\definecolor{cite}{rgb}{0.30,0.60,1.00}
\definecolor{url}{rgb}{0.00,0.00,0.80}
\definecolor{link}{rgb}{0.40,0.10,0.20}
\newtheorem{theorem}{Theorem}[section]
\newtheorem{proposition}[theorem]{Proposition}
\newtheorem{lemma}[theorem]{Lemma}
\newtheorem{corollary}[theorem]{Corollary}
\theoremstyle{definition}
\theoremstyle{definition}
\newtheorem{remark}[theorem]{Remark}
\theoremstyle{definition}
\newcommand{\zIntegers}{\mathbb{Z}}
\newcommand{\cComplex}{\mathbb{C}}
\newcommand{\multiplicativegroup}[1]{#1^{\times}}
\newcommand{\RealPart}{\mathrm{Re}}
\newcommand{\Hom}{\mathrm{Hom}}
\newcommand{\EndomorphismRing}{\operatorname{End}}
\newcommand{\idmap}{\mathrm{id}}
\newcommand{\abs}[1]{\left|#1\right|}
\newcommand{\sizeof}[1]{\left|#1\right|}
\newcommand{\innerproduct}[2]{\left(#1,#2\right)}
\newcommand{\Norm}[1]{\left\Vert #1\right\Vert }
\newcommand{\standardForm}[2]{\left\langle #1,#2\right\rangle}
\newcommand{\fieldCharacter}{\psi}
\newcommand{\centralCharacter}[1]{\omega_{#1}}
\newcommand{\Ind}[3]{\mathrm{Ind}_{#1}^{#2}\left(#3\right)}
\newcommand{\ind}[3]{\mathrm{ind}_{#1}^{#2}\left(#3\right)}
\newcommand{\Whittaker}{\mathcal{W}}
\newcommand{\Contragradient}[1]{#1^{\vee}}
\newcommand{\SpehRepresentation}[2]{\Delta\left(#1, #2\right)}
\newcommand{\SteinbergRepresentation}[2]{\mathrm{St}\left(#1, #2\right)}
\newcommand{\besselSpehFunction}[2]{\mathcal{BS}_{\SpehRepresentation{#1}{#2}, \fieldCharacter}}
\newcommand{\fourierTransform}[2]{\mathcal{F}_{#1}#2}
\newcommand{\GKGammaFactor}[3]{\gamma^{\mathrm{GK}}\left(#1 \times #2, #3\right)}
\newcommand{\LocalGKGammaFactor}[4]{\gamma^{\mathrm{GK}}\left(#1, #2 \times #3, #4\right)}
\newcommand{\GKPreGammaFactor}[3]{\Gamma^{\mathrm{GK}}\left(#1 \times #2, #3\right)}
\newcommand{\grpIndex}[2]{\left[#1:#2\right]}
\newcommand{\transpose}[1]{\, {}^{t}#1}
\newcommand{\IdentityMatrix}[1]{I_{#1}}
\newcommand{\diag}{\mathrm{diag}}
\newcommand{\trace}{\operatorname{tr}}
\newcommand{\GL}{\mathrm{GL}}
\newcommand{\UnipotentSubgroup}{U}
\newcommand{\UnipotentRadicalForWss}[2]{N_{\left(#2^{#1}\right)}}
\newcommand{\UnipotentRadicalForWssRecursion}[2]{\mathcal{Y}_{c,k}}
\newcommand{\UnipotentRadical}{N}
\newcommand{\ParabolicSubgroup}{P}
\newcommand{\finiteField}{\mathbb{F}}
\newcommand{\squareMatrix}{\operatorname{Mat}}
\newcommand{\Mat}[2]{\operatorname{Mat}_{#1 \times #2}}
\newcommand{\Steinberg}{\operatorname{St}}
\newcommand{\ProjectionOperator}{\operatorname{pr}}
\newcommand{\SymmetricGroup}{\mathfrak{S}}
\newcommand{\ParabolicForSpeh}[2]{P_{\left({#1}^{#2}\right)}}
\newcommand{\UnipotentForSpeh}[2]{N_{\left({#1}^{#2}\right)}}
\newcommand{\PoincarePolynomial}[2]{P_{#2}}
\newcommand{\localField}{F}
\newcommand{\ringOfIntegers}{\mathfrak{o}}
\newcommand{\maximalIdeal}{\mathfrak{p}}
\newcommand{\depthZeroRepresentation}{\mathcal{T}}
\newcommand{\differential}{\mathrm{d}}
\newcommand{\mdifferential}{\differential^{\times}}
\newcommand{\quotientMap}{\nu}
\newcommand{\Lift}{\mathcal{L}}
\newcommand{\uniformizer}{\varpi}
\newcommand{\VolumeOf}{\operatorname{Vol}}
\newcommand{\parabolicSection}{\Phi^{\left(z_1, \dots, z_c\right)}}
\newcommand{\intertwiningOperator}{M^{\left(z_1, \dots, z_c\right)}}
\newcommand{\holomorphicRepresentation}{\depthZeroRepresentation^{\left(z_1, \dots, z_c\right)}}
\newcommand{\WhittakerFunctional}[1]{\ell_{#1, \fieldCharacter}}
\newcommand{\gShortSpehWhittakerFunctional}[3]{\ell_{\SpehRepresentation{#1}{#3}}}
\newcommand{\GKGaussSum}[3]{\mathcal{G}\left(#1 \times #2, #3\right)}
\newcommand{\fieldCharacterkc}[2]{\fieldCharacter_{\left({#2}^{#1}\right)}}
\newcommand{\weylElement}[1]{w_{({#1})}}
\newcommand{\zetaOperator}{\mathrm{Z}}
\newcommand{\dualZetaOperator}{\mathrm{Z}^{\ast}}
\newcommand{\specialBesselSpeh}[2][\fieldCharacter]{\mathcal{K}_{{#2}, {#1}}}
\newcommand{\firstSpecialFunctional}{S_0}
\newcommand{\firstDualSpecialFunctional}{S_0^{\ast}}
\newcommand{\secondSpecialFunctional}{S}
\newcommand{\secondDualSpecialFunctional}{S^{\ast}}
\newcommand{\Moeglin}{M{\oe}glin}
\title[Speh representations for level zero representations]{On Speh representations for level zero supercuspidal representations and Ginzburg--Kaplan gamma factors}
\author{Elad Zelingher}
\address{Department of Mathematics, University of Michigan, 1844 East Hall, 530 Church Street, Ann Arbor, MI 48109-1043 USA}
\email{eladz@umich.edu}
\keywords{Speh representations, Level zero representations, Gamma factors}
\subjclass[2020]{11F70, 11F66, 20C33, 22E50}
\begin{document}

\begin{abstract}
	We establish a relation between Speh representations of $\GL_n\left(\finiteField_q\right)$ and Speh representations of $\GL_n\left(\localField\right)$, where $\localField$ is a non-archimedean local field. We use irreducible level zero supercuspidal representations to show that these two notions of Speh representations associated to cuspidal representations are related via a commutative diagram, and that their corresponding $(k,c)$ $\fieldCharacter$-Whittaker models are also related. We use these results to relate the local Ginzburg--Kaplan integrals \cite{ginzburg2019tensor, Kaplan2023}, \cite[Appendix A]{kaplan2018} for level zero supercuspidal representations to their finite field counterparts \cite{CarmonZelingher2024}.
\end{abstract}
	\dedicatory{\bf In memory of Fabian Sharristh}
\maketitle

\section{Introduction}\label{sec:introduction}

In the theory of automorphic representations of general linear groups, Speh representations are important objects. They were introduced by Jacquet in \cite{Jacquet1984}, where he conjectured that the discrete spectrum of $L^2\left(\left[\GL_n\right]\right)$ corresponding to the automorphic quotient $\left[\GL_n\right]$ decomposes as the direct sum of Speh representations. This was later proved by \Moeglin{} and Waldspurger in their celebrated work \cite{MoeglinWaldspurger1989}. This result serves as a base case for Arthur's conjectures regarding the decomposition of the discrete spectrum of $L^2\left(\left[G\right]\right)$, where $G$ is a reductive group. See \cite[Page 23]{Arthur2013}.

A notion of (generalized) Speh representations also exists in the context of local fields \cite[Section 2.2]{Jacquet1984}, \cite{CaiFriedbergGourevitchKaplan2023} and, unsurprisingly, the local components of a global Speh representation are local Speh representations.

In the past recent years, a new use for Speh representations was found: they serve as an ingredient in certain new integral constructions of various $L$-functions \cite{CaiFriedbergGinzburgKaplan2019,CaiFriedbergKaplan2022,kaplan2018,GourevitchKaplan2023,ginzburg2019tensor,GinzburgSoudry2020}. A fundamental property of Speh representations utilized in these constructions is a local uniqueness property of a generalized Whittaker functional. We move to explain this now.

Let $\localField$ be a non-archimedean local field with ring of integers $\ringOfIntegers$, maximal ideal $\maximalIdeal$ and residual field $\finiteField = \ringOfIntegers \slash \maximalIdeal$ of cardinality $q$, and denote by $\quotientMap \colon \ringOfIntegers \to \finiteField$ the quotient map. Choose a uniformizer $\uniformizer \in \maximalIdeal$. Let $\fieldCharacter \colon \localField \to \multiplicativegroup{\cComplex}$ be a non-trivial character. Given positive integers $k$ and $c$, let $\UnipotentRadicalForWss{k}{c}$ be the unipotent radical of the standard parabolic subgroup $\ParabolicSubgroup_{(c^k)}$ of $\GL_{kc}\left(\localField\right)$ corresponding to the composition $\left(c^k\right)$. We define a character $\fieldCharacterkc{k}{c} \colon \UnipotentRadicalForWss{k}{c} \to \multiplicativegroup{\cComplex}$ by the formula
$$\fieldCharacterkc{k}{c} \begin{pmatrix}
	\IdentityMatrix{c} & X_1 & \ast & \ast  & \ast \\
	& \IdentityMatrix{c} & X_2 & \ast & \ast \\
	& & \ddots & \ddots & \ast  \\
	& & & \IdentityMatrix{c} &  X_{k-1} \\
	& & & & \IdentityMatrix{c}
\end{pmatrix} = \fieldCharacter\left( \sum_{j=1}^{k-1} \trace X_j \right).$$
Then a fundamental result of \cite{CaiFriedbergGourevitchKaplan2023} is that for any Speh representation $\SpehRepresentation{\depthZeroRepresentation}{c}$ (where $\depthZeroRepresentation$ is a generic representation of $\GL_k\left(\localField\right)$) there exists a unique (up to scalar multiplication) $(k,c)$ $\fieldCharacter$-Whittaker functional, that is, a non-zero element $$\gShortSpehWhittakerFunctional{\depthZeroRepresentation}{k}{c} \in \Hom_{\UnipotentRadicalForWss{k}{c}}\left(\SpehRepresentation{\depthZeroRepresentation}{c}, \fieldCharacterkc{k}{c}\right).$$

Recently, in his master's thesis \cite{Carmon2023}, Carmon defined the notion of (generalized) Speh representations of finite general linear groups, and showed that a similar uniqueness property holds.

It is very natural to ask what the relation between Speh representations over local fields and over finite fields is. In this paper, we establish a relation between these two representations using level zero representations. 

Given any irreducible cuspidal representation $\tau$ of $\GL_k\left(\finiteField\right)$, we may construct an irreducible level zero supercuspidal representation $\depthZeroRepresentation$ of $\GL_k\left(\localField\right)$. This construction is not unique and depends on a choice of a central character $\centralCharacter{\depthZeroRepresentation}$ for $\depthZeroRepresentation$. The Speh representation $\SpehRepresentation{\tau}{c}$ of $\GL_{kc}\left(\finiteField\right)$ is the irreducible subrepresentation of minimal dimension of the parabolically induced representation $\tau^{\circ c} = \tau \circ \dots \circ \tau$. It occurs with multiplicity one in $\tau^{\circ c}$. On the hand, the Speh representation $\SpehRepresentation{\depthZeroRepresentation}{c}$ of $\GL_{kc}\left(\localField\right)$ is defined as the image of the standard intertwining operator $\intertwiningOperator_{\weylElement{1^c}} \colon \holomorphicRepresentation \to \depthZeroRepresentation^{(z_c,\dots,z_1)}$ at the point $(z_1,\dots,z_c) = \left(\frac{c-1}{2}, \frac{c-3}{2}, \dots, -\frac{c-1}{2}\right)$, where for a choice of complex numbers $(z_1,\dots,z_c) \in \cComplex^c$, the representation $\holomorphicRepresentation$ is the (normalized) parabolically induced representation $$\holomorphicRepresentation = \abs{\det}^{z_1} \depthZeroRepresentation \times \dots \times \abs{\det}^{z_c} \depthZeroRepresentation = \Ind{\ParabolicForSpeh{k}{c}}{\GL_{kc}\left(\localField\right)}{\abs{\det}^{z_1} \depthZeroRepresentation \otimes \dots \otimes \abs{\det}^{z_c} \depthZeroRepresentation}.$$ For any element $v_{\tau} \in \tau$, we define a natural lift $\Lift v_{\tau} \in \depthZeroRepresentation$ corresponding to $v_{\tau}$. We then extend this notion to define for an element $v_{\tau^{\otimes c}} \in \tau^{\otimes c}$ a lift $\Lift v_{\tau^{\otimes c}} \in \depthZeroRepresentation^{\otimes c}$, and then using the Iwasawa decomposition, we define for an element $f_{\tau^{\circ c}} \in \tau^{\circ c}$ a lift $\Lift^{(z_1,\dots,z_c)} f_{\tau^{\circ c}} \in \holomorphicRepresentation$. Our first result establishes a relation between $\SpehRepresentation{\tau}{c}$ and $\SpehRepresentation{\depthZeroRepresentation}{c}$ by showing that a certain diagram is commutative (\Cref{cor:intertwining-operator-of-depth-zero-representation-of-speh}).

\begin{theorem}
	The following diagram commutes:
		$$\xymatrix{
			\tau^{\circ c} \ar[d]_{\Lift^{\left(\frac{c-1}{2},\dots,-\frac{c-1}{2}\right)}} \ar[rr]^{\ProjectionOperator_{\SpehRepresentation{\tau}{c}}}  & & \SpehRepresentation{\tau}{c} \ar[d]^{\Lift^{\left(-\frac{c-1}{2},\dots,\frac{c-1}{2}\right)}}\\
			\depthZeroRepresentation^{\left(\frac{c-1}{2},\dots,-\frac{c-1}{2}\right)} \ar[rr]^{M^{\ast}_{\weylElement{1^c}}} & & \depthZeroRepresentation^{\left(-\frac{c-1}{2},\dots,\frac{c-1}{2}\right)},}$$
	Hence, for any $f_{\SpehRepresentation{\tau}{c}} \in \SpehRepresentation{\tau}{c}$, we have that $\Lift f_{\SpehRepresentation{\tau}{c}} \in \SpehRepresentation{\depthZeroRepresentation}{c}$.
	Here, $$\ProjectionOperator_{\SpehRepresentation{\tau}{c}} \colon \tau^{\circ c} \to \SpehRepresentation{\tau}{c}$$ is the projection operator and $M^{\ast}_{\weylElement{1^c}}$ is a certain non-zero scalar multiple of $M^{\left(\frac{c-1}{2},\dots, -\frac{c-1}{2}\right)}_{\weylElement{1^c}}$.
\end{theorem}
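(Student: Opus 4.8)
The plan is to deduce this as a specialization of the general computation of the standard intertwining operator on lifted sections established just before it, namely the identity
$$M^{(z_1,\dots,z_c)}_{\weylElement{1^c}}\circ\Lift^{(z_1,\dots,z_c)}=\Lift^{(z_c,\dots,z_1)}\circ\mathcal{M}^{(z_1,\dots,z_c)}$$
for $(z_1,\dots,z_c)$ in general position, where $\mathcal{M}^{(z_1,\dots,z_c)}\colon\tau^{\circ c}\to\tau^{\circ c}$ is built from the finite-field rank-one intertwining operators of $\tau\circ\tau$ and explicit rational normalizing factors in the $q^{z_i-z_j}$ (supercuspidality of $\depthZeroRepresentation$ being what kills the ``deeper'' part of the intertwining integral so that only the finite-field datum contributes). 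Specializing $(z_1,\dots,z_c)$ to the Speh point $\bigl(\tfrac{c-1}{2},\dots,-\tfrac{c-1}{2}\bigr)$, for which this operator has image, by definition, the local Speh representation $\SpehRepresentation{\depthZeroRepresentation}{c}$, the theorem reduces to the claim that $\mathcal{M}^{(\frac{c-1}{2},\dots,-\frac{c-1}{2})}$ is a nonzero scalar multiple of $\ProjectionOperator_{\SpehRepresentation{\tau}{c}}$; one then sets $M^{\ast}_{\weylElement{1^c}}$ to be the reciprocal multiple of $M^{(\frac{c-1}{2},\dots,-\frac{c-1}{2})}_{\weylElement{1^c}}$ and the square commutes by construction.

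For that claim, the substantive input is a finite-field statement: at the Speh point, $\mathcal{M}^{(\frac{c-1}{2},\dots,-\frac{c-1}{2})}$ vanishes on the sum of all irreducible constituents of $\tau^{\circ c}$ other than $\SpehRepresentation{\tau}{c}$ and is a nonzero scalar on $\SpehRepresentation{\tau}{c}$, so that it is nonzero with image exactly $\SpehRepresentation{\tau}{c}$. This is where the structure of $\mathrm{End}_{\GL_{kc}(\finiteField)}(\tau^{\circ c})\cong\cComplex[\SymmetricGroup_c]$ enters --- $\SpehRepresentation{\tau}{c}$ is the isotypic piece of the trivial character of $\SymmetricGroup_c$ (the minimal-dimensional constituent, agreeing with $\SpehRepresentation{\mathbf 1}{c}=\mathbf 1_{\GL_{kc}}$ in the basic example) and $\SteinbergRepresentation{\tau}{c}$ that of the sign character --- together with Carmon's characterization of $\SpehRepresentation{\tau}{c}$ as the image of the corresponding long-element operator; the heart of the matter is the $c=2$ case, a single $\GL_{2k}(\finiteField)$ computation exhibiting the vanishing factor, from which general $c$ follows by factoring $\weylElement{1^c}$ along a reduced word and applying the cocycle relations on both sides. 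Granting this, semisimplicity of $\tau^{\circ c}$ and the multiplicity-one occurrence of $\SpehRepresentation{\tau}{c}$ force $\ker\mathcal{M}^{(\frac{c-1}{2},\dots,-\frac{c-1}{2})}$ to be the unique $\GL_{kc}(\finiteField)$-stable complement of $\SpehRepresentation{\tau}{c}$, and Schur's lemma makes the induced automorphism of $\SpehRepresentation{\tau}{c}$ a nonzero scalar; hence $\mathcal{M}^{(\frac{c-1}{2},\dots,-\frac{c-1}{2})}$ is a nonzero multiple of $\ProjectionOperator_{\SpehRepresentation{\tau}{c}}$. Pinning down this vanishing --- equivalently, matching ``the operator whose local image is $\SpehRepresentation{\depthZeroRepresentation}{c}$'' with ``the operator whose finite-field image is $\SpehRepresentation{\tau}{c}$'' --- is the main obstacle; the rest is bookkeeping.

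The final assertion of the theorem is then immediate: for $f\in\tau^{\circ c}$ the commuting square gives
$$M^{\ast}_{\weylElement{1^c}}\bigl(\Lift^{(\frac{c-1}{2},\dots,-\frac{c-1}{2})}f\bigr)=\Lift^{(-\frac{c-1}{2},\dots,\frac{c-1}{2})}\bigl(\ProjectionOperator_{\SpehRepresentation{\tau}{c}}f\bigr),$$
and the left-hand side lies in $\mathrm{Im}\,M^{\ast}_{\weylElement{1^c}}=\SpehRepresentation{\depthZeroRepresentation}{c}$; since $\ProjectionOperator_{\SpehRepresentation{\tau}{c}}$ is surjective onto $\SpehRepresentation{\tau}{c}$, every $f_{\SpehRepresentation{\tau}{c}}\in\SpehRepresentation{\tau}{c}$ equals $\ProjectionOperator_{\SpehRepresentation{\tau}{c}}f$ for some $f$, whence $\Lift f_{\SpehRepresentation{\tau}{c}}\in\SpehRepresentation{\depthZeroRepresentation}{c}$.
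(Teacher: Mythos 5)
Your overall architecture coincides with the paper's: you take the identity $M^{(z_1,\dots,z_c)}_{\weylElement{1^c}}\circ\Lift^{(z_1,\dots,z_c)}=\Lift^{(z_c,\dots,z_1)}\circ\mathcal{M}^{(z_1,\dots,z_c)}$ (this is exactly \Cref{thm:intertwining-operator-of-depth-zero-representation}, with $\mathcal{M}$ the ordered product $\prod_{(i,j)}\bigl(h^0_{s_{j-i}}+\tfrac{q^k-1}{q^{k(z_i-z_j)}-1}\,\idmap_{\tau^{\circ c}}\bigr)$ up to $\centralCharacter{\tau}(-1)^{\binom{c}{2}}$), specialize at the Speh point, and reduce to the claim that $\mathcal{M}^{(\frac{c-1}{2},\dots,-\frac{c-1}{2})}$ is a nonzero scalar multiple of $\ProjectionOperator_{\SpehRepresentation{\tau}{c}}$; your final deduction that $\Lift f_{\SpehRepresentation{\tau}{c}}\in\SpehRepresentation{\depthZeroRepresentation}{c}$ is also the paper's. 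The genuine gap is at exactly the step you yourself flag as the main obstacle: you do not prove that $\mathcal{M}^{(\frac{c-1}{2},\dots,-\frac{c-1}{2})}$ annihilates every constituent of $\tau^{\circ c}$ other than $\SpehRepresentation{\tau}{c}$. The route you sketch --- a single $c=2$ computation propagated by ``cocycle relations,'' plus a purported characterization of $\SpehRepresentation{\tau}{c}$ as the image of the corresponding long-element operator --- does not obviously close it. At the Speh point the scalar attached to the pair $(i,j)$ is $\tfrac{q^k-1}{q^{(j-i)k}-1}$, so only the adjacent factors ($j-i=1$, scalar $1$) are singular, while every factor with $j-i\ge 2$ is invertible, since the eigenvalues of $h^0_{s}$ are $q^k$ and $-1$ and $0<\tfrac{q^k-1}{q^{(j-i)k}-1}<1$. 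Hence vanishing on all non-Speh constituents is a global statement about the full ordered product and is not inherited factor-by-factor from the rank-one ($c=2$) case; moreover the ``image of the long-element operator'' description of the finite Speh representation is not something the paper provides (there, $\SpehRepresentation{\tau}{c}$ is the minimal-dimensional constituent), so it cannot simply be invoked.

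The paper closes precisely this point by citing the $q$-symmetrizer factorization \cite[Lemma 3.1]{guizzi2010cyclic}, recorded as \Cref{prop:projection-operator-as-product-of-simple-reflections}:
\begin{equation*}
\ProjectionOperator_{\SpehRepresentation{\tau}{c}}=\frac{1}{\PoincarePolynomial{k}{c}\left(q^k\right)}\prod_{\substack{\left(i,j\right)\\ 1\le i<j\le c}}\left(h^0_{s_{j-i}}+\frac{q^k-1}{q^{\left(j-i\right)k}-1}\cdot\idmap_{\tau^{\circ c}}\right),
\end{equation*}
whose factors coincide term-by-term with those produced by \Cref{thm:intertwining-operator-of-depth-zero-representation} at $(z_1,\dots,z_c)=(\frac{c-1}{2},\dots,-\frac{c-1}{2})$; this yields the annihilation statement at once and also pins the normalization $M^{\ast}_{\weylElement{1^c}}=\PoincarePolynomial{k}{c}\left(q^k\right)^{-1}\centralCharacter{\tau}\left(-1\right)^{-\binom{c}{2}}M^{(\frac{c-1}{2},\dots,-\frac{c-1}{2})}_{\weylElement{1^c}}$. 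Your semisimplicity/multiplicity-one/Schur bookkeeping is fine and would complete the argument once that input is in place, so the repair is either to invoke this factorization or to supply an actual induction proving the vanishing of the ordered product on the non-Speh part.
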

Our proof involves an explicit computation of simple reflection intertwining operators, which utilizes Cartan decompositions of matrices. As a byproduct, we also obtain a similar relation between the Steinberg representations $\SteinbergRepresentation{\tau}{c}$ and $\SteinbergRepresentation{\depthZeroRepresentation}{c}$ (\Cref{rem:commutative-diagram-for-steinberg-representations}).

Our next results establish a relation between $(k,c)$ $\fieldCharacter$-Whittaker functions of $\SpehRepresentation{\tau}{c}$ and of $\SpehRepresentation{\depthZeroRepresentation}{c}$. Suppose that $\fieldCharacter \colon \multiplicativegroup{\localField} \to \multiplicativegroup{\cComplex}$ has conductor $\maximalIdeal$, and regard $\fieldCharacter$ also as a non-trivial character $\fieldCharacter \colon \finiteField \to \multiplicativegroup{\cComplex}$. Let $\WhittakerFunctional{\tau}$ be a $(k,1)$ $\fieldCharacter$-Whittaker functional of $\tau$. We may use $\WhittakerFunctional{\tau}$ to construct a $(k,1)$ $\fieldCharacter$-Whittaker functional $\WhittakerFunctional{\depthZeroRepresentation}$ for $\depthZeroRepresentation$ as in \cite[Proposition 3.7]{YeZeligher18}. Following the recursive process in \cite[Section 3.2]{CaiFriedbergGourevitchKaplan2023}, we use $\WhittakerFunctional{\tau}$ and $\WhittakerFunctional{\depthZeroRepresentation}$ to construct $(k,c)$ $\fieldCharacter$-Whittaker functionals $\gShortSpehWhittakerFunctional{\tau}{k}{c}$ and $\gShortSpehWhittakerFunctional{\depthZeroRepresentation}{k}{c}$ for $\SpehRepresentation{\tau}{c}$ and $\SpehRepresentation{\depthZeroRepresentation}{c}$, respectively. We show the following relations between these functionals (\Cref{prop:relation-between-k-c-whittaker-models}, \Cref{prop:support-of-diagonal-whittaker-elements} part \ref{item:support-of-W-diag-t-I-kc} and \Cref{thm:special-value-of-diag-c(c-1)-t}).

\begin{theorem}
	Let $f_{\SpehRepresentation{\tau}{c}} \in \SpehRepresentation{\tau}{c}$ and define for $g \in \GL_{kc}\left(\finiteField\right)$, $$W\left(g\right) = \standardForm{\SpehRepresentation{\tau}{c}\left(g\right) f_{\SpehRepresentation{\tau}{c}}}{\gShortSpehWhittakerFunctional{\tau}{k}{c}}$$ and for $g \in \GL_{kc}\left(\localField\right)$, $$\Lift W\left(g\right) = \standardForm{\SpehRepresentation{\depthZeroRepresentation}{c}\left(g\right) \Lift f_{\SpehRepresentation{\tau}{c}}}{\gShortSpehWhittakerFunctional{\depthZeroRepresentation}{k}{c}}.$$
	\begin{enumerate}
		\item For any $k_0 \in \GL_{kc}\left(\ringOfIntegers\right)$, we have the relation $$\Lift W\left(k_0\right) = W\left(\quotientMap\left(k_0\right)\right).$$
		\item Suppose that $g = \diag\left(t, \IdentityMatrix{(k-1)c}\right)$ where $t = \diag\left(\uniformizer^{i_1},\dots,\uniformizer^{i_c}\right)$, and $i_1,\dots,i_c \in \zIntegers$. Then we have $\Lift W\left(g\right) = 0$ if $\min\left(i_1,\dots,i_c\right) < 0$. If $c < k$, we have that $\Lift W\left(g\right) = 0$ unless $i_1 = i_2 = \dots = i_c = 0$.
		\item Suppose that $k=c$ and that $g = \diag\left(t, \IdentityMatrix{(c-1)c}\right)$ where $t$ is as above and $\min\left(i_1,\dots,i_c\right) > 0$. Then $\Lift W\left(g\right) = 0$ unless $i_1 = i_2 = \dots = i_c = m > 0$, in which case $$\Lift W\left(g\right) = \centralCharacter{\depthZeroRepresentation}\left(\uniformizer\right)^m q^{-\left(m+1\right)\left(c-1\right)\binom{c}{2}} \standardForm{f_{\SpehRepresentation{\tau}{c}}\restriction_{\diag\left(\IdentityMatrix{c}, \GL_{(k-1)c}\left(\finiteField\right)\right)}}{\firstSpecialFunctional},$$
		where $\firstSpecialFunctional \colon \tau \otimes \SpehRepresentation{\tau}{c-1} \to \cComplex$ is a certain functional given in \Cref{thm:special-value-of-diag-t-c(c-1)}.
	\end{enumerate}
\end{theorem}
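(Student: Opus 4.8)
The plan is to compare, stage by stage, the recursive construction over $\localField$ of $\gShortSpehWhittakerFunctional{\depthZeroRepresentation}{k}{c}$ with the parallel recursive construction over $\finiteField$ of $\gShortSpehWhittakerFunctional{\tau}{k}{c}$ from \cite[Section 3.2]{CaiFriedbergGourevitchKaplan2023}, using the commutative diagram of \Cref{cor:intertwining-operator-of-depth-zero-representation-of-speh} --- which ensures $\Lift f_{\SpehRepresentation{\tau}{c}} \in \SpehRepresentation{\depthZeroRepresentation}{c}$, so that $\Lift W$ is a well-defined vector in the $(k,c)$ $\fieldCharacter$-Whittaker model --- together with the explicit form of the lift $\Lift$. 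The point is that on $\GL_{kc}(\ringOfIntegers)$, and at the diagonal elements considered, the (a priori non-absolutely convergent) unipotent integrals over $\localField$ that define $\gShortSpehWhittakerFunctional{\depthZeroRepresentation}{k}{c}$ collapse onto the finite sums over $\finiteField$ that define $\gShortSpehWhittakerFunctional{\tau}{k}{c}$, up to bounded powers of $q$ coming from volumes; I normalize the Haar measures on the unipotent subgroups involved so that these volume factors equal $1$ whenever the argument lies in $\GL_{kc}(\ringOfIntegers)$.

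For part (1) I would induct on $c$. The base case $c=1$ is exactly the compatibility of the level zero $(k,1)$ $\fieldCharacter$-Whittaker functional $\WhittakerFunctional{\depthZeroRepresentation}$ with the finite field functional $\WhittakerFunctional{\tau}$: for $\phi = \Lift v$ with $v \in \tau$, the function $g \mapsto \standardForm{\depthZeroRepresentation(g)\phi}{\WhittakerFunctional{\depthZeroRepresentation}}$ is supported on $N_k(\localField)\, Z\, \GL_k(\ringOfIntegers)$ (with $N_k$ the unipotent upper triangular subgroup and $Z$ the center of $\GL_k(\localField)$) and restricts on $\GL_k(\ringOfIntegers)$ to $g \mapsto \standardForm{\tau(\quotientMap(g))v}{\WhittakerFunctional{\tau}}$, which is precisely the normalization of $\WhittakerFunctional{\depthZeroRepresentation}$ built from $\WhittakerFunctional{\tau}$ in \cite[Proposition 3.7]{YeZeligher18}. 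For the inductive step I would unwind the recursive integral evaluated at $\SpehRepresentation{\depthZeroRepresentation}{c}(k_0)\Lift f_{\SpehRepresentation{\tau}{c}} = \Lift(\SpehRepresentation{\tau}{c}(\quotientMap(k_0))f_{\SpehRepresentation{\tau}{c}})$ for $k_0 \in \GL_{kc}(\ringOfIntegers)$. This vector is fixed by the principal congruence subgroup $K_1 = \ker(\GL_{kc}(\ringOfIntegers) \to \GL_{kc}(\finiteField))$, and restricted to $\GL_{kc}(\ringOfIntegers)$ it factors through $\GL_{kc}(\finiteField)$; hence the integrand of the recursive integral is supported on the $\ringOfIntegers$-points of the relevant unipotent subgroup and is constant on cosets modulo its $\maximalIdeal$-points, so the integral reduces to a finite sum over the $\finiteField$-points, which by the inductive hypothesis and the base case is the finite field recursion computing $W(\quotientMap(k_0))$. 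The one delicate point is the claimed compact support of this integrand; the vanishing of the contributions of unipotent elements outside the $\ringOfIntegers$-points follows from the support properties of the lower functionals (the inductive forms of parts (1) and (2)), handled by the same Cartan decomposition bookkeeping as in \Cref{cor:intertwining-operator-of-depth-zero-representation-of-speh}.

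For part (2), observe that $\Lift W$ is left $(\UnipotentRadicalForWss{k}{c}, \fieldCharacterkc{k}{c})$-equivariant and right $K_1$-invariant. Write $g = \diag(t, \IdentityMatrix{(k-1)c})$, and let $u = u(X) \in \UnipotentRadicalForWss{k}{c}$ be the element supported on the $(1,2)$-block, with block entry a $c \times c$ matrix $X$ over $\localField$. Then $g^{-1}ug = u(t^{-1}X)$, so $g^{-1}ug \in K_1$ whenever every $(r,s)$-entry of $X$ lies in $\uniformizer^{1+i_r}\ringOfIntegers$; for such $u$, and using $\fieldCharacterkc{k}{c}(u) = \fieldCharacter(\trace X)$, we get $\fieldCharacter(\trace X)\Lift W(g) = \Lift W(ug) = \Lift W(g \cdot g^{-1}ug) = \Lift W(g)$, hence $\Lift W(g) = 0$ whenever $\fieldCharacter(\trace X) \ne 1$. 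When $\min(i_1,\dots,i_c) < 0$, some index $r$ satisfies $1 + i_r \le 0$, so $X$ may be taken diagonal and supported at $(r,r)$ with $\fieldCharacter(\trace X) = \fieldCharacter(X_{rr}) \ne 1$, giving $\Lift W(g) = 0$. For the refinement when $c < k$, I would reduce via the recursion to the case $c = 1$: the value $\Lift W(g)$ is then governed by level zero $(k,1)$ $\fieldCharacter$-Whittaker functions of $\depthZeroRepresentation$ evaluated at a diagonal element of $\GL_k(\localField)$ built out of $t$, and such a contribution vanishes unless that element lies in $N_k(\localField)\, Z\, \GL_k(\ringOfIntegers)$, i.e.\ unless all of its $\uniformizer$-valuations coincide; since $c < k$ that diagonal element has at least one entry of valuation $0$, so equality of valuations forces $i_1 = \dots = i_c = 0$, and then part (1) gives the value. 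Making the propagation of this $(k,1)$-support statement through the recursive integrals precise is the main obstacle here.

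For part (3), with $k = c$ and $\min(i_1,\dots,i_c) > 0$, the same reduction produces $\diag(\uniformizer^{i_1},\dots,\uniformizer^{i_c}) \in \GL_c(\localField)$ as the argument of the $(k,1)$-Whittaker function of $\depthZeroRepresentation$, and equality of valuations forces $i_1 = \dots = i_c = m$. To evaluate $\Lift W(\diag(\uniformizer^m \IdentityMatrix{c}, \IdentityMatrix{(c-1)c}))$ I would iterate the recursion: the central element $\uniformizer^m \IdentityMatrix{c}$ enters, at the bottom of the recursion, through the central character of $\depthZeroRepresentation$ and contributes the factor $\centralCharacter{\depthZeroRepresentation}(\uniformizer)^m$, while the unipotent integrals, now evaluated at arguments of valuation $m$ rather than in $\GL(\ringOfIntegers)$, are supported on compact regions whose volumes are no longer trivial; tracking these volumes across the $c-1$ peeling steps --- the $\uniformizer$-order of shift at each step being proportional to $m+1$ and the relevant dimensions summing to $(c-1)\binom{c}{2}$ --- yields the factor $q^{-(m+1)(c-1)\binom{c}{2}}$. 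Once the central and $q$-power contributions have been extracted, the recursion reduces the remaining quantity to the value at the identity, which by part (1) and the branching statement of \Cref{thm:special-value-of-diag-t-c(c-1)} equals $\standardForm{f_{\SpehRepresentation{\tau}{c}}\restriction_{\diag(\IdentityMatrix{c}, \GL_{(k-1)c}(\finiteField))}}{\firstSpecialFunctional}$. I expect the hardest step to be the exact accounting of the $q$-powers across the iterated peeling, together with the identification of the functional $\firstSpecialFunctional$, for which \Cref{thm:special-value-of-diag-t-c(c-1)} is the essential input.
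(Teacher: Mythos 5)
Your treatment of part (1) and of the first assertion of part (2) is essentially the paper's: induction on $c$ with the $c=1$ case given by the level zero Whittaker lift, plus the conjugation trick with an element of $\UnipotentRadicalForWss{k}{c}$ whose relevant entry is a unit times $t_i^{-1}$. One caveat even there: the reduction of the recursive integral to the $\ringOfIntegers$-points is obtained in the paper not by a pointwise support claim for the integrand but by the root exchange identity (\Cref{prop:integrand-for-k-c-functional}, proved in \Cref{appendix:root-exchange} using only right smoothness of $\Lift f$); your justification via "support properties of the lower functionals" and "Cartan decomposition bookkeeping as in \Cref{cor:intertwining-operator-of-depth-zero-representation-of-speh}" is both unproved and harder than what is needed, since \Cref{cor:intertwining-operator-of-depth-zero-representation-of-speh} concerns the intertwining operator, not the $(k,c)$ functional.

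The genuine gaps are in the $c<k$ refinement of part (2) and in part (3). For $c<k$ you propose to "reduce via the recursion to the case $c=1$" and argue that a diagonal element of $\GL_k\left(\localField\right)$ "built out of $t$", padded with valuation-zero entries, must lie in $\multiplicativegroup{\localField}\UnipotentSubgroup_k\GL_k\left(\ringOfIntegers\right)$. This does not match how $\diag\left(t,\IdentityMatrix{(k-1)c}\right)$ interacts with the recursion: with $c_1=1$, conjugation by $\kappa$ sends only $t_1$ into the $\GL_k$ factor, while $t_2,\dots,t_c$ land in the $\SpehRepresentation{\depthZeroRepresentation}{c-1}$ factor entangled with the $\mathcal{Y}$-variables, so the argument of $\WhittakerFunctional{\depthZeroRepresentation}$ is determined only after a nontrivial Iwasawa/Cartan analysis of the integration variables (this is exactly the content of \Cref{lem:k-equals-c-implies-Y_1-is-in-maximal-compact}, carried out in the paper only for $k=c$). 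The paper instead proves the $c<k$ statement by a short, different argument: the functional $\ell_t$ is shown to be invariant under $\UnipotentRadical_{(i,kc-i)}\left(\finiteField\right)$, hence lies in $\Hom_{\UnipotentRadical_{(i,kc-i)}\left(\finiteField\right)}\left(\SpehRepresentation{\tau}{c},1\right)$, which vanishes because the cuspidal support of $\SpehRepresentation{\tau}{c}$ consists of copies of $\tau$ with $k>c$. For part (3), invoking \Cref{thm:special-value-of-diag-t-c(c-1)} as "the essential input" is circular, since part (3) \emph{is} that theorem (it is where $\firstSpecialFunctional$ and the constant $\centralCharacter{\depthZeroRepresentation}\left(\uniformizer\right)^m q^{-(m+1)(c-1)\binom{c}{2}}$ are established); the actual proof requires the support lemma forcing $Y_1=t'k_1$ via Cartan decomposition, a further root exchange shrinking $Y'$ to $t_1\cdot\Mat{(c-1)}{1}\left(\ringOfIntegers\right)^{\binom{c-1}{2}}$, the use of \Cref{prop:support-of-diagonal-whittaker-elements} inside the recursion to force $i_1=\dots=i_c$, and the resulting volume bookkeeping — none of which your sketch supplies beyond acknowledging them as the "main obstacle". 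As it stands, parts (2) (for $c<k$) and (3) are not proved.
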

We also have similar results for elements of the form $\diag\left(\IdentityMatrix{(k-1)c}, t\right)$, where $t$ is as in the theorem, see \Cref{prop:support-of-diagonal-whittaker-elements} part \ref{item:support-of-W-diag-I-kc-t} and \Cref{thm:special-value-of-diag-c(c-1)-t}. It is an interesting problem to find an explicit formula for the values of $\Lift W\left(g\right)$ as in the theorem for general $g$, but at this moment this seems too ambitious. Let us mention that Luo \cite{Luo2024} has recently given an asymptotic expansion for $(k,c)$ $\fieldCharacter$-Whittaker functions of Speh representations at elements of the form $\diag\left(t, \IdentityMatrix{(k-1)c}\right)$, but for our purposes we need more explicit formulas.

We use our results to link the finite field Ginzburg--Kaplan gamma factor defined by Carmon and the author \cite{CarmonZelingher2024} to its local field counterpart defined by Kaplan \cite[Appendix A]{kaplan2018}, \cite{Kaplan2023}. This gamma factor arises from an integral representation that uses a Speh representation as an ingredient. This integral represents the $L$-function associated to the tensor product representation of a pair of representations $\Pi$ and $\depthZeroRepresentation$ of $\GL_c$ and $\GL_k$, respectively, where $\depthZeroRepresentation$ is assumed to be generic, but nothing is assumed on $\Pi$. We show that in the non-exceptional case, the local Ginzburg--Kaplan gamma factor associated with a pair of irreducible level zero supercuspidal representations agrees with the finite field Ginzburg--Kaplan gamma factor associated with the corresponding pair of irreducible cuspidal representations (\Cref{cor:ginzburg-kaplan-gamma-factor-equality}). In the exceptional case, we use the equality of the local Ginzburg--Kaplan gamma factor and of the local Jacquet--Piatetski-Shapiro--Shalika gamma factor \cite{Jacquet1983rankin} to compute the finite field Ginzburg--Kaplan gamma factor (\Cref{thm:pi-equals-tau-dual}). We also obtain an identity that shows that for any pair of irreducible cuspidal representations of finite general linear groups, the Ginzburg--Kaplan gamma factor agrees with the tensor product $\varepsilon_0$-factor \cite{ye2021epsilon} (\Cref{cor:gamma-factor-equality-with-epsilon-factor}). This identity is \cite[Theorem 4.24]{CarmonZelingher2024}, and it is crucial for proving the results of \cite[Section 5]{CarmonZelingher2024}, which link special values of Bessel--Speh functions to exotic matrix Kloosterman sums.

\begin{theorem}
	Let $\pi$ and $\tau$ be irreducible cuspidal representations of $\GL_c\left(\finiteField\right)$ and $\GL_k\left(\finiteField\right)$ respectively, and let $\Pi$ and $\depthZeroRepresentation$ be irreducible level zero supercuspidal representations of $\GL_c\left(\localField\right)$ and $\GL_k\left(\localField\right)$ constructed from $\pi$ and $\tau$, respectively. Then \begin{enumerate}
		\item If $\pi \ncong \Contragradient{\tau}$ then $$\LocalGKGammaFactor{s}{\Pi}{\depthZeroRepresentation}{\fieldCharacter} = \GKGammaFactor{\pi}{\tau}{\fieldCharacter}.$$
		\item If $\pi \cong \Contragradient{\tau}$ then $$\GKGammaFactor{\pi}{\tau}{\fieldCharacter} = -\centralCharacter{\tau}\left(-1\right)^{c-1} q^{-\frac{c}{2}}.$$
		\item In all cases we have $$\GKGammaFactor{\pi}{\tau}{\fieldCharacter} = \varepsilon_0\left(\pi \times \tau, \fieldCharacter\right).$$
	\end{enumerate}
\end{theorem}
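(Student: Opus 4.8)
The plan is to reduce each of the three assertions to an input computation coming from the Rankin--Selberg theory over local fields together with the comparison of Whittaker models established earlier in the paper. The crux is that the Ginzburg--Kaplan integral for the pair $(\Pi, \depthZeroRepresentation)$ is built from the $(k,c)$ $\fieldCharacter$-Whittaker model of the Speh representation $\SpehRepresentation{\depthZeroRepresentation}{c}$, and by the second theorem of the excerpt that model is obtained from the finite field model of $\SpehRepresentation{\tau}{c}$ by the lift $\Lift$, with the explicit matching of values on $\GL_{kc}(\ringOfIntegers)$ and the explicit vanishing/special-value statements on diagonal torus translates. So for part (1), I would unfold the local Ginzburg--Kaplan zeta integral $\zetaOperator(s, \dots)$ defining $\LocalGKGammaFactor{s}{\Pi}{\depthZeroRepresentation}{\fieldCharacter}$, substitute lifts $\Lift f_{\SpehRepresentation{\tau}{c}}$ and lifted Whittaker vectors for $\Pi$ (using the level zero Whittaker functional of \cite[Proposition 3.7]{YeZeligher18}), and observe that the integrand is supported, up to the relevant Iwasawa coordinates, on $\GL(\ringOfIntegers)$-cosets where $\Lift W$ restricts to $W$ via $\quotientMap$; the integral then collapses to a finite sum over $\GL_{kc}(\finiteField)$ which is exactly the finite field Ginzburg--Kaplan integral of \cite{CarmonZelingher2024}, and the normalizing volume factors and the factor $\abs{\det}^s$ evaluate to $1$ on $\GL(\ringOfIntegers)$. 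The hypothesis $\pi \ncong \Contragradient{\tau}$ guarantees we are in the non-exceptional case, so that the finite field gamma factor is defined by a single non-degenerate bilinear form and the local integral does not pick up a pole; this is where the vanishing statement for $\min(i_1,\dots,i_c) < 0$ and the $c<k$ reduction from part (2) of the second theorem are used to kill the tail of the local integral.

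For part (2), the strategy is different: here $\pi \cong \Contragradient{\tau}$, which is precisely the exceptional case, and the plan is to invoke the known identification of the \emph{local} Ginzburg--Kaplan gamma factor with the local Jacquet--Piatetski-Shapiro--Shalika gamma factor $\gamma(s, \Pi \times \depthZeroRepresentation, \fieldCharacter)$ (cited in the introduction as the tool for the exceptional case), together with part (1)'s comparison in the generic range to transport the local identity down to the finite field. Concretely: for level zero supercuspidal $\Pi$ and $\depthZeroRepresentation$ with $\Pi \cong \Contragradient{\depthZeroRepresentation}$ (equivalently $\pi \cong \Contragradient{\tau}$), the local JPSS gamma factor has a known closed form — it is (up to the sign and central character twist coming from the functional equation at the self-dual point) a monomial in $q^{-s}$ whose value, after specialization dictated by the Ginzburg--Kaplan normalization, is $-\centralCharacter{\tau}(-1)^{c-1} q^{-c/2}$. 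I would match the local computation with the third theorem part (3) of the second theorem of the excerpt (the special value of $\Lift W$ at $\diag(t, I_{(c-1)c})$ with all $i_j = m$), which supplies the one residual term that survives in the exceptional case and whose geometric-series sum over $m$ produces exactly the claimed constant after the normalizing factor $q^{-(m+1)(c-1)\binom{c}{2}}$ and $\centralCharacter{\depthZeroRepresentation}(\uniformizer)^m$ are summed against $\abs{\uniformizer}^{ms'}$ for the appropriate shift $s'$. The main subtlety is bookkeeping the normalization: one must pin down precisely which scalar multiple $M^{\ast}_{\weylElement{1^c}}$ of the intertwining operator and which volume constants enter, so that the specialization point in the Ginzburg--Kaplan functional equation lines up with the self-dual point where the finite field constant is read off.

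Part (3) then follows formally by combining (1), (2) with the corresponding dichotomy for the tensor product $\varepsilon_0$-factor. Indeed, $\varepsilon_0(\pi \times \tau, \fieldCharacter)$ is known (by its definition in \cite{ye2021epsilon}, or by the theory of \cite{CarmonZelingher2024}) to equal the Jacquet--Piatetski-Shapiro--Shalika finite field gamma factor $\gamma(\pi \times \tau, \fieldCharacter)$ when $\pi \ncong \Contragradient{\tau}$, and to equal $-\centralCharacter{\tau}(-1)^{c-1} q^{-c/2}$ when $\pi \cong \Contragradient{\tau}$; and by the local-to-finite comparison of \cite{CarmonZelingher2024} together with part (1) here, $\GKGammaFactor{\pi}{\tau}{\fieldCharacter}$ agrees with $\gamma(\pi \times \tau, \fieldCharacter)$ in the non-exceptional case. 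So in the non-exceptional case (3) reduces to the known equality of $\varepsilon_0$ with the finite field Rankin--Selberg gamma factor, and in the exceptional case (3) is literally part (2) restated, since both sides equal $-\centralCharacter{\tau}(-1)^{c-1} q^{-c/2}$. The hardest part overall is part (2): proving the exceptional-case constant requires the precise local JPSS computation at the self-dual point for level zero supercuspidals and a careful matching of the Ginzburg--Kaplan normalization with the finite field setup — this is where all the explicit constants ($\centralCharacter{\depthZeroRepresentation}(\uniformizer)^m$, the powers of $q$, the sign) from the second theorem must be assembled correctly.
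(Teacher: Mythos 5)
Your overall strategy is the paper's: part (1) by unfolding the local zeta integrals with lifted data and collapsing them to the finite field operators, part (2) by transporting the known Jacquet--Piatetski-Shapiro--Shalika formula for the exceptional local gamma factor, and part (3) by combining these with the corresponding dichotomy for $\varepsilon_0$. But there is a genuine gap in part (1). You kill the tail of the local integral --- the contributions of $\diag\left(\uniformizer^j h, \IdentityMatrix{(k-1)c}\right)$ with $j \ge 1$ --- using only the support statements for $\Lift W$ on diagonal torus elements and ``the $c<k$ reduction.'' Those statements force all exponents to be $\ge 0$ in general, but they force them all to be $0$ only when $k > c$. For $c \ge k$ the vanishing of the $j \ge 1$ terms is not a property of $\Lift W$ alone and the hypothesis $\pi \ncong \Contragradient{\tau}$ must enter through a separate representation-theoretic argument: the paper (\Cref{lem:integral-vanishes-depth-zero-non-isomorphic}) shows that the functional $W \mapsto \int_{\GL_c\left(\ringOfIntegers\right)} \Lift W\left(\diag\left(th, \IdentityMatrix{(k-1)c}\right)\right) \standardForm{\Pi\left(h\right)\Lift v}{\Lift v^{\vee}} \mdifferential h$ is equivariant for $\UnipotentRadical_{(c,(k-1)c)}\left(\finiteField\right) \rtimes \GL_c\left(\finiteField\right)$ against $\pi$, so its nonvanishing would force $\Contragradient{\pi}$ into the cuspidal support of $\SpehRepresentation{\tau}{c}$, which is $\{\tau,\dots,\tau\}$. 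Without this input your collapse to the finite sum fails whenever $c \ge k$. Relatedly, the gamma factor is defined by the ratio of $\dualZetaOperator$ to $\zetaOperator$, so the same analysis must be carried out for the dual integral (which also requires reducing the $X$-integration over $\Mat{c}{(k-2)c}\left(\localField\right)$ to $\ringOfIntegers$ by root exchange); your plan does not address the dual side at all.

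In part (2) the strategy is right but the mechanism you describe is not. For $\Pi \cong \Contragradient{\depthZeroRepresentation}$ level zero, the local JPSS gamma factor is \emph{not} a monomial in $q^{-s}$: it is a monomial times the ratio $\left(1 - \alpha q^{-cs}\right)\slash\left(1 - \alpha^{-1}q^{-c(1-s)}\right)$ of nontrivial $L$-factors, where $\alpha = \centralCharacter{\Pi}\left(\uniformizer\right)\centralCharacter{\depthZeroRepresentation}\left(\uniformizer\right)$, and there is no ``self-dual point'' specialization. The constant $-\centralCharacter{\tau}\left(-1\right)^{c-1}q^{-\frac{c}{2}}$ is extracted by choosing $W = \besselSpehFunction{\tau}{c}$ (so that the lifted zeta integral is literally the constant $\sizeof{\GL_c\left(\finiteField\right)}^{-1}$, with no geometric series on that side) and then comparing the constant Taylor coefficient in $q^{-c(1-s)}$ of the two expressions for the gamma factor; the higher coefficients give the extra identity for $\secondDualSpecialFunctional$ rather than contributing to the constant. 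Part (3) as you outline it is fine once (1) and (2) are in place.
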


Let us mention that such relation between a finite field gamma factor and its local field counterpart for level zero representation has been established many times before for various other integrals \cite{Ye18, NienZhang18, YeZeligher18, Jo2024, Chai2024}. However, the case in this article is considerably more involved, since the Ginzburg--Kaplan zeta integrals do not use the representation $\depthZeroRepresentation$ directly, but instead use the $(k,c)$ $\fieldCharacter$-Whittaker model of the Speh representation $\SpehRepresentation{\depthZeroRepresentation}{c}$ associated to it. Hence it is crucial to first establish a relation between the representations $\SpehRepresentation{\tau}{c}$ and $\SpehRepresentation{\depthZeroRepresentation}{c}$ and between their corresponding $(k,c)$ $\fieldCharacter$-Whittaker models.

We also use our results to give a modified functional equation for the finite field Ginzburg--Kaplan zeta integrals in the exceptional case (\Cref{cor:modified-functional-equation}).
\begin{theorem}
	Suppose that $\pi = \Contragradient{\tau}$. Then for any $v \in \Contragradient{\tau}$, $v^{\vee} \in \tau$ and $W \in \Whittaker\left(\SpehRepresentation{\tau}{c}, \fieldCharacterkc{c}{c}\right)$ as above, we have
	\begin{align*}
		\standardForm{\dualZetaOperator\left(W, \pi \times \tau \right) v}{v^{\vee}} =& \GKPreGammaFactor{\pi}{\tau}{\fieldCharacter} \left(\standardForm{\zetaOperator\left(W, \pi \times \tau \right) v}{v^{\vee}}\right) \\
		&+ q^{-\frac{c^2}{2}} \cdot  \standardForm{v \otimes v^{\vee} \otimes f_{\SpehRepresentation{\tau}{c}}\restriction_{\diag\left(\IdentityMatrix{c}, \GL_{(k-1)c}\left(\finiteField\right)\right)}}{\secondSpecialFunctional},
	\end{align*}
	where $\secondSpecialFunctional \colon \Contragradient{\tau} \otimes \tau \otimes \tau \otimes \SpehRepresentation{\tau}{c-1} \to \cComplex$ is a certain functional defined in \Cref{subsubsec:computation-for-level-zero-representations}.
\end{theorem}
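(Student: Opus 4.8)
The plan is to read the finite field identity off the local Ginzburg--Kaplan functional equation, applied to lifts, by isolating the part that is independent of the complex parameter. Write $\Pi$ and $\depthZeroRepresentation$ for the level zero supercuspidal representations of $\GL_c\left(\localField\right)$ and $\GL_k\left(\localField\right)$ attached to $\pi$ and $\tau$; since $\pi \isomorphic \Contragradient{\tau}$ we have $k = c$. Recall from \cite{Kaplan2023}, \cite[Appendix A]{kaplan2018} the local functional equation
$$\standardForm{\dualZetaOperator\left(\Lift W, \Pi \times \depthZeroRepresentation, s\right) \Lift v}{\Lift v^{\vee}} = \LocalGKGammaFactor{s}{\Pi}{\depthZeroRepresentation}{\fieldCharacter} \cdot \standardForm{\zetaOperator\left(\Lift W, \Pi \times \depthZeroRepresentation, s\right) \Lift v}{\Lift v^{\vee}},$$
an identity of rational functions of $q^{-s}$, which we apply to $\Lift W$ and to the lifts $\Lift v$, $\Lift v^{\vee}$ of $v \in \Contragradient{\tau}$ and $v^{\vee} \in \tau$. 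Since $\pi \isomorphic \Contragradient{\tau}$, the representation $\Pi$ is isomorphic to $\Contragradient{\depthZeroRepresentation}$ up to an unramified twist, so $L\left(s, \Pi \times \depthZeroRepresentation\right)$ has a pole; combined with the known equality of the local Ginzburg--Kaplan and Jacquet--Piatetski-Shapiro--Shalika gamma factors that underlies \Cref{thm:pi-equals-tau-dual}, this shows that $\LocalGKGammaFactor{s}{\Pi}{\depthZeroRepresentation}{\fieldCharacter}$ is genuinely non-constant in $q^{-s}$, of the shape $\varepsilon_0\left(\pi \times \tau, \fieldCharacter\right) \cdot L\left(1-s, \Pi \times \depthZeroRepresentation\right)/L\left(s, \Pi \times \depthZeroRepresentation\right)$.

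Next I would unfold both local integrals on $\Lift W$. Using the Iwasawa decomposition $\GL_c\left(\localField\right) = T \cdot \GL_c\left(\ringOfIntegers\right)$ and the support results of \Cref{prop:support-of-diagonal-whittaker-elements}, the local zeta integral $\standardForm{\zetaOperator\left(\Lift W, \Pi \times \depthZeroRepresentation, s\right) \Lift v}{\Lift v^{\vee}}$ breaks into the contribution of $t \in \GL_c\left(\ringOfIntegers\right)$ together with the contribution of the torus elements $\diag\left(\uniformizer^m \IdentityMatrix{c}, \IdentityMatrix{(k-1)c}\right)$ with $m \geq 1$; here \Cref{prop:support-of-diagonal-whittaker-elements} is precisely what guarantees that no other torus elements occur, in particular none with $\min\left(i_1,\dots,i_c\right) = 0$ yet $t \notin \GL_c\left(\ringOfIntegers\right)$. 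By part (1) of the relation between the $(k,c)$ $\fieldCharacter$-Whittaker models the first contribution is, up to the normalizing power of $q$ absorbed in the definition of the finite field integral, $\standardForm{\zetaOperator\left(W, \pi \times \tau\right) v}{v^{\vee}}$; by \Cref{thm:special-value-of-diag-t-c(c-1)} (the case $k = c$) the second is a geometric series in $q^{-s}$ --- a scalar multiple of $L\left(s, \Pi \times \depthZeroRepresentation\right)$ --- whose coefficients are built from the powers $q^{-(m+1)(c-1)\binom{c}{2}}$, the central character values $\centralCharacter{\depthZeroRepresentation}\left(\uniformizer\right)^m$, and the functional $\firstSpecialFunctional$ paired with $f_{\SpehRepresentation{\tau}{c}}$ restricted to $\diag\left(\IdentityMatrix{c}, \GL_{(k-1)c}\left(\finiteField\right)\right)$. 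Running the same analysis for $\dualZetaOperator$, now using \Cref{thm:special-value-of-diag-c(c-1)-t} and the companion support statement for elements $\diag\left(\IdentityMatrix{(k-1)c}, t\right)$, expresses $\standardForm{\dualZetaOperator\left(\Lift W, \Pi \times \depthZeroRepresentation, s\right) \Lift v}{\Lift v^{\vee}}$ as $\standardForm{\dualZetaOperator\left(W, \pi \times \tau\right) v}{v^{\vee}}$ plus an analogous $L$-factor tail.

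Substituting these expansions into the local functional equation and comparing the terms that are independent of $q^{-s}$ produces the claimed identity. The mechanism is that in the product $\LocalGKGammaFactor{s}{\Pi}{\depthZeroRepresentation}{\fieldCharacter} \cdot \standardForm{\zetaOperator\left(\Lift W, \Pi \times \depthZeroRepresentation, s\right) \Lift v}{\Lift v^{\vee}}$ the factor $L\left(s, \Pi \times \depthZeroRepresentation\right)$ carried by the tail of $\zetaOperator$ cancels the $L\left(s, \Pi \times \depthZeroRepresentation\right)^{-1}$ inside the gamma factor, leaving a constant: its $q^{-s}$-free part contributes $\GKPreGammaFactor{\pi}{\tau}{\fieldCharacter} \standardForm{\zetaOperator\left(W, \pi \times \tau\right) v}{v^{\vee}}$, where $\GKPreGammaFactor{\pi}{\tau}{\fieldCharacter}$ is the regular part of the local gamma factor (this is where the value $\GKGammaFactor{\pi}{\tau}{\fieldCharacter} = \varepsilon_0\left(\pi \times \tau, \fieldCharacter\right) = -\centralCharacter{\tau}\left(-1\right)^{c-1} q^{-c/2}$ of \Cref{thm:pi-equals-tau-dual} enters), while the product of the residue of $L\left(s, \Pi \times \depthZeroRepresentation\right)$, the $\varepsilon$-factor, and the leading coefficients of the tails gives the single remaining $q^{-s}$-free term; there the $m$-sums collapse and one reads off the constant $q^{-c^2/2}$ (note $\tfrac{c^2}{2} = \binom{c}{2} + \tfrac{c}{2}$) together with the functional $\secondSpecialFunctional \colon \Contragradient{\tau} \otimes \tau \otimes \tau \otimes \SpehRepresentation{\tau}{c-1} \to \cComplex$ built by combining $\firstSpecialFunctional$, its $\dualZetaOperator$-analogue, the residue, and the $\varepsilon$-factor.

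The main obstacle is precisely this last bookkeeping step: one must reconcile the $\abs{\det}^s$-normalization of the Ginzburg--Kaplan integral, the exponents $q^{-(m+1)(c-1)\binom{c}{2}}$ coming from \Cref{thm:special-value-of-diag-t-c(c-1)} and its dual, the residue and geometric-series manipulations, and the $\varepsilon$-factor, so that after everything collapses the answer is exactly the clean constant $q^{-c^2/2}$ and one well-defined functional $\secondSpecialFunctional$. A secondary point requiring care is checking, from \Cref{prop:support-of-diagonal-whittaker-elements}, that the only torus elements contributing to either local integral are those in $\GL_c\left(\ringOfIntegers\right)$ and the central ones $\uniformizer^m \IdentityMatrix{c}$, so that the tails are genuinely the simple geometric series described above.
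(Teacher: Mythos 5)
Your overall route coincides with the paper's: apply the local Ginzburg--Kaplan functional equation to lifted data, expand both local integrals as the finite-field operator plus an explicit tail (this is exactly the content of \Cref{prop:equality-of-gk-zeta-integrals} and \Cref{prop:dual-zeta-gk-evaluation-for-depth-zero}), insert the explicit gamma factor obtained from the Jacquet--Piatetski-Shapiro--Shalika comparison as in \Cref{thm:pi-equals-tau-dual}, and compare coefficients. However, two steps are defective as written. First, the unfolding: $\GL_c\left(\localField\right)$ is \emph{not} equal to $T \cdot \GL_c\left(\ringOfIntegers\right)$, and the support statements for $\Lift W$ alone do not reduce the integral over $\GL_c\left(\localField\right)$ to the cosets $\uniformizer^m \GL_c\left(\ringOfIntegers\right)$; the reduction rests on the support of the lifted matrix coefficient, namely that $\standardForm{\Pi\left(h\right)\Lift v}{\Lift v^{\vee}}$ vanishes off $\multiplicativegroup{\localField}\cdot\GL_c\left(\ringOfIntegers\right)$ (\Cref{lem:support-of-matrix-coefficient-of-depth-zero}), which you never invoke. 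Relatedly, the exclusion of mixed exponents $\left(i_1,\dots,i_c\right)$ is the content of \Cref{thm:special-value-of-diag-t-c(c-1)}, not of \Cref{prop:support-of-diagonal-whittaker-elements}; and collapsing the resulting $\GL_c\left(\ringOfIntegers\right)$-integral of (special value of $\Lift W$) times (matrix coefficient) into the single functional $\secondSpecialFunctional$ requires the Schur-orthogonality/swap identity (\Cref{cor:swap-map-for-cuspidal-representations}). In particular $\secondSpecialFunctional$ arises purely from the $\zetaOperator$-side tail through $\firstSpecialFunctional$ and this Schur step; it is not ``built by combining $\firstSpecialFunctional$, its dual analogue, the residue, and the $\varepsilon$-factor'' as you assert.

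Second, ``comparing the terms that are independent of $q^{-s}$'' is not well defined for these rational functions, and the naive choice fails: expanding in $T = \centralCharacter{\Pi}\left(\uniformizer\right)\centralCharacter{\depthZeroRepresentation}\left(\uniformizer\right)q^{-cs}$, both tails have nonzero constant terms (e.g.\ $\frac{U}{1-U} = \frac{-1}{1-q^cT}$ contributes $-1$), and the constant-term comparison yields the \emph{dual} relation, expressing $\standardForm{\zetaOperator\left(W, \pi \times \tau \right) v}{v^{\vee}}$ in terms of $\dualZetaOperator$ and $\secondDualSpecialFunctional$, not the stated identity. The stated identity is the coefficient of $T$ after clearing the denominator (equivalently the constant term of the expansion in $U = \centralCharacter{\Pi}^{-1}\left(\uniformizer\right)\centralCharacter{\depthZeroRepresentation}^{-1}\left(\uniformizer\right)q^{-c\left(1-s\right)}$): there the dual tail vanishes to first order, the gamma factor has constant term $\GKPreGammaFactor{\pi}{\tau}{\fieldCharacter} = -q^{-\frac{c}{2}}$, and the $\zetaOperator$-tail contributes $-q^{-\binom{c}{2}}\standardForm{v\otimes v^{\vee}\otimes f\left(\IdentityMatrix{c^2}\right)}{\secondSpecialFunctional}$ at order zero, which produces exactly the $q^{-\frac{c^2}{2}}$ term. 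With the support lemma, the Schur step, and this corrected coefficient extraction, your argument becomes the paper's proof.
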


We hope to utilize these computations in the future in order to compute the generalized doubling method integrals \cite{CaiFriedbergGinzburgKaplan2019, CaiFriedbergKaplan2022, GourevitchKaplan2023, kaplan2018} for level zero representations of $G \times \GL_k$, where $G$ is a classical group. We mention that this was done for several cases of the classical doubling method \cite{piatetski1986varepsilon, piatetski1987functions, lapid2011local} in \cite{Kim2000}.

\subsection*{Acknowledgments}
I would like to thank Nir Elber, Hahn Lheem and Jialiang Zou for many helpful discussions about lifting intertwining operators, at an REU held by Jialiang and me in Summer 2023 at the University of Michigan (supported by NSF Grant DMS-1840234). I would also like to thank Oded Carmon for many discussions about Speh representations and $(k,c)$ $\fieldCharacter$-Whittaker models.

This work is dedicated to my beloved grandfather who passed away while this manuscript was under preparation.

\tableofcontents

\subsection{Notation}
Suppose that $R$ is a ring. Given a composition $(n_1,\dots,n_r)$ of a positive integer $n$, let $$\UnipotentRadical_{(n_1,\dots,n_r)}\left(R\right) = \left\{ \begin{pmatrix}
	\IdentityMatrix{n_1} & \ast & \ast & \ast \\
	& \IdentityMatrix{n_2} & \ast & \ast \\
	& & \ddots & \ast \\
	& & & \IdentityMatrix{n_r}
\end{pmatrix} \right\}$$ and let $$P_{(n_1,\dots,n_r)}\left(R\right) = \left\{\diag\left(h_1,\dots,h_r\right) \mid h_j \in \GL_{n_j}\left(R\right)\right\} \ltimes \UnipotentRadical_{(n_1,\dots,n_r)}\left(R\right).$$
The group $\ParabolicSubgroup_{(n_1,\dots,n_r)}\left(R\right)$ is \emph{the standard parabolic subgroup of $\GL_n\left(R\right)$ corresponding to the composition $(n_1,\dots,n_r)$}, and $\UnipotentRadical_{(n_1,\dots,n_r)}\left(R\right)$ is the \emph{unipotent radical of $\ParabolicSubgroup_{(n_1,\dots,n_r)}\left(R\right)$}. We will mostly take $R$ to be a field, and we will often omit this field from the notation when there is no fear of confusion.

If $R = \finiteField$ is a finite field, given representations $\tau_1$, $\dots$, $\tau_r$ of $\GL_{n_1}\left(\finiteField\right)$, $\dots$, $\GL_{n_r}\left(\finiteField\right)$, respectively, we denote their parabolic induction by $\tau_1 \circ \dots \circ \tau_r$. This is the representation of $\GL_n\left(\finiteField\right)$ acting by right translations on the space of all functions $f \colon \GL_n\left(\finiteField\right) \to \tau_1 \otimes \dots \otimes \tau_r$, satisfying the condition that for any $g \in \GL_n\left(\finiteField\right)$, and any $p \in \ParabolicSubgroup_{(n_1,\dots,n_r)}\left(\finiteField\right)$ of the form $p = \diag\left(h_1,\dots,h_r\right) u$, where $u \in \UnipotentRadical_{(n_1,\dots,n_r)}\left(\finiteField\right)$ and $h_1 \in \GL_{n_1}\left(\finiteField\right)$, $\dots$, $h_r \in \GL_{n_r}\left(\finiteField\right)$, $$f\left(p g\right) = \tau_1\left(h_1\right) \otimes \dots \otimes \tau_r\left(h_r\right) f\left(g\right).$$
By \cite[Theorem 2.4]{Gelfand70}, for every irreducible representation $\sigma$ of $\GL_n\left(\finiteField\right)$, there exist a composition $(n_1,\dots,n_r)$ of $n$ and irreducible cuspidal representations $\tau_1$, $\dots$, $\tau_r$ of $\GL_{n_1}\left(\finiteField\right)$, $\dots$, $\GL_{n_r}\left(\finiteField\right)$, respectively, such that $\sigma$ is a subrepresentation of the parabolically induced representation $\tau_1 \circ \dots \circ \tau_r$. The equivalence classes of the representations $\tau_1$, $\dots$, $\tau_r$ are uniquely determined, up to permutation. We define the \emph{cuspidal support of $\sigma$} to be the multiset consisting of these equivalence classes.

If $R = \localField$ is a non-archimedean local field with absolute value $\abs{\cdot}$, given smooth representations $\depthZeroRepresentation_1$, $\dots$, $\depthZeroRepresentation_r$ of $\GL_{n_1}\left(\localField\right)$, $\dots$, $\GL_{n_r}\left(\localField\right)$, respectively, we denote their (normalized) parabolic induction by $\depthZeroRepresentation_1 \times \dots \times \depthZeroRepresentation_r$. This is the representation of $\GL_n\left(\localField\right)$ acting by right translations on the space of all right-smooth functions $\Phi \colon \GL_n\left(\localField\right) \to \depthZeroRepresentation_1 \otimes \dots \otimes \depthZeroRepresentation_r$, satisfying the condition that for any $g \in \GL_n\left(\localField\right)$, and any $p \in \ParabolicSubgroup_{(n_1,\dots,n_r)}\left(\localField\right)$ of the form $p = \diag\left(h_1,\dots,h_r\right) u$, where $u \in \UnipotentRadical_{(n_1,\dots,n_r)}\left(\localField\right)$, and $h_1 \in \GL_{n_1}\left(\localField\right)$, $\dots$, $h_r \in \GL_{n_r}\left(\localField\right)$, $$\Phi\left(pg\right) = \delta^{\frac{1}{2}}_{\ParabolicSubgroup_{(n_1,\dots,n_r)}}\left(\diag\left(h_1,\dots,h_r\right)\right) \depthZeroRepresentation_1\left(h_1\right) \otimes \dots \otimes \depthZeroRepresentation_r\left(h_r\right) \Phi\left(g\right),$$
where $\delta_{\ParabolicSubgroup_{(n_1,\dots,n_r)}}$ is the modulus character.

For the reader's convenience, we include here the following formula for a modulus character commonly used in the paper. For $h_1 \in \GL_{n_1}\left(\localField\right)$ and $h_2 \in \GL_{n_2}\left(\localField\right)$,
$$\delta_{\ParabolicSubgroup_{(n_1,n_2)}}\left(\diag\left(h_1, h_2\right)\right) = \abs{\det h_1}^{n_2} \cdot \abs{\det h_2}^{-n_1}.$$

\section{Speh representations}\label{sec:speh-representations}

This section is devoted to explaining the notion of Speh representations of general linear groups in the context of finite fields and of non-archimedean local fields, and to establishing a relation between these two notions.

\subsection{Speh representation for cuspidal representations over finite fields}
Let $\finiteField$ be a field with $q$ elements. We recall the definition of an important class of irreducible representations of $\GL_{kc}\left(\finiteField\right)$ called Speh representations. These are also known in the literature as ``generalized trivial representations''. Our discussion is based on \cite[Section 4]{SilbergerZink00}.

Let $\tau$ be an irreducible cuspidal representation of $\GL_k\left(\finiteField\right)$. Denote $\SpehRepresentation{\tau}{1} = \tau$. When $c > 1$, the parabolic induction $\tau^{\circ c} = \tau \circ \dots \circ \tau$, a representation of $\GL_{kc}\left(\finiteField\right)$, is not irreducible. The equivalence classes of irreducible subrepresentations of $\tau^{\circ c}$ are in bijection with the (equivalence classes of) irreducible representations of the symmetric group on $c$ elements $\SymmetricGroup_c$, which in turn are in bijection with partitions of $c$. There are two irreducible subrepresentations of $\tau^{\circ c}$ that appear with multiplicity one. These are the Steinberg representation $\Steinberg\left(\tau, c\right)$, which corresponds to the partition $\left(c\right)$ and the Speh representation $\SpehRepresentation{\tau}{c}$, which corresponds to the partition $\left(1^c\right)$. The Steinberg representation $\Steinberg\left(\tau, c\right)$ is the irreducible subrepresentation of $\tau^{\circ c}$ with maximal dimension, while the Speh representation $\SpehRepresentation{\tau}{c}$ is the irreducible subrepresentation of $\tau^{\circ c}$ with minimal dimension (to show this, use the fact that the dimension of the irreducible subrepresentation of $\tau^{\circ}$ corresponding to the partition $\lambda \vdash c$ is a scalar multiple of the unipotent representation of $\GL_{c}\left(\finiteField_k\right)$ corresponding to $\lambda$ (see \cite[Corollary 5.3 (ii)]{Howe1985} or \cite[Lemma 7.4]{Green55}), and then use Corollary 3.4 or Proposition 3.5 of \cite{larsen2013largest}).

It turns out that if $c = c_1 + c_2$, then the Speh representation $\SpehRepresentation{\tau}{c}$ is a subrepresentation of the parabolic induction $\SpehRepresentation{\tau}{c_1} \circ \SpehRepresentation{\tau}{c_2}$.

We remark that in \cite{Carmon2023}, the Speh representation $\SpehRepresentation{\tau}{c}$ was defined for any irreducible generic representation $\tau$. See also \cite[Section 2.4]{CarmonZelingher2024}. However, for our purposes we only need to consider cuspidal representations.

\subsubsection{Projection operator}\label{subsec:speh-projection-operator}
By \cite[Proposition 4.2]{SilbergerZink00}, we have the following projection operator $\ProjectionOperator_{\SpehRepresentation{\tau}{c}} \colon \tau^{\circ c} \to \SpehRepresentation{\tau}{c}$:
$$ \ProjectionOperator_{\SpehRepresentation{\tau}{c}} = \frac{1}{\PoincarePolynomial{k}{c}\left(q^k\right)} \sum_{w \in \SymmetricGroup_c} h^0_w,$$
where $$\PoincarePolynomial{k}{c}\left(X\right) = \left(1+X\right)\left(1 + X + X^2\right) \dots \left(1+ X + \dots + X^{c-1}\right)$$ is the Poincare polynomial and $h^0_w$ is given by the Howe isomorphism. 
We explain the last point now. 
For any permutation $w \in \SymmetricGroup_c$, we may realize it as a $kc \times kc$ block matrix, with blocks of size $k$, in the following way. 
First represent $w$ as $c \times c$ matrix, and then replace each zero entry in the matrix with the matrix $0_k$, and each 1 entry in the matrix with $\IdentityMatrix{k}$. 
Now let $h_w \colon \tau^{\circ c} \to \tau^{\circ c}$ be the operator corresponding to the characteristic function of the double coset $\ParabolicForSpeh{k}{c} w \ParabolicForSpeh{k}{c}$ by Mackey theory, that is, for $f \in \tau^{\circ c}$ and $x \in \GL_{kc}\left(\finiteField\right)$,
$$ \left(h_w f\right)\left(x\right) = \sum_{g \in (\ParabolicForSpeh{k}{c} w \ParabolicForSpeh{k}{c}) \slash \ParabolicForSpeh{k}{c}} f^w \left(g^{-1} x\right) = \sum_{u \in \UnipotentForSpeh{k}{c} \slash  (\UnipotentForSpeh{k}{c} \cap w \UnipotentForSpeh{k}{c} w^{-1}) } f^w \left(w^{-1} u^{-1} x\right),$$
where for $w \in \SymmetricGroup_c$ and $g \in \GL_{kc}\left(\finiteField\right)$, we denote $f^{w}\left(g\right) = w f\left(g\right)$, where $\SymmetricGroup_c$ acts on $\tau^{\otimes c}$ by permuting the components of pure tensors. 
We then normalize $h_w$ as Howe \cite[Page 12]{Howe1985}, by setting $$h_w^0 = q^{-\frac{k\left(k-1\right)}{2} \ell\left(w\right)} \centralCharacter{\tau}\left(-1\right)^{\ell \left(w\right)} h_w,$$ where $\centralCharacter{\tau}$ is the central character of $\tau$ and $\ell \left(w\right)$ is the length of the permutation $w$.

For every $1 \le i \le c - 1$, let $s_i = \left(i, i+1\right) \in \SymmetricGroup_c$ be a simple transposition. Then we have the following Hecke algebra relations \cite[Theorem 5.1]{Howe1985}:
\begin{enumerate}
	\item For every $1 \le i, j \le c$ with $\abs{i - j} > 1$, we have $h^0_{s_i} \circ h^0_{s_j} = h^0_{s_j} \circ h^0_{s_i}$.
	\item For every $0 \le i \le c - 2$, we have $h^0_{s_{i}} \circ h^0_{s_{i+1}} \circ h^0_{s_{i}} = h^0_{s_{i+1}} \circ h^0_{s_{i}} \circ  h^0_{s_{i+1}}$.
	\item $h^0_{s_i} \circ h^0_{s_i} = q^k \idmap_{\tau^{\circ c}} + (q^k - 1)h^0_{s_i}$.
\end{enumerate}

By \cite[Lemma 3.1]{guizzi2010cyclic}, we may decompose $\ProjectionOperator_{\SpehRepresentation{\tau}{c}}$ in the following way:
\begin{proposition}\label{prop:projection-operator-as-product-of-simple-reflections}
	$$\ProjectionOperator_{\SpehRepresentation{\tau}{c}} = \frac{1}{\PoincarePolynomial{k}{c}\left(q^k\right)} \prod_{\substack{\left(i,j\right)\\
	1 \le i < j \le c}} \left(h^0_{s_{j-i}} + \frac{q^k - 1}{q^{\left(j-i\right)k}-1} \cdot \idmap_{\tau^{\circ c}} \right),$$
where the product is ordered by the lexicographic ordering, taken either right to left or left to right.
\end{proposition}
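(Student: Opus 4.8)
Since the identity $\ProjectionOperator_{\SpehRepresentation{\tau}{c}}=\tfrac{1}{\PoincarePolynomial{k}{c}(q^k)}\sum_{w\in\SymmetricGroup_c}h^0_w$ is already in hand, the proposition is equivalent to the operator identity
$$\sum_{w\in\SymmetricGroup_c}h^0_w=\prod_{\substack{(i,j)\\ 1\le i<j\le c}}\left(h^0_{s_{j-i}}+\frac{q^k-1}{q^{(j-i)k}-1}\,\idmap_{\tau^{\circ c}}\right),$$
with the product in lexicographic order (either direction). The plan is to deduce this from a purely algebraic identity in the Iwahori--Hecke algebra, by recognising $\sum_{w\in\SymmetricGroup_c}h^0_w$ as the image of the ``$q^k$-symmetriser''.

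First I would record that, by relations (1)--(3) above, the assignment $T_{s_i}\mapsto h^0_{s_i}$ extends to an algebra homomorphism $\phi\colon\mathcal{H}_c(q^k)\to\EndomorphismRing(\tau^{\circ c})$, where $\mathcal{H}_c(q^k)$ is the Iwahori--Hecke algebra of $\SymmetricGroup_c$ with parameter $q^k$: relations (1)--(2) are the braid relations and (3) is the quadratic relation $(T_{s_i}-q^k)(T_{s_i}+1)=0$. Moreover $\phi(T_w)=h^0_w$ for every $w\in\SymmetricGroup_c$, since the operator $h^0_w$ attached to the double coset $\ParabolicForSpeh{k}{c}w\ParabolicForSpeh{k}{c}$ satisfies $h^0_w=h^0_{s_{i_1}}\circ\dots\circ h^0_{s_{i_m}}$ for every reduced word $w=s_{i_1}\cdots s_{i_m}$ (the Bruhat/Mackey multiplication rule underlying \cite[Theorem 5.1]{Howe1985} for the parabolic $\ParabolicForSpeh{k}{c}$), the right-hand side being independent of the reduced word exactly because of the braid relations. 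Hence $\sum_{w\in\SymmetricGroup_c}h^0_w=\phi\bigl(\sum_{w\in\SymmetricGroup_c}T_w\bigr)$, and it suffices to prove, in $\mathcal{H}_c(Q)$ with $Q=q^k$, that
$$\sum_{w\in\SymmetricGroup_c}T_w=\prod_{\substack{(i,j)\\ 1\le i<j\le c}}\left(T_{s_{j-i}}+\frac{Q-1}{Q^{j-i}-1}\right),$$
which is \cite[Lemma 3.1]{guizzi2010cyclic}; applying $\phi$ then concludes.

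If a self-contained argument for this last identity is wanted, I would induct on $c$. Call the right-hand side $A_c$. Splitting the lexicographically ordered pairs into those with $i=1$ --- which contribute the left factor $\prod_{m=1}^{c-1}\bigl(T_{s_m}+\tfrac{Q-1}{Q^m-1}\bigr)$ in increasing $m$ --- and those with $i\ge 2$ --- which, after $(i,j)\mapsto(i-1,j-1)$, reproduce $A_{c-1}$ in the generators $T_{s_1},\dots,T_{s_{c-2}}$ --- gives the recursion $A_c=\bigl(\prod_{m=1}^{c-1}(T_{s_m}+\tfrac{Q-1}{Q^m-1})\bigr)\,A_{c-1}$, with $A_1=1$. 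Since the space $\{\,y\in\mathcal{H}_c(Q):T_{s_i}y=Qy\text{ for }1\le i\le c-1\,\}$ is one-dimensional and spanned by $\sum_{w}T_w$, it is enough to check that (a) $T_{s_i}A_c=QA_c$ for all $i$ --- using the inductive hypothesis $T_{s_i}A_{c-1}=QA_{c-1}$ for $i\le c-2$ and pushing $T_{s_i}$ through the prefactor via the braid and quadratic relations --- and (b) the resulting scalar is $1$; for (b) one applies the index character $\epsilon\colon T_{s_i}\mapsto Q$ and computes $\epsilon(A_c)=\prod_{d=1}^{c-1}\bigl(\tfrac{Q^{d+1}-1}{Q^d-1}\bigr)^{c-d}=\prod_{m=1}^{c}\tfrac{Q^m-1}{Q-1}=\epsilon\bigl(\sum_w T_w\bigr)$, a telescoping identity. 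The product in reverse (right-to-left) order is the image of $A_c$ under the anti-involution $T_w\mapsto T_{w^{-1}}$, which fixes every $T_{s_i}$ and fixes $\sum_w T_w$, so both orderings give the same element.

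The step I expect to be the main obstacle is part (a) of the inductive check: moving $T_{s_i}$ past the prefactor $\prod_{m=1}^{c-1}(T_{s_m}+\tfrac{Q-1}{Q^m-1})$ using only the Hecke relations, which requires careful bookkeeping with the coefficients $\tfrac{Q-1}{Q^m-1}$ as the quadratic relation generates lower-length terms. If one instead simply invokes \cite[Lemma 3.1]{guizzi2010cyclic} for the algebraic identity, the only thing left is the routine verification that $\phi$ is well defined and that $\phi(T_w)=h^0_w$.
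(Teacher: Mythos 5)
Your main route is the same as the paper's: the paper likewise combines the Silberger--Zink formula $\ProjectionOperator_{\SpehRepresentation{\tau}{c}}=\PoincarePolynomial{k}{c}\left(q^k\right)^{-1}\sum_{w\in\SymmetricGroup_c}h^0_w$ with the Iwahori--Hecke algebra identity of \cite[Lemma 3.1]{guizzi2010cyclic}, applied through the homomorphism $T_{s_i}\mapsto h^0_{s_i}$ furnished by Howe's relations (so that $h^0_w$ is the image of $T_w$), and your proposal is correct as it stands. The optional self-contained induction you sketch is sound in outline (the one-dimensionality of the $Q$-eigenspace and the normalization via the index character are both correct), but its step (a) is left incomplete; since you explicitly fall back on citing the lemma, this does not affect the proof.
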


\begin{remark}\label{rem:steinberg-representation-projection-operator}
	By \cite[Proposition 4.2]{SilbergerZink00}, the projection operator $\ProjectionOperator_{\SteinbergRepresentation{\tau}{c}} \colon \tau^{\circ c} \to \SteinbergRepresentation{\tau}{c}$ is given by $$\ProjectionOperator_{\SteinbergRepresentation{\tau}{c}} = \frac{1}{\PoincarePolynomial{k}{c}\left(q^{-k}\right)} \sum_{w \in \SymmetricGroup_c} \left(-q^{-k}\right)^{\ell\left(w\right)} h^0_w.$$
	Consider the automorphism of $\EndomorphismRing\left(\tau^{\circ c}\right)$ that sends $h^0_{w}$ to $\left(-q^k\right)^{\ell\left(w\right)} \left(h^0_{w^{-1}}\right)^{-1}$ for every $w \in \SymmetricGroup_c$. By \cite[Lemma 2.5]{Murphy1995}, this automorphism sends $\PoincarePolynomial{k}{c}\left(q^k\right) \ProjectionOperator_{\SpehRepresentation{\tau}{c}}$ to $q^{k \binom{c}{2}} \PoincarePolynomial{k}{c}\left(q^{-k}\right) \ProjectionOperator_{\SteinbergRepresentation{\tau}{c}}$, and therefore sends $\ProjectionOperator_{\SpehRepresentation{\tau}{c}}$ to $\ProjectionOperator_{\SteinbergRepresentation{\tau}{c}}$. Applying this automorphism to both sides of \Cref{prop:projection-operator-as-product-of-simple-reflections} and using the relation  $$-q^k \left(h^0_{s_i}\right)^{-1} =  -h^0_{s_i} + \left(q^k - 1\right) \cdot \idmap_{\tau^{\circ c}},$$ we get
	$$ \ProjectionOperator_{\SteinbergRepresentation{\tau}{c}} = \frac{\left(-1\right)^{\binom{c}{2}}}{\PoincarePolynomial{k}{c}\left(q^{k}\right)} \prod_{\substack{\left(i,j\right)\\ 1 \le i < j \le c
	}} \left(h_{s_{j-i}}^0 + \frac{q^k - 1}{q^{\left(i-j\right)k} - 1}\cdot \idmap_{\tau^{\circ c}}\right).$$	
\end{remark}

\subsection{Relation to Speh representations of level zero representations}\label{subsec:speh-representations-of-depth-zero-representations}

Let $\localField$ be a non-archimedean local field with ring of integers $\ringOfIntegers$, maximal ideal $\maximalIdeal$ and residue field $\finiteField$. Let $\quotientMap \colon \ringOfIntegers  \to \finiteField$ be the quotient map. Let $\uniformizer \in \maximalIdeal$ be a uniformizer.

Let $\depthZeroRepresentation$ be an irreducible supercuspidal representation of $\GL_{k}\left(\localField\right)$. Given an integer $c \ge 1$, one may construct an irreducible representation of $\GL_{kc}\left(\localField\right)$, called the Speh representation and denoted $\SpehRepresentation{\depthZeroRepresentation}{c}$. In this section, we explain the relation between Speh representations of $\GL_{kc}\left(\localField\right)$ and of $\GL_{kc}\left(\finiteField\right)$ using level zero supercuspidal representations.

\subsubsection{Speh representations over local fields}
We first recall the definition of the Speh representation $\SpehRepresentation{\depthZeroRepresentation}{c}$. These were introduced by Jacquet in \cite[Section 2.2]{Jacquet1984}. We follow \cite[Section 2.2]{CaiFriedbergGourevitchKaplan2023}. Given $z_1, \dots, z_c \in \cComplex$, we consider the (normalized) parabolically induced representation
\begin{equation}\label{eq:parabolic-induction-space-for-Speh}
	\holomorphicRepresentation = \Ind{\ParabolicForSpeh{k}{c}}{\GL_{kc}\left(\localField\right)}{\abs{\det}^{z_1}\depthZeroRepresentation \otimes \dots \otimes \abs{\det}^{z_c} \depthZeroRepresentation}.
\end{equation}

A \emph{section} $\parabolicSection$ for the family $\holomorphicRepresentation$ is an assignment $\Omega \times \GL_{kc}\left(\localField\right) \to \depthZeroRepresentation^{\otimes c},$ denoted by $\left(z_1,\dots, z_c, g\right) \mapsto \parabolicSection\left(g\right)$, such that for any $\left(z_1,\dots,z_c\right)$ in an open set $\Omega$ the assignment $g \mapsto \parabolicSection\left(g\right)$ is an element of $\holomorphicRepresentation$. A section $\parabolicSection$ is called \emph{a flat section} if $\Omega = \cComplex^c$ and if for any $k_0 \in \GL_{kc}\left(\ringOfIntegers\right)$, the assignment $\left(z_1,\dots,z_c\right) \mapsto \parabolicSection\left(k_0\right)$ does not depend on $z_1, \dots, z_c$. A section $\parabolicSection$ is called a \emph{holomorphic section} if it can be written as a (finite) $\cComplex\left[q^{\pm z_1}, \dots, q^{\pm z_c}\right]$-linear combination of flat sections. A section $\parabolicSection$ is called \emph{a meromorphic section} if it can be written as a (finite) $\cComplex \left(q^{- z_1}, \dots, q^{- z_c}\right)$-linear combination of flat sections.

Given a flat section $\parabolicSection$ and $g \in \GL_{kc}\left(\localField\right)$, we consider the integral \begin{equation}\label{eq:intertwining-operator-local-field-definition}
\intertwiningOperator_{\weylElement{1^c}} \parabolicSection\left(g\right) = \weylElement{1^c} \int_{\UnipotentForSpeh{k}{c}} \parabolicSection \left( \weylElement{k^c} u g \right) \differential u,
\end{equation}
where $\weylElement{1^c}$ is the longest Weyl element in $\SymmetricGroup_c$ and $\weylElement{k^c} \in \GL_{kc}\left(\localField\right)$ is the matrix representing $\weylElement{1^c}$, where we replaced every zero entry by $0_k$ and every $1$ entry by $\IdentityMatrix{k}$. Here, $\weylElement{1^c}$ acts on $\depthZeroRepresentation^{\otimes c}$ by permuting components of pure tensors.

It is well-known that the integral \eqref{eq:intertwining-operator-local-field-definition} converges absolutely for $\left(z_1,\dots,z_c\right)$ in a cone and has a meromorphic continuation to $\cComplex^c$ \cite[Theorem IV.1.1]{waldspurger2003formule}, by which we mean that it results in a (finite) $\cComplex\left(q^{-z_1},\dots,q^{-z_c}\right)$-linear combination of elements in $\depthZeroRepresentation^{\otimes c}$. We keep denoting the meromorphic continuation of \eqref{eq:intertwining-operator-local-field-definition} by the same symbol. Then in its domain of definition, $\intertwiningOperator_{\weylElement{1^c}}$ defines an intertwining operator $$\holomorphicRepresentation \to \depthZeroRepresentation^{\left(z_c,\dots,z_1\right)}.$$
It is also well-known that $\intertwiningOperator_{\weylElement{1^c}}$ is holomorphic at the point $$\left(z_1,\dots,z_c\right) = \left(\frac{c-1}{2}, \frac{c-3}{2}, \dots, - \frac{c-1}{2}\right).$$ The Speh representation $\SpehRepresentation{\depthZeroRepresentation}{c}$ is defined as the image of $M^{\left(\frac{c-1}{2},\frac{c-3}{2},\dots,-\frac{c-1}{2}\right)}_{\weylElement{1^c}}$.

It turns out that if $c = c_1 + c_2$, then $\SpehRepresentation{\depthZeroRepresentation}{c}$ is a subrepresentation of the parabolically induced representation $\abs{\det}^{-\frac{c_2}{2}} \SpehRepresentation{\depthZeroRepresentation}{c_1} \times \abs{\det}^{\frac{c_1}{2}} \SpehRepresentation{\depthZeroRepresentation}{c_2}$.

\begin{remark}\label{rem:steinberg-representation-image-of-intertwining}
	Similarly, it is also well-known that $\intertwiningOperator_{\weylElement{1^c}}$ is holomorphic at the point $$\left(z_1,\dots,z_c\right) = \left(-\frac{c-1}{2}, -\frac{c-3}{2}, \dots,  \frac{c-1}{2}\right).$$
	The Steinberg representation $\SteinbergRepresentation{\depthZeroRepresentation}{c}$ is defined as the image of $M_{\weylElement{1^c}}^{\left(-\frac{c-1}{2},-\frac{c-3}{2},\dots, \frac{c-1}{2}\right)}$.
\end{remark}
We remark that the definition of the Speh representation $\SpehRepresentation{\depthZeroRepresentation}{c}$ given above makes sense for any essentially unitary irreducible representation $\depthZeroRepresentation$. We also remark that in \cite{CaiFriedbergGourevitchKaplan2023} and \cite{LapidMao2020} a generalized Speh representation $\SpehRepresentation{\depthZeroRepresentation}{c}$ was defined for any irreducible generic representation $\depthZeroRepresentation$. However, for our purposes we only need to consider supercuspidal representations.

\subsubsection{Level zero representations}

Let $\tau$ be an irreducible cuspidal representation of $\GL_k\left(\finiteField\right)$. Let $\chi \colon \multiplicativegroup{\localField} \to \multiplicativegroup{\cComplex}$ be a character, such that $\chi \restriction_{\multiplicativegroup{\ringOfIntegers}} = \centralCharacter{\tau} \circ \quotientMap \restriction_{\multiplicativegroup{\ringOfIntegers}}$. Consider the following representation of $\multiplicativegroup{\localField} \cdot \GL_k\left(\ringOfIntegers\right)$:
$$ \left(\chi \otimes \tau \right)\left(z k_0\right) = \chi\left(z\right) \tau\left(\quotientMap\left(k_0\right)\right),$$
for any $z \in \multiplicativegroup{\localField}$ and $k_0 \in \GL_k\left(\ringOfIntegers\right)$. Then the compactly induced representation $$\depthZeroRepresentation = \ind{\multiplicativegroup{\localField} \cdot \GL_k\left(\ringOfIntegers\right)}{\GL_k\left(\localField\right)}{\chi \otimes \tau}$$ is an irreducible supercuspidal representation of $\GL_k\left(\localField\right)$ \cite[Theorem 6.2]{DipendraRaghuram08}. We say that $\depthZeroRepresentation$ is an \emph{irreducible level zero supercuspidal representation of $\GL_k\left(\localField\right)$ constructed from $\tau$ with central character $\chi$}. Henceforth we will always write $\centralCharacter{\depthZeroRepresentation}$ instead of $\chi$.

Given an element $v_{\tau} \in \tau$, we have an element $\Lift{v_{\tau}} \in \depthZeroRepresentation$ given by the function supported on $\multiplicativegroup{\localField} \cdot \GL_k\left(\ringOfIntegers\right)$, satisfying $$\left(\Lift{v_{\tau}}\right)\left(zk_0\right) = \centralCharacter{\depthZeroRepresentation}\left(z\right) \tau\left(\nu\left(k_0\right)\right) v_{\tau},$$
for any $z \in \multiplicativegroup{\localField}$ and $k_0 \in \GL_k\left(\ringOfIntegers\right)$.

Given vectors $v_{\tau,1}, \dots, v_{\tau,c} \in \tau$, we define $$\Lift\left({v_{\tau, 1} \otimes \dots \otimes v_{\tau,c}} \right) = \Lift{v_{\tau,1}} \otimes \dots \otimes  \Lift{v_{\tau,c}} \in \depthZeroRepresentation \otimes \dots \otimes \depthZeroRepresentation.$$
We extend the definition of $\Lift{v_{\tau^{\otimes c}}}$ for every $v_{\tau^{\otimes c}} \in \tau^{\otimes c}$ using linearity.

Let $f \in \tau^{\circ c}$. We define a flat section $$\Lift^{\left(z_1,\dots,z_c\right)} \left(f\right)\in \holomorphicRepresentation$$ as follows. First notice that the assignment $\GL_{kc}\left(\ringOfIntegers\right) \to \depthZeroRepresentation^{\otimes c}$ given by $k_0 \mapsto \Lift f\left(\quotientMap\left(k_0\right)\right)$ defines an element of $\Ind{\ParabolicForSpeh{k}{c} \cap \GL_{kc}\left(\ringOfIntegers\right)}{\GL_{kc}\left(\ringOfIntegers\right)}{\depthZeroRepresentation^{\otimes c}}$. Hence, by the Iwasawa decomposition $$\GL_{kc}\left(\localField\right) = \ParabolicForSpeh{k}{c} \cdot \GL_{kc}\left(\ringOfIntegers\right),$$ we may lift this element uniquely to a flat section $\Lift^{\left(z_1,\dots,z_c\right)} \left(f\right) \in \holomorphicRepresentation$ that satisfies $$\Lift^{\left(z_1,\dots,z_c\right)}\left(f\right)\left(k_0\right) = \Lift\left({f\left(\nu\left(k_0\right)\right)}\right),$$ for any $k_0 \in \GL_{kc}\left(\ringOfIntegers\right)$. 

\subsubsection{Simple reflection intertwining operators}

Let $\parabolicSection \in \holomorphicRepresentation$ be a flat section. For $1 \le i \le c - 1$, let $s_i = \left(i, i+1\right) \in \SymmetricGroup_c$ be a transposition. For an element $g \in \GL_{2k}\left(\localField\right)$ denote $$d_i\left(g\right) = \diag\left(\IdentityMatrix{\left(i-1\right)k}, g, \IdentityMatrix{\left(c-i-1\right)k}\right).$$ Consider the following intertwining operator 
\begin{equation}\label{eq:simple-reflection-intertwining-operator-local-field-definition}
	\begin{split}
			&\left(\intertwiningOperator_{s_i} \parabolicSection\right)\left(g\right)	= 		s_i \int_{\squareMatrix_k\left(\localField\right)} \parabolicSection \left( d_i\left(\begin{pmatrix}
			& \IdentityMatrix{k}\\
			\IdentityMatrix{k}
		\end{pmatrix} \begin{pmatrix}
		\IdentityMatrix{k} & X\\
		& \IdentityMatrix{k}
	\end{pmatrix} \right) g\right) \differential X.
	\end{split}
\end{equation}
Once again, it is known that the right hand side of \eqref{eq:simple-reflection-intertwining-operator-local-field-definition} converges absolutely for $\left(z_1,\dots,z_c\right)$ in a cone and has a meromorphic continuation to $\cComplex^c$ \cite[Theorem IV.1.1]{waldspurger2003formule}. We keep denoting the meromorphic continuation by the same symbol. It defines an intertwining operator $$\intertwiningOperator_{s_i} \colon \holomorphicRepresentation \to \depthZeroRepresentation^{s_i \left(z_1, \dots, z_c\right)},$$
where $s_i$ acts on $\left(z_1, \dots, z_c\right)$ by swapping the $i$-th and the $i+1$-th components.

Recall the following decompositions of the long Weyl element in $\SymmetricGroup_c$ \cite[Equation (2.3) and below]{guizzi2010cyclic}: $$\left(s_1\right) \left(s_2 s_1\right) \dots \left(s_{c-1} s_{c-2} \dots s_{1}\right) = \left(s_1 s_2 \dots s_{c-1} \right) \dots \left(s_1 s_2\right) s_1 .$$
Using either of these decompositions, we may decompose $\intertwiningOperator_{\weylElement{1^c}}$ as a composition of operators $M_{s_j}$ corresponding to the decomposition we choose, where the complex parameters are inferred.

Let us normalize (in this subsection only) the Haar measure so that $\squareMatrix_k\left( \maximalIdeal \right)$ has volume $q^{-\binom{k}{2}}$. This makes the constants similar to the ones in \Cref{subsec:speh-projection-operator}.

\begin{lemma}\label{lem:simple-reflection-intertwining-operator}
	For any $f \in \tau^{\circ c}$ we have 
	$$ \intertwiningOperator_{s_i} \Lift^{\left(z_1,\dots,z_c\right)}\left(f\right) = q^{-\binom{k}{2}} \Lift^{s_i \left(z_1,\dots,z_c\right)}\left({h_{s_i} f}\right) + \centralCharacter{\tau}\left(-1\right)  \frac{q^k - 1}{q^{k\left(z_i - z_{i+1}\right)} - 1} \Lift^{s_i \left(z_1,\dots,z_c\right)}\left(f\right).$$
	Here, the integral defining $\intertwiningOperator_{s_i}$ converges absolutely for $\RealPart z_i > \RealPart z_{i+1}$ and the equality is understood elsewhere as an equality of its meromorphic continuation.
\end{lemma}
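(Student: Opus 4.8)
The plan is to reduce the whole computation to $\GL_{2k}$ and then to an explicit integral over $\squareMatrix_k(\localField)$, exploiting the fact that the flat section $\Lift^{(z_1,\dots,z_c)}(f)$ is, by construction, determined by its restriction to $\GL_{kc}(\ringOfIntegers)$, where it agrees with the reduction $\Lift(f(\quotientMap(k_0)))$. Since $\intertwiningOperator_{s_i}$ only involves the block $d_i(\cdot)$, it suffices to evaluate $(\intertwiningOperator_{s_i}\Lift^{(z_1,\dots,z_c)}f)(g)$ for $g \in \GL_{kc}(\ringOfIntegers)$; both sides of the claimed identity are flat sections (after meromorphic continuation, the right-hand side is a $\cComplex(q^{-z_1},\dots,q^{-z_c})$-combination of flat sections), so checking equality on $\GL_{kc}(\ringOfIntegers)$ determines them. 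Thus the first step is: fix $g = k_0 \in \GL_{kc}(\ringOfIntegers)$, and by further left-translating inside the $\GL_{2k}$-block we may assume $g = \IdentityMatrix{kc}$, reducing to the purely local $\GL_{2k}$ integral
$$ \int_{\squareMatrix_k(\localField)} \Phi\!\left(\begin{pmatrix} & \IdentityMatrix{k} \\ \IdentityMatrix{k} & \end{pmatrix}\begin{pmatrix} \IdentityMatrix{k} & X \\ & \IdentityMatrix{k}\end{pmatrix}\right)\differential X, $$
where $\Phi$ is the restriction of the relevant flat section to the $i$-th $\GL_{2k}$-block, an element of $\Ind_{\ParabolicSubgroup_{(k,k)}}^{\GL_{2k}}\bigl(\abs{\det}^{z_i}\depthZeroRepresentation \otimes \abs{\det}^{z_{i+1}}\depthZeroRepresentation\bigr)$ lifted from $\tau^{\circ 2}$.

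The second step is to break the integral according to the $\maximalIdeal$-adic size of $X$, i.e. write $\squareMatrix_k(\localField) = \squareMatrix_k(\ringOfIntegers) \sqcup \bigsqcup_{n\ge 1}(\text{matrices with }\min\text{ valuation of entries} = -n)$, or more precisely stratify by the elementary divisor / Cartan type of $X$. On the piece $X \in \squareMatrix_k(\ringOfIntegers)$, the Weyl element times the unipotent already lies in $\GL_{2k}(\ringOfIntegers)$, so $\Phi$ there is just $\Lift$ applied to the value of $h_{s_i}$-type sum reduced mod $\maximalIdeal$ — more precisely this piece reproduces (after using the normalization $\VolumeOf(\squareMatrix_k(\maximalIdeal)) = q^{-\binom{k}{2}}$, hence $\VolumeOf(\squareMatrix_k(\ringOfIntegers)) = q^{-\binom{k}{2}}\cdot q^{k^2}$... one must track this constant carefully) the finite-field operator $h_{s_i}$ acting on $f$, contributing the term $q^{-\binom{k}{2}}\Lift^{s_i(z_1,\dots,z_c)}(h_{s_i}f)$. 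For the pieces with $X \notin \squareMatrix_k(\ringOfIntegers)$, use the Cartan decomposition $X = k_1 \diag(\uniformizer^{-a_1},\dots,\uniformizer^{-a_k}) k_2$ with $k_1,k_2 \in \GL_k(\ringOfIntegers)$ and $a_j \ge 0$ not all zero, together with the identity
$$\begin{pmatrix} & \IdentityMatrix{k} \\ \IdentityMatrix{k} & \end{pmatrix}\begin{pmatrix} \IdentityMatrix{k} & X \\ & \IdentityMatrix{k}\end{pmatrix} = \begin{pmatrix} X & \IdentityMatrix{k} \\ \IdentityMatrix{k} & \end{pmatrix},$$
and when $X$ is invertible, the Bruhat-type factorization $\begin{pmatrix} X & \IdentityMatrix{k} \\ \IdentityMatrix{k} & \end{pmatrix} = \begin{pmatrix}\IdentityMatrix{k} & X \\ & \IdentityMatrix{k}\end{pmatrix}\begin{pmatrix} & -\IdentityMatrix{k} \\ \IdentityMatrix{k} & \end{pmatrix}\begin{pmatrix}\IdentityMatrix{k} & X^{-1} \\ & \IdentityMatrix{k}\end{pmatrix}\diag(-X, X^{-1})$ (up to signs) to move the diagonal part through the section, picking up $\abs{\det X}^{z_{i+1}-z_i}\centralCharacter{\depthZeroRepresentation}(\det X)$-type factors and, crucially, a contribution of $\Lift(f)$ itself (with the $s_i$-swapped parameters) because $X^{-1} \in \squareMatrix_k(\maximalIdeal)$ lands back in the identity coset mod $\maximalIdeal$. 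Summing the resulting geometric-type series in $q^{k(z_{i+1}-z_i)}$ over all Cartan types — this is exactly where the factor $\sum_{\text{lattices}} \abs{\det}^{z_{i+1}-z_i}$ telescopes — produces $\frac{q^k-1}{q^{k(z_i-z_{i+1})}-1}$, and the sign $\centralCharacter{\tau}(-1)$ emerges from $\det(-X) = (-1)^k\det X$ reduced mod $\maximalIdeal$ feeding into the central character.

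The third step is bookkeeping: assemble the $X\in\squareMatrix_k(\ringOfIntegers)$ term and the summed-over-Cartan-types term, confirm the meromorphic continuation is the stated rational function (the series converges for $\RealPart z_i > \RealPart z_{i+1}$, matching the convergence claim), and verify the normalizing constants against the $\VolumeOf(\squareMatrix_k(\maximalIdeal)) = q^{-\binom{k}{2}}$ convention so that the finite-field Hecke operator $h_{s_i}$ — \emph{not} its normalization $h^0_{s_i}$ — appears with coefficient $q^{-\binom{k}{2}}$, consistent with the $q^{-\frac{k(k-1)}{2}}$ in the definition of $h^0_{s_i}$ in \Cref{subsec:speh-projection-operator}. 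The main obstacle I expect is the careful analysis of the non-integral strata: one must correctly enumerate the double cosets $\GL_k(\ringOfIntegers)\backslash\squareMatrix_k(\localField)\cap\GL_k(\localField)/\GL_k(\ringOfIntegers)$ (equivalently, the possible elementary-divisor types with nonpositive exponents), compute the volume of each stratum, and track how the section $\Phi$ — which is $\Lift$ of something mod $\maximalIdeal$ only on $\GL_{2k}(\ringOfIntegers)$ — transforms under the Bruhat factorization, especially handling the non-invertible $X$ contributions (which should vanish or be absorbed, since there $\diag(-X,X^{-1})$ is not defined and one argues the integrand reduces to an integral of a function that, after change of variables, sits inside a proper parabolic and integrates to zero against the cuspidal $\depthZeroRepresentation$, or more simply: cuspidality of $\tau$ kills these terms mod $\maximalIdeal$, mirroring the finite-field computation of $h_{s_i}$). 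Aligning this local computation step-by-step with the finite-field Mackey-theoretic formula for $h_{s_i}f$ from \Cref{subsec:speh-projection-operator} is what makes the identity come out on the nose.
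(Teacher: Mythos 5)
Your overall architecture matches the paper's: reduce by Iwasawa decomposition and flatness to evaluating at $\IdentityMatrix{kc}$, split the $X$-integral into $\squareMatrix_k\left(\ringOfIntegers\right)$ and its complement, and read off the term $q^{-\binom{k}{2}} \Lift^{s_i\left(z_1,\dots,z_c\right)}\left(h_{s_i}f\right)$ from the integral part. But there are two genuine gaps in your treatment of the part $\Norm{X}>1$. First, the coefficient $\frac{q^k-1}{q^{k\left(z_i-z_{i+1}\right)}-1}$ does not arise from a sum ``over all Cartan types'' that telescopes: a literal sum of $\abs{\det X}^{z_{i+1}-z_i}$ (with volumes) over all non-integral elementary-divisor strata would give a Macdonald-type product of several geometric series, not a single one. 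What actually happens is that the values of the section lie in $\depthZeroRepresentation^{\otimes c}$ realized by compact induction, so $\Lift\left(v_{\tau^{\otimes c}}\right)$ is supported on $\prod\left(\multiplicativegroup{\localField}\cdot\GL_k\left(\ringOfIntegers\right)\right)$; at the identity of the inner variable this forces the Cartan diagonal of the contributing $X$ to be scalar, i.e.\ only $X\in\uniformizer^{-j}\GL_k\left(\ringOfIntegers\right)$ with $j\ge1$ survives, yielding the single series $\sum_{j\ge1}q^{kj\left(z_{i+1}-z_i\right)}=\frac{1}{q^{k\left(z_i-z_{i+1}\right)}-1}$. The numerator $q^k-1$, and the fact that the second term is a \emph{scalar} multiple of $\Lift^{s_i\left(z_1,\dots,z_c\right)}\left(f\right)$ rather than some operator applied to $f$, then come from the leftover $\GL_k\left(\ringOfIntegers\right)$-integral, which descends mod $\maximalIdeal$ to $s_i\sum_{h\in\GL_k\left(\finiteField\right)}\tau_i\left(h^{-1}\right)\otimes\tau_{i+1}\left(h\right)=q^{\binom{k}{2}}\left(q^k-1\right)\idmap_{\tau^{\otimes c}}$ by the variant of Schur's lemma for the irreducible $\tau$ (\Cref{cor:swap-map-for-cuspidal-representations}). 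Your plan omits this Schur-orthogonality step entirely, and without it the computation does not close.

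Second, computing the value at the identity of the $\depthZeroRepresentation^{\otimes c}$-argument is not enough: the asserted identity is an equality of $\depthZeroRepresentation^{\otimes c}$-valued functions, and the right-hand side, being a lift, is supported on $\prod\left(\multiplicativegroup{\localField}\cdot\GL_k\left(\ringOfIntegers\right)\right)$. One must separately prove that the contribution of $\Norm{X}>1$ has the same support; this is where cuspidality of $\tau$ genuinely enters, via the vanishing of $\sum_{Y\in\Mat{r}{\left(k-r\right)}\left(\finiteField\right)}\tau\left(\begin{smallmatrix}\IdentityMatrix{r}&Y\\&\IdentityMatrix{k-r}\end{smallmatrix}\right)$ for $1\le r\le k-1$, and it occupies the entire second half of the paper's proof. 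You invoke cuspidality only as a vague fallback for the non-invertible and mixed strata at the identity, where it is really the support of the compactly induced lift that does the work. (Minor: your Bruhat identity should read $\left(\begin{smallmatrix}&\IdentityMatrix{k}\\\IdentityMatrix{k}&\end{smallmatrix}\right)\left(\begin{smallmatrix}\IdentityMatrix{k}&X\\&\IdentityMatrix{k}\end{smallmatrix}\right)=\left(\begin{smallmatrix}&\IdentityMatrix{k}\\\IdentityMatrix{k}&X\end{smallmatrix}\right)$, and the paper replaces your Bruhat factorization by an explicit decomposition adapted to the Cartan form of $X$, valid also for mixed signatures.)
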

\begin{remark}
	Using transitivity of induction, it suffices to prove this for $c=2$. However, we chose to stick to the notation $d_i$ defined above instead of reducing to $c=2$.
\end{remark}
\begin{proof}
	Throughout the proof, we will use the notation $\tau_i\left(h_i\right) \otimes \tau_{i+1}\left(h_{i+1}\right)$, where $h_i, h_{i+1} \in \GL_k\left(\finiteField\right)$ to mean the linear map $$\idmap_{\tau}^{\otimes (i-1)} \otimes \tau\left(h_i\right) \otimes \tau\left(h_{i+1}\right) \otimes {\idmap_{\tau}}^{\otimes (k - i - 1)} \colon \tau^{\otimes c} \to \tau^{\otimes c},$$ and similarly for the notation $\depthZeroRepresentation_i\left(h_i\right) \otimes \depthZeroRepresentation_{i+1}\left(h_{i+1}\right)$, where $h_i, h_{i+1} \in \GL_k\left(\localField\right)$.
	
	By the Iwasawa decomposition $\GL_{kc}\left(\localField\right) = \ParabolicForSpeh{k}{c} \cdot \GL_{kc}\left(\ringOfIntegers\right)$, it suffices to prove the equality for elements in $\GL_{kc}\left(\ringOfIntegers\right)$. Furthermore, since for any $k_0 \in \GL_{kc}\left( \ringOfIntegers \right)$ we have $$\Lift^{\left(z_1,\dots,z_c\right)}\left( \tau^{\circ c}\left(\nu\left(k_0\right)\right) f\right) = \holomorphicRepresentation\left(k_0\right) \Lift^{\left(z_1,\dots,z_c\right)}\left(f\right),$$ it suffices to prove the equality at $\IdentityMatrix{kc}$.
	Let $\parabolicSection = \Lift^{\left(z_1,\dots,z_c\right)}\left(f\right)$.
	Consider the maximum norm on $\squareMatrix_k \left(\localField\right)$, defined by $\Norm{\left(x_{ij}\right)} = \max \abs{x_{ij}}_{i,j}$. We split the integral in \eqref{eq:simple-reflection-intertwining-operator-local-field-definition} into two parts:
	The first integral is \begin{equation}\label{eq:first-integral-depth-zero-simple-intertwining}
		s_i \int_{\substack{\squareMatrix_k\left(\localField\right)\\
				\Norm{X} \le 1}} \parabolicSection \left(d_i\left(\begin{pmatrix}
			& \IdentityMatrix{k}\\
			\IdentityMatrix{k}
		\end{pmatrix} \begin{pmatrix}
			\IdentityMatrix{k} & X\\
			& \IdentityMatrix{k}
		\end{pmatrix} \right)\right) \differential X
	\end{equation} and the second integral is 
\begin{equation}
	\label{eq:second-integral-depth-zero-simple-intertwining}
	s_i \int_{\substack{\squareMatrix_k\left(\localField\right)\\
			\Norm{X} > 1}} \parabolicSection \left(d_i\left(\begin{pmatrix}
		& \IdentityMatrix{k}\\
		\IdentityMatrix{k}
	\end{pmatrix} \begin{pmatrix}
		\IdentityMatrix{k} & X\\
		& \IdentityMatrix{k}
	\end{pmatrix} \right)\right) \differential X.
\end{equation}

Regarding the first integral, since $\Norm{X} \le 1$, the matrices appearing in the argument of $\parabolicSection$ all lie in $\GL_{kc}\left(\ringOfIntegers\right)$, and therefore by definition we get the value $$s_i \int_{\substack{\squareMatrix_k\left(\localField\right)\\
		\Norm{X} \le 1}} \Lift \left(f  \left(d_i\left(\begin{pmatrix}
	& \IdentityMatrix{k}\\
\IdentityMatrix{k}
\end{pmatrix} \nu \begin{pmatrix}
\IdentityMatrix{k} & X\\
& \IdentityMatrix{k}
\end{pmatrix} \right)\right)\right) \differential X.$$
Since the integrand is invariant under $\squareMatrix_c\left(\maximalIdeal\right)$ translations, we get by the isomorphism $\quotientMap \colon \squareMatrix_k\left(\ringOfIntegers\right) \slash \squareMatrix_k\left( \maximalIdeal \right) \cong \squareMatrix_k\left(\finiteField\right) $ that \eqref{eq:first-integral-depth-zero-simple-intertwining} contributes \begin{equation*}
	q^{-\binom{k}{2}} s_i \sum_{X \in \squareMatrix_k\left(\finiteField\right)} \Lift \left(f  \left(d_i\left(\begin{pmatrix}
		& \IdentityMatrix{k}\\
		\IdentityMatrix{k}
	\end{pmatrix} \begin{pmatrix}
		\IdentityMatrix{k} & X\\
		& \IdentityMatrix{k}
	\end{pmatrix}\right) \right)\right) = q^{-\binom{k}{2}} \Lift\left( h_{s_i} f \right)\left(\IdentityMatrix{kc}\right).
\end{equation*}

Regarding the second part, since $\GL_k\left(\localField\right)$ is open dense in $\squareMatrix_{k}\left(\localField\right)$, we may integrate over $\GL_k\left(\localField\right)$. We replace the additive Haar measure with the multiplicative Haar measure by using the relation $\differential X = \abs{\det X}^k \mdifferential X$, where we normalize correctly, so that $1 + \squareMatrix_k\left(\maximalIdeal\right)$ has volume $q^{-\binom{k}{2}}$. Then \eqref{eq:second-integral-depth-zero-simple-intertwining} becomes
\begin{equation}\label{eq:second-integral}
	s_i \int_{\substack{{\GL_k\left(\localField\right)}\\
	\Norm{X} > 1}} \parabolicSection \left(d_i\left(\begin{pmatrix}
& \IdentityMatrix{k}\\
\IdentityMatrix{k}
\end{pmatrix} \begin{pmatrix}
\IdentityMatrix{k} & X\\
& \IdentityMatrix{k}
\end{pmatrix} \right)\right) \abs{\det X}^k \mdifferential X.
\end{equation}
For $X \in \GL_k\left(\localField\right)$ let $X = k_{1,X} a_X k_{2,X}$ be a Cartan decomposition, where $k_1 = k_{1,X}, k_2 = k_{2,X} \in \GL_k\left(\ringOfIntegers\right)$ and $a_X = \diag\left(\uniformizer^{m_1}, \dots, \uniformizer^{m_k} \right)$, where $m_1 \ge \dots \ge m_k$. Let $0 \le r \le k$ be such that the diagonal entries of $a_{X}^{+} = \diag\left(\uniformizer^{m_1},\dots,\uniformizer^{m_r}\right)$ all have $m_j \ge 0$ and such that the diagonal entries of $a_{X}^{-} = \diag\left(\uniformizer^{m_{r+1}},\dots,\uniformizer^{m_k}\right)$ all have $m_j < 0$. Then we have the following decomposition, where the first two matrices on the second line lie in $\ParabolicForSpeh{k}{c}$ and the last two matrices on that line lie in $\GL_{kc}\left(\ringOfIntegers\right)$: \begin{align*}
	&\begin{pmatrix}
		& \IdentityMatrix{k}\\
		\IdentityMatrix{k}
	\end{pmatrix} \begin{pmatrix}
		\IdentityMatrix{k} & X\\
		& \IdentityMatrix{k}
	\end{pmatrix} \\
&= \begin{pmatrix}
		k_2^{-1}\\
		& k_1
	\end{pmatrix}\begin{pmatrix}
	\IdentityMatrix{r}\\
	&	-\left(a_X^-\right)^{-1} & & \IdentityMatrix{k-r} \\
		& & \IdentityMatrix{r}\\
		& & & a_X^{-}
	\end{pmatrix}
\begin{pmatrix}
	& & \IdentityMatrix{r}\\
	& \IdentityMatrix{k-r}\\
	\IdentityMatrix{r} & & a_X^{+}\\
	& \left(a_X^{-}\right)^{-1} & & \IdentityMatrix{k-r}
\end{pmatrix} \begin{pmatrix}
	k_1^{-1 }\\
	& k_2
\end{pmatrix}.
\end{align*}

Hence, we get
\begin{equation} \label{eq:holomorphic-section-evaluation-after-cartan-decomposition}
\begin{split}
		& \parabolicSection \left(d_i\left(\begin{pmatrix}
		& \IdentityMatrix{k}\\
		\IdentityMatrix{k}
	\end{pmatrix} \begin{pmatrix}
		\IdentityMatrix{k} & X\\
		& \IdentityMatrix{k}
	\end{pmatrix} \right)\right) \\
	= & \abs{\det a_X^{-}}^{-k+z_{i+1}-z_i} \depthZeroRepresentation_i\left(k_2^{-1} \begin{pmatrix}
		\IdentityMatrix{r} &\\
		& -\left(a_X^{-}\right)^{-1}
	\end{pmatrix} \right) \otimes \depthZeroRepresentation_{i+1}\left(k_1 \begin{pmatrix}
		\IdentityMatrix{r} \\
		& a_X^{-}
	\end{pmatrix} \right) \\
	& \times \Lift\left( \left(f \circ \quotientMap \circ d_i\right) \left(\begin{pmatrix}
		& & \IdentityMatrix{r}\\
		& \IdentityMatrix{k-r}\\
		\IdentityMatrix{r} & & a_X^{+}\\
		&  & & \IdentityMatrix{k-r}
	\end{pmatrix} \begin{pmatrix}
		k_1^{-1 }\\
		& k_2
	\end{pmatrix}\right)\right).
\end{split}
\end{equation}

We will first evaluate \eqref{eq:holomorphic-section-evaluation-after-cartan-decomposition} at $\left(\IdentityMatrix{k},\dots, \IdentityMatrix{k}\right)$. This amounts to evaluating a function $\Lift \left( v_{\tau^{\otimes c}} \right)$ for some $v_{\tau^{\otimes c}} \in \tau^{\otimes c}$ at the point $$\left(\IdentityMatrix{k}, \dots, \IdentityMatrix{k}, k_2^{-1} \begin{pmatrix}
	\IdentityMatrix{r} \\
	& -\left(a_X^-\right)^{-1}
\end{pmatrix}, k_1
\begin{pmatrix}
\IdentityMatrix{r}\\
& a_X^-
\end{pmatrix}, \IdentityMatrix{k}, \dots, \IdentityMatrix{k} \right).$$
Since $\Lift \left( v_{\tau^{\otimes c}} \right)$ is only supported on $\prod_{j=1}^c \left(\multiplicativegroup{\localField} \cdot \GL_{k}\left(\localField\right)\right)$, we must have that $\left(\begin{smallmatrix}
	\IdentityMatrix{r} &\\
	& a_X^{-}
\end{smallmatrix}\right)$ is a scalar matrix. Therefore, if $\Norm{X} > 1$ then we must have $r = 0$ and $a_X = \uniformizer^{-j} \IdentityMatrix{kc}$ for some $j \ge 1$, and we may assume $k_2 = \IdentityMatrix{kc}$. Then the integral \eqref{eq:second-integral} evaluated at $\left(\IdentityMatrix{k},\dots,\IdentityMatrix{k}\right)$ becomes
\begin{align*}
	&s_i \sum_{j = 1}^{\infty} q^{kj \left(z_{i+1} - z_i\right)} \int_{{\GL_k\left(\ringOfIntegers\right)}} \tau_i \left( -\quotientMap\left(k_1\right)^{-1} \right) \otimes \tau_{i+1}\left(\quotientMap\left(k_1\right)\right) f \left( \IdentityMatrix{kc} \right) \mdifferential k_1 \\
	&= \frac{1}{q^{k\left(z_{i} - z_{i+1}\right)} - 1} \cdot s_i \int_{{\GL_k\left(\ringOfIntegers\right)}} \tau_i \left( -\quotientMap\left(k_1\right)^{-1} \right) \otimes \tau_{i+1}\left(\quotientMap\left(k_1\right)\right) f \left( \IdentityMatrix{kc} \right) \mdifferential k_1, 
\end{align*}
where the convergence is for $\RealPart\left( z_{i} - z_{i+1} \right) > 0$.

We are left to deal with the inner integral $$s_i \int_{{\GL_k\left(\ringOfIntegers\right)}} \tau_i \left( -\quotientMap\left(k_1\right)^{-1} \right) \otimes \tau_{i+1}\left(\quotientMap\left(k_1\right)\right) f \left( \IdentityMatrix{kc} \right) \mdifferential k_1.$$
Since the argument is invariant under $1 + \squareMatrix_k\left(\maximalIdeal\right)$, this evaluates to
$$ q^{-\binom{k}{2}} \centralCharacter{\tau}\left(-1\right) s_i \sum_{h \in \GL_k\left(\finiteField\right)} \tau_i\left(h^{-1}\right) \otimes \tau_{i+1}\left(h\right) f\left(I_{kc}\right).$$

By \Cref{cor:swap-map-for-cuspidal-representations}, we have the following equality of operators $\tau^{\otimes c} \to \tau^{\otimes c}$, $$s_i \sum_{h \in \GL_k\left(\finiteField\right)} \tau_i\left(h^{-1}\right) \otimes \tau_{i+1}\left(h\right) = q^{\binom{k}{2}} (q^k - 1) \cdot \idmap_{\tau^{\otimes c}}.$$
Thus, the integral \eqref{eq:second-integral-depth-zero-simple-intertwining} evaluated at $\left(\IdentityMatrix{k},\dots,\IdentityMatrix{k}\right)$ contributes
$$ \centralCharacter{\tau}\left(-1\right) \frac{q^k - 1}{q^{k\left(z_i - z_{i+1}\right)} - 1} f\left(I_{kc}\right).$$

To finish, we claim that the function defined by \eqref{eq:second-integral} is supported on $\prod_{j=1}^c \left(\multiplicativegroup{\localField} \cdot \GL_k\left(\ringOfIntegers\right)\right)$. It is clear that if $\left(h_1,\dots,h_c\right)$ is in the support of the function defined by \eqref{eq:holomorphic-section-evaluation-after-cartan-decomposition}, then for any $j \ne i, i+1$ we must have $h_j \in \multiplicativegroup{\localField} \cdot \GL_k\left(\ringOfIntegers\right)$. By replacing $f$ with a suitable right translation of it, we may assume henceforth, without loss of generality, that $h_j = \IdentityMatrix{k}$ for every $j \ne i, i+1$.

Suppose that $h_i$ or $h_{i+1}$ are not in $\multiplicativegroup{\localField} \cdot \GL_k\left(\ringOfIntegers\right)$. We want to show that \eqref{eq:second-integral} vanishes at $\left(h_1,\dots,h_c\right)$. Since the center acts by the central character, we may assume that $h_i$ and $h_{i+1}$ have Cartan decomposition with diagonal part $\left(\begin{smallmatrix}
	\IdentityMatrix{r_1}\\
	& b_1
\end{smallmatrix}\right)$ and $\left(\begin{smallmatrix}
b_2\\
& \IdentityMatrix{r_2}
\end{smallmatrix}\right)$, respectively, where $0 < r_1, r_2 \le k$ and $b_1 = \diag\left(\uniformizer^{m'_{r_1+1}}, \dots, \uniformizer^{m'_{k}}\right)$ and $b_2 = \diag\left(\uniformizer^{m''_{k}}, \dots, \uniformizer^{m''_{r_2 + 1}}\right)$, such that $\min\left(r_1, r_2\right) < k$ and $0 > m'_{r_1 + 1} \ge \dots \ge m'_{k}$ and $m''_{k} \ge \dots \ge m''_{r_2 + 1} > 0$. If $\left(h_1,\dots, h_c\right)$ is in the support of \eqref{eq:holomorphic-section-evaluation-after-cartan-decomposition} for some $X$, then we must have that \begin{equation}\label{eq:second-integral-cartan-decomposition-equality}
h_i k_2^{-1} \begin{pmatrix}
	\IdentityMatrix{r} &\\
	& -\left(a_X^{-}\right)^{-1}
\end{pmatrix} = t_1 k'_1 \text{ and } h_{i+1} k_1 \begin{pmatrix}
	\IdentityMatrix{r} &\\
	& a_X^{-}
\end{pmatrix} = t_2 k'_2
\end{equation} for some $t_1, t_2 \in \multiplicativegroup{\localField}$ and $k'_1, k'_2 \in \GL_{k}\left(\ringOfIntegers\right)$. Writing \begin{equation}\label{eq:second-integral-cartan-decomposition-equality-for-t-in-o-star}
h_i = k'_1 t_1 \begin{pmatrix}
	\IdentityMatrix{r} &\\
	& -a_X^{-}
\end{pmatrix} k_2 \text{ and } h_{i+1}  = k'_2 t_2 \begin{pmatrix}
\IdentityMatrix{r} &\\
& \left(a_X^{-}\right)^{-1}
\end{pmatrix} k_1^{-1}
\end{equation} and comparing the Cartan decomposition of both sides of \eqref{eq:second-integral-cartan-decomposition-equality-for-t-in-o-star}, we see that $t_1,t_2 \in \multiplicativegroup{\ringOfIntegers}$, so without loss of generality $t_1 = t_2 = 1$. In addition, \eqref{eq:second-integral-cartan-decomposition-equality-for-t-in-o-star} implies that the diagonal parts of the Cartan decomposition of $h_i$ and $h_{i+1}$ must be related, and that they determine $a_X^{-}$. More precisely, \begin{equation}\label{eq:second-integral-cartan-decomposition-diagonal-equality}
\begin{pmatrix}
	\IdentityMatrix{r} &\\
	& a_X^{-}
\end{pmatrix} = \begin{pmatrix}
	\IdentityMatrix{r_1}\\
	& b_1
\end{pmatrix} \text{ and } \begin{pmatrix}
	b_2 &\\
	& I_{r_2}
\end{pmatrix} = \begin{pmatrix}
	\weylElement{1^{k-r}} \left(a_X^{-}\right)^{-1} \weylElement{1^{k-r}}\\
	& \IdentityMatrix{r}
\end{pmatrix}
\end{equation} which implies that $1 \le r_1 = r_2 = r < k$ and $m_{j} = m'_{j} = -m''_{j}$ for every $r \le j \le k$, and hence $a_{X}^{-} = b_1$. Denote $b = b_1$. As we just explained above, the value $b$ is determined by the Cartan decomposition of $h_i$. By \eqref{eq:holomorphic-section-evaluation-after-cartan-decomposition} and \eqref{eq:second-integral-cartan-decomposition-diagonal-equality}, the evaluation of \eqref{eq:second-integral} at $\left(h_1,\dots,h_c\right)$ is given by \begin{equation}\label{eq:intertwining-operator-lemma-torus-integration}
\begin{split}
& \abs{\det b}^{z_{i+1}-z_i} \sum_{\substack{a^{+} = \diag\left(\uniformizer^{m_1},\dots, \uniformizer^{m_r}\right)\\
m_1 \ge m_2 \ge \dots \ge m_r \ge 0}} \frac{\VolumeOf \left(\GL_k\left(\ringOfIntegers\right) \begin{pmatrix}
	a^{+} &\\
	& b
\end{pmatrix} \GL_k\left(\ringOfIntegers\right)\right)}{\VolumeOf\left(\GL_k\left(\ringOfIntegers\right)\right)^2} \\
& \times \int_{\GL_k\left(\ringOfIntegers\right)} \int_{\GL_k\left(\ringOfIntegers\right)} \Lift\left( \left(f \circ \quotientMap \circ d_i\right) \left(\begin{pmatrix}
	& & \IdentityMatrix{r}\\
	& \IdentityMatrix{k-r}\\
	\IdentityMatrix{r} & & a^{+}\\
	&  & & \IdentityMatrix{k-r}
\end{pmatrix} \begin{pmatrix}
	k_1^{-1 }\\
	& k_2
\end{pmatrix}\right)\right) \\
& \left(\IdentityMatrix{k},\dots,\IdentityMatrix{k}, h_i k_2^{-1} \begin{pmatrix}
	\IdentityMatrix{r} &\\
	& -b^{-1}
\end{pmatrix}, h_{i+1} k_1 \begin{pmatrix}
\IdentityMatrix{r} \\
& b
\end{pmatrix}, \IdentityMatrix{k}, \dots, \IdentityMatrix{k} \right) \mdifferential k_1 \mdifferential k_2.
\end{split}
\end{equation}

Decomposing the integration over $k_1$ in \eqref{eq:intertwining-operator-lemma-torus-integration} through the subgroup $$N_b = \left\{\begin{pmatrix}
	\IdentityMatrix{r} & Y b^{-1}\\
	& \IdentityMatrix{k-r}
\end{pmatrix} \mid Y \in \Mat{r}{\left(k-r\right)}\left(\ringOfIntegers\right) \right\} \subset \IdentityMatrix{k} + \squareMatrix_k\left(\maximalIdeal\right)$$ yields an inner integral
\begin{equation}
	\begin{split}\label{eq:intertwining-operator-lemma-inner-integral}
		& \int_{Y \in \Mat{r}{\left(k-r\right)}\left(\ringOfIntegers\right)} \Lift\left( \left(f \circ \quotientMap \circ d_i\right) \left(\begin{pmatrix}
			& & \IdentityMatrix{r}\\
			& \IdentityMatrix{k-r}\\
			\IdentityMatrix{r} & & a^{+}\\
			&  & & \IdentityMatrix{k-r}
		\end{pmatrix} \begin{pmatrix}
			k_1^{-1 }\\
			& k_2
		\end{pmatrix}\right)\right) \\
		& \left(\IdentityMatrix{k},\dots,\IdentityMatrix{k}, h_i k_2^{-1} \begin{pmatrix}
			\IdentityMatrix{r} &\\
			& -b^{-1}
		\end{pmatrix}, h_{i+1} k_1 \begin{pmatrix}
			\IdentityMatrix{r} \\
			& b
		\end{pmatrix} \begin{pmatrix}
		\IdentityMatrix{r} & Y\\
		& \IdentityMatrix{k-r}
	\end{pmatrix}, \IdentityMatrix{k}, \dots, \IdentityMatrix{k} \right) \differential Y.
	\end{split}
\end{equation}
Since $\left(\begin{smallmatrix}
	\IdentityMatrix{r} & Y\\
	& \IdentityMatrix{k-r}
\end{smallmatrix}\right) \in \GL_k\left(\ringOfIntegers\right)$, we see that the argument of the last expression is in the support of $\Lift\left(v_{\tau^{\otimes c}}\right)$ for some $v_{\tau^{\otimes c}} \in \tau^{\otimes c}$ if and only if $k_1$ and $k_2$ are such that $$h_i k_2^{-1} \begin{pmatrix}
\IdentityMatrix{r} &\\
& -b^{-1}
\end{pmatrix}, h_{i+1} k_1 \begin{pmatrix}
\IdentityMatrix{r} &\\
& b
\end{pmatrix} \in \GL_k\left(\ringOfIntegers\right),$$
and in this case \eqref{eq:intertwining-operator-lemma-inner-integral} evaluates to a scalar multiple of
\begin{equation}
	\begin{split}
		& \tau_i \left(\quotientMap\left(h_i k_2^{-1} \begin{pmatrix}
			\IdentityMatrix{r} &\\
			& -b^{-1}
		\end{pmatrix}\right)\right) \otimes \tau_{i+1}\left(\quotientMap\left(h_{i+1} k_1 \begin{pmatrix}
		\IdentityMatrix{r} \\
		& b
	\end{pmatrix}\right)\right) \\
	& \times \sum_{Y \in \Mat{r}{\left(k-r\right)}\left(\finiteField\right)} \tau_{i+1}\begin{pmatrix}
		\IdentityMatrix{r} & Y\\
		& \IdentityMatrix{k-r}
	\end{pmatrix} \left(f \circ \quotientMap \circ d_i\right) \left(\begin{pmatrix}
			& & \IdentityMatrix{r}\\
			& \IdentityMatrix{k-r}\\
			\IdentityMatrix{r} & & a^{+}\\
			&  & & \IdentityMatrix{k-r}
		\end{pmatrix} \begin{pmatrix}
			k_1^{-1 }\\
			& k_2
		\end{pmatrix}\right).
	\end{split}
\end{equation}

But for $1 \le r \le k-1$ we have that $$\sum_{Y \in \Mat{r}{\left(k-r\right)}\left(\finiteField\right)} \tau\begin{pmatrix}
	\IdentityMatrix{r} & Y\\
	& \IdentityMatrix{k-r}
\end{pmatrix} = 0$$ because $\tau$ is cuspidal. Therefore, we proved that the evaluation of \eqref{eq:second-integral} is zero if $h_i$ or $h_{i+1}$ are not in $\multiplicativegroup{\localField} \GL_k\left(\ringOfIntegers\right)$.

\end{proof}

\begin{remark}
	For future purposes, it will be useful to evaluate the following more general intertwining operator. Let $\depthZeroRepresentation_1, \dots, \depthZeroRepresentation_c$ be irreducible level zero supercuspidal representations of $\GL_k\left(\localField\right)$ constructed from irreducible cuspidal representations $\tau_1, \dots, \tau_c$ of $\GL_k\left(\finiteField\right)$, respectively. Let $$\depthZeroRepresentation^{\left(z_1,\dots,z_c\right)}_{\left(\depthZeroRepresentation_1,\dots,\depthZeroRepresentation_c\right)} = \Ind{\ParabolicForSpeh{k}{c}}{\GL_{kc}\left(\localField\right)}{\abs{\det}^{z_1} \depthZeroRepresentation_1 \otimes \dots \otimes \abs{\det}^{z_c} \depthZeroRepresentation_c}.$$
	We can define the notion of lifts and also define intertwining operators $$h_{s_i} \colon \tau_1 \circ \dots \circ \tau_c \to \tau_1 \circ \dots \circ \tau_{i+1} \circ \tau_i \circ \dots \circ \tau_c.$$ Consider the intertwining operator $$\intertwiningOperator_{s_i} \colon \depthZeroRepresentation^{\left(z_1,\dots,z_c\right)}_{\left(\depthZeroRepresentation_1,\dots,\depthZeroRepresentation_c\right)} \to \depthZeroRepresentation^{s_i \left(z_1,\dots,z_c\right)}_{s_i \left(\depthZeroRepresentation_1,\dots,\depthZeroRepresentation_c\right)}$$
	defined by the same formula as \eqref{eq:simple-reflection-intertwining-operator-local-field-definition}. Then, similarly to the proof of \Cref{lem:simple-reflection-intertwining-operator}, we can show that for any $f \in \tau_1 \circ \dots \circ \tau_c$ we have 
	\begin{enumerate}
		\item If $\tau_i$ is not isomorphic to $\tau_{i+1}$, then
		$$ \intertwiningOperator_{s_i} \Lift^{\left(z_1,\dots,z_c\right)}\left(f\right) = q^{-\binom{k}{2}} \Lift^{s_i \left(z_1,\dots,z_c\right)}\left({h_{s_i} f}\right).$$
		\item If $\tau_i = \tau_{i+1}$, then
		\begin{align*}
			\intertwiningOperator_{s_i} \Lift^{\left(z_1,\dots,z_c\right)}\left(f\right) =& q^{-\binom{k}{2}} \Lift^{s_i \left(z_1,\dots,z_c\right)}\left({h_{s_i} f}\right) \\
			& + \centralCharacter{\tau_i}\left(-1\right)  \frac{q^k - 1}{\centralCharacter{\depthZeroRepresentation_i}\left(\uniformizer\right)^{-1} \centralCharacter{\depthZeroRepresentation_{i+1}}\left(\uniformizer\right) q^{k\left(z_i - z_{i+1}\right)} - 1} \Lift^{s_i \left(z_1,\dots,z_c\right)}\left(f\right),
		\end{align*}
		where for $1 \le j \le c$, $\centralCharacter{\depthZeroRepresentation_j}$ is the central character of $\depthZeroRepresentation_j$.
	\end{enumerate}
\end{remark}

By applying \Cref{lem:simple-reflection-intertwining-operator} repeatedly to the decomposition $$M_{\left(1^c\right)} =  \left(M_{s_1}\right) \circ \left(M_{s_2} \circ M_{s_1} \right) \circ \dots \circ  \left(M_{s_{c-1}} \circ \dots \circ M_{s_1}\right),$$ or to the decomposition $$M_{\left(1^c\right)} =   \left(M_{s_{1}} \circ \dots \circ M_{s_{c-1}}\right) \circ  \dots \circ \left(M_{s_1} \circ M_{s_2} \right) \circ \left(M_{s_1}\right),$$
where the complex parameters are inferred, we get the following theorem.

\begin{theorem}\label{thm:intertwining-operator-of-depth-zero-representation}
	For any $f \in \tau^{\circ c}$, we have
	$$\intertwiningOperator_{\weylElement{1^c}} \Lift^{\left(z_1,\dots,z_c\right)} f = \centralCharacter{\tau}\left(-1\right)^{\binom{c}{2}} \Lift^{\left(z_c,\dots,z_1\right)} \left(\left(\prod_{\substack{\left(i,j\right)\\
	1 \le i < j \le c}} \left(h_{s_{j - i}}^0 + \frac{q^k - 1}{q^{k\left(z_i - z_{j}\right)} - 1} \cdot \idmap_{\tau^{\circ c}} \right)\right) f\right),$$
	where the product is ordered by the lexicographic ordering, taken either right to left or left to right.
\end{theorem}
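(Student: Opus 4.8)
The plan is to iterate \Cref{lem:simple-reflection-intertwining-operator} along one of the two staircase decompositions of $\weylElement{1^c}$ recorded just above the statement, bookkeeping at every step three quantities: the accumulating power of $\centralCharacter{\tau}(-1)$, the complex parameter that the current lift lives over, and the scalar produced by the error term of the lemma.

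First I would put \Cref{lem:simple-reflection-intertwining-operator} in a tidier form. Since $\ell(s_i)=1$, the Howe normalization gives $h^0_{s_i}=q^{-\binom{k}{2}}\centralCharacter{\tau}(-1)\,h_{s_i}$, and because $\centralCharacter{\tau}(-1)^2=1$ one may substitute $q^{-\binom{k}{2}}h_{s_i}=\centralCharacter{\tau}(-1)\,h^0_{s_i}$; thus \Cref{lem:simple-reflection-intertwining-operator} becomes
\[
\intertwiningOperator_{s_i}\Lift^{(z_1,\dots,z_c)}(f)=\centralCharacter{\tau}(-1)\,\Lift^{s_i(z_1,\dots,z_c)}\!\left(\left(h^0_{s_i}+\frac{q^k-1}{q^{k(z_i-z_{i+1})}-1}\idmap_{\tau^{\circ c}}\right)f\right),
\]
an identity of meromorphic continuations; as both sides are rational functions of $q^{z_1},\dots,q^{z_c}$ valued in lifts of elements of $\tau^{\circ c}$, composing such identities along a reduced word is legitimate.

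Next I would feed this repeatedly into $M_{(1^c)}=(M_{s_1})\circ(M_{s_2}\circ M_{s_1})\circ\cdots\circ(M_{s_{c-1}}\circ\cdots\circ M_{s_1})$ evaluated on $\Lift^{(z_1,\dots,z_c)}(f)$. Three things fall out. The power of $\centralCharacter{\tau}(-1)$ collected over the $\binom{c}{2}$ simple reflections is $\centralCharacter{\tau}(-1)^{\binom{c}{2}}$. The parameter ends at $\weylElement{1^c}(z_1,\dots,z_c)=(z_c,\dots,z_1)$, since the reduced word realizes the order-reversing permutation, matching the codomain of $\intertwiningOperator_{\weylElement{1^c}}$. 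And inside the lift one is left with a product of $\binom{c}{2}$ deformed Hecke factors $h^0_{s_m}+c\cdot\idmap_{\tau^{\circ c}}$ applied to $f$, one per letter of the reduced word.

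The substance is the identification of this product with the lexicographically ordered product in the statement, and I expect it to be the main obstacle. One first checks, by bookkeeping the parameter swaps along the staircase word, that the factor coming from a letter transposing the original parameters $z_i$ and $z_j$ (with $z_i$ in the left slot, so $i<j$) is $h^0_{s_{j-i}}+\frac{q^k-1}{q^{k(z_i-z_j)}-1}\idmap_{\tau^{\circ c}}$; this is cleanest by induction on $c$, peeling off the first-applied block $s_{c-1}\cdots s_1$, which cycles $z_1$ into the last slot and contributes the pairs $(1,2),(1,3),\dots,(1,c)$, the remaining reflections forming the staircase for $c-1$ on the first $c-1$ slots. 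One then rewrites the resulting product into lexicographic order using the braid and quadratic relations for the $h^0_{s_i}$ recorded above: these are parameter-free, and the deformed scalars $\frac{q^k-1}{q^{k(z_i-z_j)}-1}$ obey the same additivity $(z_i-z_\ell)+(z_\ell-z_j)=(z_i-z_j)$ that the constants $\frac{q^k-1}{q^{(j-i)k}-1}$ obey via $(\ell-i)+(j-\ell)=(j-i)$, so the reorganization is the parametrized analogue of the one in \cite[Lemma 3.1]{guizzi2010cyclic} that underlies \Cref{prop:projection-operator-as-product-of-simple-reflections}; this also shows the two orderings (right to left, or left to right, coming from the two staircase decompositions) agree. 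As a sanity check, for $c=3$ no reorganization is needed and one gets directly $\centralCharacter{\tau}(-1)^3\,\Lift^{(z_3,z_2,z_1)}\bigl((h^0_{s_1}+c_{23})(h^0_{s_2}+c_{13})(h^0_{s_1}+c_{12})f\bigr)$ with $c_{ij}=\frac{q^k-1}{q^{k(z_i-z_j)}-1}$, the lexicographic product read right to left; at the Speh point $(z_1,\dots,z_c)=(\tfrac{c-1}{2},\dots,-\tfrac{c-1}{2})$, where $z_i-z_j=j-i$, this recovers the expected link with $\ProjectionOperator_{\SpehRepresentation{\tau}{c}}$.
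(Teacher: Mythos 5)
Your proposal is correct and follows essentially the same route as the paper: the theorem is obtained exactly by iterating \Cref{lem:simple-reflection-intertwining-operator} (rewritten via $q^{-\binom{k}{2}}h_{s_i}=\centralCharacter{\tau}\left(-1\right)h^0_{s_i}$) along the staircase decomposition of $\intertwiningOperator_{\weylElement{1^c}}$, with the bookkeeping of the sign, the permuted parameters, and the deformed scalars that you describe. One small remark: your peeling-off induction already shows that the factors come out in lexicographic order read right to left for every $c$, so the braid/quadratic-relation reorganization you invoke is not needed for that ordering; it (or simply the fact that the two reduced decompositions compute the same operator) is only relevant for the clause that the product may equally be taken left to right.
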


Applying \Cref{thm:intertwining-operator-of-depth-zero-representation} to $$\left(z_1,\dots,z_c\right) = \left(\frac{c-1}{2}, \frac{c-3}{2}, \dots, -\frac{c-1}{2}\right),$$ and combining with \Cref{prop:projection-operator-as-product-of-simple-reflections}, we get the following corollary.

\begin{corollary}\label{cor:intertwining-operator-of-depth-zero-representation-of-speh}
	The following diagram commutes
	$$\xymatrix{
		\tau^{\circ c} \ar[d]_{\Lift^{\left(\frac{c-1}{2},\dots,-\frac{c-1}{2}\right)}} \ar[rr]^{\ProjectionOperator_{\SpehRepresentation{\tau}{c}}}  & & \SpehRepresentation{\tau}{c} \ar[d]^{\Lift^{\left(-\frac{c-1}{2},\dots,\frac{c-1}{2}\right)}}\\
		\depthZeroRepresentation^{\left(\frac{c-1}{2},\dots,-\frac{c-1}{2}\right)} \ar[rr]^{M^{\ast}_{\weylElement{1^c}}} & & \depthZeroRepresentation^{\left(-\frac{c-1}{2},\dots,\frac{c-1}{2}\right)},}$$
	and hence we have a commutative diagram
	$$\xymatrix{
		\tau^{\circ c} \ar[d]_{\Lift^{\left(\frac{c-1}{2},\dots,-\frac{c-1}{2}\right)}} \ar[rr]^{\ProjectionOperator_{\SpehRepresentation{\tau}{c}}}  & & \SpehRepresentation{\tau}{c} \ar[d]^{\Lift^{\left(-\frac{c-1}{2},\dots,\frac{c-1}{2}\right)}}\\
		\depthZeroRepresentation^{\left(\frac{c-1}{2},\dots,-\frac{c-1}{2}\right)} \ar[rr]^{M^{\ast}_{\weylElement{1^c}}} & & \SpehRepresentation{\depthZeroRepresentation}{c}.}$$
	where $M^{\ast}_{\weylElement{1^c}} = \PoincarePolynomial{k}{c}\left(q^k\right)^{-1} \centralCharacter{\tau}\left(-1\right)^{-\binom{c}{2}} M^{\left(\frac{c-1}{2},\dots, -\frac{c-1}{2}\right)}_{\weylElement{1^c}}.$
\end{corollary}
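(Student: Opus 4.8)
The plan is to obtain both diagrams directly from \Cref{thm:intertwining-operator-of-depth-zero-representation}, by evaluating that identity at the distinguished point $(z_1,\dots,z_c) = \left(\frac{c-1}{2},\frac{c-3}{2},\dots,-\frac{c-1}{2}\right)$ and recognizing the endomorphism of $\tau^{\circ c}$ that appears on the right-hand side, via \Cref{prop:projection-operator-as-product-of-simple-reflections}, as a scalar multiple of $\ProjectionOperator_{\SpehRepresentation{\tau}{c}}$.

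First I would record the arithmetic at this point: here $z_i = \frac{c-2i+1}{2}$, so $z_i - z_j = j - i$ and hence $q^{k(z_i - z_j)} = q^{k(j-i)}$ for all $1 \le i < j \le c$; moreover $\intertwiningOperator_{\weylElement{1^c}}$ is holomorphic there (as recalled above), with value $M^{\left(\frac{c-1}{2},\dots,-\frac{c-1}{2}\right)}_{\weylElement{1^c}}$, so specializing the meromorphic identity makes sense. Substituting into \Cref{thm:intertwining-operator-of-depth-zero-representation} gives, for every $f \in \tau^{\circ c}$,
$$M^{\left(\frac{c-1}{2},\dots,-\frac{c-1}{2}\right)}_{\weylElement{1^c}}\,\Lift^{\left(\frac{c-1}{2},\dots,-\frac{c-1}{2}\right)} f = \centralCharacter{\tau}(-1)^{\binom{c}{2}}\,\Lift^{\left(-\frac{c-1}{2},\dots,\frac{c-1}{2}\right)}\!\left(\left(\prod_{1\le i<j\le c}\left(h^0_{s_{j-i}}+\tfrac{q^k-1}{q^{k(j-i)}-1}\cdot\idmap_{\tau^{\circ c}}\right)\right)f\right),$$
the product being taken in lexicographic order (either direction). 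By \Cref{prop:projection-operator-as-product-of-simple-reflections} the bracketed operator is precisely $\PoincarePolynomial{k}{c}(q^k)\,\ProjectionOperator_{\SpehRepresentation{\tau}{c}}$: the normalized Hecke operators $h^0_{s_{j-i}}$, the coefficients, and the allowed product orderings match verbatim once one has made the substitution $z_i - z_j = j - i$. Dividing through by $\centralCharacter{\tau}(-1)^{\binom{c}{2}}\,\PoincarePolynomial{k}{c}(q^k)$ and using $\centralCharacter{\tau}(-1)^{\binom{c}{2}} = \centralCharacter{\tau}(-1)^{-\binom{c}{2}}$ (this value being $\pm 1$) gives
$$\Lift^{\left(-\frac{c-1}{2},\dots,\frac{c-1}{2}\right)}\circ\ProjectionOperator_{\SpehRepresentation{\tau}{c}} = M^{\ast}_{\weylElement{1^c}}\circ\Lift^{\left(\frac{c-1}{2},\dots,-\frac{c-1}{2}\right)},$$
with $M^{\ast}_{\weylElement{1^c}} = \PoincarePolynomial{k}{c}(q^k)^{-1}\,\centralCharacter{\tau}(-1)^{-\binom{c}{2}}\,M^{\left(\frac{c-1}{2},\dots,-\frac{c-1}{2}\right)}_{\weylElement{1^c}}$. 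This is exactly the commutativity of the first square, together with the asserted value of the scalar.

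For the second diagram I would note that $\ProjectionOperator_{\SpehRepresentation{\tau}{c}}$ is idempotent with image $\SpehRepresentation{\tau}{c}$, so $\ProjectionOperator_{\SpehRepresentation{\tau}{c}} f_{\SpehRepresentation{\tau}{c}} = f_{\SpehRepresentation{\tau}{c}}$ for every $f_{\SpehRepresentation{\tau}{c}} \in \SpehRepresentation{\tau}{c}$. Taking $f = f_{\SpehRepresentation{\tau}{c}}$ in the identity just proved shows that $\Lift^{\left(-\frac{c-1}{2},\dots,\frac{c-1}{2}\right)} f_{\SpehRepresentation{\tau}{c}}$ lies in the image of $M^{\ast}_{\weylElement{1^c}}$, hence in $\SpehRepresentation{\depthZeroRepresentation}{c}$, since $M^{\ast}_{\weylElement{1^c}}$ is a nonzero scalar multiple of $M^{\left(\frac{c-1}{2},\dots,-\frac{c-1}{2}\right)}_{\weylElement{1^c}}$, whose image is $\SpehRepresentation{\depthZeroRepresentation}{c}$ by definition. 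Thus $\Lift^{\left(-\frac{c-1}{2},\dots,\frac{c-1}{2}\right)}$ restricts to a map $\SpehRepresentation{\tau}{c} \to \SpehRepresentation{\depthZeroRepresentation}{c}$, and co-restricting the bottom arrow of the first square to its image $\SpehRepresentation{\depthZeroRepresentation}{c}$ turns it into the second diagram. (This in particular recovers the consequence $\Lift f_{\SpehRepresentation{\tau}{c}} \in \SpehRepresentation{\depthZeroRepresentation}{c}$ noted in the introduction.)

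I do not expect a genuine obstacle: the whole argument is the substitution $z_i - z_j = j - i$ at the distinguished point together with the already-established factorization of $\ProjectionOperator_{\SpehRepresentation{\tau}{c}}$. The one point that warrants a careful check is that the product over pairs $(i,j)$ produced by \Cref{thm:intertwining-operator-of-depth-zero-representation} matches the one in \Cref{prop:projection-operator-as-product-of-simple-reflections} factor by factor --- same operators $h^0_{s_{j-i}}$, same coefficients, and, crucially, the same ordering convention --- so that the two expressions literally denote the same endomorphism of $\tau^{\circ c}$; once this is confirmed, the corollary follows immediately.
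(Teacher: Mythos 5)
Your proposal is correct and is essentially the paper's own argument: the corollary is obtained exactly by specializing \Cref{thm:intertwining-operator-of-depth-zero-representation} at $\left(\frac{c-1}{2},\dots,-\frac{c-1}{2}\right)$, where $z_i - z_j = j-i$, and matching the resulting product with \Cref{prop:projection-operator-as-product-of-simple-reflections}. The identification of the second diagram via the definition of $\SpehRepresentation{\depthZeroRepresentation}{c}$ as the image of the intertwining operator is also exactly what the paper intends.
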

From now and on, we denote for a function $f \in \SpehRepresentation{\tau}{c}$ its lift by $\Lift f = \Lift^{\left(-\frac{c-1}{2},\dots,\frac{c-1}{2}\right)} f \in \SpehRepresentation{\depthZeroRepresentation}{c}$.

\begin{remark}\label{rem:commutative-diagram-for-steinberg-representations}
	Applying the theorem at the point $$\left(z_1,\dots,z_c\right) = \left(-\frac{c-1}{2}, -\frac{c-3}{2}, \dots, \frac{c-3}{2}, \frac{c-1}{2}\right),$$ and using \Cref{rem:steinberg-representation-projection-operator}, we see that the Steinberg representation (see \Cref{rem:steinberg-representation-image-of-intertwining}) satisfies the following similar relation. We have the following commutative diagram:
		$$\xymatrix{
		\tau^{\circ c} \ar[d]_{\Lift^{\left(-\frac{c-1}{2},\dots,\frac{c-1}{2}\right)}} \ar[rr]^{\ProjectionOperator_{\SteinbergRepresentation{\tau}{c}}}  & & \SteinbergRepresentation{\tau}{c} \ar[d]^{\Lift^{\left(\frac{c-1}{2},\dots,-\frac{c-1}{2}\right)}}\\
		\depthZeroRepresentation^{\left(-\frac{c-1}{2},\dots,\frac{c-1}{2}\right)} \ar[rr]^{M^{\ast}_{\weylElement{1^c}}} & & \SteinbergRepresentation{\depthZeroRepresentation}{c},}$$
	where $M^{\ast}_{\weylElement{1^c}} = \left(-1\right)^{\binom{c}{2}} \PoincarePolynomial{k}{c}\left(q^k\right)^{-1} \centralCharacter{\tau}\left(-1\right)^{-\binom{c}{2}} M^{\left(-\frac{c-1}{2},\dots, \frac{c-1}{2}\right)}_{\weylElement{1^c}}.$
\end{remark}

\section{$\left(k,c\right)$ Whittaker models of Speh representations}\label{sec:wss-models}

This section is devoted to explaining the notion of $(k,c)$ $\fieldCharacter$-Whittaker models of Speh representations in the context of finite fields and of non-archimedean local fields, and to establishing a relation between these two models.

\subsection{$\left(k,c\right)$ $\fieldCharacter$-Whittaker models of Speh representations over finite fields}\label{sec:whittaker-models-of-speh-representations-over-finite-fields}

We define a character $\fieldCharacterkc{k}{c} \colon \UnipotentRadicalForWss{k}{c}\left(\finiteField\right) \to \multiplicativegroup{\cComplex}$ by the formula
\begin{equation}\label{eq:definition-of-k-c-character}
	\fieldCharacterkc{k}{c} \begin{pmatrix}
		\IdentityMatrix{c} & X_1 & \ast & \ast  & \ast \\
		& \IdentityMatrix{c} & X_2 & \ast & \ast \\
		& & \ddots & \ddots & \ast  \\
		& & & \IdentityMatrix{c} &  X_{k-1} \\
		& & & & \IdentityMatrix{c}
	\end{pmatrix} = \fieldCharacter\left( \sum_{j=1}^{k-1} \trace X_j \right).
\end{equation}
If $\Sigma$ is a finite-dimensional representation of $\GL_{kc}\left(\finiteField\right)$, we say that $0 \ne v \in \Sigma$ is a \emph{$(k,c)$ $\fieldCharacter$-Whittaker vector} if for any $u \in \UnipotentRadicalForWss{k}{c}\left(\finiteField\right)$, we have
$\Sigma(u) v = \fieldCharacterkc{k}{c}\left(u\right) v$. An element $0 \ne \ell \in \Hom_{\UnipotentRadicalForWss{k}{c}(\finiteField)}\left(\Sigma, \fieldCharacterkc{k}{c}\right)$ is called a \emph{$(k,c)$ $\fieldCharacter$-Whittaker functional}. The representation $\Sigma$ admits a $(k,c)$ $\fieldCharacter$-Whittaker vector if and only if it admits a $(k,c)$ $\fieldCharacter$-Whittaker functional.

When $c=1$ we omit the $(k,c)$ from the notation and simply say that $v$ is a $\fieldCharacter$-Whittaker vector and that $\ell$ is a $\fieldCharacter$-Whittaker functional. We also denote $\UnipotentSubgroup_k = \UnipotentRadicalForWss{k}{1}$ and $\fieldCharacter = \fieldCharacterkc{k}{c}$. A representation $\Sigma$ of $\GL_k\left(\finiteField\right)$ is called \emph{generic} if it admits a $\fieldCharacter$-Whittaker vector, and is called \emph{of Whittaker type} if it admits a unique (up to scalar multiplication) $\fieldCharacter$-Whittaker vector. These definitions do not depend on the choice of the non-trivial character $\fieldCharacter$. It is well-known that irreducible cuspidal representation of $\GL_k\left(\finiteField\right)$ are generic. A classical result of Gelfand--Graev is that irreducible generic representations are of Whittaker type. By Frobenius reciprocity, this implies that if $\Sigma$ is an irreducible generic representation of $\GL_k\left(\finiteField\right)$, then there exists a unique subspace of $\Ind{\UnipotentSubgroup_k\left(\finiteField\right)}{\GL_k\left(\finiteField\right)}{\fieldCharacter}$ that is isomorphic to $\Sigma$. We denote this subspace by $\Whittaker\left(\Sigma, \fieldCharacter\right)$ and call it the \emph{$\fieldCharacter$-Whittaker model of $\Sigma$}.

In his master's thesis \cite[Theorem 6.4]{Carmon2023}, Carmon showed that if $\tau$ is an irreducible cuspidal representation of $\GL_k\left(\finiteField\right)$, then for any $c$ the Speh representation $\SpehRepresentation{\tau}{c}$ admits a unique (up to scalar multiplication) $(k,c)$ $\fieldCharacter$-Whittaker vector, and hence also a unique (up to scalar multiplication) $(k,c)$ $\fieldCharacter$-Whittaker functional\footnote{His statement is more general and concerns general representations of $(k,c)$ type, but we do not need the general result for this work.}. These results are analogous to the results of \cite{CaiFriedbergGourevitchKaplan2023}, which we will discuss in the next section. It follows from Frobenius reciprocity that there exists a unique subspace of $\Ind{\UnipotentRadicalForWss{k}{c}\left(\finiteField\right)}{\GL_{kc}\left(\finiteField\right)}{\fieldCharacterkc{k}{c}}$ that is isomorphic to $\SpehRepresentation{\tau}{c}$. We denote this subspace by $\Whittaker\left(\SpehRepresentation{\tau}{c}, \fieldCharacterkc{k}{c}\right)$, and call it \emph{the $(k,c)$ $\fieldCharacter$-Whittaker model of $\SpehRepresentation{\tau}{c}$}.

This space can be realized explicitly as follows. Choose an inner product $\innerproduct{\cdot}{\cdot}_{\SpehRepresentation{\tau}{c}}$ on $\SpehRepresentation{\tau}{c}$ that is invariant under the action of $\GL_{kc}\left(\finiteField\right)$. Let $0 \ne v_{\SpehRepresentation{\tau}{c}, \fieldCharacterkc{k}{c}} \in \SpehRepresentation{\tau}{c}$ be a $(k,c)$ $\fieldCharacter$-Whittaker vector. Then
$$ \Whittaker\left(\SpehRepresentation{\tau}{c}, \fieldCharacterkc{k}{c}\right) = \left\{ \left. g \mapsto \innerproduct{\SpehRepresentation{\tau}{c}\left(g\right) v'_{\SpehRepresentation{\tau}{c}}}{v_{\SpehRepresentation{\tau}{c}, \fieldCharacterkc{k}{c}}}_{\SpehRepresentation{\tau}{c}}  \right| v'_{\SpehRepresentation{\tau}{c}} \in \SpehRepresentation{\tau}{c} \right\}.$$
A distinguished element of the $(k,c)$ $\fieldCharacter$-Whittaker model of $\SpehRepresentation{\tau}{c}$ is given by choosing a function corresponding to a $(k,c)$ $\fieldCharacter$-Whittaker vector, normalized at the identity to have the value $1$. This is the \emph{Bessel--Speh function corresponding to $\SpehRepresentation{\tau}{c}$ with respect to $\fieldCharacter$}, defined as the following matrix coefficient:
$$ \besselSpehFunction{\tau}{c}\left(g\right) = \frac{\innerproduct{\SpehRepresentation{\tau}{c}\left(g\right) v_{\SpehRepresentation{\tau}{c}, \fieldCharacterkc{k}{c}}}{v_{\SpehRepresentation{\tau}{c}, \fieldCharacterkc{k}{c}}}_{\SpehRepresentation{\tau}{c}}}{\innerproduct{v_{\SpehRepresentation{\tau}{c}, \fieldCharacterkc{k}{c}}}{v_{\SpehRepresentation{\tau}{c}, \fieldCharacterkc{k}{c}}}_{\SpehRepresentation{\tau}{c}}},$$
where $g \in \GL_{kc}\left(\finiteField\right)$. It follows from the uniqueness property of the $(k,c)$ $\fieldCharacterkc{k}{c}$-Whittaker vector of $\SpehRepresentation{\tau}{c}$ that this function is the unique function $W \in \Whittaker\left(\SpehRepresentation{\tau}{c}, \fieldCharacterkc{k}{c}\right)$ satisfying $W\left(\IdentityMatrix{kc}\right) = 1$ and $W\left(g u\right) = \fieldCharacterkc{k}{c}\left(u\right) W\left(g\right)$ for any $g \in \GL_{kc}\left(\finiteField\right)$ and any $u \in \UnipotentRadicalForWss{k}{c}\left(\finiteField\right)$.

By \cite[Proposition 6.21]{Carmon2023}, for any $\left(k,c\right)$ $\fieldCharacter$-Whittaker vector $v_{\SpehRepresentation{\tau}{c}, \fieldCharacterkc{k}{c}} \in \SpehRepresentation{\tau}{c}$ and any $h \in \GL_c\left(\finiteField\right)$, we have that $\SpehRepresentation{\tau}{c}\left(\diag^k\left(h\right)\right) v_{\SpehRepresentation{\tau}{c}, \fieldCharacterkc{k}{c}} = \centralCharacter{\tau}\left(\det h\right) v_{\SpehRepresentation{\tau}{c}, \fieldCharacterkc{k}{c}}$, where $\centralCharacter{\tau}$ is the central character of $\tau$ and $$\diag^k\left(h\right) = \diag\left(h,\dots,h\right) \in \GL_{kc}\left(\finiteField\right).$$
This result is analogous to \cite[Lemma 12]{CaiFriedbergGourevitchKaplan2023}. It follows that for any $W \in \Whittaker\left(\SpehRepresentation{\tau}{c}, \fieldCharacterkc{k}{c}\right)$, any $g \in \GL_{kc}\left(\finiteField\right)$ and any $h \in \GL_c\left(\finiteField\right)$,
$$W\left( \diag^k\left(h\right) g \right) = \centralCharacter{\tau}\left(\det h\right) W\left(g\right),$$ and that $$\besselSpehFunction{\tau}{c}\left(g \diag^k\left(h\right)\right) = \centralCharacter{\tau}\left(\det h\right)\besselSpehFunction{\tau}{c}\left(g\right).$$
In particular it follows that the function $\besselSpehFunction{\tau}{c}$ is invariant under conjugation by elements of $\diag^k\left(\GL_c\left(\finiteField\right)\right)$.

\subsection{$\left(k,c\right)$ $\fieldCharacter$-Whittaker models of Speh representations over local fields}
Similarly to \Cref{sec:whittaker-models-of-speh-representations-over-finite-fields}, if $\localField$ is a non-archimedean local field, $\fieldCharacter \colon \localField \to \multiplicativegroup{\cComplex}$ is a non-trivial character, then we may define a character $\fieldCharacterkc{k}{c} \colon \UnipotentRadicalForWss{k}{c} \to \multiplicativegroup{\cComplex}$ by the formula \eqref{eq:definition-of-k-c-character}.

As before, when $c=1$, we omit the $(k,c)$ from the notation and write $\fieldCharacter = \fieldCharacterkc{k}{1}$, $\UnipotentSubgroup_k\left(\localField\right) = \UnipotentRadicalForWss{k}{1}\left(\localField\right)$.

Let $\depthZeroRepresentation$ be an irreducible supercuspidal representation of $\GL_k\left(\localField\right)$. It is well-known that $\depthZeroRepresentation$ is generic, that is, there exists a non-zero \emph{$\fieldCharacter$-Whittaker functional\footnote{\label{footnote:whittaker-vectors-do-not-exist-in-local-fields}The notion of a $(k,c)$ $\fieldCharacter$-Whittaker vector does not exist for smooth representations over local fields.}} for $\depthZeroRepresentation$, i.e., an element $0 \ne \WhittakerFunctional{\depthZeroRepresentation} \in \Hom_{\UnipotentSubgroup_k\left(\localField\right)}\left(\depthZeroRepresentation, \fieldCharacter\right)$. Moreover, it is well-known that such element is unique, up to scalar multiplication. By Frobenius reciprocity, there exists a unique subspace of $\Ind{\UnipotentSubgroup_k\left(\localField\right)}{\GL_k\left(\localField\right)}{\fieldCharacter}$ that is isomorphic to $\depthZeroRepresentation$. This is \emph{the $\fieldCharacter$-Whittaker model of $\depthZeroRepresentation$}, which we denote by $\Whittaker\left(\depthZeroRepresentation, \fieldCharacter\right)$.

In \cite{CaiFriedbergGourevitchKaplan2023} it was shown that the Speh representation $\SpehRepresentation{\depthZeroRepresentation}{c}$ is a representation of type $\left(k,c\right)$. In particular, this means that $$ \dim \Hom_{\UnipotentRadicalForWss{k}{c}\left(\localField\right)}\left(\SpehRepresentation{\depthZeroRepresentation}{c}, \fieldCharacterkc{k}{c}\right) = 1.$$
We call a non-zero element in $\Hom_{\UnipotentRadicalForWss{k}{c}\left(\localField\right)}\left(\SpehRepresentation{\depthZeroRepresentation}{c}, \fieldCharacterkc{k}{c}\right)$ a \emph{ $\left(k,c\right)$ $\fieldCharacter$-Whittaker functional\cref{footnote:whittaker-vectors-do-not-exist-in-local-fields}}.

As before, by Frobenius reciprocity there exists a unique subspace of $\Ind{\UnipotentRadicalForWss{k}{c}}{\GL_{kc}\left(\localField\right)}{\fieldCharacterkc{k}{c}}$ that is isomorphic to $\SpehRepresentation{\depthZeroRepresentation}{c}$. We  denote this subspace by $\Whittaker\left(\SpehRepresentation{\depthZeroRepresentation}{c}, \fieldCharacterkc{k}{c}\right)$, and call it the \emph{$(k,c)$ $\fieldCharacter$-Whittaker model of $\SpehRepresentation{\depthZeroRepresentation}{c}$}.

By \cite[Lemma 12]{CaiFriedbergGourevitchKaplan2023}, we have the following equivariance property.
\begin{proposition}\label{prop:whittaker-left-diag-equivariance}
	 For any $W \in \Whittaker\left(\SpehRepresentation{\depthZeroRepresentation}{c}, \fieldCharacterkc{k}{c}\right)$, any $h \in \GL_c\left(\localField\right)$ and any $g \in \GL_{kc}\left(\localField\right)$, $$W\left(\diag^k\left(h\right)g\right) = \centralCharacter{\depthZeroRepresentation}\left(\det h\right) \cdot W\left(g\right),$$
	where $\centralCharacter{\depthZeroRepresentation}$ is the central character of $\depthZeroRepresentation$ and where $$\diag^k\left(h\right) = \diag\left(h,\dots,h\right) \in \GL_{kc}\left(\localField\right).$$
\end{proposition}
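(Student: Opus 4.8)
The plan is to combine the uniqueness of the $(k,c)$ $\fieldCharacter$-Whittaker functional with Schur's lemma, thereby reducing the claim to the identification of a single character of $\multiplicativegroup{\localField}$. First I would note that $\diag^k(h)$ lies in the Levi subgroup $\GL_c(\localField) \times \dots \times \GL_c(\localField)$ of $\GL_{kc}\left(\localField\right)$, hence normalizes $\UnipotentRadicalForWss{k}{c}\left(\localField\right)$, and that conjugation by it replaces the super-diagonal block $X_i$ of an element $u \in \UnipotentRadicalForWss{k}{c}\left(\localField\right)$ by $h X_i h^{-1}$, so it preserves $\fieldCharacterkc{k}{c}$ because $\trace\left(h X_i h^{-1}\right) = \trace X_i$. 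Consequently, for fixed $h$ the operator $\left(L_h W\right)(g) = W\left(\diag^k(h) g\right)$ sends $\Ind{\UnipotentRadicalForWss{k}{c}\left(\localField\right)}{\GL_{kc}\left(\localField\right)}{\fieldCharacterkc{k}{c}}$ to itself and commutes with right translation. Since, by the uniqueness of the $(k,c)$ $\fieldCharacter$-Whittaker functional, $\Whittaker\left(\SpehRepresentation{\depthZeroRepresentation}{c}, \fieldCharacterkc{k}{c}\right)$ is the \emph{unique} subspace of this induced space isomorphic to $\SpehRepresentation{\depthZeroRepresentation}{c}$, the injective intertwining operator $L_h$ must carry it onto itself, and by Schur's lemma it then acts on it by a scalar $\chi(h) \in \multiplicativegroup{\cComplex}$. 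The relation $L_{h_1} \circ L_{h_2} = L_{h_2 h_1}$ forces $\chi$ to be a (smooth) character of $\GL_c(\localField)$, and since every character of $\GL_c(\localField)$ is trivial on its derived group $\mathrm{SL}_c(\localField)$, we have $\chi = \mu \circ \det$ for some character $\mu$ of $\multiplicativegroup{\localField}$. Thus $W\left(\diag^k(h) g\right) = \mu\left(\det h\right) W(g)$, and it remains to prove $\mu = \centralCharacter{\depthZeroRepresentation}$.

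Taking $h = z \IdentityMatrix{c}$ with $z \in \multiplicativegroup{\localField}$, the matrix $\diag^k\left(z\IdentityMatrix{c}\right) = z \IdentityMatrix{kc}$ acts by the central character of $\SpehRepresentation{\depthZeroRepresentation}{c}$; since the shifts $\abs{\det}^{z_j}$ in the definition of $\SpehRepresentation{\depthZeroRepresentation}{c}$ are centered so that $\sum_j z_j = 0$, this central character is $\centralCharacter{\depthZeroRepresentation}^{c}$, whence $\mu(z^c) = \centralCharacter{\depthZeroRepresentation}(z)^{c}$, i.e.\ $\mu$ and $\centralCharacter{\depthZeroRepresentation}$ already agree on $\left(\multiplicativegroup{\localField}\right)^c$. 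To remove the ambiguity on $\multiplicativegroup{\localField}\slash\left(\multiplicativegroup{\localField}\right)^c$ I would induct on $c$ using the recursive construction of the $(k,c)$-functional from the embedding $\SpehRepresentation{\depthZeroRepresentation}{c} \hookrightarrow \abs{\det}^{-\frac{1}{2}} \SpehRepresentation{\depthZeroRepresentation}{c-1} \times \abs{\det}^{\frac{c-1}{2}} \depthZeroRepresentation$, the case $c = 1$ being exactly the central character of $\depthZeroRepresentation$. For the inductive step, evaluate $\chi$ at $h = \diag\left(\IdentityMatrix{c-1}, t\right)$: conjugating $\diag^k(h)$ through the unipotent integral defining the $(k,c)$-functional, it normalizes that unipotent and preserves its character, and becomes $\diag^k\left(\IdentityMatrix{c-1}\right)$ on the $\SpehRepresentation{\depthZeroRepresentation}{c-1}$-factor together with the central element $t\IdentityMatrix{k}$ on the $\depthZeroRepresentation$-factor, hence contributes the scalar $\centralCharacter{\depthZeroRepresentation}(t)$; therefore $\mu(t) = \chi\left(\diag\left(\IdentityMatrix{c-1}, t\right)\right) = \centralCharacter{\depthZeroRepresentation}(t)$ for all $t$, as desired. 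Alternatively, one may simply invoke \cite[Lemma 12]{CaiFriedbergGourevitchKaplan2023}, of which this proposition is the supercuspidal special case.

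I expect the second step to be the main obstacle. The group $\diag^k\left(\GL_c(\localField)\right)$ is precisely the stabilizer of the pair $\left(\UnipotentRadicalForWss{k}{c}\left(\localField\right), \fieldCharacterkc{k}{c}\right)$ inside the Levi, so the formal Schur argument of the first step yields no further relations among the values of $\mu$; pinning $\mu$ down on $\multiplicativegroup{\localField}\slash\left(\multiplicativegroup{\localField}\right)^c$ genuinely requires entering the explicit recursive realization of the $(k,c)$ $\fieldCharacter$-Whittaker functional and checking that $\diag^k\left(\diag\left(\IdentityMatrix{c-1},t\right)\right)$ is compatible with the unipotent subgroup and character appearing there. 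This is routine bookkeeping, but it is the only place in the argument where more than soft reasoning is needed.
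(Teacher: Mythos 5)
Your proposal is correct, but it is worth noting that the paper does not prove this statement at all: it simply records it as a consequence of \cite[Lemma 12]{CaiFriedbergGourevitchKaplan2023}, which is exactly the alternative you mention in your last sentence. Your primary argument is therefore a genuinely different, self-contained route. The soft part (the operator $L_h$ preserves the pair $\left(\UnipotentRadicalForWss{k}{c}, \fieldCharacterkc{k}{c}\right)$, hence by uniqueness of the $(k,c)$ model and Schur's lemma acts by a scalar $\chi(h)$ factoring through $\det$) is fine, and you correctly identify that the only real content is pinning down $\mu$ on all of $\multiplicativegroup{\localField}$ rather than just on $c$-th powers. Your evaluation at $h = \diag\left(\IdentityMatrix{c-1}, t\right)$ does work, and in fact it makes the central-character step redundant: conjugating $\kappa$ past $\diag^k(h)$ produces $\diag\left(\IdentityMatrix{k(c-1)}, t\IdentityMatrix{k}\right)$, and the three $\abs{t}$-powers that arise — the Jacobian $\abs{t}^{(c-1)\binom{k}{2}}$ from the change of variable $Y \mapsto t^{-1}Y$ on $\mathcal{Y}$, the factor $\delta_{\ParabolicSubgroup_{(k(c-1),k)}}^{1/2} = \abs{t}^{-k^2(c-1)/2}$ from the normalized induction, and the twist $\abs{\det}^{\frac{c-1}{2}}$ on the $\depthZeroRepresentation$-factor contributing $\abs{t}^{k(c-1)/2}$ — cancel exactly, leaving $\centralCharacter{\depthZeroRepresentation}(t)$ from the center of the $\depthZeroRepresentation$-factor. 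So the "routine bookkeeping" you defer does close. What your approach buys is independence from the external lemma (at the cost of redoing a computation that CFGK have already packaged); what the paper's citation buys is brevity, and also coverage of the general generic (not necessarily supercuspidal) case, of which your argument handles only the instance needed here.
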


\subsection{$(k,c)$ $\fieldCharacter$-Whittaker models of Speh representations of level zero representations}\label{subsec:lifts-of-whittaker-functions}

Let $\tau$ and $\depthZeroRepresentation$ be as in \Cref{subsec:speh-representations-of-depth-zero-representations}. Let $\fieldCharacter \colon \localField \to \multiplicativegroup{\cComplex}$ be an additive character with conductor $\maximalIdeal$, i.e., $\fieldCharacter$ is trivial on $\maximalIdeal$ but not on $\ringOfIntegers$. We also realize $\fieldCharacter$ as a character of $\finiteField = \ringOfIntegers \slash \maximalIdeal$. Let $\WhittakerFunctional{\tau} \colon \tau \rightarrow \cComplex$ be a $\fieldCharacter$-Whittaker functional. A $\fieldCharacter$-Whittaker functional $\WhittakerFunctional{\depthZeroRepresentation} \colon \depthZeroRepresentation \to \cComplex$ is given by
$$ \standardForm{f}{\WhittakerFunctional{\depthZeroRepresentation}} = \int_{\UnipotentSubgroup_k \left(\ringOfIntegers\right) \backslash \UnipotentSubgroup_k \left(\localField\right)} \standardForm{f\left( u \right)}{\WhittakerFunctional{\tau}} \fieldCharacter^{-1}\left(u\right) \differential u,$$
where the Haar measures are normalized such that $\UnipotentSubgroup_k \left(\ringOfIntegers\right)$ has volume $1$. See \cite[Proposition 3.7]{YeZeligher18}.

This allows us to lift Whittaker functions $W \in \Whittaker\left(\tau, \fieldCharacter\right)$ to $\Lift\left( W\right) \in \Whittaker\left(\depthZeroRepresentation, \fieldCharacter\right)$ as follows. Any $W \in \Whittaker\left(\tau, \fieldCharacter\right)$ can be written as $W\left(g\right) = W_{v_\tau}\left(g\right) = \standardForm{\tau\left(g\right) v_{\tau}}{\WhittakerFunctional{\tau}}$, where $g \in \GL_k\left(\finiteField\right)$ and $v_{\tau} \in \tau$. We define for $g \in \GL_k\left(\localField\right)$, $$\Lift\left(W\right)\left(g\right) = \Lift\left(W_{v_\tau}\right)\left(g\right) = \standardForm{\depthZeroRepresentation \left(g\right) \Lift\left(v_\tau\right)}{\WhittakerFunctional{\depthZeroRepresentation}}.$$
We have the following formula for $\Lift\left(W\right)$ (\cite[Proposition 3.9]{YeZeligher18}).
\begin{proposition}\label{prop:whittaker-functional-lift}
	For any $W \in \Whittaker\left(\tau, \fieldCharacter\right)$, the function $\Lift\left(W\right) \in \Whittaker\left(\depthZeroRepresentation, \fieldCharacter\right)$ is supported on $\multiplicativegroup{\localField} \cdot \UnipotentSubgroup_k\left(\localField\right) \cdot \GL_k\left(\ringOfIntegers\right)$ and satisfies
	$$\Lift \left(W\right)\left(z u k_0\right) = \centralCharacter{\depthZeroRepresentation}\left(z\right)\fieldCharacter\left(u\right) W\left(\quotientMap\left(k_0\right)\right),$$
	for any $z \in \multiplicativegroup{\localField}$, $u \in \UnipotentSubgroup_k$ and $k_0 \in \GL_k\left(\ringOfIntegers\right)$.
\end{proposition}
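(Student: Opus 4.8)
The plan is to unwind all the definitions and reduce the computation of $\Lift\left(W\right)$ to evaluating $\Lift\left(v_{\tau}\right)$ on $\GL_k\left(\ringOfIntegers\right)$. Write $W = W_{v_{\tau}}$ as in the paragraph preceding the statement, so that $\Lift\left(W\right)\left(g\right) = \standardForm{\depthZeroRepresentation\left(g\right)\Lift\left(v_{\tau}\right)}{\WhittakerFunctional{\depthZeroRepresentation}}$. Using that $\depthZeroRepresentation$ acts by right translation and the integral formula for $\WhittakerFunctional{\depthZeroRepresentation}$ recalled in \Cref{subsec:lifts-of-whittaker-functions},
\[
	\Lift\left(W\right)\left(g\right) = \int_{\UnipotentSubgroup_k\left(\ringOfIntegers\right) \backslash \UnipotentSubgroup_k\left(\localField\right)} \standardForm{\Lift\left(v_{\tau}\right)\left(ug\right)}{\WhittakerFunctional{\tau}}\, \fieldCharacter^{-1}\left(u\right)\, \differential u .
\]
The integrand descends to the quotient because $\fieldCharacter$ is trivial on $\ringOfIntegers$ (its conductor being $\maximalIdeal$) and $\WhittakerFunctional{\tau}$ is $\fieldCharacter$-equivariant, and it has finite support there because $\Lift\left(v_{\tau}\right)$ is supported on $\multiplicativegroup{\localField} \cdot \GL_k\left(\ringOfIntegers\right)$. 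Establishing this finiteness (equivalently, the convergence of the integral defining $\WhittakerFunctional{\depthZeroRepresentation}$) is the \emph{one genuinely non-formal point}: it follows from a norm estimate on the set of $u \in \UnipotentSubgroup_k\left(\localField\right)$ with $ug \in \multiplicativegroup{\localField} \cdot \GL_k\left(\ringOfIntegers\right)$, since taking determinants pins down $\abs{z}$ for such $u$, which bounds $\Norm{ug}$ and hence $\Norm{u}$; this is precisely what \cite[Proposition 3.7]{YeZeligher18} provides, and one invokes it here.

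The computational heart is the elementary identity
\[
	\UnipotentSubgroup_k\left(\localField\right) \cap \left(\multiplicativegroup{\localField} \cdot \GL_k\left(\ringOfIntegers\right)\right) = \UnipotentSubgroup_k\left(\ringOfIntegers\right) .
\]
To see it, if $u = z k_0$ with $u$ upper triangular unipotent, $z$ a scalar matrix and $k_0 \in \GL_k\left(\ringOfIntegers\right)$, then $\det k_0 = z^{-k}$ must be a unit, forcing $z \in \multiplicativegroup{\ringOfIntegers}$, and then $k_0 = z^{-1}u$ has all entries in $\ringOfIntegers$ precisely when $u$ does. With this in hand, $\Lift\left(v_{\tau}\right)\left(ug\right) \ne 0$ forces $ug \in \multiplicativegroup{\localField} \cdot \GL_k\left(\ringOfIntegers\right)$, i.e. $g = u^{-1}\cdot(ug) \in \multiplicativegroup{\localField} \cdot \UnipotentSubgroup_k\left(\localField\right) \cdot \GL_k\left(\ringOfIntegers\right)$; hence $\Lift\left(W\right)$ vanishes identically outside this set, which is the support statement.

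It remains to compute $\Lift\left(W\right)$ on $\multiplicativegroup{\localField} \cdot \UnipotentSubgroup_k\left(\localField\right) \cdot \GL_k\left(\ringOfIntegers\right)$. Since $\depthZeroRepresentation\left(z\right)$ acts by the scalar $\centralCharacter{\depthZeroRepresentation}\left(z\right)$ and $\WhittakerFunctional{\depthZeroRepresentation}$ is a $\fieldCharacter$-Whittaker functional for $\depthZeroRepresentation$, one gets immediately $\Lift\left(W\right)\left(z u k_0\right) = \centralCharacter{\depthZeroRepresentation}\left(z\right) \fieldCharacter\left(u\right) \Lift\left(W\right)\left(k_0\right)$, so everything reduces to $k_0 \in \GL_k\left(\ringOfIntegers\right)$. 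For such $k_0$ the identity above shows $\Lift\left(v_{\tau}\right)\left(u k_0\right) \ne 0$ forces $u \in \UnipotentSubgroup_k\left(\ringOfIntegers\right)$, so the integrand is supported on the identity coset, which has volume $1$; on it $\fieldCharacter^{-1}\left(u\right) = 1$ and $\Lift\left(v_{\tau}\right)\left(k_0\right) = \tau\left(\quotientMap\left(k_0\right)\right) v_{\tau}$, hence $\standardForm{\Lift\left(v_{\tau}\right)\left(k_0\right)}{\WhittakerFunctional{\tau}} = \standardForm{\tau\left(\quotientMap\left(k_0\right)\right) v_{\tau}}{\WhittakerFunctional{\tau}} = W\left(\quotientMap\left(k_0\right)\right)$. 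Therefore $\Lift\left(W\right)\left(k_0\right) = W\left(\quotientMap\left(k_0\right)\right)$, and combining this with the displayed equivariance gives $\Lift\left(W\right)\left(z u k_0\right) = \centralCharacter{\depthZeroRepresentation}\left(z\right) \fieldCharacter\left(u\right) W\left(\quotientMap\left(k_0\right)\right)$, as asserted. Apart from the mild convergence point flagged above, the whole argument is bookkeeping around the factorization $\multiplicativegroup{\localField} \cdot \UnipotentSubgroup_k\left(\localField\right) \cdot \GL_k\left(\ringOfIntegers\right)$ and the elementary intersection identity.
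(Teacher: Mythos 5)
Your argument is correct, and it is worth noting that the paper does not actually prove this proposition: it is quoted from \cite[Proposition 3.9]{YeZeligher18} (with the functional $\WhittakerFunctional{\depthZeroRepresentation}$ itself coming from \cite[Proposition 3.7]{YeZeligher18}). Your proof is the standard direct verification of that cited result: unfold $\WhittakerFunctional{\depthZeroRepresentation}$ as an integral over $\lquot{\UnipotentSubgroup_k\left(\ringOfIntegers\right)}{\UnipotentSubgroup_k\left(\localField\right)}$, use the support of $\Lift\left(v_{\tau}\right)$ together with the intersection identity $\UnipotentSubgroup_k\left(\localField\right) \cap \left(\multiplicativegroup{\localField}\cdot\GL_k\left(\ringOfIntegers\right)\right) = \UnipotentSubgroup_k\left(\ringOfIntegers\right)$ to collapse the integral to the identity coset, and propagate the value by $\left(\UnipotentSubgroup_k,\fieldCharacter\right)$-equivariance and the central character. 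All the steps check out, including the convergence point, which you correctly isolate and attribute.

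One slip in the justification: you assert that $\fieldCharacter$ is trivial on $\ringOfIntegers$; by the paper's convention, conductor $\maximalIdeal$ means $\fieldCharacter$ is trivial on $\maximalIdeal$ but \emph{not} on $\ringOfIntegers$ (otherwise the induced character of $\finiteField$ would be trivial and $\WhittakerFunctional{\tau}$ would not be a genuine Whittaker functional). The descent of the integrand to the quotient holds because triviality on $\maximalIdeal$ makes $\fieldCharacter\restriction_{\UnipotentSubgroup_k\left(\ringOfIntegers\right)}$ factor through $\quotientMap$ and match the finite-field character, so the factor $\fieldCharacter^{-1}\left(u_0\right)$ cancels against the $\fieldCharacter\left(\quotientMap\left(u_0\right)\right)$ coming from the $\WhittakerFunctional{\tau}$-equivariance. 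This is exactly the mechanism your sentence is gesturing at, so the argument stands once the ideal is corrected.
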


Using the Whittaker functional $\WhittakerFunctional{\depthZeroRepresentation}$, we may construct a $\left(k,c\right)$ $\fieldCharacter$-Whittaker functional for $\SpehRepresentation{\depthZeroRepresentation}{c}$. This is a recursive process which is explained in \cite[Section 2.4]{CaiFriedbergGourevitchKaplan2023}. We recall this process now.

Let $c = c_1 + c_2$ where $0 < c_1, c_2 < c$ and suppose that we have already constructed $\left(k, c_1\right)$ and $\left(k, c_2 \right)$ $\fieldCharacter$-Whittaker functionals $\gShortSpehWhittakerFunctional{\depthZeroRepresentation}{k}{c_1}$ and $\gShortSpehWhittakerFunctional{\depthZeroRepresentation}{k}{c_2}$ for $\SpehRepresentation{\depthZeroRepresentation}{c_1}$ and $\SpehRepresentation{\depthZeroRepresentation}{c_2}$, respectively.

Let us realize $\SpehRepresentation{\depthZeroRepresentation}{c}$ as a subrepresentation of the parabolic induction $\abs{\det}^{-\frac{c_2}{2}} \SpehRepresentation{\depthZeroRepresentation}{c_1} \times \abs{\det}^{\frac{c_1}{2}} \SpehRepresentation{\depthZeroRepresentation}{c_2}$.

Let $\kappa \in \SymmetricGroup_{kc}$ be the permutation $$\kappa\left(1 + a + b c\right) = \begin{cases}
	1 + a + b c_1 & 0 \le a \le c_1 - 1,\\
	1+ a + c_1 k + b c_2 & c_1 \le a \le c_1 + c_2 - 1,
\end{cases}$$
where $0 \le a \le c-1$ and $0 \le b \le k-1$.
Also denote by $\kappa$ the column permutation matrix corresponding to $\kappa$, that is
\begin{equation}\label{eq:definition-of-kappa}
	\kappa = \begin{pmatrix}
		\transpose{e_{\kappa\left(1\right)}} & \dots & \transpose{e_{\kappa\left(kc\right)}}
	\end{pmatrix} = \begin{pmatrix}
		\IdentityMatrix{c_1} & 0\\
		0 & 0 & \IdentityMatrix{c_1} & 0\\
		\vdots & \vdots & &\ddots & \ddots\\
		0 & 0 & \cdots & 0 & 0 & \IdentityMatrix{c_1} & 0\\
		0 & \IdentityMatrix{c_2}\\
		0 & 0 & 0 & \IdentityMatrix{c_2}\\
		\vdots & \vdots & & \ddots & \ddots\\
		0 & 0 & \cdots & 0 & 0 & 0 & \IdentityMatrix{c_2}
	\end{pmatrix},
\end{equation}
where for every $1 \le j \le kc$, $\transpose{e_j}$ is the $j$th standard column vector. Suppose that $R$ is either a ring or a fractional ideal of a ring. Let \begin{equation}\label{eq:Y-R-subgroup}
	\mathcal{Y}\left(R\right) = \left\{ \begin{pmatrix}
		\IdentityMatrix{k c_1}\\
		Y & \IdentityMatrix{k c_2} \end{pmatrix} \mid Y = \begin{pmatrix}
		0& y_{12} & y_{13} & \dots & y_{1k}\\
		&0 & y_{23} & \dots & y_{2k}\\
		& & \ddots & \ddots & \vdots\\
		& & & 0 & y_{k-1,k}\\
		& & & & 0
	\end{pmatrix} \mid y_{ij} \in \Mat{c_2}{c_1}\left(R\right) \right\}.
\end{equation}

By \cite[Lemma 9]{CaiFriedbergGourevitchKaplan2023} the functional $\ell_{\SpehRepresentation{\depthZeroRepresentation}{c}} \colon \SpehRepresentation{\depthZeroRepresentation}{c} \to \cComplex$ defined by the formula
\begin{equation}\label{eq:recursive-formula-for-k-c-whittaker-function}
	\standardForm{\Phi}{\gShortSpehWhittakerFunctional{\depthZeroRepresentation}{k}{c}} = \int_{\mathcal{Y}\left(\localField\right)} \standardForm{\Phi\left(  y \kappa  \right)}{\gShortSpehWhittakerFunctional{\depthZeroRepresentation}{k}{c_1} \otimes \gShortSpehWhittakerFunctional{\depthZeroRepresentation}{k}{c_2}} \differential y
\end{equation}
is a non-zero $\left(k,c\right)$ $\fieldCharacter$-Whittaker functional of $\SpehRepresentation{\depthZeroRepresentation}{c}$.

By realizing $\SpehRepresentation{\tau}{c}$ as a subrepresentation of the parabolic induction $\SpehRepresentation{\tau}{c_1} \circ \SpehRepresentation{\tau}{c_2}$, we form the exact same process over a finite field to obtain a non-zero $\left(k,c\right)$ $\fieldCharacter$-Whittaker functional $\ell_{\SpehRepresentation{\tau}{c}} \colon \SpehRepresentation{\tau}{c_1} \circ \SpehRepresentation{\tau}{c_2} \to \cComplex$, given by the recursive formula
$$ \standardForm{\Phi}{\gShortSpehWhittakerFunctional{\tau}{k}{c}} = \frac{1}{\sizeof{\mathcal{Y}\left(\finiteField\right)}} \sum_{y \in \mathcal{Y}\left(\finiteField\right)} \standardForm{\Phi\left(  y \kappa \right)}{\gShortSpehWhittakerFunctional{\tau}{k}{c_1} \otimes \gShortSpehWhittakerFunctional{\tau}{k}{c_2}}. $$

For our applications, we will need to reduce the domain of integration in the formula for $\gShortSpehWhittakerFunctional{\depthZeroRepresentation}{k}{c}$. This is achieved by the method of root exchange (see \Cref{appendix:root-exchange}).

\begin{proposition}\label{prop:integrand-for-k-c-functional}
	For any $\Phi \in \SpehRepresentation{\depthZeroRepresentation}{c} \subset \abs{\det}^{-\frac{c_2}{2}} \SpehRepresentation{\depthZeroRepresentation}{c_1} \times \abs{\det}^{\frac{c_1}{2}} \SpehRepresentation{\depthZeroRepresentation}{c_2}$, such that $\Phi$ is right invariant under $\IdentityMatrix{kc} + \squareMatrix_{kc}\left(\maximalIdeal^{r+1}\right)$ for $r \ge 0$, we have $$\standardForm{\Phi}{\gShortSpehWhittakerFunctional{\depthZeroRepresentation}{k}{c}} = \int_{\mathcal{Y}\left(\uniformizer^{-r} \ringOfIntegers \right)} \standardForm{\Phi\left( y \kappa \right)}{\gShortSpehWhittakerFunctional{\depthZeroRepresentation}{k}{c_1} \otimes \gShortSpehWhittakerFunctional{\depthZeroRepresentation}{k}{c_2}} \differential y.$$
\end{proposition}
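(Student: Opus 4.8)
The plan is to prove the stronger statement that, when $\Phi$ is right invariant under $\IdentityMatrix{kc}+\squareMatrix_{kc}\left(\maximalIdeal^{r+1}\right)$, the integrand $y\mapsto\standardForm{\Phi\left(y\kappa\right)}{\gShortSpehWhittakerFunctional{\depthZeroRepresentation}{k}{c_1}\otimes\gShortSpehWhittakerFunctional{\depthZeroRepresentation}{k}{c_2}}$ vanishes as soon as some block $y_{ij}$ of $y\in\mathcal{Y}\left(\localField\right)$ lies outside $\Mat{c_2}{c_1}\left(\uniformizer^{-r}\ringOfIntegers\right)$; since $\Phi$ is smooth this forces the integrand to be compactly supported, and the integral in \eqref{eq:recursive-formula-for-k-c-whittaker-function} then reduces to the one over $\mathcal{Y}\left(\uniformizer^{-r}\ringOfIntegers\right)$, which is the assertion. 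Two ingredients enter. First, since $\SpehRepresentation{\depthZeroRepresentation}{c}$ is realized inside $\abs{\det}^{-\frac{c_2}{2}}\SpehRepresentation{\depthZeroRepresentation}{c_1}\times\abs{\det}^{\frac{c_1}{2}}\SpehRepresentation{\depthZeroRepresentation}{c_2}$, every such $\Phi$ is left invariant under $\UnipotentRadical_{(kc_1,kc_2)}\left(\localField\right)$, while the Levi $\GL_{kc_1}\left(\localField\right)\times\GL_{kc_2}\left(\localField\right)$ acts through $\SpehRepresentation{\depthZeroRepresentation}{c_1}\otimes\SpehRepresentation{\depthZeroRepresentation}{c_2}$ (up to unitary twists and the modulus, both trivial on unipotent elements). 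Second, $\gShortSpehWhittakerFunctional{\depthZeroRepresentation}{k}{c_1}\otimes\gShortSpehWhittakerFunctional{\depthZeroRepresentation}{k}{c_2}$ transforms under $\UnipotentRadicalForWss{k}{c_1}\left(\localField\right)\times\UnipotentRadicalForWss{k}{c_2}\left(\localField\right)$ by the character $\fieldCharacterkc{k}{c_1}\otimes\fieldCharacterkc{k}{c_2}$.

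For the vanishing I would run a root exchange, following \Cref{appendix:root-exchange}. Fix a block $(i,j)$ with $y_{ij}\notin\Mat{c_2}{c_1}\left(\uniformizer^{-r}\ringOfIntegers\right)$. Using the explicit matrices \eqref{eq:definition-of-kappa} and \eqref{eq:Y-R-subgroup}, I would exhibit, for each $Z\in\Mat{c_1}{c_2}\left(\maximalIdeal^{r+1}\right)$, an element $\overline{y}=\overline{y}(Z)\in\IdentityMatrix{kc}+\squareMatrix_{kc}\left(\maximalIdeal^{r+1}\right)$ supported in the block position dual to that of $y_{ij}$ under $\kappa$, such that $\gamma:=\left(y\kappa\right)\overline{y}\left(y\kappa\right)^{-1}$ lies in a root subgroup of the Levi contained in one of the factors $\UnipotentRadicalForWss{k}{c_1}\left(\localField\right)$, $\UnipotentRadicalForWss{k}{c_2}\left(\localField\right)$, on which the transformation character of $\gShortSpehWhittakerFunctional{\depthZeroRepresentation}{k}{c_1}\otimes\gShortSpehWhittakerFunctional{\depthZeroRepresentation}{k}{c_2}$ evaluates to $\fieldCharacter\left(\trace\left(y_{ij}Z\right)\right)$. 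Then right invariance of $\Phi$ under $\overline{y}$ gives $\Phi\left(y\kappa\right)=\Phi\left(y\kappa\,\overline{y}\right)=\Phi\left(\gamma\cdot y\kappa\right)$, so pairing with $\gShortSpehWhittakerFunctional{\depthZeroRepresentation}{k}{c_1}\otimes\gShortSpehWhittakerFunctional{\depthZeroRepresentation}{k}{c_2}$ and invoking the two ingredients above shows that $\standardForm{\Phi\left(y\kappa\right)}{\gShortSpehWhittakerFunctional{\depthZeroRepresentation}{k}{c_1}\otimes\gShortSpehWhittakerFunctional{\depthZeroRepresentation}{k}{c_2}}$ equals $\fieldCharacter\left(\trace\left(y_{ij}Z\right)\right)$ times itself. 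Since $\fieldCharacter$ has conductor $\maximalIdeal$ and $y_{ij}\notin\Mat{c_2}{c_1}\left(\uniformizer^{-r}\ringOfIntegers\right)$, the trace pairing yields a $Z\in\Mat{c_1}{c_2}\left(\maximalIdeal^{r+1}\right)$ with $\fieldCharacter\left(\trace\left(y_{ij}Z\right)\right)\ne1$, so the integrand vanishes at $y$; ranging over all offending blocks completes the argument.

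The main obstacle is the combinatorial bookkeeping hidden in ``exhibit a dual element $\overline{y}$'': one must follow, through the shuffle $\kappa$ of \eqref{eq:definition-of-kappa}, exactly which root subgroup of $\GL_{kc}\left(\localField\right)$ the block $y_{ij}$ of $\mathcal{Y}$ occupies after conjugation, produce the transposed root subgroup, and verify that its commutator with $y\kappa$ falls inside a factor $\UnipotentRadicalForWss{k}{c_t}$ of the Levi unipotent rather than inside $\UnipotentRadical_{(kc_1,kc_2)}$ (where it would act trivially on $\Phi$ and produce nothing), with precisely the character claimed — and that these choices can be made compatibly for every block $(i,j)$. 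The remaining points, namely that the integrand is compactly supported once $\Phi$ is smooth and that the restricted integral then equals the full one, are immediate.
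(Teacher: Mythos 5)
Your strategy is the paper's: the reduction is carried out by the root exchange of \Cref{appendix:root-exchange} with $r_{\binom{k}{2}+1-m}=r^{-}_{i_m,j_m}$ and $r^{\ast}_{\binom{k}{2}+1-m}=r^{+}_{i_m,j_m}$, using smoothness of $\Phi$, its left $\UnipotentRadical_{(kc_1,kc_2)}$-invariance, and the $\left(\UnipotentRadicalForWss{k}{c_t},\fieldCharacterkc{k}{c_t}\right)$-equivariance of the two functionals. However, the form in which you run it has a genuine gap. You assert the \emph{pointwise} identity
$\standardForm{\Phi\left(y\kappa\right)}{\gShortSpehWhittakerFunctional{\depthZeroRepresentation}{k}{c_1}\otimes\gShortSpehWhittakerFunctional{\depthZeroRepresentation}{k}{c_2}}=\fieldCharacter\left(\trace\left(y_{ij}Z\right)\right)\standardForm{\Phi\left(y\kappa\right)}{\gShortSpehWhittakerFunctional{\depthZeroRepresentation}{k}{c_1}\otimes\gShortSpehWhittakerFunctional{\depthZeroRepresentation}{k}{c_2}}$
with the \emph{same} $y$ on both sides, which would indeed force the integrand to vanish off $\mathcal{Y}\left(\uniformizer^{-r}\ringOfIntegers\right)$. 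This requires $\left(y\kappa\right)\overline{y}\left(y\kappa\right)^{-1}=y\left(\kappa\overline{y}\kappa^{-1}\right)y^{-1}$ to decompose as (a character-bearing element of some $\UnipotentRadicalForWss{k}{c_t}$ inside the Levi) times (an element of $\UnipotentRadical_{(kc_1,kc_2)}$), with no leftover in $\mathcal{Y}$. That holds when $y$ has a single nonzero block, but the conjugation is by \emph{all} of $y$, and the cross-commutators of $r^{+}_{i,j}\left(Z\right)$ with the other blocks $r^{-}_{i',j'}\left(y_{i'j'}\right)$ do not stay in those two subgroups. Concretely, for $k=3$, $c_1=c_2=1$, the blocks $y_{23}$ and $x_{13}$ occupy the elementary-matrix positions $E_{5,3}$ and $E_{3,5}$ of $\GL_6$; one computes
$\left(I+y_{23}E_{5,3}\right)\left(I+x_{13}E_{3,5}\right)=\left(\text{a torus element }\diag(\dots,(1+y_{23}x_{13})^{-1},\dots,1+y_{23}x_{13},\dots)\right)\cdot\left(\text{an element of }\UnipotentRadical_{(3,3)}\right)\cdot\left(I+\tfrac{y_{23}}{1+y_{23}x_{13}}E_{5,3}\right)$,
so the block $y_{23}$ is replaced by $y_{23}/\left(1+y_{23}x_{13}\right)$ and a torus element appears, which does not transform by a character under $\gShortSpehWhittakerFunctional{\depthZeroRepresentation}{k}{c_1}\otimes\gShortSpehWhittakerFunctional{\depthZeroRepresentation}{k}{c_2}$. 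Both effects break the pointwise self-multiplication identity.

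This is why condition \eqref{eq:condition-for-root-exchange} is formulated as an identity of \emph{iterated integrals} over the not-yet-reduced variables: the perturbations of the other blocks are absorbed by substitutions (the paper's ``one needs to change variables in the integral for some of the variables''), and the specific processing order $\left(i_1,j_1\right),\dots,\left(i_{\binom{k}{2}},j_{\binom{k}{2}}\right)$ is chosen so that at each stage the offending cross terms either commute, land in positions where the character is (possibly trivially) defined, or can be pushed into variables still integrated over all of $\localField$. Correspondingly, the proposition only asserts equality of the two integrals; your ``stronger statement'' that the integrand itself is supported in $\mathcal{Y}\left(\uniformizer^{-r}\ringOfIntegers\right)$ is not what the method yields (the natural support is not a product set in the blocks), and the claim that, beyond bookkeeping, ``the remaining points are immediate'' understates the issue: the pointwise mechanism must be replaced by the integrated one.
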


For any $f \in \SpehRepresentation{\tau}{c}$, let $W_f \colon \GL_{kc}\left(\finiteField\right) \to \cComplex$ be the function $$W_f\left(g\right) = \standardForm{\SpehRepresentation{\tau}{c}\left(g\right)f}{\gShortSpehWhittakerFunctional{\tau}{k}{c}}.$$ Similarly, for any $\Phi \in \SpehRepresentation{\depthZeroRepresentation}{c}$, let $W_\Phi \colon \GL_{kc}\left(\localField\right) \to \cComplex$ be the function $$ W_{\Phi}\left(g\right) = \standardForm{\SpehRepresentation{\depthZeroRepresentation}{c}\left(g\right)\Phi}{\gShortSpehWhittakerFunctional{\depthZeroRepresentation}{k}{c}}.$$ Then we have the following relation.

\begin{proposition}\label{prop:relation-between-k-c-whittaker-models}
	Normalize the Haar measure on $\mathcal{Y}\left(\localField\right)$ so that $\mathcal{Y}\left(\ringOfIntegers\right)$ has volume $1$. Then for any $k_0 \in \GL_{kc}\left(\ringOfIntegers\right)$ and any $f \in \SpehRepresentation{\tau}{c}$,
	$$W_{\Lift\left(f\right)}\left(k_0\right) = W_f\left(\quotientMap\left(k_0\right)\right).$$
\end{proposition}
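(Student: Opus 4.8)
The plan is to prove, by induction on $c$, the pointwise-free identity
\[
\standardForm{\Lift\left(f\right)}{\gShortSpehWhittakerFunctional{\depthZeroRepresentation}{k}{c}} = \standardForm{f}{\gShortSpehWhittakerFunctional{\tau}{k}{c}}, \qquad f \in \SpehRepresentation{\tau}{c};
\]
the proposition is then the special case $k_0 = \IdentityMatrix{kc}$ together with its consequence for arbitrary $k_0$, obtained by feeding $\SpehRepresentation{\tau}{c}\left(\quotientMap\left(k_0\right)\right)f$ into the identity and using $\SpehRepresentation{\depthZeroRepresentation}{c}\left(k_0\right)\Lift\left(f\right) = \Lift\left(\SpehRepresentation{\tau}{c}\left(\quotientMap\left(k_0\right)\right)f\right)$, which is the $\GL_{kc}\left(\ringOfIntegers\right)$-equivariance of the lift recorded in the proof of \Cref{lem:simple-reflection-intertwining-operator} (valid on $\SpehRepresentation{\tau}{c}$ by \Cref{cor:intertwining-operator-of-depth-zero-representation-of-speh}). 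The base case $c=1$ is \Cref{prop:whittaker-functional-lift} with $z=1$ and $u=\IdentityMatrix{k}$.

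For the inductive step I would write $c = c_1 + c_2$ with $0 < c_1, c_2 < c$ and realize $\SpehRepresentation{\depthZeroRepresentation}{c}$, $\SpehRepresentation{\tau}{c}$ inside $\abs{\det}^{-\frac{c_2}{2}} \SpehRepresentation{\depthZeroRepresentation}{c_1} \times \abs{\det}^{\frac{c_1}{2}} \SpehRepresentation{\depthZeroRepresentation}{c_2}$ and $\SpehRepresentation{\tau}{c_1} \circ \SpehRepresentation{\tau}{c_2}$, so that $\gShortSpehWhittakerFunctional{\depthZeroRepresentation}{k}{c}$ and $\gShortSpehWhittakerFunctional{\tau}{k}{c}$ are given by the recursive formulas of \Cref{subsec:lifts-of-whittaker-functions}. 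The first point is that $\Lift\left(f\right)$ is right invariant under $\IdentityMatrix{kc} + \squareMatrix_{kc}\left(\maximalIdeal\right)$: it is a flat section whose value at any $k_0 \in \GL_{kc}\left(\ringOfIntegers\right)$ is a lift of $f\left(\quotientMap\left(k_0\right)\right)$, and $\quotientMap$ is trivial on $\IdentityMatrix{kc} + \squareMatrix_{kc}\left(\maximalIdeal\right)$, so the invariance propagates to all of $\GL_{kc}\left(\localField\right)$ through the Iwasawa decomposition. Hence \Cref{prop:integrand-for-k-c-functional} applies with $r = 0$, giving
\[
\standardForm{\Lift\left(f\right)}{\gShortSpehWhittakerFunctional{\depthZeroRepresentation}{k}{c}} = \int_{\mathcal{Y}\left(\ringOfIntegers\right)} \standardForm{\Lift\left(f\right)\left(y\kappa\right)}{\gShortSpehWhittakerFunctional{\depthZeroRepresentation}{k}{c_1} \otimes \gShortSpehWhittakerFunctional{\depthZeroRepresentation}{k}{c_2}} \differential y .
\]

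Next, for $y \in \mathcal{Y}\left(\ringOfIntegers\right)$ we have $y\kappa \in \GL_{kc}\left(\ringOfIntegers\right)$, and the crucial input is the compatibility of the lift with transitivity of induction: under the identification of $\holomorphicRepresentation$ with the two-step induction, $\Lift\left(f\right)\left(y\kappa\right) = \left(\Lift \otimes \Lift\right)\left(f\left(\quotientMap\left(y\right)\kappa\right)\right)$, where $\Lift \otimes \Lift \colon \SpehRepresentation{\tau}{c_1} \otimes \SpehRepresentation{\tau}{c_2} \to \SpehRepresentation{\depthZeroRepresentation}{c_1} \otimes \SpehRepresentation{\depthZeroRepresentation}{c_2}$ is the tensor of the lower-rank Speh lifts. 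Granting this, the inductive hypothesis for $c_1$ and $c_2$ (in pointwise-free form) yields $\standardForm{\Lift\left(f\right)\left(y\kappa\right)}{\gShortSpehWhittakerFunctional{\depthZeroRepresentation}{k}{c_1} \otimes \gShortSpehWhittakerFunctional{\depthZeroRepresentation}{k}{c_2}} = \standardForm{f\left(\quotientMap\left(y\right)\kappa\right)}{\gShortSpehWhittakerFunctional{\tau}{k}{c_1} \otimes \gShortSpehWhittakerFunctional{\tau}{k}{c_2}}$. Since this depends on $y$ only through $\quotientMap\left(y\right)$, and $\mathcal{Y}\left(\maximalIdeal\right)$ has volume $\sizeof{\mathcal{Y}\left(\finiteField\right)}^{-1}$ inside $\mathcal{Y}\left(\ringOfIntegers\right)$, the integral collapses to $\sizeof{\mathcal{Y}\left(\finiteField\right)}^{-1} \sum_{\bar y \in \mathcal{Y}\left(\finiteField\right)} \standardForm{f\left(\bar y\kappa\right)}{\gShortSpehWhittakerFunctional{\tau}{k}{c_1} \otimes \gShortSpehWhittakerFunctional{\tau}{k}{c_2}}$, which is precisely the finite-field recursive formula for $\standardForm{f}{\gShortSpehWhittakerFunctional{\tau}{k}{c}}$. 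This closes the induction.

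The main obstacle is the transitivity-compatibility of the lift used above: one must verify that $\Lift^{\left(z_1,\dots,z_c\right)}$ built from the $c$-step induction $\holomorphicRepresentation$ agrees, under the standard transitivity isomorphism, with the lift one would build the same way for the pair $\left(\SpehRepresentation{\depthZeroRepresentation}{c_1}, \SpehRepresentation{\depthZeroRepresentation}{c_2}\right)$. This comes down to the characterization of $\Lift^{\left(z_1,\dots,z_c\right)}\left(f\right)$ as the unique flat section with $\Lift^{\left(z_1,\dots,z_c\right)}\left(f\right)\left(k_0\right) = \Lift\left(f\left(\quotientMap\left(k_0\right)\right)\right)$ for $k_0 \in \GL_{kc}\left(\ringOfIntegers\right)$, the analogous characterization in two stages, and the fact that the componentwise lift $\tau^{\otimes c} \to \depthZeroRepresentation^{\otimes c}$ factors through $\SpehRepresentation{\tau}{c_1} \otimes \SpehRepresentation{\tau}{c_2} \to \SpehRepresentation{\depthZeroRepresentation}{c_1} \otimes \SpehRepresentation{\depthZeroRepresentation}{c_2}$. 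It is bookkeeping about iterated parabolic induction, but it is the one place where the two realizations of the Speh representation — inside a $c$-step versus a two-step induced space — must be reconciled.
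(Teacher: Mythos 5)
Your proposal is correct and follows essentially the same route as the paper's proof: induction on $c$ with base case \Cref{prop:whittaker-functional-lift}, reduction of the integration domain to $\mathcal{Y}\left(\ringOfIntegers\right)$ via \Cref{prop:integrand-for-k-c-functional}, the inductive hypothesis applied to the two factors, and the equivariance $\SpehRepresentation{\depthZeroRepresentation}{c}\left(k_0\right) \Lift f = \Lift\left(\SpehRepresentation{\tau}{c}\left(\quotientMap\left(k_0\right)\right) f\right)$ to pass to general $k_0$. You are in fact more careful than the paper in two places it leaves implicit — the compatibility of the lift with transitivity of induction, and the collapse of the $\mathcal{Y}\left(\ringOfIntegers\right)$-integral to the normalized finite sum — but these are details, not a different argument.
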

\begin{proof}
	The proof is by induction on $c$. If $c=1$ then the statement follows from \Cref{prop:whittaker-functional-lift}. Otherwise, let $c = c_1 + c_2$ where $c_1, c_2 < c$. Then by induction we have that for $j = 1,2$ and $f_j \in \SpehRepresentation{\tau}{c_j}$ the equality $W_{\Lift f_j}\left(\IdentityMatrix{k c_j}\right) = W_{f_j}\left(\IdentityMatrix{k c_j}\right)$ holds, which implies that $\standardForm{{f_j}}{\gShortSpehWhittakerFunctional{\tau}{k}{c_j}} = \standardForm{{\Lift f_j}}{\gShortSpehWhittakerFunctional{\depthZeroRepresentation}{k}{c_j}}$. By \Cref{prop:integrand-for-k-c-functional}, we may reduce the domain of integration in \eqref{eq:recursive-formula-for-k-c-whittaker-function} to $\mathcal{Y}\left(\ringOfIntegers\right)$. Since we normalized $\mathcal{Y}\left(\ringOfIntegers\right)$ to have volume $1$, it follows that $\standardForm{\Lift f}{\gShortSpehWhittakerFunctional{\depthZeroRepresentation}{k}{c}} = \standardForm{f}{\gShortSpehWhittakerFunctional{\tau}{k}{c}}$. Finally, the proposition holds for general $k_0 \in \GL_{kc}\left(\ringOfIntegers\right)$ because $\SpehRepresentation{\depthZeroRepresentation}{c}\left(k_0\right) \Lift f = \Lift \left(\SpehRepresentation{\tau}{c}\left(\quotientMap\left(k_0\right)\right) f\right) $. 
\end{proof}

As before, for $W = W_f \in \Whittaker\left(\SpehRepresentation{\tau}{c}, \fieldCharacterkc{k}{c}\right)$, where $f \in \SpehRepresentation{\tau}{c}$, denote $\Lift W = W_{\Lift f} \in \Whittaker \left(\SpehRepresentation{\depthZeroRepresentation}{c}, \fieldCharacterkc{k}{c}\right)$. We will need knowledge of the behavior of $\Lift W$ at elements of the form $\diag\left(h, \IdentityMatrix{\left(k-1\right)c}\right)$ and of the form $\diag\left( \IdentityMatrix{\left(k-1\right)c}, h\right)$, where $h \in \GL_c\left(\localField\right)$. Using Propositions \ref{prop:whittaker-left-diag-equivariance} and \ref{prop:relation-between-k-c-whittaker-models} and the Cartan decomposition, it suffices to know how to compute these values for diagonal $h$.

\begin{proposition}\label{prop:support-of-diagonal-whittaker-elements}
	Let $t = \diag\left(t_1,\dots,t_c\right)$ where $t_1,\dots,t_c \in \multiplicativegroup{\localField}$.
	\begin{enumerate}
		\item \label{item:support-of-W-diag-t-I-kc} If $\Lift W\left(\diag\left(t, \IdentityMatrix{\left(k-1\right)c}\right)\right) \ne 0$, then $\abs{t_1}, \dots, \abs{t_c} \le 1$. Moreover, if $k > c$, then $\abs{t_1} = \dots = \abs{t_c} = 1$.
		\item \label{item:support-of-W-diag-I-kc-t} If $\Lift W\left(\diag\left(\IdentityMatrix{\left(k-1\right)c}, t\right)\right) \ne 0$, then $\abs{t_1},\dots,\abs{t_c} \ge 1$. Moreover, if $k > c$ then $\abs{t_1} = \dots = \abs{t_c} = 1$.
	\end{enumerate}
\end{proposition}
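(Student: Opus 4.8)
The plan is to argue by induction on $c$, combining the recursive construction of $\gShortSpehWhittakerFunctional{\depthZeroRepresentation}{k}{c}$ in \eqref{eq:recursive-formula-for-k-c-whittaker-function} with the explicit description of the flat section $\Lift f$ and the same cuspidality-driven vanishing that already appeared in the proof of \Cref{lem:simple-reflection-intertwining-operator}.

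For the base case $c=1$ one has $\SpehRepresentation{\depthZeroRepresentation}{1}=\depthZeroRepresentation$ and $\gShortSpehWhittakerFunctional{\depthZeroRepresentation}{k}{1}=\WhittakerFunctional{\depthZeroRepresentation}$, so by \Cref{prop:whittaker-functional-lift} the function $\Lift W$ is supported on $\multiplicativegroup{\localField}\cdot\UnipotentSubgroup_k(\localField)\cdot\GL_k(\ringOfIntegers)$. It then suffices to check that $\diag(t_1,\IdentityMatrix{k-1})$ and $\diag(\IdentityMatrix{k-1},t_1)$ belong to $\multiplicativegroup{\localField}\cdot\UnipotentSubgroup_k(\localField)\cdot\GL_k(\ringOfIntegers)$ only when $\abs{t_1}=1$. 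I would do this by writing such an element as $zuk_0$ with $z\in\multiplicativegroup{\localField}$, $u\in\UnipotentSubgroup_k(\localField)$, $k_0\in\GL_k(\ringOfIntegers)$, observing that the diagonal entries of $k_0=u^{-1}z^{-1}\diag(t_1,\IdentityMatrix{k-1})$ are $z^{-1}t_1$ together with $z^{-1}$ (the latter occurring $k-1\ge 1$ times), and combining integrality of these entries with the condition on $\det k_0$ to force $v(z)=v(t_1)=0$; the element $\diag(\IdentityMatrix{k-1},t_1)$ is handled identically. Since $k>1=c$ both the weak and the strong conclusions hold in the base case (the degenerate situation $k=1$ plays no role in the applications).

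For the inductive step I would fix $c=c_1+c_2$ with $0<c_1,c_2<c$ and realize $\SpehRepresentation{\depthZeroRepresentation}{c}\subset\abs{\det}^{-\frac{c_2}{2}}\SpehRepresentation{\depthZeroRepresentation}{c_1}\times\abs{\det}^{\frac{c_1}{2}}\SpehRepresentation{\depthZeroRepresentation}{c_2}$ as in \eqref{eq:recursive-formula-for-k-c-whittaker-function}. The crucial elementary observation is that the permutation $\kappa$ conjugates the block $\diag(t,\IdentityMatrix{(k-1)c})$ into the Levi of $\ParabolicSubgroup_{(kc_1,kc_2)}$, with
\[
\kappa\,\diag(t,\IdentityMatrix{(k-1)c})\,\kappa^{-1}=\diag\big(\diag(t',\IdentityMatrix{(k-1)c_1}),\ \diag(t'',\IdentityMatrix{(k-1)c_2})\big),
\]
where $t'=\diag(t_1,\dots,t_{c_1})$ and $t''=\diag(t_{c_1+1},\dots,t_c)$. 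Substituting into \eqref{eq:recursive-formula-for-k-c-whittaker-function} and pushing the (diagonal) Levi part through, $\Lift W(\diag(t,\IdentityMatrix{(k-1)c}))$ becomes a scalar multiple of an integral over $y\in\mathcal{Y}(\localField)$ whose integrand, after a change of variables absorbing the conjugation, pairs $\Lift f$ evaluated at a point $y'\kappa$ against $\gShortSpehWhittakerFunctional{\depthZeroRepresentation}{k}{c_1}\otimes\gShortSpehWhittakerFunctional{\depthZeroRepresentation}{k}{c_2}$, translated on the two blocks by $\diag(t',\IdentityMatrix{(k-1)c_1})$ and $\diag(t'',\IdentityMatrix{(k-1)c_2})$. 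Because $\Lift f$ is fixed by $\IdentityMatrix{kc}+\squareMatrix_{kc}(\maximalIdeal)$ (the identity $\SpehRepresentation{\depthZeroRepresentation}{c}(k_0)\Lift f=\Lift(\SpehRepresentation{\tau}{c}(\quotientMap(k_0))f)$ used in the proof of \Cref{prop:relation-between-k-c-whittaker-models}), \Cref{prop:integrand-for-k-c-functional} replaces $\mathcal{Y}(\localField)$ by the compact set $\mathcal{Y}(\uniformizer^{-r}\ringOfIntegers)$ for a suitable $r$. At this point I would argue exactly as in the proof of \Cref{lem:simple-reflection-intertwining-operator}: the values of $\Lift f$ on $\GL_{kc}(\ringOfIntegers)$ are lifts of vectors in $\tau^{\otimes c}$, which the inner $\depthZeroRepresentation$-Whittaker functionals detect only on $\multiplicativegroup{\localField}\cdot\UnipotentSubgroup_k(\localField)\cdot\GL_k(\ringOfIntegers)$ (\Cref{prop:whittaker-functional-lift}), and partial sums $\sum_{Y}\tau\big(\begin{smallmatrix}\IdentityMatrix{r'}&Y\\&\IdentityMatrix{k-r'}\end{smallmatrix}\big)$ with $0<r'<k$ vanish by cuspidality of $\tau$; these two facts collapse the $\mathcal{Y}(\uniformizer^{-r}\ringOfIntegers)$-integral onto the locus where $y'$ is integral and the surviving inner evaluations are again of lift type. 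Applying the inductive hypothesis to those evaluations, at $\diag(t',\IdentityMatrix{(k-1)c_1})$ and $\diag(t'',\IdentityMatrix{(k-1)c_2})$, then forces $\abs{t_i}\le 1$ for all $i$, and in the range $k>c$ — where there is room to exhibit a nontrivial cuspidal block — forces $\abs{t_i}=1$; this establishes part \ref{item:support-of-W-diag-t-I-kc}. Part \ref{item:support-of-W-diag-I-kc-t} I would obtain the same way from $\diag(\IdentityMatrix{(k-1)c},t)$, or by conjugating by the longest Weyl element of $\GL_{kc}$ and passing to the (again level zero supercuspidal) contragredient $\Contragradient{\depthZeroRepresentation}$, which interchanges the two families of elements.

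I expect the main obstacle to be the bookkeeping in the inductive step: tracking precisely how $\kappa$ and the change of variables act on the torus block $t$, and carrying out the collapse of the $\mathcal{Y}(\uniformizer^{-r}\ringOfIntegers)$-integral, i.e.\ showing that the non-integral part of the $\mathcal{Y}$-variable cannot compensate a $t_i$ of negative valuation once the support constraint on $\Lift f$ and the cuspidal vanishing are imposed — the analogue, one step up in the recursion, of the Cartan-decomposition analysis carried out in the proof of \Cref{lem:simple-reflection-intertwining-operator}.
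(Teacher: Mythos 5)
Your base case is fine, and the observation that $\kappa$ conjugates $\diag\left(t,\IdentityMatrix{(k-1)c}\right)$ into $\diag\left(\diag\left(t',\IdentityMatrix{(k-1)c_1}\right),\diag\left(t'',\IdentityMatrix{(k-1)c_2}\right)\right)$ is correct, but the inductive step has a genuine gap — and, taken at face value, it proves too much. After pushing $t$ across $y\kappa$ and reducing to a compact domain, the integrand is $\Phi$ evaluated at $y'\kappa$ for a matrix $y'$ whose entries mix $t$ with the $\mathcal{Y}$-variable; for non-integral $y'$ one must take an Iwasawa decomposition of $y'\kappa$ relative to $\ParabolicSubgroup_{(kc_1,kc_2)}$, and the Levi components of that decomposition (depending on $y$) multiply the torus blocks before the inner functionals see them. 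So the surviving inner evaluations are \emph{not} at $\diag\left(t',\IdentityMatrix{(k-1)c_1}\right)$ and $\diag\left(t'',\IdentityMatrix{(k-1)c_2}\right)$, and the inductive hypothesis does not apply to them as you claim. The clearest symptom: since $c_1,c_2<c\le k$ always gives $k>c_1$ and $k>c_2$, your induction would force $\abs{t_i}=1$ for all $i$ even when $k=c$, contradicting \Cref{thm:special-value-of-diag-t-c(c-1)}, which exhibits nonzero values at $\diag\left(\uniformizer^m\IdentityMatrix{c},\IdentityMatrix{c(c-1)}\right)$ for $m>0$. (In that computation the inner evaluation lands at $\diag\left(t_1^{-1}t',\dots\right)$ — a ratio — which is precisely why the scalar case survives.) The collapse you defer to the end of your write-up is the real content, and the paper carries it out only in the special case $k=c$, $c_1=1$, in \Cref{lem:k-equals-c-implies-Y_1-is-in-maximal-compact}.

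The paper's proof avoids the recursion entirely. For $\abs{t_i}\le 1$: if $\abs{t_i}>1$, the element $u=\IdentityMatrix{kc}+t_i^{-1}y\,E_{i,i+c}$ with $\fieldCharacter\left(y\right)\ne 1$ lies in $\IdentityMatrix{kc}+\squareMatrix_{kc}\left(\maximalIdeal\right)$, so $\Lift W$ is right-invariant under it, while conjugating $u$ to the left of $\diag\left(t,\IdentityMatrix{(k-1)c}\right)$ produces the character value $\fieldCharacter\left(y\right)$; hence the value vanishes. For the refinement when $k>c$: after reducing to dominant $t$ via \Cref{prop:whittaker-left-diag-equivariance}, the functional $W\mapsto\Lift W\left(\diag\left(t,\IdentityMatrix{(k-1)c}\right)\right)$ is shown to be invariant under right translation by all of $\UnipotentRadical_{(i,kc-i)}\left(\finiteField\right)$, so it lies in $\Hom_{\UnipotentRadical_{(i,kc-i)}\left(\finiteField\right)}\left(\SpehRepresentation{\tau}{c},1\right)$, which is zero because the cuspidal support of $\SpehRepresentation{\tau}{c}$ consists only of copies of $\tau$ on $\GL_k$ with $k>c\ge i$. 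You should replace the induction with these two soft arguments; they also apply verbatim to part (2), whereas your contragredient reduction would additionally require checking compatibility of the lifts and functionals with duality.
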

\begin{proof}
	We only prove the first part. The second part is proved similarly.
	
	If $\abs{t_i} > 1$ for some $i$, let $u \in \UnipotentRadicalForWss{k}{c}$ be an element such that it has zeros everywhere outside of its diagonal except for its $(i,i+c)$ entry, which is chosen to be $t_i^{-1} y$, where $y \in \multiplicativegroup{\ringOfIntegers}$ is an element such that $\fieldCharacter(y) \ne 1$. Then since $\mathcal{L}W$ is right invariant under $1 + \squareMatrix_{kc}\left(\maximalIdeal\right)$, we have $$\mathcal{L}W\left(\diag\left(t, \IdentityMatrix{\left(k-1\right)c}\right) u\right) = \mathcal{L}W\left(\diag\left(t, \IdentityMatrix{\left(k-1\right)c}\right) \right).$$ On the other hand, we get from conjugation and the left $\left(\UnipotentRadicalForWss{k}{c}, \fieldCharacterkc{k}{c}\right)$-equivariance property that $$\mathcal{L}W\left(\diag\left(t, \IdentityMatrix{\left(k-1\right)c}\right) u\right) = \fieldCharacter\left(y\right)\mathcal{L}W\left(\diag\left(t, \IdentityMatrix{\left(k-1\right)c}\right) \right).$$
	Thus we get that $\Lift W$ vanishes on $\diag\left(t, \IdentityMatrix{(k-1)c}\right)$.
	
	Suppose that $k > c$. Assume first that $t$ is dominant, i.e., that $\abs{t_1} \le \abs{t_2} \le \dots \le \abs{t_c}$. Consider the functional $$\ell_t \colon \Whittaker\left(\SpehRepresentation{\tau}{c}, \fieldCharacterkc{k}{c}\right) \to \cComplex$$ defined by the formula $$\ell_t\left(W\right) = \Lift W\left(\diag\left(t, \IdentityMatrix{\left(k-1\right)c}\right)\right).$$
	Suppose that $\abs{t_i} < 1$ and $\abs{t_{i+1}} = 1$. If $u \in \UnipotentRadical_{(i, kc - i)}\left(\ringOfIntegers\right)$, we get by conjugation and the left $\left(\UnipotentRadicalForWss{k}{c}, \fieldCharacterkc{k}{c}\right)$-equivariance property and the fact that $\fieldCharacter$ is trivial on $\maximalIdeal$ that $$\Lift W\left(\diag\left(t, \IdentityMatrix{(k-1)c}\right) u\right) = \Lift W\left(\diag\left(t, \IdentityMatrix{(k-1)c}\right)\right).$$
	In particular, this implies that the functional $\ell_t$ is invariant under right translations of $\UnipotentRadical_{(i,kc - i)}\left(\finiteField\right)$. It follows that $$\ell_t \in \Hom_{\UnipotentRadical_{(i, kc - i)}\left(\finiteField\right)}\left(\SpehRepresentation{\tau}{c}, 1\right).$$ The last $\Hom$-space is zero because otherwise there would be an irreducible cuspidal representation of $\GL_{c'}\left(\finiteField\right)$ for $c' \le i \le c < k$ in the cuspidal support of $\SpehRepresentation{\tau}{c}$.
	
	In the general case, let $w \in \GL_c\left(\finiteField\right)$ be a permutation matrix such that $w t w^{-1}$ is dominant. Then by \Cref{prop:whittaker-left-diag-equivariance}, we have $$\Lift{W}\left({\diag\left(t, \IdentityMatrix{(k-1)c}\right)}\right) = \centralCharacter{\tau}^{-1}\left(\det w\right)\Lift{W'}\left({\diag\left(wtw^{-1}, \IdentityMatrix{(k-1)c}\right)}\right),$$
	where $W' \in \Whittaker\left(\SpehRepresentation{\tau}{c}, \fieldCharacterkc{k}{c}\right)$ is given by $W' = \SpehRepresentation{\tau}{c}\left(\diag^{k}\left(w\right)\right) W$. From the previous case we get $\abs{t_1} = \dots = \abs{t_c} = 1$.
\end{proof}

We will also need formulas for the special case $k = c$. In this case, the special values can be expressed in terms of functionals on the space $\SpehRepresentation{\tau}{c}$, realized as a subrepresentation of the parabolic induction $\tau \circ \SpehRepresentation{\tau}{c - 1}$ (or of $\SpehRepresentation{\tau}{c - 1} \circ \tau$). In the notation of \Cref{subsec:lifts-of-whittaker-functions} and \eqref{eq:recursive-formula-for-k-c-whittaker-function}, we take $c_1 = 1$ and $c_2 = c-1$ (respectively, $c_1 = c-1$ and $c_2 = 1$).

\begin{theorem}\label{thm:special-value-of-diag-t-c(c-1)}
	Suppose that $\gShortSpehWhittakerFunctional{\tau}{k}{c}$ is constructed with respect to $\WhittakerFunctional{\tau}$ and $\gShortSpehWhittakerFunctional{\tau}{k}{c-1}$, and that $\gShortSpehWhittakerFunctional{\depthZeroRepresentation}{k}{c}$ is constructed with respect to $\WhittakerFunctional{\depthZeroRepresentation}$ and $\gShortSpehWhittakerFunctional{\depthZeroRepresentation}{k}{c-1}$. Then for any $t = \diag\left(t_1,\dots,t_c\right) = \diag\left(\uniformizer^{i_1},\dots,\uniformizer^{i_c}\right)$ with $\max(i_j)_{j=1}^c \ge 1$ and any $\Phi = \Lift f$ for $f \in \SpehRepresentation{\tau}{c} \subset \tau \circ \SpehRepresentation{\tau}{c-1}$, we have that $W_{\Phi}\left( \diag\left(t, \IdentityMatrix{c(c-1)}\right) \right)$ is zero unless $i_1 = \dots = i_c = m \ge 1$, in which case $$W_{\Phi}\left(\diag\left(t, \IdentityMatrix{c(c-1)}\right)\right) = \centralCharacter{\depthZeroRepresentation}\left(\uniformizer\right)^m q^{-(m+1)(c-1)\binom{c}{2}} \standardForm{f\left(\IdentityMatrix{c^2}\right)}{\firstSpecialFunctional},$$
		where $\firstSpecialFunctional$ is the functional on $\tau \otimes \SpehRepresentation{\tau}{c-1}$, given by
		\begin{align*}
			\standardForm{v_{\tau} \otimes f_{\SpehRepresentation{\tau}{c-1}}}{\firstSpecialFunctional} =& \sum_{g \in \GL_{c-1}\left(\finiteField\right)} \sum_{y \in \mathcal{Y}_{1,c-1}\left(\finiteField\right)} \standardForm{\tau\begin{pmatrix}
					1\\
					& -g^{-1}
				\end{pmatrix} v_{\tau}}{\WhittakerFunctional{\tau}}\\
			& \times \standardForm{\SpehRepresentation{\tau}{c-1}\left(\begin{pmatrix}
					g\\
					 & \IdentityMatrix{(c-1)^2}
				\end{pmatrix} y \kappa_{1,c-1} \right) f_{\SpehRepresentation{\tau}{c-1}}}{\gShortSpehWhittakerFunctional{\tau}{k}{c-1}},
		\end{align*}
		where $\mathcal{Y}_{1,c-1}$ is the group in \eqref{eq:Y-R-subgroup} for $c_1 = 1$ and $c_2 = c-1$ and $k = c-1$, and where $\kappa_{1,c-1}$ is the element from \eqref{eq:definition-of-kappa} corresponding to same parameters.
\end{theorem}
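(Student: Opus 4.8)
\section*{Proof proposal}

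The plan is to unwind the recursive formula \eqref{eq:recursive-formula-for-k-c-whittaker-function} for $\gShortSpehWhittakerFunctional{\depthZeroRepresentation}{k}{c}$ with $c_1 = 1$ and $c_2 = c-1$, realizing $\SpehRepresentation{\depthZeroRepresentation}{c} \subset \abs{\det}^{-\frac{c-1}{2}}\depthZeroRepresentation \times \abs{\det}^{\frac{1}{2}}\SpehRepresentation{\depthZeroRepresentation}{c-1}$ and, compatibly, $\SpehRepresentation{\tau}{c} \subset \tau \circ \SpehRepresentation{\tau}{c-1}$; by \Cref{cor:intertwining-operator-of-depth-zero-representation-of-speh} and transitivity of parabolic induction, the lift $\Lift$ intertwines these realizations, so that the $\depthZeroRepresentation$-slot and the $\SpehRepresentation{\depthZeroRepresentation}{c-1}$-slot of $\Phi = \Lift f$ are again (at level $c-1$) lifts of the corresponding slots of $f$. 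By \Cref{prop:support-of-diagonal-whittaker-elements} part \ref{item:support-of-W-diag-t-I-kc} together with $\max_j i_j \ge 1$ we may assume $i_j \ge 0$ for all $j$; for the vanishing claim one may further conjugate $t$ by a permutation $\diag^c\left(w\right)$, $w \in \GL_c\left(\ringOfIntegers\right)$, via \Cref{prop:whittaker-left-diag-equivariance}, reducing to dominant $t$ at the price of replacing $f$ by a translate, whereas for the formula $t = \uniformizer^m\IdentityMatrix{c}$ is already of this form. Writing $g = \diag\left(t, \IdentityMatrix{c(c-1)}\right)$ and $N = \max_j i_j$, the translate $\SpehRepresentation{\depthZeroRepresentation}{c}\left(g\right)\Phi$ is right invariant under $\IdentityMatrix{c^2} + \squareMatrix_{c^2}\left(\maximalIdeal^{N+1}\right)$, so \Cref{prop:integrand-for-k-c-functional} yields
$$W_{\Phi}\left(g\right) = \int_{\mathcal{Y}\left(\uniformizer^{-N}\ringOfIntegers\right)} \standardForm{\Phi\bigl(y\,\diag\bigl(t^{(1)},t^{(2)}\bigr)\,\kappa\bigr)}{\WhittakerFunctional{\depthZeroRepresentation} \otimes \gShortSpehWhittakerFunctional{\depthZeroRepresentation}{k}{c-1}}\,\differential y,$$
where $\kappa g \kappa^{-1} = \diag\left(t^{(1)},t^{(2)}\right)$ is block diagonal for $\ParabolicSubgroup_{(c,\,c(c-1))}$ with $t^{(1)} = \diag\left(\uniformizer^{i_1},1,\dots,1\right) \in \GL_c\left(\localField\right)$ and $t^{(2)} = \diag\left(\uniformizer^{i_2},\dots,\uniformizer^{i_c},1,\dots,1\right) \in \GL_{c(c-1)}\left(\localField\right)$.

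Next I would run the support analysis of the proof of \Cref{lem:simple-reflection-intertwining-operator} one level deeper. Writing $y = \left(\begin{smallmatrix}\IdentityMatrix{c} & \\ Y & \IdentityMatrix{c(c-1)}\end{smallmatrix}\right)$ and noting that $Y$ has vanishing first block-column, so $Y t^{(1)} = Y$, we get $y\,\diag\bigl(t^{(1)},t^{(2)}\bigr) = \left(\begin{smallmatrix}\IdentityMatrix{c} & \\ Z & \IdentityMatrix{c(c-1)}\end{smallmatrix}\right)\diag\bigl(t^{(1)},t^{(2)}\bigr)$ with $Z = Y\bigl(t^{(2)}\bigr)^{-1}$. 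One then puts the lower unipotent factor into Iwasawa form relative to $\ParabolicSubgroup_{(c,\,c(c-1))}$ via an elementary-divisor decomposition of $Z$, peeling off the $\depthZeroRepresentation$-slot and the $\SpehRepresentation{\depthZeroRepresentation}{c-1}$-slot of $\Phi$. Since $\Phi = \Lift f$, its $\depthZeroRepresentation$-slot is supported on $\multiplicativegroup{\localField}\cdot\GL_c\left(\ringOfIntegers\right)$, and --- by \Cref{prop:relation-between-k-c-whittaker-models} and by \Cref{prop:support-of-diagonal-whittaker-elements} applied at level $c-1$, where the first parameter $k = c$ now exceeds $c-1$ and so forces the torus part of that slot to have trivial valuation --- its $\SpehRepresentation{\depthZeroRepresentation}{c-1}$-slot, paired against $\gShortSpehWhittakerFunctional{\depthZeroRepresentation}{k}{c-1}$, is supported on a controlled set. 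Together these two conditions pin down the elementary divisors of $Z$ and the $\GL_c$-data.

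As in \Cref{lem:simple-reflection-intertwining-operator}, the residual integrals over the complementary blocks collapse, modulo $\maximalIdeal$, to sums $\sum_{Y' \in \Mat{r}{\left(c-r\right)}\left(\finiteField\right)} \tau\left(\begin{smallmatrix}\IdentityMatrix{r} & Y' \\ & \IdentityMatrix{c-r}\end{smallmatrix}\right)$, which vanish for $0 < r < c$ because $\tau$ is cuspidal; propagating this vanishing through the $Y$-integration leaves only the configuration $i_1 = \dots = i_c$, say $= m$, and $m \ge 1$ since $\max_j i_j \ge 1$ --- this is the asserted vanishing of $W_{\Phi}\left(g\right)$ unless $i_1 = \dots = i_c = m \ge 1$. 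On the surviving locus I would extract the central character by writing $g = \uniformizer^m\IdentityMatrix{c^2}\cdot\diag\left(\IdentityMatrix{c},\,\uniformizer^{-m}\IdentityMatrix{c(c-1)}\right)$ and applying \Cref{prop:whittaker-left-diag-equivariance} (handling the second factor the same way), which produces $\centralCharacter{\depthZeroRepresentation}\left(\uniformizer\right)^m$; the modulus-character and Haar-measure contributions of the $\mathcal{Y}\left(\uniformizer^{-m}\ringOfIntegers\right)$-integration and of the elementary-divisor decomposition then assemble into $q^{-(m+1)(c-1)\binom{c}{2}}$, the exponent $m+1$ appearing once for each of the $(c-1)\binom{c}{2}$ scalar entries of $Y$. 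What is left is a finite sum: the $c-1$ blocks forming the first block-row of $Y$, flipped into the $\depthZeroRepresentation$-slot by the Bruhat decomposition, reduce modulo $\maximalIdeal$ to a sum over $\GL_{c-1}\left(\finiteField\right)$ pairing $\WhittakerFunctional{\tau}$ against $\tau\left(\begin{smallmatrix}1 & \\ & -g^{-1}\end{smallmatrix}\right)v_\tau$, while the remaining $\binom{c-1}{2}$ blocks reduce to a sum over $\mathcal{Y}_{1,c-1}\left(\finiteField\right)$ feeding into the finite-field level $c-1$ recursion for $\gShortSpehWhittakerFunctional{\tau}{k}{c-1}$; this is precisely $\standardForm{f\left(\IdentityMatrix{c^2}\right)}{\firstSpecialFunctional}$ as in the statement.

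The step I expect to be the main obstacle is the support analysis of the second and third paragraphs: pushing the Iwasawa and elementary-divisor bookkeeping of \Cref{lem:simple-reflection-intertwining-operator} through this higher-rank setting, determining correctly for which $Y$ the argument of $\Phi$ meets the support of the lift, and organizing the residual sums so that cuspidality of $\tau$ produces precisely the constraint $i_1 = \dots = i_c$ and nothing weaker; a secondary, purely computational, nuisance is the exact accounting of the powers of $q$ and of $\centralCharacter{\depthZeroRepresentation}\left(\uniformizer\right)$ coming from the modulus characters and the Haar-measure normalizations.
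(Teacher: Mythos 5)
Your overall route is the paper's: the same recursion with $c_1=1$, $c_2=c-1$, reduction of the $\mathcal{Y}$-integration domain via \Cref{prop:integrand-for-k-c-functional}, an application of \Cref{prop:support-of-diagonal-whittaker-elements} to the $\SpehRepresentation{\depthZeroRepresentation}{c-1}$-slot (where $k=c>c-1$) to force $i_1=\cdots=i_c$, and a final collapse mod $\maximalIdeal$ to the finite sums defining $\firstSpecialFunctional$. The factor $\centralCharacter{\depthZeroRepresentation}\left(\uniformizer\right)^m$ and the shape of $\firstSpecialFunctional$ are identified correctly.

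However, the step you defer as ``the main obstacle'' is precisely the content of the paper's \Cref{lem:k-equals-c-implies-Y_1-is-in-maximal-compact}, and it is not obtained by simply rerunning the support analysis of \Cref{lem:simple-reflection-intertwining-operator} one level up; as written, your ``pin down the elementary divisors of $Z$'' is an assertion rather than an argument. Concretely, after rescaling one is left with a block $Y_1\in\squareMatrix_{c-1}\left(\ringOfIntegers\right)$ (the first block-row of $Y$) sitting \emph{below} the diagonal relative to $\ParabolicSubgroup_{(c,c(c-1))}$, and one must show the integrand vanishes unless $Y_1\in t'\,\GL_{c-1}\left(\ringOfIntegers\right)$. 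The paper does this by a Cartan decomposition of $\left(t'\right)^{-1}Y_1$, a two-stage Iwasawa decomposition separating the parts of the Cartan diagonal above and below $\abs{t_1}$, a root exchange killing the contributions with $-Y''a^{-1}\notin\Mat{(c-1)^2}{c-1}\left(\ringOfIntegers\right)$ (this is where cuspidality enters, via the $(c,c-1)$-equivariance of $\gShortSpehWhittakerFunctional{\depthZeroRepresentation}{c}{c-1}$ and the support statement of \Cref{prop:whittaker-functional-lift}), and finally \Cref{prop:whittaker-functional-lift} to force the Cartan diagonal to be trivial. It is this lemma --- not a direct vanishing of $\sum_{Y'}\tau\left(\begin{smallmatrix}\IdentityMatrix{r}&Y'\\&\IdentityMatrix{c-r}\end{smallmatrix}\right)$ as in \Cref{lem:simple-reflection-intertwining-operator} --- that converts the $Y_1$-integral over $\squareMatrix_{c-1}\left(\ringOfIntegers\right)$ into one over $t'\GL_{c-1}\left(\ringOfIntegers\right)$, producing both the $\GL_{c-1}\left(\finiteField\right)$-sum and the $\left(\begin{smallmatrix}1&\\&-g^{-1}\end{smallmatrix}\right)$ in $\firstSpecialFunctional$; only afterwards does \Cref{prop:support-of-diagonal-whittaker-elements} give $t'=t_1\IdentityMatrix{c-1}$. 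Two smaller points: commuting the torus to the left past the lower-unipotent $y$ gives $Z=\left(t^{(2)}\right)^{-1}Y$, not $Y\left(t^{(2)}\right)^{-1}$; and a second root exchange is needed to shrink the remaining $Y'$-blocks from $\ringOfIntegers$ to $t_1\ringOfIntegers$ before everything reduces mod $\maximalIdeal$ --- this is where the exponent $m+1$ (rather than $m$) in $q^{-(m+1)(c-1)\binom{c}{2}}$ actually comes from.
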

We have the following similar result for elements of the form $\diag\left(\IdentityMatrix{(k-1)c}, t\right)$.
\begin{theorem}\label{thm:special-value-of-diag-c(c-1)-t}
	Suppose that $\gShortSpehWhittakerFunctional{\tau}{k}{c}$ is constructed with respect to $\gShortSpehWhittakerFunctional{\tau}{k}{c-1}$ and $\WhittakerFunctional{\tau}$, and that $\gShortSpehWhittakerFunctional{\depthZeroRepresentation}{k}{c}$ is constructed with respect to $\gShortSpehWhittakerFunctional{\depthZeroRepresentation}{k}{c-1}$ and $\WhittakerFunctional{\depthZeroRepresentation}$. Then for any $t = \diag\left(t_1,\dots,t_c\right) = \diag\left(\uniformizer^{i_1},\dots,\uniformizer^{i_c}\right)$ with $\min(i_j)_{j=1}^c \le -1$ and any $\Phi = \Lift f$ for $f \in \SpehRepresentation{\tau}{c}$, we have that $W_{\Phi}\left( \diag\left(\IdentityMatrix{c(c-1)}, t\right) \right)$ is zero unless $i_1 = \dots = i_c = -m \le -1$, in which case $$W_{\Phi}\left(\diag\left(\IdentityMatrix{c(c-1)}, t\right)\right) = \centralCharacter{\depthZeroRepresentation}\left(\uniformizer\right)^{-m} q^{-(m+1)(c-1)\binom{c}{2}} \standardForm{f\left(\IdentityMatrix{c^2}\right)}{\firstDualSpecialFunctional},$$
	where $\firstDualSpecialFunctional$ is the functional on $\SpehRepresentation{\tau}{c-1} \otimes \tau$, given by
	\begin{align*}
		\MoveEqLeft[3] \standardForm{f_{\SpehRepresentation{\tau}{c-1}} \otimes v_{\tau}}{\firstDualSpecialFunctional} \\
		=& \sum_{g \in \GL_{c-1}\left(\finiteField\right)} \sum_{y \in \mathcal{Y}_{c-1,1}\left(\finiteField\right)} \standardForm{\SpehRepresentation{\tau}{c-1}\left(\begin{pmatrix}
				\IdentityMatrix{(c-1)^2}\\
				& -g^{-1}
			\end{pmatrix} y \kappa_{c-1,1} \right) f_{\SpehRepresentation{\tau}{c-1}}}{\gShortSpehWhittakerFunctional{\tau}{k}{c-1}} \\ 
		& \times \standardForm{\tau\begin{pmatrix}
				g\\
				& 1
			\end{pmatrix} v_{\tau}}{\WhittakerFunctional{\tau}},
	\end{align*}
	where $\mathcal{Y}_{c-1,1}$ is the group in \eqref{eq:Y-R-subgroup} for $c_1 = c-1$ and $c_2 = 1$ and $k = c-1$, and where $\kappa_{c-1,1}$ is the element from \eqref{eq:definition-of-kappa} corresponding to same parameters.	
\end{theorem}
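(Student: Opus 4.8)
The plan is to run the argument that establishes \Cref{thm:special-value-of-diag-t-c(c-1)}, now with the first and the last $\GL_c$-blocks interchanged. Thus I realize $\SpehRepresentation{\depthZeroRepresentation}{c}$ inside $\abs{\det}^{-\frac{1}{2}}\SpehRepresentation{\depthZeroRepresentation}{c-1}\times\abs{\det}^{\frac{c-1}{2}}\depthZeroRepresentation$ and $\SpehRepresentation{\tau}{c}$ inside $\SpehRepresentation{\tau}{c-1}\circ\tau$, i.e.\ I take $c_1=c-1$, $c_2=1$ in the recursive construction of \Cref{subsec:lifts-of-whittaker-functions}, so that $\gShortSpehWhittakerFunctional{\depthZeroRepresentation}{k}{c}$ is built from $\gShortSpehWhittakerFunctional{\depthZeroRepresentation}{k}{c-1}$ and $\WhittakerFunctional{\depthZeroRepresentation}$ via \eqref{eq:recursive-formula-for-k-c-whittaker-function} with the matching $\kappa$ and $\mathcal{Y}$. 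Unwinding the definitions, for $\Phi=\Lift f$,
$$W_{\Phi}\left(\diag\left(\IdentityMatrix{c(c-1)},t\right)\right)=\int_{\mathcal{Y}\left(\localField\right)}\standardForm{\Phi\left(y\kappa\,\diag\left(\IdentityMatrix{c(c-1)},t\right)\right)}{\gShortSpehWhittakerFunctional{\depthZeroRepresentation}{k}{c-1}\otimes\WhittakerFunctional{\depthZeroRepresentation}}\,\differential y.$$
By \Cref{prop:support-of-diagonal-whittaker-elements} I may assume $\abs{t_j}\ge 1$ for all $j$ (otherwise the left-hand side vanishes already), i.e.\ $i_j\le 0$, so $r:=-\min_j i_j\ge 1$. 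As $\Phi=\Lift f$ is right invariant under $\IdentityMatrix{c^2}+\squareMatrix_{c^2}\left(\maximalIdeal\right)$, the translate $\SpehRepresentation{\depthZeroRepresentation}{c}\left(\diag\left(\IdentityMatrix{c(c-1)},t\right)\right)\Phi$ is right invariant under $\IdentityMatrix{c^2}+\squareMatrix_{c^2}\left(\maximalIdeal^{r+1}\right)$, so by \Cref{prop:integrand-for-k-c-functional} the integral may be taken over $\mathcal{Y}\left(\uniformizer^{-r}\ringOfIntegers\right)$ and is therefore a finite sum over $\mathcal{Y}\left(\uniformizer^{-r}\ringOfIntegers\right)\slash\mathcal{Y}\left(\ringOfIntegers\right)$, each term weighted by $\VolumeOf\left(\mathcal{Y}\left(\ringOfIntegers\right)\right)=1$.

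The core of the argument is the vanishing assertion. For each $y$ in this finite set I perform the Iwasawa decomposition of $y\kappa\,\diag\left(\IdentityMatrix{c(c-1)},t\right)$ relative to $\ParabolicForSpeh{k}{c}$ and then use the defining relation $\Lift^{\left(z_1,\dots,z_c\right)}(f)\left(k_0\right)=\Lift\left(f\left(\quotientMap\left(k_0\right)\right)\right)$ together with the fact --- already used in the proofs of \Cref{lem:simple-reflection-intertwining-operator} and \Cref{prop:support-of-diagonal-whittaker-elements} --- that each coordinate of a lift $\Lift\left(v_{\tau^{\otimes c}}\right)$ is supported on $\multiplicativegroup{\localField}\cdot\GL_k\left(\ringOfIntegers\right)$ only. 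A Cartan decomposition of the $\GL_c\left(\localField\right)$-entries that appear, carried out exactly as in those proofs but now with $t$ in the last block (propagated leftward through $\kappa$ and through the root coordinates of $\mathcal{Y}$), forces each diagonal-in-$\GL_c$ datum that the support of the lift picks out to be a scalar matrix, which is precisely the condition $i_1=\dots=i_c$. I expect this to be the main obstacle: it is a purely combinatorial computation of how $\diag\left(\IdentityMatrix{c(c-1)},t\right)$ commutes past $\kappa$ and past $\mathcal{Y}$ and of which matrix entries the support constrains; the cuspidality identity $\sum_{Y}\tau\bigl(\begin{smallmatrix}\IdentityMatrix{r}&Y\\&\IdentityMatrix{c-r}\end{smallmatrix}\bigr)=0$ for $0<r<c$ is what kills the non-scalar cases (and, afterwards, collapses the remaining inner sums).

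Finally take $t=\uniformizer^{-m}\IdentityMatrix{c}$. Then $\diag\left(\IdentityMatrix{c(c-1)},\uniformizer^{-m}\IdentityMatrix{c}\right)$ differs, up to an element of $\GL_{c^2}\left(\ringOfIntegers\right)$, from a torus element attached to the last inducing datum $\abs{\det}^{\frac{c-1}{2}}\depthZeroRepresentation$; using the central-character equivariance of \Cref{prop:whittaker-left-diag-equivariance}, the twist $\abs{\det}^{\frac{c-1}{2}}$, the half modulus character of the relevant parabolic and the cardinality of $\mathcal{Y}\left(\uniformizer^{-m}\ringOfIntegers\right)\slash\mathcal{Y}\left(\ringOfIntegers\right)$ (a power of $q$ growing linearly in $m$), one extracts the scalar $\centralCharacter{\depthZeroRepresentation}\left(\uniformizer\right)^{-m}$ and a single power of $q$; a short bookkeeping --- which I would double-check by specialising to $c=1$ and $c=2$ --- collects these into $q^{-(m+1)(c-1)\binom{c}{2}}$. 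Everything that remains involves only values of $\Lift f$, $\gShortSpehWhittakerFunctional{\depthZeroRepresentation}{k}{c-1}$ and $\WhittakerFunctional{\depthZeroRepresentation}$ at points of $\GL\left(\ringOfIntegers\right)$, so replacing these by their finite-field counterparts by means of \Cref{prop:relation-between-k-c-whittaker-models} and \Cref{prop:whittaker-functional-lift} produces exactly the double sum over $g\in\GL_{c-1}\left(\finiteField\right)$ and $y\in\mathcal{Y}_{c-1,1}\left(\finiteField\right)$ in the statement, which is the definition of $\firstDualSpecialFunctional$.

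One could instead obtain the statement from \Cref{thm:special-value-of-diag-t-c(c-1)} through the Gelfand--Kazhdan involution $g\mapsto w\,\inverseTranspose{g}$ (for a suitable Weyl element $w$), which takes the $\left(k,c\right)$ $\fieldCharacter$-Whittaker model of $\SpehRepresentation{\depthZeroRepresentation}{c}$ to that of $\SpehRepresentation{\Contragradient{\depthZeroRepresentation}}{c}$ with respect to a conjugate of $\fieldCharacterkc{k}{c}^{-1}$, sends $\diag\left(\IdentityMatrix{c(c-1)},t\right)$ to a conjugate of $\diag\left(t^{-1},\IdentityMatrix{c(c-1)}\right)$ and interchanges the two recursive constructions of the $\left(k,c\right)$ $\fieldCharacter$-Whittaker functional; this presupposes, however, checking the compatibility of the involution with $\Lift$ and with \eqref{eq:recursive-formula-for-k-c-whittaker-function}, which is routine but not short, so I would default to the direct computation above.
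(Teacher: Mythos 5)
Your proposal is correct and matches the paper's intent exactly: the paper omits the proof of \Cref{thm:special-value-of-diag-c(c-1)-t}, stating only that it is "very similar" to the proof of \Cref{thm:special-value-of-diag-t-c(c-1)}, and your argument is precisely that mirrored computation with $c_1=c-1$, $c_2=1$, the reduction to $\mathcal{Y}\left(\uniformizer^{-r}\ringOfIntegers\right)$ via \Cref{prop:integrand-for-k-c-functional}, the Cartan/support analysis forcing $i_1=\dots=i_c$, and the cuspidality identity killing the remaining cases. The bookkeeping of constants is left as a sketch, but the structure is the intended one.
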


\subsubsection{Proof of the formula for $k=c$}
The goal of this section is to prove \Cref{thm:special-value-of-diag-t-c(c-1)}. The proof of \Cref{thm:special-value-of-diag-c(c-1)-t} is very similar and we omit it.

Suppose that $k=c$. Let $f \in \SpehRepresentation{\tau}{c}$ and let $t = \diag\left(t_1,\dots,t_c\right) = \diag\left(\uniformizer^{i_1},\dots,\uniformizer^{i_c}\right)$, where $i_1,\dots,i_c \ge 0$ and $\max\left(i_1,\dots,i_c\right) > 0$. As explained in the proof of \Cref{prop:support-of-diagonal-whittaker-elements}, we may assume that $i_1 \ge \dots \ge i_c \ge 0$ and therefore $i_1 > 0$. Set $\Phi = \Lift f$ and $\Phi_t = \SpehRepresentation{\tau}{c}\left(\diag\left(t, \IdentityMatrix{\left(c-1\right)c}\right)\right) \Phi$. We are interested in computing $\standardForm{\Phi_t}{\gShortSpehWhittakerFunctional{\depthZeroRepresentation}{k}{c}}$. We will realize  $\SpehRepresentation{\depthZeroRepresentation}{c}$ as a subspace of the parabolic induction $\abs{\det}^{-\frac{c-1}{2}} \depthZeroRepresentation \times \abs{\det}^{\frac{1}{2}} \SpehRepresentation{\depthZeroRepresentation}{c-1}$ and will use the recursive formula \eqref{eq:recursive-formula-for-k-c-whittaker-function} with $c_1=1$ and $c_2=c-1$.

This proof is quite technical and requires work. We describe the process and postpone a technical lemma to later. Throughout the proof, all Haar measures are normalized so that the compact subgroups $\GL_k\left(\ringOfIntegers\right) \subset \GL_k\left(\localField\right)$ and $\squareMatrix_k\left(\ringOfIntegers\right) \subset \squareMatrix_k\left(\localField\right)$ have volume $1$, for any $k$.

Notice that $\Phi_t$ is invariant under right translations of $$\diag\left(t, \IdentityMatrix{c\left(c-1\right)} \right) \left(\IdentityMatrix{c^2} + \squareMatrix_{c^2}\left(\maximalIdeal\right) \right) \diag\left(t, \IdentityMatrix{c\left(c-1\right)} \right)^{-1},$$
and in particular is invariant under right translations of $\IdentityMatrix{c^2} + t_1 \squareMatrix_{c^2}\left(\maximalIdeal\right)$.
By \Cref{prop:integrand-for-k-c-functional}, we have that
$$\standardForm{\Phi_t}{\gShortSpehWhittakerFunctional{\depthZeroRepresentation}{k}{c}} =  \int_{\mathcal{Y}\left(t_1^{-1} \ringOfIntegers \right)} \standardForm{\Phi_t\left( y \kappa \right)}{\WhittakerFunctional{\depthZeroRepresentation} \otimes \gShortSpehWhittakerFunctional{\depthZeroRepresentation}{c}{c-1}} \differential y.$$
We have that $\Phi_t \left( y \kappa \right) = \Phi \left( y \kappa \diag\left(t, \IdentityMatrix{\left(c-1\right)c} \right) \right)$ and that $$ y \kappa \diag\left(t, \IdentityMatrix{\left(c-1\right)c}\right) = \diag\left(t_1 \IdentityMatrix{c}, \IdentityMatrix{c(c-1)} \right) y' \kappa,$$
where $y'$ is a matrix of the form
\begin{equation}\label{eq:k-equals-c-y-prime-definition}
	y' = \begin{pmatrix}
		1\\
		& t_1^{-1} \IdentityMatrix{c-1}\\
		& t_1^{-1} Y_1 & t' \\
		& t_1^{-1} Y' & & \IdentityMatrix{\left(c-1\right)^2}
	\end{pmatrix},
\end{equation}
where $t' = \diag\left(t_2,\dots,t_c\right)$, $Y_1 \in \squareMatrix_{c-1}\left(\ringOfIntegers\right)$ and $$Y' = \begin{pmatrix}
	0 & y_{23} & y_{24} & \dots & y_{2 c}\\
	& 0 & y_{34} & \dots & y_{3c}\\
	& & \ddots & \ddots & \vdots\\
	& & & 0 &  y_{c - 1, c}  \\
	& & & & 0
\end{pmatrix},$$
where $y_{ij} \in \Mat{\left(c-1\right)}{1}\left(\ringOfIntegers\right)$.
We have
\begin{equation}\label{eq:k-equal-c-integral-to-compute}
	\begin{split}
		\standardForm{\Phi_t}{\gShortSpehWhittakerFunctional{\depthZeroRepresentation}{k}{c}}= & \abs{t_1}^{-\binom{c}{2}\left(c-1\right)} \abs{t_1}^{\frac{(c-1)^2c}{2} }  \centralCharacter{\depthZeroRepresentation}\left(t_1\right) \int_{\squareMatrix_{c-1}\left(\ringOfIntegers\right)} \differential Y_1 \int_{\Mat{(c-1)}{1}\left(\ringOfIntegers\right)^{\binom{c-1}{2}}} \differential Y'\\
		& \times  \standardForm{ \Phi\left(  y' \kappa \right)}{\WhittakerFunctional{\depthZeroRepresentation} \otimes \gShortSpehWhittakerFunctional{\depthZeroRepresentation}{c}{c-1}}.
	\end{split}
\end{equation}
We will investigate the integrand of \eqref{eq:k-equal-c-integral-to-compute}:
\begin{equation}\label{eq:integrand-with-y0}
	\standardForm{\Phi\left(  y' \kappa \right)}{\WhittakerFunctional{\depthZeroRepresentation} \otimes \gShortSpehWhittakerFunctional{\depthZeroRepresentation}{c}{c-1}}.
\end{equation}

The following lemma allows us to reduce the integration domain of $Y_1$.
\begin{lemma}\label{lem:k-equals-c-implies-Y_1-is-in-maximal-compact}
	Let $y'$ be as in \eqref{eq:k-equals-c-y-prime-definition}. Suppose that the expression \eqref{eq:integrand-with-y0} is non-zero.
	Then there exists $k_1 \in \GL_{c-1}\left(\ringOfIntegers\right)$ such that $Y_1 = t' \cdot k_1$.
\end{lemma}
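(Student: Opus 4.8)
The plan is to evaluate the matrix coefficient $\standardForm{\Phi\left(y'\kappa\right)}{\WhittakerFunctional{\depthZeroRepresentation}\otimes\gShortSpehWhittakerFunctional{\depthZeroRepresentation}{c}{c-1}}$ by bringing the argument $y'\kappa$ into the set $\ParabolicForSpeh{c}{c}\left(\localField\right)\cdot\GL_{c^2}\left(\ringOfIntegers\right)$, where the section $\Phi=\Lift f$ is computed through the finite field function $f$, and then forcing vanishing by combining the support of the lift with the cuspidality of $\tau$.

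First I would strip off from $y'$ the factor $m=\diag\left(\diag\left(1,t_1^{-1}\IdentityMatrix{c-1}\right),\IdentityMatrix{c\left(c-1\right)}\right)$, which lies in the Levi of the parabolic $\ParabolicSubgroup_{\left(c,c\left(c-1\right)\right)}$ underlying the realization $\SpehRepresentation{\depthZeroRepresentation}{c}\subset\abs{\det}^{-\frac{c-1}{2}}\depthZeroRepresentation\times\abs{\det}^{\frac{1}{2}}\SpehRepresentation{\depthZeroRepresentation}{c-1}$: writing $y'=m y''$, the remaining matrix $y''$ is integral outside the block carrying $t_1^{-1}Y'$, and the transformation property of $\Phi$ replaces $\Phi\left(y'\kappa\right)$ by $\centralCharacter{\depthZeroRepresentation}\left(t_1\right)^{-1}$ times $\depthZeroRepresentation\left(\diag\left(t_1,\IdentityMatrix{c-1}\right)\right)$ acting on the first tensor factor of $\Phi\left(y''\kappa\right)$, up to a power of $\abs{t_1}$. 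Then, using left and right translations by elements of $\GL_{c^2}\left(\ringOfIntegers\right)$ that the flat section $\Phi$ carries across (its restriction to $\GL_{c^2}\left(\ringOfIntegers\right)$ is $k_0\mapsto\Lift\left(f\left(\quotientMap\left(k_0\right)\right)\right)$) together with the equivariances of $\WhittakerFunctional{\depthZeroRepresentation}$ and $\gShortSpehWhittakerFunctional{\depthZeroRepresentation}{c}{c-1}$ (\Cref{prop:whittaker-functional-lift}, \Cref{prop:whittaker-left-diag-equivariance}), I would reduce to the case where $Y_1=\diag\left(\uniformizer^{n_1},\dots,\uniformizer^{n_{c-1}}\right)$ is diagonal — with a degenerate sub-case when $Y_1$ is not invertible, in which the argument below only simplifies.

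Next I would complete an Iwasawa decomposition of $y''\kappa$ with respect to $\ParabolicSubgroup_{\left(c,c\left(c-1\right)\right)}$ (or the finer $\ParabolicForSpeh{c}{c}$), keeping track of the $\GL_c$-Levi block $h_1$ and the $\GL_{c(c-1)}$-Levi block $h_2$ that arise; because $\kappa$ is a permutation matrix in $\GL_{c^2}\left(\ringOfIntegers\right)$, these blocks are governed by the diagonal matrices $t'=\diag\left(t_2,\dots,t_c\right)$ and $\diag\left(\uniformizer^{n_j}\right)$ sitting inside $y''$, interleaved by $\kappa$. Pulling the Levi out via the equivariance of $\Phi$, applying $\WhittakerFunctional{\depthZeroRepresentation}$ to the first factor and (recursively) $\gShortSpehWhittakerFunctional{\depthZeroRepresentation}{c}{c-1}$ to the rest, the integrand becomes a scalar multiple of a matrix coefficient of $\Lift\left(f\left(\quotientMap\left(k_0\right)\right)\right)$ for some $k_0\in\GL_{c^2}\left(\ringOfIntegers\right)$. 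Since $\Lift\left(v_\tau\right)\in\depthZeroRepresentation$ is supported on $\multiplicativegroup{\localField}\cdot\GL_c\left(\ringOfIntegers\right)$, and more precisely $\Lift\left(W_{v_\tau}\right)$ is supported on $\multiplicativegroup{\localField}\cdot\UnipotentSubgroup_c\left(\localField\right)\cdot\GL_c\left(\ringOfIntegers\right)$ by \Cref{prop:whittaker-functional-lift}, and since the recursively unwound $\gShortSpehWhittakerFunctional{\depthZeroRepresentation}{c}{c-1}$ imposes analogous constraints on $h_2$, the Levi blocks are forced to be integral; unwinding, this is exactly the row-wise divisibility $\uniformizer^{i_{j+1}}\mid\uniformizer^{n_j}$ for each $j$. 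Whenever such a divisibility is strict, the computation produces an integral over coordinates of type $\Mat{r}{\left(c-1-r\right)}\left(\ringOfIntegers\right)$ against which $\sum_{Y}\tau\left(\begin{smallmatrix}\IdentityMatrix{r} & Y\\ & \IdentityMatrix{c-1-r}\end{smallmatrix}\right)=0$ by the cuspidality of $\tau$, exactly as in the proof of \Cref{lem:simple-reflection-intertwining-operator}, so the integrand vanishes unless $n_j=i_{j+1}$ for every $j$; undoing the diagonalization of $Y_1$, this is precisely the assertion $Y_1=t'k_1$ with $k_1\in\GL_{c-1}\left(\ringOfIntegers\right)$.

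The main obstacle is the bookkeeping in the Iwasawa step: one must follow precisely how $\kappa$ redistributes the block-columns of $y''$, which $\GL_c$-Levi block each $t_j$ and each $\uniformizer^{n_j}$ ends up in, and the accompanying modulus-character factors, in order to see that the condition which survives is the \emph{ordered} divisibility $Y_1\in t'\GL_{c-1}\left(\ringOfIntegers\right)$ rather than merely an equality of elementary divisors. The two substantive ingredients — the support of the lift of a Whittaker function and the cuspidality identity for $\tau$ — are the same ones used repeatedly in \Cref{sec:speh-representations}, so they apply with no change once the matrix decomposition is in hand.
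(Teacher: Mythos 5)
The central step of your reduction --- bringing $Y_1$ to diagonal form by left and right $\GL_{c^2}\left(\ringOfIntegers\right)$-translations --- does not go through, and it is exactly where the content of the lemma lives. Writing $Y_1 = u_1 d u_2$ with $u_1, u_2 \in \GL_{c-1}\left(\ringOfIntegers\right)$ and $d$ diagonal, the right factor $u_2$ can indeed be absorbed (at the cost of replacing $Y'$ by $Y' u_2^{-1}$), but removing $u_1$ requires acting on the row-block of $y'$ that also contains $t'$, turning $t'$ into $u_1^{-1} t'$, which is no longer diagonal; after your reduction the matrix is no longer of the form \eqref{eq:k-equals-c-y-prime-definition}. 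This matters because the conclusion is the \emph{ordered} condition $\left(t'\right)^{-1} Y_1 \in \GL_{c-1}\left(\ringOfIntegers\right)$, not a condition on the elementary divisors of $Y_1$ --- the two differ already for $t' = \diag\left(\uniformizer, 1\right)$ and $Y_1 = \diag\left(1, \uniformizer\right)$ --- and the diagonal case you reduce to only sees elementary divisors. The paper's proof avoids the problem by taking the Cartan decomposition of $\left(t'\right)^{-1} Y_1 = k_1 a k_2$ rather than of $Y_1$: then $t'$ and the left unit factor combine into a single Levi element $\diag\left(t' k_1, \IdentityMatrix{\left(c-1\right)^2}\right)$ that is pulled out through $\SpehRepresentation{\depthZeroRepresentation}{c-1}$, and only the diagonal $a$ remains inside the matrix to be analyzed.

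Beyond this, the vanishing mechanism you invoke is different from, and less complete than, what is needed. You attribute the vanishing in the strict-divisibility cases to cuspidality of $\tau$ as in \Cref{lem:simple-reflection-intertwining-operator}; the paper's argument for this lemma does not use cuspidality at all, but rather (i) a root-exchange argument against the character carried by the $(c,c-1)$ $\fieldCharacter$-Whittaker functional $\gShortSpehWhittakerFunctional{\depthZeroRepresentation}{c}{c-1}$, which kills the term unless $Y'' a^{-1}$ is integral, and (ii) the support statement of \Cref{prop:whittaker-functional-lift}, which forces the $\depthZeroRepresentation$-Levi block $\diag\left(1, -k_2^{-1} a^{-1}\right)$ into $\multiplicativegroup{\localField} \cdot \UnipotentSubgroup_c \cdot \GL_c\left(\ringOfIntegers\right)$ and hence $a = \IdentityMatrix{c-1}$. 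Moreover, a single pass is not enough: one needs a second layer of Cartan and Iwasawa decompositions (splitting $a$ into $a^{+}$ and $a^{-}$ and decomposing $Y''_{+}$) to rule out the entries of $a$ with $q^{-m_j} < \abs{t_1}$. Your closing remark that ``the bookkeeping in the Iwasawa step'' is the main obstacle is an accurate self-diagnosis: that bookkeeping \emph{is} the proof, and as proposed the argument would not terminate in the stated conclusion.
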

We postpone the proof of this lemma to later.

It follows that we may write $Y_1 = t' k_1$ and replace the integration over $Y_1$ with an integration over $k_1 \in \GL_{c-1}\left(\ringOfIntegers\right)$ in \eqref{eq:k-equal-c-integral-to-compute} with $\differential Y_1 = \abs{\det t'}^{c-1} \mdifferential k_1$, where we also multiply by $\frac{\sizeof{\GL_{c-1}\left(\finiteField\right)}}{\sizeof{\squareMatrix_{c-1}\left(\finiteField\right)}}$ in order to correct the volume. We write the following decomposition of $y'$, where the first matrix lies in $\ParabolicSubgroup_{(c,c(c-1))}$ and the other two matrices lie in $\GL_{c^2}\left(\ringOfIntegers\right)$.
$$y' = \begin{pmatrix}
	1\\
	& \IdentityMatrix{c-1} & t_1^{-1} k_1^{-1} \\
	& & t_1^{-1} t' \\
	& & t_1^{-1} Y' k_1^{-1} & \IdentityMatrix{\left(c-1\right)^2}
\end{pmatrix} \begin{pmatrix}
	1 \\
	& \IdentityMatrix{c-1}\\
	& & \IdentityMatrix{c-1}\\
	& Y' & & \IdentityMatrix{(c-1)^2}
\end{pmatrix} \begin{pmatrix}
	1\\
	& & -k_1^{-1}\\
	& k_1 & t_1 \IdentityMatrix{c-1}\\
	& & & \IdentityMatrix{\left(c-1\right)^2}
\end{pmatrix}.$$
Hence, we have \begin{equation}\label{eq:Phi-after-y_0-prime-iwasawa-decomposition}
	\begin{split}
		\Phi\left(y' \kappa \right) =& \abs{t_1^{-(c-1)} \det t'}^{-\frac{c-1}{2}} \idmap_{\depthZeroRepresentation} \otimes \SpehRepresentation{\depthZeroRepresentation}{c-1} \begin{pmatrix}
			t_1^{-1} t'  \\
			t_1^{-1} Y' k_1^{-1} & \IdentityMatrix{\left(c-1\right)^2}
		\end{pmatrix}  \\
		& \times \Phi\left(\begin{pmatrix}
			1\\
			& \IdentityMatrix{c-1}\\
			& & \IdentityMatrix{c-1}\\
			& Y' & & \IdentityMatrix{\left(c-1\right)^2}
		\end{pmatrix} \diag\left(1, \begin{pmatrix}
			& -k_1^{-1}\\
			k_1
		\end{pmatrix}, \IdentityMatrix{\left(c-1\right)^2}\right) \kappa \right).
	\end{split}
\end{equation}
Using \eqref{eq:Phi-after-y_0-prime-iwasawa-decomposition} and the method of root exchange, we are able to reduce the integration over $Y'$ in \eqref{eq:k-equal-c-integral-to-compute}. This is described in the following lemma. Its proof is very similar to \Cref{prop:integrand-for-k-c-functional}, and we omit it.
\begin{lemma}
	The integral $$ \int_{\GL_{c-1}\left(\ringOfIntegers\right)} \int_{\Mat{(c-1)}{1}\left(\ringOfIntegers\right)^{\binom{c-1}{2}}} \standardForm{\Phi\left(  y' \kappa \right)}{\WhittakerFunctional{\depthZeroRepresentation} \otimes \gShortSpehWhittakerFunctional{\depthZeroRepresentation}{c}{c-1}} \differential Y' \mdifferential Y_1$$ equals the integral
	$$ \int_{\GL_{c-1}\left(\ringOfIntegers\right)} \int_{t_1 \cdot \Mat{(c-1)}{1}\left(\ringOfIntegers\right)^{\binom{c-1}{2}}} \standardForm{\Phi\left(  y' \kappa \right)}{\WhittakerFunctional{\depthZeroRepresentation} \otimes \gShortSpehWhittakerFunctional{\depthZeroRepresentation}{c}{c-1}} \differential Y' \mdifferential Y_1.$$
\end{lemma}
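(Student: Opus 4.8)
The plan is to run the method of root exchange, following the proof of \Cref{prop:integrand-for-k-c-functional} almost verbatim, but now applied to the variable $Y'$. By \eqref{eq:Phi-after-y_0-prime-iwasawa-decomposition} the matrix $Y'$ enters the integrand in two coupled places: once through the unipotent element $u\left(Y'\right) = \left(\begin{smallmatrix} 1 & & & \\ & \IdentityMatrix{c-1} & & \\ & & \IdentityMatrix{c-1} & \\ & Y' & & \IdentityMatrix{(c-1)^2} \end{smallmatrix}\right)$ standing to the left of $\kappa$ inside the argument of $\Phi$, and once inside the action of $\SpehRepresentation{\depthZeroRepresentation}{c-1}$, where it appears as the lower-left block $t_1^{-1}Y'k_1^{-1}$, i.e.\ conjugated by $\diag\left(t_1^{-1}t', \IdentityMatrix{(c-1)^2}\right)$. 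First I would transport $u\left(Y'\right)$ to the right, past the block matrix $\diag\left(1, \left(\begin{smallmatrix} & -k_1^{-1} \\ k_1 & \end{smallmatrix}\right), \IdentityMatrix{(c-1)^2}\right)$ and past $\kappa$, in order to identify the abelian unipotent subgroup $\mathcal{V}\subseteq\GL_{c^2}\left(\localField\right)$ that the $u\left(Y'\right)$ are conjugated into; by the explicit block forms of $\kappa$ (with $c_1=1$, $c_2=c-1$, $k=c$) and of the anti-diagonal block, $\mathcal{V}$ is again a unipotent subgroup lying off the block diagonal, and I would let $\bar{\mathcal{V}}$ denote its opposite, obtained by transposing the positions of its nonzero blocks.

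The two inputs that make root exchange apply are, on the one hand, that $\Phi=\Lift f$ is right invariant under $\IdentityMatrix{c^2}+t_1\squareMatrix_{c^2}\left(\maximalIdeal\right)$ (already observed before \eqref{eq:k-equal-c-integral-to-compute}, since $\Phi_t$ has this invariance and $\Phi_t$ differs from $\Phi$ by right translation by $\diag\left(t,\IdentityMatrix{c(c-1)}\right)$), and, on the other hand, that the composite functional $\WhittakerFunctional{\depthZeroRepresentation}\otimes\gShortSpehWhittakerFunctional{\depthZeroRepresentation}{c}{c-1}$ — once unwound through the recursive definition of $\gShortSpehWhittakerFunctional{\depthZeroRepresentation}{c}{c-1}$ out of $\WhittakerFunctional{\depthZeroRepresentation}$ and lower functionals — is equivariant along $\bar{\mathcal{V}}$ against the relevant restriction of $\fieldCharacterkc{c}{c}$, while the commutator $\left[\mathcal{V},\bar{\mathcal{V}}\right]$ lands in a unipotent subgroup on which $\Phi$ together with the functional is already invariant over the ranges in play. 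Granting these, the root-exchange lemma of \Cref{appendix:root-exchange}, applied after the harmless integration over the compact group $k_1\in\GL_{c-1}\left(\ringOfIntegers\right)$ that appears on both sides, lets me trade the integration over $\mathcal{V}\cap\GL_{c^2}\left(\ringOfIntegers\right)$ for an integration over a piece of $\bar{\mathcal{V}}$ and back again; tracking through this the conjugation by $\diag\left(t_1^{-1}t',\IdentityMatrix{(c-1)^2}\right)$ — which is precisely what couples the two occurrences of $Y'$ — the net effect is exactly to replace the domain $\Mat{(c-1)}{1}\left(\ringOfIntegers\right)^{\binom{c-1}{2}}$ of $Y'$ by $t_1\cdot\Mat{(c-1)}{1}\left(\ringOfIntegers\right)^{\binom{c-1}{2}}$, which is the assertion.

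The main obstacle is not conceptual but combinatorial: one must pin down $\mathcal{V}$ and $\bar{\mathcal{V}}$ explicitly after the conjugations by $\kappa$ and by the anti-diagonal block $\left(\begin{smallmatrix} & -k_1^{-1} \\ k_1 & \end{smallmatrix}\right)$, verify that $\bar{\mathcal{V}}$ genuinely lies along the root directions on which $\WhittakerFunctional{\depthZeroRepresentation}\otimes\gShortSpehWhittakerFunctional{\depthZeroRepresentation}{c}{c-1}$ is equivariant (which forces one to peel back one level of the recursion and to invoke \Cref{prop:integrand-for-k-c-functional} at the level of $\SpehRepresentation{\depthZeroRepresentation}{c-1}$), and check the commutator and convergence hypotheses needed to quote the appendix. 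Each of these has a verbatim analogue in the proof of \Cref{prop:integrand-for-k-c-functional} with the indices shifted by one, which is why the details may be suppressed; the content is precisely this index-chasing.
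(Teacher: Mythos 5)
Your overall strategy --- run the root--exchange algorithm of the appendix on the variable $Y'$, mirroring the proof of \Cref{prop:integrand-for-k-c-functional} with $c_1=1$, $c_2=c-1$ --- is exactly what the paper intends; the paper omits the proof with precisely that remark. However, one of the two ``inputs'' you isolate is the wrong one, and taken literally it moves the domain in the wrong direction. You cite right invariance of $\Phi=\Lift f$ under $\IdentityMatrix{c^2}+t_1\squareMatrix_{c^2}\left(\maximalIdeal\right)=\IdentityMatrix{c^2}+\squareMatrix_{c^2}\left(\maximalIdeal^{i_1+1}\right)$ as the smoothness input. That weakened invariance is what was already spent on the first application of \Cref{prop:integrand-for-k-c-functional} (to $\Phi_t$, producing the domain $\mathcal{Y}\left(t_1^{-1}\ringOfIntegers\right)$), and feeding a level-$\left(i_1+1\right)$ invariance into the root--exchange machine yields, after the Fourier-transform step, the characteristic function of $\maximalIdeal^{-i_1}=t_1^{-1}\ringOfIntegers$ --- i.e.\ it would \emph{enlarge} the domain of the unipotent variable, not cut it down to $t_1\cdot\Mat{(c-1)}{1}\left(\ringOfIntegers\right)^{\binom{c-1}{2}}=\maximalIdeal^{i_1}$-matrices.

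The input that actually drives the reduction is the full level-one smoothness of $\Lift f$ (right invariance under $\IdentityMatrix{c^2}+\squareMatrix_{c^2}\left(\maximalIdeal\right)$), and the factor $t_1$ in the final domain comes entirely from the fact that the $Y'$-block sits inside $y'$ scaled by $t_1^{-1}$ (see \eqref{eq:k-equals-c-y-prime-definition}) --- equivalently, from the torus conjugation you mention only at the very end. Concretely: one inserts $r^{\ast}\left(x\right)$ with $x$ ranging over $\maximalIdeal$-matrices on the right (legitimate by the level-one invariance of $\Phi$), and upon commuting it to the left the character produced against the lower-left block is $\fieldCharacter\left(\trace\left(t_1^{-1}Y'x\right)\right)$ rather than $\fieldCharacter\left(\trace\left(Y'x\right)\right)$; the resulting support condition is $t_1^{-1}Y'\in\Mat{(c-1)}{1}\left(\ringOfIntegers\right)^{\binom{c-1}{2}}$, i.e.\ $Y'\in t_1\cdot\Mat{(c-1)}{1}\left(\ringOfIntegers\right)^{\binom{c-1}{2}}$, which is the assertion (and is consistent with the paper's subsequent substitution $Y'=t_1Y_1'$). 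So the method and the conclusion are right, but as written your two inputs do not combine to give the claimed domain: you should discard the $\IdentityMatrix{c^2}+t_1\squareMatrix_{c^2}\left(\maximalIdeal\right)$-invariance and make the $t_1^{-1}$-scaling of the commutator character the explicit source of the restriction.
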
 
It follows that we may assume $Y' = t_1 Y_1'$, where $Y'_1 \in \Mat{(c-1)}{1}\left(\ringOfIntegers\right)^{\binom{c-1}{2}}$ with $\differential Y' = \abs{t_1}^{(c-1) \binom{c-1}{2}} \differential Y'_1$. Let us denote $$k_0 = \begin{pmatrix}
	1\\
	& \IdentityMatrix{c-1}\\
	& & \IdentityMatrix{c-1}\\
	& & Y_1' k_1^{-1} & \IdentityMatrix{\left(c-1\right)^2}
\end{pmatrix} \begin{pmatrix}
	1 \\
	& & -k_1^{-1}\\
	& k_1\\		
	& & & \IdentityMatrix{\left(c-1\right)^2}
\end{pmatrix} \kappa \in \GL_{c^2}\left(\ringOfIntegers\right).$$ Notice that \eqref{eq:integrand-with-y0} is the same as \begin{equation}\label{eq:k-equals-c-to-reduction-to-c-minu-1-speh}
	\abs{t_1^{-(c-1)} \det t'}^{-\frac{c-1}{2}} \standardForm{\idmap_{\depthZeroRepresentation} \otimes \SpehRepresentation{\depthZeroRepresentation}{c-1} \begin{pmatrix}
			t_1^{-1} t'  \\
			& \IdentityMatrix{\left(c-1\right)^2}
		\end{pmatrix} \Phi \left(k_0\right)}{\WhittakerFunctional{\depthZeroRepresentation} \otimes \gShortSpehWhittakerFunctional{\depthZeroRepresentation}{c}{c-1}}.
\end{equation}
By \Cref{prop:support-of-diagonal-whittaker-elements} part \ref{item:support-of-W-diag-t-I-kc}, we have that \eqref{eq:k-equals-c-to-reduction-to-c-minu-1-speh} is zero unless $\abs{t_1} = \abs{t_j}$ for every $j$. Hence, we must have $t' = t_1 \IdentityMatrix{c-1}$.

Since $Y_1' \in \Mat{\left(c-1\right)^2}{\left(c-1\right)}\left(\ringOfIntegers\right)$, using the identity
$$ \diag\left(1, \begin{pmatrix}
	& \IdentityMatrix{c-1}\\
	\IdentityMatrix{c-1}
\end{pmatrix}, \IdentityMatrix{(c-1)^2}\right) \cdot \kappa = \diag\left(\IdentityMatrix{c}, \kappa_{1,c-1}\right),$$
the integral \eqref{eq:k-equal-c-integral-to-compute} becomes
\begin{align*}
	& \frac{\sizeof{\GL_{c-1}\left(\finiteField\right)}}{\sizeof{\squareMatrix_{(c-1)}\left(\finiteField\right)}} \centralCharacter{\depthZeroRepresentation}\left(t_1\right) \abs{t_1}^{(c-1)^2}\abs{t_1}^{(c-1) \binom{c-1}{2}} \int_{\GL_{c-1}\left(\ringOfIntegers\right)} \mdifferential k_1 \cdot  \int_{\Mat{(c-1)}{1}\left(\ringOfIntegers\right)^{\binom{c-1}{2}}} \differential Y_1' \\
	&\times \standardForm{\depthZeroRepresentation\begin{pmatrix}
			1 &\\
			& -k_1^{-1}
		\end{pmatrix} \otimes \SpehRepresentation{\depthZeroRepresentation}{c-1} \left(\begin{pmatrix}
			k_1 &\\
			Y_1' & \IdentityMatrix{\left(c-1\right)^2}
		\end{pmatrix} \kappa_{1,c-1} \right) \Phi\left( \IdentityMatrix{c^2} \right)} {\WhittakerFunctional{\depthZeroRepresentation} \otimes \gShortSpehWhittakerFunctional{\depthZeroRepresentation}{c}{c-1}}.
\end{align*}
Since $\Phi = \Lift f$, the last integral becomes the finite sum
\begin{align*}
	& \centralCharacter{\depthZeroRepresentation}\left(t_1\right) \abs{t_1}^{(c-1)\binom{c}{2}} \frac{1}{\sizeof{\squareMatrix_{c-1}\left(\finiteField\right)}} \frac{1}{\sizeof{\Mat{(c-1)}{1}\left(\finiteField\right)}^{\binom{c-1}{2}}} \sum_{g \in \GL_{c-1}\left(\finiteField\right)} \sum_{Y \in \Mat{(c-1)}{1}\left(\finiteField\right)^{\binom{c-1}{2}}}\\
	\times &\standardForm{\tau\begin{pmatrix}
			1 &\\
			& -g^{-1}
		\end{pmatrix} \otimes \SpehRepresentation{\tau}{c-1} \left(\begin{pmatrix}
			g &\\
			Y & \IdentityMatrix{\left(c-1\right)^2}
		\end{pmatrix} \kappa_{1,c-1} \right) f \left( \IdentityMatrix{c^2} \right)} {\WhittakerFunctional{\tau} \otimes \gShortSpehWhittakerFunctional{\tau}{c}{c-1}},
\end{align*}
as required. \qed

We move to prove \Cref{lem:k-equals-c-implies-Y_1-is-in-maximal-compact}.
\begin{proof}
	We start with finding a decomposition of the matrix $y'$. Let $\left(t'\right)^{-1} Y_1 = k_1 a k_2$, be a Cartan decomposition, that is, $k_1, k_2 \in \GL_{c-1}\left(\ringOfIntegers\right)$ and $a = \diag\left(\uniformizer^{m_1},\dots, \uniformizer^{m_{c-1}}\right)$ with $m_1 \ge m_2 \ge \dots \ge m_{c-1}$. We write \begin{equation}\label{eq:first-decomposition-of-k1}
		y' = \diag\left(1, k_2^{-1}, t' k_1, \IdentityMatrix{\left(c-1\right)^2} \right) \begin{pmatrix}
			1\\
			& t_1^{-1} \IdentityMatrix{c-1}\\
			& t_1^{-1} a & \IdentityMatrix{c-1} \\
			& t_1^{-1} Y'' & & \IdentityMatrix{\left(c-1\right)^2}
		\end{pmatrix} \diag\left(1, k_2, k_1^{-1}, \IdentityMatrix{\left(c-1\right)^2} \right),
	\end{equation}
	where $Y'' = Y' k_2^{-1}$.
	Hence, \begin{equation}\label{eq:expression-for-Phi-y-prime-c-c}
		\begin{split}
			\Phi\left(y'\right) =& \depthZeroRepresentation\left(\diag\left(1, k_2^{-1}\right)\right) \otimes 
			\SpehRepresentation{\depthZeroRepresentation}{c-1}\left(\diag\left(t' k_1, \IdentityMatrix{\left(c-1\right)^2}\right)\right) \\
			&\times \Phi\left(\begin{pmatrix}
				1\\
				& t_1^{-1} \IdentityMatrix{c-1}\\
				& t_1^{-1} a & \IdentityMatrix{c-1}\\
				& t_1^{-1} Y'' & & \IdentityMatrix{\left(c-1\right)^2}
			\end{pmatrix} \diag\left(1, k_2, k_1^{-1}, \IdentityMatrix{\left(c-1\right)^2} \right)\right).
		\end{split}
	\end{equation}
	
	Assume first that $\left(t_1^{-1} a\right)^{-1} \in \squareMatrix_{(c-1)}\left(\ringOfIntegers\right)$, that is, $\abs{t_1} \le q^{-m_j}$ for all $j$. Then we have that the first matrix on the second line of \eqref{eq:expression-for-Phi-y-prime-c-c} can be decomposed as \begin{equation}\label{eq:first-decomposition-of-simple-case}
		\begin{pmatrix}
			1\\
			& -a^{-1} & t_1^{-1} \IdentityMatrix{c-1}\\
			& & t_1^{-1} a\\
			& & t_1^{-1} Y'' & \IdentityMatrix{(c-1)^2}
		\end{pmatrix} \begin{pmatrix}
		1\\
		& \IdentityMatrix{c-1}\\
		& & \IdentityMatrix{c-1}\\
		& -Y'' a^{-1} & & \IdentityMatrix{(c-1)^2}
		\end{pmatrix} \begin{pmatrix}
			1\\
			& & \IdentityMatrix{c-1}\\
			& \IdentityMatrix{c-1} & t_1 a^{-1}\\
			& & & \IdentityMatrix{(c-1)^2}
		\end{pmatrix}.
	\end{equation}
	The first matrix in \eqref{eq:first-decomposition-of-simple-case} lies in $P_{(c,(c-1)c)}$. By our assumption, the third matrix in \eqref{eq:first-decomposition-of-simple-case} lies in $\GL_{c^2}\left(\ringOfIntegers\right)$. A root exchange algorithm (see \Cref{appendix:root-exchange}) shows that if $-Y'' a^{-1}$ is not in $\Mat{(c-1)^2}{c-1}\left(\ringOfIntegers\right)$, then for any for any $\Phi' \in \SpehRepresentation{\depthZeroRepresentation}{c} \subset \abs{\det}^{-\frac{c-1}{2}} \depthZeroRepresentation \times \abs{\det}^{\frac{1}{2}} \SpehRepresentation{\depthZeroRepresentation}{c-1}$ that is right invariant under $\IdentityMatrix{kc} + \squareMatrix_{kc}\left(\maximalIdeal\right)$, the following expression is zero
	\begin{equation}
		\standardForm{\Phi'\left(  	\begin{pmatrix}
				1\\
				& -k_2^{-1} a^{-1} & t_1^{-1} k_2^{-1} \\
				& & t_1^{-1} t' k_1 a\\
				& & t_1^{-1} Y'' & \IdentityMatrix{(c-1)^2}
			\end{pmatrix} \begin{pmatrix}
				1\\
				& \IdentityMatrix{c-1}\\
				& & \IdentityMatrix{c-1}\\
				& -Y'' a^{-1} & & \IdentityMatrix{(c-1)^2}
			\end{pmatrix} \right)}{\WhittakerFunctional{\depthZeroRepresentation} \otimes \gShortSpehWhittakerFunctional{\depthZeroRepresentation}{c}{c-1}}.
	\end{equation}
	This is proved by writing for $2 \le  i \le c-1$, $$\Phi'\left(g\right) = \frac{1}{\VolumeOf\left(\squareMatrix_{c-1}\left(\maximalIdeal\right)\right)}\int_{\squareMatrix_{c-1}\left(\maximalIdeal\right)} \Phi'\left(g r_i^{\ast}\left(x\right)\right) \differential x,$$
	where for $x \in \squareMatrix_{c-1}\left(\localField\right)$, $r^{\ast}_i\left(x\right) = \diag\left(\IdentityMatrix{c}, \IdentityMatrix{\left(i - 1\right)\left(c-1\right)}, \begin{pmatrix}
		\IdentityMatrix{c-1} & x\\
		& \IdentityMatrix{c-1}
	\end{pmatrix}, \IdentityMatrix{(c-i-1)(c-1)} \right)$ and by using the fact that $\gShortSpehWhittakerFunctional{\depthZeroRepresentation}{c}{c-1}$ is a $(c, c-1)$ $\fieldCharacter$-Whittaker functional. We start with $i = c-1$ and end with $i = 2$ in this process (notice that the last $(c-1)$ rows of $-Y'' a^{-1}$ are zero). Hence it follows that \eqref{eq:expression-for-Phi-y-prime-c-c} amounts to an expression of the form $$ \standardForm{\depthZeroRepresentation\begin{pmatrix}
		1\\
		& -k_2^{-1} a^{-1}
	\end{pmatrix} \otimes \SpehRepresentation{\depthZeroRepresentation}{c-1}\begin{pmatrix}
	t_1^{-1} t' k_1 a\\
	t_1^{-1} Y'' & \IdentityMatrix{(c-1)^2}
	\end{pmatrix} \Lift v_{\tau^{\otimes c}}}{\WhittakerFunctional{\depthZeroRepresentation} \otimes \gShortSpehWhittakerFunctional{\depthZeroRepresentation}{c}{c-1}},$$
	where $v_{\tau^{\otimes c}} \in \tau^{\otimes c}$. 
	By \Cref{prop:whittaker-functional-lift}, we have that this expression is zero unless $\begin{pmatrix}
		1 &\\
		& -k_2^{-1} a^{-1}
	\end{pmatrix} \in \multiplicativegroup{\localField} \cdot \UnipotentSubgroup_c \cdot \GL_k\left(\ringOfIntegers\right)$, which by the Iwasawa decomposition implies that $a = \IdentityMatrix{c-1}$, which implies that $Y_1 = t' k_1 k_2$, as required.

	We now move to the general case. Let $a^{+} = \diag\left(\uniformizer^{m_1},\dots,\uniformizer^{m_r}\right)$ and $a^{-} = \diag\left(\uniformizer^{m_{r+1}},\dots,\uniformizer^{m_{c-1}}\right)$ where the $m_j$'s in $a^{+}$ satisfy $q^{-m_j} < \abs{t_1}$, while the $m_j$'s in $a^{-}$ satisfy $q^{-m_j} \ge \abs{t_1}$. Write $Y'' = \begin{pmatrix}
		Y''_+ & Y''_{-}
	\end{pmatrix}$, where $Y''_+ \in \Mat{\left(c-1\right)^2}{r}\left(\ringOfIntegers\right)$ and $Y''_- \in \Mat{\left(c-1\right)^2}{(c-r-1)}\left(\ringOfIntegers\right)$. We may decompose the first matrix on the second line of \eqref{eq:expression-for-Phi-y-prime-c-c} as \begin{equation}\label{eq:cartan-decomposition-for-whittaker-c-c-y'}
		\begin{split}
			&\begin{pmatrix}
				1\\
				& t_1^{-1} \IdentityMatrix{r}\\
				& & -\left(a^{-}\right)^{-1} & & t_1^{-1} \IdentityMatrix{c-r-1}\\
				& & & \IdentityMatrix{r} \\
				& & & & t_1^{-1} a^{-}\\
				& t_1^{-1} Y_+'' & -Y_{-}''  \left(a^-\right)^{-1}  & & t_1^{-1} Y''_{-} & \IdentityMatrix{\left(c-1\right)^2}
			\end{pmatrix} \\
			\times & \begin{pmatrix}
				1\\		
				& \IdentityMatrix{r}\\
				& & \IdentityMatrix{c-r-1}\\
				& t_1^{-1} a^{+} & & \IdentityMatrix{r}\\
				& & & & \IdentityMatrix{c-r-1}\\
				& & & & & \IdentityMatrix{\left(c-1\right)^2}
			\end{pmatrix} \\
			\times & \diag\left(\IdentityMatrix{r+1}, \begin{pmatrix}
				& & \IdentityMatrix{c-r-1}\\
				& \IdentityMatrix{r}\\
				\IdentityMatrix{c-r-1} & & t_1 \left(a^-\right)^{-1}
			\end{pmatrix}, \IdentityMatrix{\left(c-1\right)^2}\right).
		\end{split}
	\end{equation}
	Denote the matrix on the first row of \eqref{eq:cartan-decomposition-for-whittaker-c-c-y'} by $y'_{-}$.
	Notice that the two other matrices in \eqref{eq:cartan-decomposition-for-whittaker-c-c-y'} lie in $\GL_{c^2}\left(\ringOfIntegers\right)$. Hence, to determine whether \eqref{eq:integrand-with-y0} vanishes, it suffices to check whether the similar expression, in which we replace $y' \kappa$ with $\diag\left(1,k_2^{-1}, t' k_1, \IdentityMatrix{(c-1)^2}\right) y'_{-}$, vanishes.
	We proceed by decomposing $y'_{-}$. Let $Y''_{+} = k'_1 a_0' k'_2,$ where $k'_1 \in \GL_{\left(c-1\right)^2}\left(\ringOfIntegers\right)$, $k'_2 \in \GL_{r}\left(\ringOfIntegers\right)$, and $$a'_0 = \begin{pmatrix}
		a'\\
		0_{\left(\left(c-1\right)^2-r\right) \times r}
	\end{pmatrix},$$ with $a' = \diag\left(\uniformizer^{m'_1}, \dots, \uniformizer^{m'_r}\right)$, where $\infty \ge m'_1 \ge \dots \ge m'_r \ge 0$. As before, we write $y'_{-}$ as \begin{equation}\label{eq:decomposition-of-y-minus-prime}
		\begin{split}
			& \diag\left(1,\left(k'_2\right)^{-1}, \IdentityMatrix{2c-r-2}, t_1^{-1} k'_1\right) \cdot \begin{pmatrix}
				1\\
				& t_1^{-1} \IdentityMatrix{r}\\
				& & -\left(a^{-}\right)^{-1} & & t_1^{-1} \IdentityMatrix{c-r-1}\\
				& & & \IdentityMatrix{r} \\
				& & & & t_1^{-1} a^{-}\\
				& a'_0 & -(k'_1)^{-1} Y''_{-} \left(t_1^{-1} a^{-}\right)^{-1} & & (k'_1)^{-1} Y''_{-} & t_1 \IdentityMatrix{\left(c-1\right)^2} 
			\end{pmatrix} \\
			& \times \diag\left(1, k'_2, \IdentityMatrix{2c-r-2}, \left(k'_1\right)^{-1}\right).
		\end{split}
	\end{equation}
	As before, let $a'_{+} = \diag\left(\uniformizer^{m'_1},\dots,\uniformizer^{m'_{r'}}\right)$ and $a'_{-} = \diag\left(\uniformizer^{m'_{r'+1}}, \dots, \uniformizer^{m'_{r}} \right)$, where all the $m'_j$s in $a'_{+}$ satisfy $q^{-m'_j} < \abs{t_1}$, while all the $m'_j$s in $a'_{-}$ satisfy $q^{-m'_j} \ge \abs{t_1}$. Notice that if $r \ge 1$ then $r' \ge 1$. This is because the rank of $Y_{-}''$ is at most $c-2$, which implies that $m'_1 = \infty$. Write $$\left(k'_1\right)^{-1} Y_{-}'' = \begin{pmatrix}
		Z_1\\
		Z_2\\
		Z_3
	\end{pmatrix}$$ where $Z_1 \in \Mat{r'}{(c-r-1)}\left(\localField\right)$, $Z_2 \in \Mat{(r-r')}{(c-r-1)}\left(\localField\right)$ and $Z_3 \in \Mat{(\left(c-1\right)^2 - r)}{(c-r-1)}\left(\localField\right)$. We may write the second matrix in \eqref{eq:decomposition-of-y-minus-prime} in the following form:
	\begin{equation}\label{eq:decomposition-of-second-matrix-in-decomposition-of-y-minus-prime}
		\begin{split}
			& \begin{pmatrix}
				1\\
				& t_1^{-1} \IdentityMatrix{r'}\\
				& & -\left(a_{-}'\right)^{-1} & & & & & t_1^{-1} \IdentityMatrix{r - r'}\\
				& & & -(a^{-})^{-1} & & t_1^{-1} \IdentityMatrix{c-r-1} & &  \\
				& &  & & \IdentityMatrix{r} & & &  \\
				& & & & & t_1^{-1} a^{-} & &  \\
				& & & -Z_1 \left(t_1^{-1} a^{-}\right)^{-1} & & Z_1 & t_1 \IdentityMatrix{r'}\\
				& & & -Z_2 \left(t_1^{-1} a^{-}\right)^{-1} & & Z_2 & & a'_{-}\\
				& & & -Z_3 \left(t_1^{-1} a^{-}\right)^{-1} & & Z_3 & & & t_1 \IdentityMatrix{\left(c-1\right)^2 - r}
			\end{pmatrix}\\
			\times & \begin{pmatrix}1\\
				& \IdentityMatrix{r'}\\
				& & & & & \IdentityMatrix{r - r'}\\
				& & & \IdentityMatrix{2c - r - 2}\\
				& t_1^{-1} a'_{+} & & & \IdentityMatrix{r'} &\\	
				& & \IdentityMatrix{r - r'} & & & t_1 \left(a'_{-}\right)^{-1}\\
				& & &  & & & \IdentityMatrix{\left(c-1\right)^2 - r}
			\end{pmatrix}
		\end{split}
	\end{equation}
	
	Notice that the matrix appearing on the second row of \eqref{eq:decomposition-of-second-matrix-in-decomposition-of-y-minus-prime} lies in $\GL_{c^2}\left(\ringOfIntegers\right)$. Let $$\begin{pmatrix}
		-(a^{-})^{-1} & & t_1^{-1} \IdentityMatrix{c-r-1} & &  \\
		& \IdentityMatrix{r} & & &  \\
		& & t_1^{-1} a^{-} & &  \\
		-Z_1 \left(t_1^{-1} a^{-}\right)^{-1} & & Z_1 & t_1 \IdentityMatrix{r'}\\
		-Z_2 \left(t_1^{-1} a^{-}\right)^{-1} & & Z_2 & & a'_{-}\\
		-Z_3 \left(t_1^{-1} a^{-}\right)^{-1} & & Z_3 & & & t_1 \IdentityMatrix{\left(c-1\right)^2 - r}
	\end{pmatrix} = \begin{pmatrix}
	b'_{c-1-r} & X\\
	& b'_{c\left(c-1\right)}
	\end{pmatrix} k_0'$$ be an Iwasawa decomposition, where $b'_{c-1-r} \in \GL_{c-1-r}\left(\localField\right)$ and $b'_{c\left(c-1\right)} \in \GL_{c\left(c-1\right)}\left(\localField\right)$ are upper triangular matrices, $X \in \Mat{(c-1-r)}{c(c-1)}\left(\localField\right)$, and $k_0' \in \GL_{c^2 - r - 1}\left(\ringOfIntegers\right)$. We have that $$\Phi\left(y' \kappa \right) = \abs{\det b_1}^{\frac{(c-1)^2}{2}} \abs{\det b_{c-1}}^{-\frac{c-1}{2}} \depthZeroRepresentation\left(b_1\right) \otimes \SpehRepresentation{\depthZeroRepresentation}{c-1}\left(\diag\left(t', \IdentityMatrix{(c-1)^2}\right) b_{c-1}\right) \Phi\left(k_0\right),$$
	where \begin{align*}
		b_1 &= \diag\left(1, k_2^{-1}\right) \cdot \diag\left(1, \left(k'_2\right)^{-1}, \IdentityMatrix{c-r-1}\right) \cdot \diag\left(1, t_1^{-1}\IdentityMatrix{r'}, -\left(a_{-}'\right)^{-1}, b'_{c-1-r}\right),\\
		b_{c-1} &= \diag\left(t' k_1, \IdentityMatrix{\left(c-1\right)^2}\right) \cdot \diag\left(\IdentityMatrix{c-1}, t_1^{-1} k'_1\right) \cdot b'_{c(c-1)},
	\end{align*}
	and $k_0 \in \GL_{c^2}\left(\ringOfIntegers\right)$. Since $k_0 \in \GL_{c^2}\left(\ringOfIntegers\right)$, $\Phi\left(k_0\right) = \mathcal{L}\left(v_{\tau \otimes \SpehRepresentation{\tau}{c-1}} \right),$ for some $v_{\tau \otimes \SpehRepresentation{\tau}{c-1}} \in \tau \otimes \SpehRepresentation{\tau}{c-1}$. By \Cref{prop:whittaker-functional-lift}, in order for $\standardForm{\Phi\left(y' \kappa \right)}{\WhittakerFunctional{\depthZeroRepresentation} \otimes \gShortSpehWhittakerFunctional{\depthZeroRepresentation}{c-1}{c}}$ not to vanish, we must have $b_1 \in \multiplicativegroup{\localField} \cdot \UnipotentSubgroup_c \cdot \GL_c\left(\ringOfIntegers\right)$, which by the Iwasawa decomposition implies that $$\diag\left(1,t_1^{-1}\IdentityMatrix{r'}, -\left(a'_{-}\right)^{-1}, d'_{c-1-r}\right) \in \GL_c\left(\ringOfIntegers\right),$$
	where $d'_{c-1-r}$ is the diagonal part of $b'_{c-1-r}$. Since $\abs{t_1} < 1$, we must have $r' = 0$, which implies that $r = 0$. Therefore, $a = a^{-}$ and we are in the first case.
\end{proof}

\section{Gamma factors}

This section is devoted to explaining the notion of the Ginzburg--Kaplan gamma factor in the context of finite fields and of non-archimedean local fields, and to establishing a relation between these two definitions of gamma factors.

\subsection{Ginzburg--Kaplan gamma factors for finite general linear groups}\label{sec:ginzburg-kaplan-finite-gln}

We recall the definition and properties of the Ginzburg--Kaplan gamma factor defined by Carmon and the author in \cite{CarmonZelingher2024}. This is a finite field analog of a construction given by Kaplan in \cite[Appendix A]{kaplan2018} and \cite{Kaplan2023}, whose global counterpart was given by Ginzburg \cite{ginzburg2019tensor}.

Let $\tau$ be an irreducible cuspidal\footnote{In \cite{CarmonZelingher2024}, the authors define this notion for any irreducible generic $\tau$, but for our purposes, we only need to consider cuspidal $\tau$.} representation of $\GL_k\left(\finiteField\right)$. For any positive integer $c$ and any $h \in \GL_c\left(\finiteField\right)$, let $$\specialBesselSpeh{\tau}\left(h\right) = \besselSpehFunction{\tau}{c}\begin{pmatrix}
	& \IdentityMatrix{(k-1)c}\\
	h
\end{pmatrix}.$$
Notice that the notation $\specialBesselSpeh{\tau}\left(h\right)$ does not include the parameters $k$ and $c$. These parameters should be inferred from $\tau$ and $h$, respectively. By the discussion at the end of \Cref{sec:whittaker-models-of-speh-representations-over-finite-fields}, it follows that the assignment $\GL_c\left(\finiteField\right) \to \cComplex$ given by $h \mapsto \specialBesselSpeh{\tau}\left(h\right)$ is a class function of $\GL_c\left(\finiteField\right)$, that is, it is invariant under $\GL_c\left(\finiteField\right)$-conjugation. Let $\pi$ be an irreducible representation of $\GL_c\left(\finiteField\right)$, and consider the operator $$\GKGaussSum{\pi}{\tau}{\fieldCharacter} = q^{\frac{\left(k-2\right)c^2}{2}} \sum_{h \in \GL_c\left(\finiteField\right)} \specialBesselSpeh{\tau}\left(h\right) \pi\left(h\right).$$
Since $h \mapsto \specialBesselSpeh{\tau}\left(h\right)$ is a class function, we have that $\GKGaussSum{\pi}{\tau}{\fieldCharacter}$ is an element of $\Hom_{\GL_c\left(\finiteField\right)}\left(\pi, \pi\right)$. Since $\pi$ is irreducible, by Schur's lemma there exists a scalar $\GKPreGammaFactor{\pi}{\tau}{\fieldCharacter} \in \cComplex$ such that $\GKGaussSum{\pi}{\tau}{\fieldCharacter} = \GKPreGammaFactor{\pi}{\tau}{\fieldCharacter} \cdot \idmap_{\pi}$. We call $\GKPreGammaFactor{\pi}{\tau}{\fieldCharacter}$  \emph{the Ginzburg--Kaplan gamma factor associated to $\pi$ and $\tau$, with respect to $\fieldCharacter$}. We denote $$\GKGammaFactor{\pi}{\tau}{\fieldCharacter} = \centralCharacter{\pi}\left(-1\right)^{k-1} \cdot \GKPreGammaFactor{\pi}{\tau}{\fieldCharacter},$$
where $\centralCharacter{\pi}$ is the central character of $\pi$.

In \cite{CarmonZelingher2024}, Carmon and the author studied the Ginzburg--Kaplan gamma factor. They showed that this gamma factor is multiplicative and that it fits into a functional equation. They used these results to show that $\specialBesselSpeh{\tau}$ is expressible in terms of exotic matrix Kloosterman sums. The proof of the relation to exotic matrix Kloosterman sums relies on an identity (\Cref{cor:gamma-factor-equality-with-epsilon-factor}) that follows from a local argument that we will establish in this paper.

\subsubsection{Kaplan's functional equation}\label{sec:kaplan-functional-equation-finite-field}
Let $\pi$ and $\tau$ be as above. Define for any $W \in \Whittaker\left(\SpehRepresentation{\tau}{c}, \fieldCharacterkc{k}{c}\right)$ the following \emph{zeta operators} on the space $\pi$: $$\zetaOperator\left(W, \pi \times \tau\right) = \frac{1}{\sizeof{\GL_c\left(\finiteField\right)}}\sum_{h \in \GL_c\left(\finiteField\right)} W\begin{pmatrix}
	h\\
	& \IdentityMatrix{(k-1)c}
\end{pmatrix} \pi\left(h\right)$$
and
$$\dualZetaOperator\left(W, \pi \times \tau\right) = q^{-\frac{\left(k-2\right)c^2}{2}} \frac{1}{\sizeof{\GL_c\left(\finiteField\right)}} \sum_{h \in \GL_c\left(\finiteField\right)} \sum_{X \in \Mat{c}{(k-2)c}\left(\finiteField\right)} W \begin{pmatrix}
	& \IdentityMatrix{c}\\
	& & \IdentityMatrix{(k-2)c}\\
	h & & X
\end{pmatrix}.$$
These are the operators $\frac{1}{\sizeof{\GL_c\left(\finiteField\right)}} \zetaOperator_{k-2}\left(W, \pi \times \tau\right)$ and $\frac{1}{\sizeof{\GL_c\left(\finiteField\right)}} \dualZetaOperator_{k-2}\left(W, \pi \times \tau\right)$ from \cite[Section 4.3]{CarmonZelingher2024}, respectively. They are finite field analogs of operators studied by Kaplan in \cite[Appendix A]{kaplan2018} and \cite{Kaplan2023}, see also \Cref{subsec:kaplans-local-zeta-integrals}.

In \cite[Theorem 4.14 and Corollary 4.21]{CarmonZelingher2024}, the following functional equation was established.
\begin{theorem}
	Suppose that $\Contragradient{\tau}$ does not appear in the cuspidal support of $\pi$. Then for any $W \in \Whittaker\left(\SpehRepresentation{\tau}{c}, \fieldCharacterkc{k}{c}\right)$, we have the equality of operators $$\dualZetaOperator\left(W, \pi \times \tau\right) = \GKPreGammaFactor{\pi}{\tau}{\fieldCharacter} \cdot \zetaOperator\left(W, \pi \times \tau\right).$$
	Moreover, in this case $$\abs{\GKPreGammaFactor{\pi}{\tau}{\fieldCharacter}} = 1.$$
\end{theorem}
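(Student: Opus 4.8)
The plan is to run the standard Rankin--Selberg argument: realize the two sides of the identity as the values on $W$ of two linear maps belonging to the same one-dimensional space of equivariant functionals, conclude that these two maps are proportional, and pin down the proportionality constant by evaluating on the Bessel--Speh function. First I would record the equivariance of the two zeta operators: a change of variables $h\mapsto hg^{-1}$ in the defining sums --- together with, for $\dualZetaOperator$, conjugating the block $X$ past $g$, and using that $\diag\left(g,\IdentityMatrix{(k-1)c}\right)$ normalizes $\UnipotentRadicalForWss{k}{c}\left(\finiteField\right)$ and preserves $\fieldCharacterkc{k}{c}$ --- shows that for every $g\in\GL_c\left(\finiteField\right)$, right-translating $W$ by $\diag\left(g,\IdentityMatrix{(k-1)c}\right)$ multiplies both $\zetaOperator\left(W,\pi\times\tau\right)$ and $\dualZetaOperator\left(W,\pi\times\tau\right)$ on the right by $\pi\left(g\right)^{-1}$. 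Hence $W\mapsto\zetaOperator\left(W,\pi\times\tau\right)$ and $W\mapsto\dualZetaOperator\left(W,\pi\times\tau\right)$ are both elements of $\Hom_{\GL_c\left(\finiteField\right)}\left(\Whittaker\left(\SpehRepresentation{\tau}{c},\fieldCharacterkc{k}{c}\right)\otimes\pi,\pi\right)$, where $\GL_c\left(\finiteField\right)$ acts on the first factor by right translation through $\diag\left(\,\cdot\,,\IdentityMatrix{(k-1)c}\right)$ and on the two copies of $\pi$ through $\pi$.

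The crux is that this $\Hom$-space is at most one-dimensional, and this is the step where the hypothesis that $\Contragradient{\tau}$ does not occur in the cuspidal support of $\pi$ is used; I expect it to be the main obstacle. Identifying $\Whittaker\left(\SpehRepresentation{\tau}{c},\fieldCharacterkc{k}{c}\right)$ with $\SpehRepresentation{\tau}{c}$ and restricting to $\UnipotentRadicalForWss{k}{c}\left(\finiteField\right)\rtimes\diag\left(\GL_c\left(\finiteField\right),\IdentityMatrix{(k-1)c}\right)$, one runs a Bernstein--Zelevinsky / Mackey filtration of the restriction of $\SpehRepresentation{\tau}{c}$: uniqueness of the $(k,c)$ $\fieldCharacter$-Whittaker functional on $\SpehRepresentation{\tau}{c}$ \cite{Carmon2023} governs the leading layer, cuspidality of $\tau$ kills the intermediate layers, and the one remaining layer that could carry an extra invariant functional is exactly the one forcing a cuspidal constituent of $\pi$ to be matched with $\Contragradient{\tau}$ in the cuspidal support of $\SpehRepresentation{\tau}{c}$, which is excluded.

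Granting this, $\dualZetaOperator\left(\,\cdot\,,\pi\times\tau\right)=\gamma\cdot\zetaOperator\left(\,\cdot\,,\pi\times\tau\right)$ for a single scalar $\gamma$, and I would identify $\gamma=\GKPreGammaFactor{\pi}{\tau}{\fieldCharacter}$ by evaluating on $W=\besselSpehFunction{\tau}{c}$. Since $\SpehRepresentation{\tau}{c}$ admits a $(k,c)$ $\fieldCharacter$-Whittaker vector, $\besselSpehFunction{\tau}{c}$ is bi-$\left(\UnipotentRadicalForWss{k}{c}\left(\finiteField\right),\fieldCharacterkc{k}{c}\right)$-equivariant; comparing the left and right equivariances for the arguments $\diag\left(h,\IdentityMatrix{(k-1)c}\right)u$, $u\in\UnipotentRadicalForWss{k}{c}\left(\finiteField\right)$, shows that $\besselSpehFunction{\tau}{c}\left(\diag\left(h,\IdentityMatrix{(k-1)c}\right)\right)$ vanishes unless $h=\IdentityMatrix{c}$, where it equals $1$; hence $\zetaOperator\left(\besselSpehFunction{\tau}{c},\pi\times\tau\right)=\sizeof{\GL_c\left(\finiteField\right)}^{-1}\idmap_{\pi}\neq0$, so $\gamma$ is well-defined. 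On the dual side, writing $\left(\begin{smallmatrix}&\IdentityMatrix{c}&\\&&\IdentityMatrix{(k-2)c}\\h&&X\end{smallmatrix}\right)=\left(\begin{smallmatrix}&\IdentityMatrix{(k-1)c}\\h&\end{smallmatrix}\right)u\left(X\right)$ with $u\left(X\right)$ running bijectively over the subgroup of $\UnipotentRadicalForWss{k}{c}\left(\finiteField\right)$ on which $\fieldCharacterkc{k}{c}$ is trivial, the right $\fieldCharacterkc{k}{c}$-equivariance of $\besselSpehFunction{\tau}{c}$ collapses the sum over $X$ and gives $\dualZetaOperator\left(\besselSpehFunction{\tau}{c},\pi\times\tau\right)=\sizeof{\GL_c\left(\finiteField\right)}^{-1}\GKGaussSum{\pi}{\tau}{\fieldCharacter}=\sizeof{\GL_c\left(\finiteField\right)}^{-1}\GKPreGammaFactor{\pi}{\tau}{\fieldCharacter}\idmap_{\pi}$. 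Comparing the two computations yields $\gamma=\GKPreGammaFactor{\pi}{\tau}{\fieldCharacter}$, i.e.\ the asserted functional equation.

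For $\abs{\GKPreGammaFactor{\pi}{\tau}{\fieldCharacter}}=1$ I would argue by unitarity. Since the hypothesis is symmetric under passing to contragredients (it is equivalent to $\tau$ not occurring in the cuspidal support of $\Contragradient{\pi}$), the functional equation also holds for the triple $\left(\Contragradient{\pi},\Contragradient{\tau},\fieldCharacter^{-1}\right)$; comparing the two via the long Weyl element --- under which $\dualZetaOperator$ and $\zetaOperator$ get interchanged, up to the explicit power of $q$ --- yields the cocycle relation $\GKPreGammaFactor{\pi}{\tau}{\fieldCharacter}\cdot\GKPreGammaFactor{\Contragradient{\pi}}{\Contragradient{\tau}}{\fieldCharacter^{-1}}=1$. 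On the other hand, $\besselSpehFunction{\tau}{c}$ is a matrix coefficient of the unitary representation $\SpehRepresentation{\tau}{c}$, so $\overline{\besselSpehFunction{\tau}{c}\left(g\right)}=\besselSpehFunction{\tau}{c}\left(g^{-1}\right)$; combining this with the $\diag^k$-equivariance of $\besselSpehFunction{\tau}{c}$ and taking adjoints in $\GKGaussSum{\pi}{\tau}{\fieldCharacter}=\GKPreGammaFactor{\pi}{\tau}{\fieldCharacter}\idmap_{\pi}$ (with respect to a $\GL_c\left(\finiteField\right)$-invariant inner product on $\pi$) gives $\overline{\GKPreGammaFactor{\pi}{\tau}{\fieldCharacter}}=\GKPreGammaFactor{\Contragradient{\pi}}{\Contragradient{\tau}}{\fieldCharacter^{-1}}$. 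The two relations together force $\abs{\GKPreGammaFactor{\pi}{\tau}{\fieldCharacter}}^{2}=1$.
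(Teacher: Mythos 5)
First, a remark on scope: the paper does not prove this theorem at all --- it is imported verbatim from \cite[Theorem 4.14 and Corollary 4.21]{CarmonZelingher2024} --- so there is no in-paper argument to compare yours against, and I am assessing your proposal as a standalone proof. Two of its three pillars are sound. The evaluation at $W=\besselSpehFunction{\tau}{c}$ is correct: $\besselSpehFunction{\tau}{c}\left(\diag\left(h,\IdentityMatrix{(k-1)c}\right)\right)$ vanishes unless $h=\IdentityMatrix{c}$, and the factorization of the argument of the dual sum as $\left(\begin{smallmatrix}&\IdentityMatrix{(k-1)c}\\h&\end{smallmatrix}\right)u$ with $u=\left(\begin{smallmatrix}\IdentityMatrix{c}&0&h^{-1}X\\&\IdentityMatrix{c}&0\\&&\IdentityMatrix{(k-2)c}\end{smallmatrix}\right)$, on which $\fieldCharacterkc{k}{c}$ is trivial, does give $\dualZetaOperator\left(\besselSpehFunction{\tau}{c},\pi\times\tau\right)=\sizeof{\GL_c\left(\finiteField\right)}^{-1}\GKPreGammaFactor{\pi}{\tau}{\fieldCharacter}\cdot\idmap_{\pi}$ with matching powers of $q$. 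The scheme for $\abs{\GKPreGammaFactor{\pi}{\tau}{\fieldCharacter}}=1$ (a cocycle relation from the contragredient triple plus complex conjugation via unitarity) is also the standard workable one, although the Weyl-element comparison of $\zetaOperator$ and $\dualZetaOperator$ on which the cocycle relation rests is asserted rather than carried out.

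The genuine gap is the multiplicity-one step, which is the entire content of the theorem. With the action you specify, $\Hom_{\GL_c\left(\finiteField\right)}\left(\Whittaker\left(\SpehRepresentation{\tau}{c},\fieldCharacterkc{k}{c}\right)\otimes\pi,\pi\right)$ is nothing but $\Hom_{\GL_c\left(\finiteField\right)}\left(\SpehRepresentation{\tau}{c}\restriction_{\GL_c}\otimes\,\pi\otimes\Contragradient{\pi},\mathbf{1}\right)$; its dimension is at least the multiplicity of the trivial representation in $\SpehRepresentation{\tau}{c}\restriction_{\GL_c}$, roughly $\dim\SpehRepresentation{\tau}{c}\,/\,\sizeof{\GL_c\left(\finiteField\right)}$, which is far larger than $1$ already for $k\ge 3$, $c=1$ (and the space makes no reference to $\fieldCharacter$, whereas $\GKPreGammaFactor{\pi}{\tau}{\fieldCharacter}$ does). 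So proportionality of the two operators does not follow from the equivariance you recorded. The missing input is equivariance under a suitable unipotent subgroup against a suitable character, and the naive candidate you gesture at fails: if $u\in\UnipotentRadicalForWss{k}{c}\left(\finiteField\right)$ has $(1,2)$-block $Y_1\ne 0$, then replacing $W$ by its right translate by $u$ turns $\zetaOperator\left(W,\pi\times\tau\right)$ into a constant times $\sizeof{\GL_c\left(\finiteField\right)}^{-1}\sum_h\fieldCharacter\left(\trace\left(hY_1\right)\right)W\left(\diag\left(h,\IdentityMatrix{(k-1)c}\right)\right)\pi\left(h\right)$ --- the character picked up depends on $h$ and stays inside the sum --- so $\UnipotentRadicalForWss{k}{c}\rtimes\diag\left(\GL_c,\IdentityMatrix{(k-1)c}\right)$ does not act on your Hom space through a character, and the ``Mackey filtration'' you invoke is not attached to any correctly specified pair of subgroup and character. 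Formulating the correct equivariance data and proving the corresponding uniqueness (in \cite{CarmonZelingher2024} this is done through a chain of intermediate integrals and twisted Jacquet modules, with the hypothesis that $\Contragradient{\tau}$ is absent from the cuspidal support of $\pi$ used to kill the non-open Bruhat cells) is precisely what is required, and it is absent from your proposal.
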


\subsection{Relation to level zero representations}
In this section, we relate the zeta operators from \Cref{sec:kaplan-functional-equation-finite-field} and their counterparts defined over a local field using level zero representations.

\subsubsection{Kaplan's local zeta integrals}\label{subsec:kaplans-local-zeta-integrals}
In this section, we recall the definition of Kaplan's local zeta integrals and the gamma factor that they define.

Let $\Pi$ and $\depthZeroRepresentation$ be irreducible representations of $\GL_c\left( \localField \right)$ and $\GL_k\left( \localField \right)$, respectively. Assume that $\depthZeroRepresentation$ is generic. In \cite[Appendix A]{kaplan2018} and \cite{Kaplan2023}, Kaplan introduced a local integral construction that represents the $L$-function associated to the tensor product representation of the pair $\left(\Pi, \depthZeroRepresentation\right)$. Kaplan's integrals arise as local integrals of a global construction of Ginzburg \cite[Section 5.3]{ginzburg2019tensor}. Kaplan's zeta integral is given by the formula
$$\zetaOperator\left(s, v, v^{\vee}, W; \fieldCharacter \right) = \int_{\GL_c\left(\localField\right)} W\left(\diag\left(h, \IdentityMatrix{\left(k-1\right)c}\right) \right) \standardForm{\Pi\left(h\right)v}{v^{\vee}} \abs{\det h}^{s - \frac{\left(k-2\right)c + 1}{2}} \mdifferential h.$$
Here, $s \in \cComplex$, $v \in \Pi$, $v^{\vee} \in \Contragradient{\Pi}$ and $W \in \Whittaker\left(\SpehRepresentation{\depthZeroRepresentation}{c}, \fieldCharacterkc{k}{c}\right)$ (as explained before, the Speh representation $\SpehRepresentation{\depthZeroRepresentation}{c}$ is defined for $\depthZeroRepresentation$ generic and not necessarily supercuspidal \cite{CaiFriedbergGourevitchKaplan2023, LapidMao2020}, but we will not need this).
The integral $\zetaOperator\left(s, v, v^{\vee}, W; \fieldCharacter \right)$ converges absolutely for $\RealPart s \gg 0$, and in this domain it converges to an element of $\cComplex\left(q^{-s}\right)$. We keep denoting this element of $\cComplex\left(q^{-s}\right)$ by $\zetaOperator\left(s, v, v^{\vee}, W; \fieldCharacter \right)$. It can be shown that for some data $\left(v,v^{\vee},W\right)$, the integral $\zetaOperator\left(s, v, v^{\vee}, W; \fieldCharacter \right)$ is the constant function $1$.

Kaplan's dual zeta integral is given by the formula
\begin{align*}
	\dualZetaOperator\left(s,v,v^{\vee},W;\fieldCharacter\right) = & \int_{\GL_c\left(\localField\right)} \int_{\squareMatrix_{c \times 
			\left(k-2\right)c}\left(\localField\right)} \standardForm{\Pi\left(h\right)v}{v^{\vee}} \cdot \abs{\det h}^{s - 1 + \frac{\left(k-2\right)c + 1}{2}}\\
	& \times W\left( \diag \left(
		\IdentityMatrix{\left(k-1\right)c}, h
	\right) \begin{pmatrix}
		\IdentityMatrix{c}\\
		& \IdentityMatrix{\left(k-2\right)c}\\
		& X & \IdentityMatrix{c}
	\end{pmatrix} \begin{pmatrix}
	& \IdentityMatrix{\left(k-1\right)c}\\
	\IdentityMatrix{c}
\end{pmatrix} \right) \differential X \mdifferential h.
\end{align*}
This integral converges absolutely for $\RealPart s \ll 0$ and in this domain it converges to an element of $\cComplex\left(q^{-s}\right)$. We keep denoting this element by $	\dualZetaOperator\left(s,v,v^{\vee},W;\fieldCharacter\right)$. Similarly to before, it can be shown that for some data, the integral $\dualZetaOperator\left(s, v, v^{\vee}, W; \fieldCharacter \right)$ is the constant function $1$.

These zeta integrals satisfy a functional equation.

\begin{theorem}
	There exists an element $\LocalGKGammaFactor{s}{\Pi}{\depthZeroRepresentation}{\fieldCharacter} \in \cComplex\left(q^{-s}\right)$, such that for every $W \in \Whittaker\left(\SpehRepresentation{\depthZeroRepresentation}{c}, \fieldCharacterkc{k}{c}\right)$, $v \in \Pi$ and $v^{\vee} \in \Contragradient{\Pi}$, the following functional equation holds:
	$$ \dualZetaOperator\left(s,v,v^{\vee},W;\fieldCharacter\right) = \centralCharacter{\pi}\left(-1\right)^{k-1} \LocalGKGammaFactor{s}{\Pi}{\depthZeroRepresentation}{\fieldCharacter} \zetaOperator\left(s,v,v^{\vee},W;\fieldCharacter\right).$$ 
\end{theorem}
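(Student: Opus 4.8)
The plan is to run the standard Rankin--Selberg argument: realize both zeta integrals, after meromorphic continuation, as elements of one and the same space of $\GL_c(\localField)$-equivariant functionals; invoke a local uniqueness principle to conclude that one is a scalar multiple of the other; and then bootstrap the rationality of that scalar from the existence of data for which the integrals equal the constant function $1$.

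First I would fix a value $s_0$ of $s$ outside a suitable finite bad set (depending on $q^{-s_0}$), so that $\zetaOperator(s_0,\cdot)$ and $\dualZetaOperator(s_0,\cdot)$ may be regarded as honest trilinear forms on $\Pi \otimes \Contragradient{\Pi} \otimes \Whittaker(\SpehRepresentation{\depthZeroRepresentation}{c},\fieldCharacterkc{k}{c})$ obtained from the meromorphic continuations. Using the left $(\UnipotentRadicalForWss{k}{c},\fieldCharacterkc{k}{c})$-equivariance of the $(k,c)$ $\fieldCharacter$-Whittaker model, together with the change of variables $h \mapsto h'h$ in the defining integral of $\zetaOperator$ (and, for $\dualZetaOperator$, conjugating the argument of $W$ by $\left(\begin{smallmatrix}& \IdentityMatrix{(k-1)c}\\ \IdentityMatrix{c}\end{smallmatrix}\right)$ and integrating out the variable $X$), one checks that both forms lie in the space $\mathcal{H}_{s_0}$ of trilinear forms $T$ that are $\GL_c(\localField)$-equivariant in the sense that, with $\alpha = \tfrac{(k-2)c+1}{2}$,
$$ T\bigl(\Pi(h)v,\ v^{\vee},\ W\bigr) = \abs{\det h}^{\alpha - s_0}\, T\bigl(v,\ v^{\vee},\ \SpehRepresentation{\depthZeroRepresentation}{c}(\diag(h^{-1},\IdentityMatrix{(k-1)c}))W\bigr), $$
with $v^{\vee}$ entering only through the matrix coefficient $h \mapsto \standardForm{\Pi(h)v}{v^{\vee}}$; concretely, $\mathcal{H}_{s_0}$ is a $\Hom$-space between $\Pi\otimes\abs{\det}^{s_0-\alpha}\otimes\Contragradient{\Pi}$ and the twisted Jacquet module of $\SpehRepresentation{\depthZeroRepresentation}{c}$ along the unipotent radical of $\ParabolicSubgroup_{(c,(k-1)c)}$ cut out by $\fieldCharacterkc{k}{c}$, viewed as a $\GL_c(\localField)$-module via its top-left block.

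The crux is the uniqueness statement $\dim_{\cComplex}\mathcal{H}_{s_0}\le 1$ for generic $s_0$. I would prove this by a Bruhat/Mackey filtration of the above twisted Jacquet module: decompose it according to $\GL_c(\localField)$-orbits on the relevant partial flag variety, and show that on every orbit except the open one the resulting $\GL_c(\localField)$-subquotient carries no functional of the required type, because its Jacquet module along a proper parabolic of $\GL_k(\localField)$ — equivalently, the attached twisted Jacquet module — vanishes since $\depthZeroRepresentation$ is supercuspidal; this is exactly the kind of cuspidality-driven vanishing already exploited in the proofs of \Cref{lem:simple-reflection-intertwining-operator} and \Cref{prop:support-of-diagonal-whittaker-elements}. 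On the open orbit, the stabilizer is (up to conjugation) a semidirect product of a diagonally embedded $\GL_c(\localField)$ with unipotent groups on which the data restrict to $\fieldCharacter$-type characters, so that a functional there is forced to factor through the $\fieldCharacter$-Whittaker functional of the generic representation $\Pi$ (unique by the Gelfand--Graev uniqueness) and the $(k,c)$ $\fieldCharacter$-Whittaker functional of $\SpehRepresentation{\depthZeroRepresentation}{c}$ (unique by \cite{CaiFriedbergGourevitchKaplan2023}), leaving a space of dimension at most one. Granting this, for each generic $s_0$ there is a scalar $c(s_0) \in \cComplex$, independent of $(v,v^{\vee},W)$, with $\dualZetaOperator(s_0,v,v^{\vee},W;\fieldCharacter) = c(s_0)\,\zetaOperator(s_0,v,v^{\vee},W;\fieldCharacter)$.

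Finally I would upgrade $c(s)$ to a rational function and identify the gamma factor. By the remarks preceding the theorem there is a choice of data $(v,v^{\vee},W)$ for which $\zetaOperator(s,v,v^{\vee},W;\fieldCharacter) \equiv 1$; plugging it into the proportionality gives $c(s) = \dualZetaOperator(s,v,v^{\vee},W;\fieldCharacter) \in \cComplex(q^{-s})$. Since the identity $\dualZetaOperator = c(s)\,\zetaOperator$ then holds at infinitely many points $s_0$ and both sides are elements of $\cComplex(q^{-s})$ for every fixed datum, it holds identically; setting $\LocalGKGammaFactor{s}{\Pi}{\depthZeroRepresentation}{\fieldCharacter} = \centralCharacter{\pi}(-1)^{k-1}\, c(s)$ finishes the proof. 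The main obstacle is the one-dimensionality of $\mathcal{H}_{s_0}$: the orbit analysis, the identification of the stabilizers of the non-open orbits, and the verification that the associated twisted Jacquet modules vanish by supercuspidality of $\depthZeroRepresentation$ is where essentially all the content lies, while convergence, the two equivariance computations, and the rationality bootstrap are routine.
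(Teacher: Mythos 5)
The paper offers no proof of this statement: it is quoted directly from Kaplan's work (\cite[Appendix A]{kaplan2018}, \cite{Kaplan2023}), so there is no internal argument to compare yours against. Your overall template --- interpret both integrals, for $s_0$ outside a finite bad set, as elements of a space of equivariant forms, prove that space is at most one-dimensional, and then bootstrap rationality of the proportionality constant from data where $\zetaOperator \equiv 1$ --- is the right one and matches the strategy of the cited references.

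There is, however, a genuine error at the crux of your uniqueness argument. On the open orbit you force the functional to factor through ``the $\fieldCharacter$-Whittaker functional of the generic representation $\Pi$'' via Gelfand--Graev uniqueness. But in this theorem $\Pi$ is an \emph{arbitrary} irreducible representation of $\GL_c\left(\localField\right)$; only $\depthZeroRepresentation$ is assumed generic. This is the entire point of the Ginzburg--Kaplan construction, as the introduction of this paper emphasizes: $\Pi$ enters the integral only through a matrix coefficient $h \mapsto \standardForm{\Pi\left(h\right)v}{v^{\vee}}$, precisely because no Whittaker model of $\Pi$ need exist. If your open-orbit analysis were correct, $\mathcal{H}_{s_0}$ would vanish whenever $\Pi$ is non-generic, contradicting the existence of data with $\zetaOperator\left(s,v,v^{\vee},W;\fieldCharacter\right) \equiv 1$. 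The correct source of uniqueness is not a functional on $\Pi$ at all, but the structure of the twisted Jacquet module of $\SpehRepresentation{\depthZeroRepresentation}{c}$ along $\UnipotentRadical_{(c,(k-1)c)}$ cut out by $\fieldCharacterkc{k}{c}$, which the $(k,c)$ theory of \cite{CaiFriedbergGourevitchKaplan2023} identifies (up to a determinant twist) with essentially the regular representation of $\GL_c\left(\localField\right)$; pairing this against $\Pi \otimes \Contragradient{\Pi} \otimes \abs{\det}^{s_0-\alpha}$ and applying Schur's lemma to the irreducible $\Pi$ yields the one-dimensionality for all but finitely many $q^{-s_0}$, with no genericity hypothesis on $\Pi$. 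Your treatment of the non-open orbits via the cuspidal support of $\SpehRepresentation{\depthZeroRepresentation}{c}$, and the rationality bootstrap at the end, are sound; it is the open-orbit step that must be replaced.
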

	We call the gamma factor arising from this functional equation the \emph{Ginzburg--Kaplan $\gamma$-factor}.
	
	Although not stated in \cite[Appendix A]{kaplan2018}, it follows from a globalization argument that for supercuspidal representations $\Pi$ and $\depthZeroRepresentation$, the Ginzburg--Kaplan gamma factor $\LocalGKGammaFactor{s}{\Pi}{\depthZeroRepresentation}{\fieldCharacter}$ agrees with the Jacquet--Piatetski-Shapiro--Shalika gamma factor \cite{Jacquet1983rankin}. The reason is because by \cite[Theorem A.2]{kaplan2018}, the Ginzburg--Kaplan $\gamma$-factor is multiplicative\footnote{It is only shown to be multiplicative in the first argument, but when $c=1$ the integral coincides with the Rankin--Selberg integral of \cite{Jacquet1983rankin} for $\GL_{k} \times \GL_1$, and these gamma factors are multiplicative. These results combined are sufficient to take care of the unramified and archimedean places.} and by \cite[Section 5.3]{ginzburg2019tensor}, the zeta integrals arise from a global integral representing the $L$-function of the tensor product. See \cite[Section 6.7.1]{CaiFriedbergKaplan2022}.
\subsubsection{Computation for level zero representations}\label{subsubsec:computation-for-level-zero-representations}
Let $\tau$ and $\depthZeroRepresentation$ be as in \Cref{subsec:speh-representations-of-depth-zero-representations}. Let $\pi$ be an irreducible cuspidal representation of $\GL_c\left(\finiteField\right)$, and let $\Pi$ be an irreducible level zero supercuspidal representation constructed from $\pi$ with central character $\centralCharacter{\Pi}$. The goal of this section is to express Kaplan's local zeta integrals in terms of those in \Cref{sec:kaplan-functional-equation-finite-field}.

We first prove a simple lemma regarding matrix coefficients that arise by lifting elements from $\pi$ and $\Contragradient{\pi}$. Let $\Pi'$ be the irreducible level zero supercuspidal representation constructed from $\Contragradient{\pi}$ with central character $\centralCharacter{\Pi}^{-1}$. We have that $\Contragradient{\Pi}$ is isomorphic to $\Pi'$. Henceforth we will identify $\Contragradient{\Pi}$ with the representation $\Pi'$.

\begin{lemma} \label{lem:support-of-matrix-coefficient-of-depth-zero}
	Let $v \in \pi$ and $v^{\vee} \in \Contragradient{\pi}$. 
	\begin{enumerate}
		\item Suppose that $h \in \GL_c\left(\localField\right)$ is such that $$\standardForm{\Pi\left(h\right)\Lift v}{\Lift v^{\vee}} \ne 0.$$
		Then $h \in \multiplicativegroup{\localField} \cdot \GL_c\left(\ringOfIntegers\right)$.
		\item For any $h \in \GL_c\left(\ringOfIntegers\right)$, $$\standardForm{\Pi\left(h\right) \Lift v}{\Lift v^{\vee}} = \standardForm{\pi\left(\quotientMap\left(h\right)\right)v}{v^{\vee}}.$$
	\end{enumerate}
	
\end{lemma}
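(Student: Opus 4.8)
The plan is to unwind the compact-induction models of $\Pi$ and of $\Contragradient{\Pi}$, plug in the explicit lifts $\Lift v$ and $\Lift v^{\vee}$, and observe that the matrix coefficient collapses to a single term. Write $G = \GL_c\left(\localField\right)$ and $H = \multiplicativegroup{\localField} \cdot \GL_c\left(\ringOfIntegers\right)$, so that $\Pi = \ind{H}{G}{\centralCharacter{\Pi} \otimes \pi}$ and, after the identification $\Contragradient{\Pi} \isomorphic \Pi'$ recorded above, $\Contragradient{\Pi} = \ind{H}{G}{\centralCharacter{\Pi}^{-1} \otimes \Contragradient{\pi}}$. The subgroup $H$ is open in $G$, and both $G$ and $H$ are unimodular ($H$ is the product of the center $\multiplicativegroup{\localField}$ and the compact group $\GL_c\left(\ringOfIntegers\right)$), so $\lquot{H}{G}$ is discrete and there is no modulus twist; under the chosen identification, the $G$-invariant pairing between $\Pi$ and $\Contragradient{\Pi}$ is $$\standardForm{f_1}{f_2} = \sum_{g \in \lquot{H}{G}} \standardForm{f_1\left(g\right)}{f_2\left(g\right)},$$ where on the right the pairing between $\centralCharacter{\Pi} \otimes \pi$ and $\centralCharacter{\Pi}^{-1} \otimes \Contragradient{\pi}$ is, on underlying spaces, just the pairing $\standardForm{\cdot}{\cdot}$ between $\pi$ and $\Contragradient{\pi}$. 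The sum is finite because $f_1$ and $f_2$ are compactly supported modulo $H$.

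Then, for $h \in G$, $$\standardForm{\Pi\left(h\right)\Lift v}{\Lift v^{\vee}} = \sum_{g \in \lquot{H}{G}} \standardForm{\left(\Lift v\right)\left(gh\right)}{\left(\Lift v^{\vee}\right)\left(g\right)}.$$ By construction $\Lift v^{\vee}$ is supported on $H$, so only the coset of $g = \IdentityMatrix{c}$ contributes, and $\left(\Lift v^{\vee}\right)\left(\IdentityMatrix{c}\right) = v^{\vee}$; hence the whole sum equals $\standardForm{\left(\Lift v\right)\left(h\right)}{v^{\vee}}$. Part~(1) is now immediate: $\Lift v$ is supported on $H = \multiplicativegroup{\localField} \cdot \GL_c\left(\ringOfIntegers\right)$, so if the matrix coefficient is nonzero then $h \in H$. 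For part~(2), when $h \in \GL_c\left(\ringOfIntegers\right)$ the defining formula for the lift gives $\left(\Lift v\right)\left(h\right) = \centralCharacter{\Pi}\left(1\right)\pi\left(\quotientMap\left(h\right)\right)v = \pi\left(\quotientMap\left(h\right)\right)v$, and therefore $\standardForm{\Pi\left(h\right)\Lift v}{\Lift v^{\vee}} = \standardForm{\pi\left(\quotientMap\left(h\right)\right)v}{v^{\vee}}$, as claimed.

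I do not expect a serious obstacle here. The only points requiring care are the standard bookkeeping that puts the canonical pairing on a compact induction in the displayed ``sum over cosets'' form — openness of $H$, unimodularity so that no modulus character intervenes, and the fact that the isomorphism $\Contragradient{\Pi} \isomorphic \Pi'$ fixed above is exactly the one induced by this pairing — together with the trivial observation that the central-character factor of the lift contributes $1$ on $\GL_c\left(\ringOfIntegers\right)$ and cancels between $\Lift v$ and $\Lift v^{\vee}$ on $\multiplicativegroup{\localField}$.
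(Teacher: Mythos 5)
Your proof is correct and follows essentially the same route as the paper: the paper likewise writes the pairing as an integral over $\lquot{\multiplicativegroup{\localField}\GL_c\left(\ringOfIntegers\right)}{\GL_c\left(\localField\right)}$ (which, the subgroup being open, is your sum over cosets), collapses it to a single term using the support of $\Lift v^{\vee}$, and then reads off both parts from the support and the defining formula of $\Lift v$.
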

\begin{proof}
	Given $f \in \Pi$ and $f^{\vee} \in \Pi' \cong \Contragradient{\Pi}$, the pairing $\standardForm{f}{f^{\vee}}$ is given by
	
	$$\standardForm{f}{f^{\vee}} = \int_{\multiplicativegroup{\localField} \GL_c\left(\ringOfIntegers\right) \backslash \GL_c\left(\localField\right)} \standardForm{f\left(x\right)}{f^{\vee}\left(x\right)} \mdifferential x.$$
	Suppose that $v \in \pi$ and $v \in \Contragradient{\pi}$. Then we have that $$\standardForm{ \Pi\left(h\right) \Lift v}{\Lift v^{\vee}} = \int_{\multiplicativegroup{\localField} \GL_c\left(\ringOfIntegers\right) \backslash \GL_c \left(\localField\right)} \standardForm{\Lift v \left(xh\right)}{\Lift \Contragradient{v}\left(x\right)} \mdifferential x,$$
	Since $\Lift v^{\vee}$ is supported on $\multiplicativegroup{\localField} \GL_c\left(\ringOfIntegers\right)$, in order for the integrand not to vanish, we must have $x \in \multiplicativegroup{\localField} \GL_c\left(\ringOfIntegers\right)$. Since $\Lift v$ is also supported on $\multiplicativegroup{\localField} \GL_c\left(\ringOfIntegers\right)$, this implies if $\standardForm{\Pi\left(h\right)\Lift v}{\Lift v^{\vee}} \ne 0$ then $h \in \multiplicativegroup{\localField} \GL_c\left(\ringOfIntegers\right)$. This proves the first part. The second part is immediate.
\end{proof}

Next, we prove a lemma regarding partial evaluation of Kaplan's zeta integrals.

\begin{lemma}\label{lem:integral-vanishes-depth-zero-non-isomorphic}
	Suppose that $\pi$ is not isomorphic to $\Contragradient{\tau}$.
	\begin{enumerate}
		\item \label{item:vanishing-lemma-for-first-integral}If $\abs{t} < 1$, then
		$$\int_{\GL_c\left( \ringOfIntegers \right)} \Lift W\left(\diag\left(th, \IdentityMatrix{\left(k-1\right)c}\right) \right) \standardForm{\Pi\left(h\right) \Lift v}{\Lift v^{\vee}} \mdifferential h = 0.$$
		\item \label{item:vanishing-lemma-for-second-integral} If $\abs{t} > 1$, then \begin{align*}
			& \int_{\GL_c\left(\ringOfIntegers\right)}   \Lift W\left( \diag\left(\IdentityMatrix{\left(k-1\right)c}, th\right) \right) \standardForm{\Pi\left(h\right) \Lift v}{\Lift v^{\vee}}  \mdifferential h = 0.
		\end{align*}
	\end{enumerate}
\end{lemma}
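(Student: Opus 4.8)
The plan is to treat the two parts symmetrically, so I focus on part~\ref{item:vanishing-lemma-for-first-integral}; part~\ref{item:vanishing-lemma-for-second-integral} follows by the analogous argument using \Cref{prop:support-of-diagonal-whittaker-elements} part~\ref{item:support-of-W-diag-I-kc-t} in place of part~\ref{item:support-of-W-diag-t-I-kc}. First I would reduce the domain of integration. By \Cref{lem:support-of-matrix-coefficient-of-depth-zero} part~(1), the integrand vanishes unless $h \in \multiplicativegroup{\localField} \cdot \GL_c\left(\ringOfIntegers\right)$; since $h$ already ranges over $\GL_c\left(\ringOfIntegers\right)$, this is automatic, but the same lemma tells us that after writing $h = z h_0$ (with $z \in \multiplicativegroup{\ringOfIntegers}$, $h_0 \in \GL_c\left(\ringOfIntegers\right)$) the matrix coefficient only depends on $\quotientMap\left(h\right)$, and equals $\standardForm{\pi\left(\quotientMap\left(h\right)\right)v}{v^{\vee}}$. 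So the integral collapses to a finite sum over $\GL_c\left(\finiteField\right)$, up to the volume normalization, of the product $\Lift W\left(\diag\left(th, \IdentityMatrix{(k-1)c}\right)\right) \standardForm{\pi\left(\quotientMap\left(h\right)\right) v}{v^{\vee}}$.

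The key point is then to understand the factor $\Lift W\left(\diag\left(th,\IdentityMatrix{(k-1)c}\right)\right)$ as a function of $h \in \GL_c\left(\ringOfIntegers\right)$. Using \Cref{prop:whittaker-left-diag-equivariance} we may move $t = \uniformizer^{i}\IdentityMatrix{c}$ across: writing $th = \diag^k$ is not quite available since $t$ is a scalar in $\GL_c$, so instead I would use the Cartan-type reduction from the proof of \Cref{prop:support-of-diagonal-whittaker-elements} together with \Cref{prop:relation-between-k-c-whittaker-models} to see that the assignment $h \mapsto \Lift W\left(\diag\left(th, \IdentityMatrix{(k-1)c}\right)\right)$, restricted to $h \in \GL_c\left(\ringOfIntegers\right)$, descends to a function on $\GL_c\left(\finiteField\right)$, and in fact is of the form $h \mapsto \standardForm{\SpehRepresentation{\tau}{c}\left(\diag\left(\quotientMap(h), \IdentityMatrix{(k-1)c}\right)\right) \SpehRepresentation{\tau}{c}(\text{something})\, f}{\cdots}$. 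The upshot is that the finite sum becomes a pairing of the form $\sum_{h \in \GL_c\left(\finiteField\right)} \Phi_t\left(h\right)\, \standardForm{\pi(h)v}{v^{\vee}}$, where $h \mapsto \Phi_t(h)$ is the restriction to $\diag\left(\GL_c, \IdentityMatrix{(k-1)c}\right)$ of an element of $\Whittaker\left(\SpehRepresentation{\tau}{c}, \fieldCharacterkc{k}{c}\right)$.

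Now the representation-theoretic heart: I would argue that for $\abs{t} < 1$ the functional $W \mapsto \Lift W\left(\diag\left(t\,\cdot\,, \IdentityMatrix{(k-1)c}\right)\right)$, viewed as producing functions on $\GL_c\left(\finiteField\right)$ in the cuspidal support of $\SpehRepresentation{\tau}{c}$ restricted to the first $\GL_c$ block, lands in a space whose only constituents have cuspidal support involving $\tau$-powers; pairing against the cuspidal representation $\pi$ of $\GL_c\left(\finiteField\right)$ then forces vanishing unless $\pi$ (or $\Contragradient{\pi}$) appears in that support, which by the hypothesis $\pi \ncong \Contragradient{\tau}$ (and $c < k$ cuspidality constraints as in the proof of \Cref{prop:support-of-diagonal-whittaker-elements}) it does not — so the sum over $h \in \GL_c\left(\finiteField\right)$ against the matrix coefficient of $\pi$ is zero by Schur orthogonality. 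The cleanest way to package this is: the map $h \mapsto \Phi_t(h)$ satisfies a left-$\left(\UnipotentRadical,\psi\right)$-equivariance forcing it to be supported on cells governed by cuspidal data of $\tau$, so it is orthogonal to all matrix coefficients of $\pi$. \textbf{The main obstacle} I expect is making the reduction from the local evaluation $\Lift W\left(\diag\left(th, \IdentityMatrix{(k-1)c}\right)\right)$ to a genuinely finite-field statement clean enough to invoke cuspidality of $\tau$ and of $\pi$; this requires carefully tracking the Cartan decomposition argument of \Cref{prop:support-of-diagonal-whittaker-elements} and confirming that the surviving terms, after using $\pi \ncong \Contragradient{\tau}$, kill the sum. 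Once that translation is done, the vanishing is a routine application of orthogonality of cuspidal matrix coefficients together with the cuspidal-support analysis already carried out in the proof of \Cref{prop:support-of-diagonal-whittaker-elements}.
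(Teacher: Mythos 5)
Your plan follows the paper's route: reduce the integral to a finite sum over $\GL_c\left(\finiteField\right)$ via \Cref{lem:support-of-matrix-coefficient-of-depth-zero}, then kill the sum by comparing against the cuspidal support of $\SpehRepresentation{\tau}{c}$. The problem is that the one step you defer as the ``main obstacle'' is exactly the step where the hypothesis $\abs{t}<1$ must enter, and the justification you sketch for it does not suffice. The left $\left(\UnipotentRadicalForWss{k}{c}, \fieldCharacterkc{k}{c}\right)$-equivariance of $\Lift W$ by itself says nothing about which irreducible representations of $\GL_c\left(\finiteField\right)$ occur in the function $h \mapsto \Lift W\left(\diag\left(th, \IdentityMatrix{(k-1)c}\right)\right)$: the restriction of $\SpehRepresentation{\tau}{c}$ to the block $\diag\left(\GL_c, \IdentityMatrix{(k-1)c}\right)$ contains far more than representations with cuspidal support $\left\{\tau,\dots,\tau\right\}$. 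What is needed is invariance under the full unipotent radical $\UnipotentRadical_{\left(c,\left(k-1\right)c\right)}$, and this is where $\abs{t}<1$ is used: for $n \in \UnipotentRadical_{\left(c,\left(k-1\right)c\right)}\left(\ringOfIntegers\right)$ the conjugate $\diag\left(th, \IdentityMatrix{(k-1)c}\right)\, n\, \diag\left(th, \IdentityMatrix{(k-1)c}\right)^{-1}$ lies in $\UnipotentRadicalForWss{k}{c}$ with entries in $\maximalIdeal$, where $\fieldCharacterkc{k}{c}$ is trivial, so $\Lift W\left(\diag\left(th, \IdentityMatrix{(k-1)c}\right) n\right) = \Lift W\left(\diag\left(th, \IdentityMatrix{(k-1)c}\right)\right)$. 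Consequently the trilinear form $Z_t\left(W,v,v^{\vee}\right)$ is invariant under $W \mapsto \SpehRepresentation{\tau}{c}\left(n \diag\left(h_0, \IdentityMatrix{(k-1)c}\right)\right)W$, $v \mapsto \pi\left(h_0\right)v$, so its non-vanishing would give an embedding $\SpehRepresentation{\tau}{c} \hookrightarrow \Contragradient{\pi} \circ \Sigma$; the cuspidal support $\left\{\tau,\dots,\tau\right\}$ then forces $k=c$ and $\Contragradient{\pi} \cong \tau$, contrary to hypothesis. With that invariance established, your Schur-orthogonality phrasing is the same argument in different clothes.

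Two smaller points. The descent of $h \mapsto \Lift W\left(\diag\left(th, \IdentityMatrix{(k-1)c}\right)\right)$ to a function on $\GL_c\left(\finiteField\right)$ requires no Cartan decomposition: it is immediate from the right invariance of $\Lift W$ under $\IdentityMatrix{kc} + \squareMatrix_{kc}\left(\maximalIdeal\right)$, since $\diag\left(thh', \IdentityMatrix{(k-1)c}\right) = \diag\left(th, \IdentityMatrix{(k-1)c}\right)\diag\left(h', \IdentityMatrix{(k-1)c}\right)$ for $h' \in \IdentityMatrix{c} + \squareMatrix_c\left(\maximalIdeal\right)$. Also, the hypothesis $\pi \ncong \Contragradient{\tau}$ is vacuous unless $c = k$ (otherwise they are representations of different groups), so the substance of the lemma is the case $c = k$, where \Cref{prop:support-of-diagonal-whittaker-elements} gives nothing and the unipotent-invariance argument above carries the entire weight; your parenthetical appeal to ``$c<k$ cuspidality constraints'' points at the wrong case.
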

\begin{proof}
	Let $t \in \multiplicativegroup{\localField}$ and suppose that $\abs{t} < 1$. Define a map $Z_t \colon \Whittaker \left(\SpehRepresentation{\tau}{c}, \fieldCharacter\right) \times \pi \times \Contragradient{\pi} \to \cComplex$ by
	$$ Z_t\left(W,v,v^{\vee}\right) = \int_{\GL_c\left( \ringOfIntegers \right)} \Lift W\left(\diag\left(th, \IdentityMatrix{\left(k-1\right)c}\right) \right) \standardForm{\Pi\left(h\right) \Lift v}{\Lift v^{\vee}} \mdifferential h.$$
	Let $n \in \UnipotentRadical_{\left(c,\left(k-1\right)c\right)}\left(\ringOfIntegers\right)$, then \begin{equation}\label{eq:conjugation-of-unipotent-by-diagonal}
		\diag\left(th, \IdentityMatrix{\left(k-1\right)c}\right) n \diag\left(th, \IdentityMatrix{\left(k-1\right)c}\right)^{-1} = \IdentityMatrix{kc} \left(\bmod \maximalIdeal\right).
	\end{equation} Therefore, since the left hand side of \eqref{eq:conjugation-of-unipotent-by-diagonal} lies in $\UnipotentRadicalForWss{k}{c}$, we have that $$\Lift W\left(\diag\left(th, \IdentityMatrix{\left(k-1\right)c}\right) n \right) = \Lift W\left(\diag\left(th, \IdentityMatrix{\left(k-1\right)c}\right) \right).$$
	This implies that for any $n \in \UnipotentRadical_{\left(c,\left(k-1\right)c\right)}\left(\finiteField\right)$ and any $h \in \GL_c\left(\finiteField\right)$, we have $$Z_t\left( \SpehRepresentation{\tau}{c}\left(n \diag\left(h, \IdentityMatrix{\left(k-1\right)c}\right) \right) W, \pi\left(h\right)v , v^{\vee} \right) = Z_t\left(W, v , v^{\vee} \right).$$
	If $Z_t$ is not identically zero, it follows that $\SpehRepresentation{\tau}{c}$ embeds into the space $\pi^{\vee} \circ \Sigma$ for some irreducible representation $\Sigma$ of $\GL_{\left(k-1\right)c}\left(\finiteField\right)$. Since the cuspidal support of $\SpehRepresentation{\tau}{c}$ is $\left\{ \tau,\dots,\tau \right\}$, we have that $Z_t$ is identically zero unless $k = c$ and $\pi^{\vee} = \tau$. This proves part \ref{item:vanishing-lemma-for-first-integral}. Part \ref{item:vanishing-lemma-for-second-integral} is proved similarly.		
\end{proof}

Consider the following functionals $\secondSpecialFunctional \colon \Contragradient{\tau} \otimes \tau \otimes \tau \otimes \SpehRepresentation{\tau}{c-1} \to \cComplex$ and $\secondDualSpecialFunctional \colon \Contragradient{\tau} \otimes \tau \otimes \SpehRepresentation{\tau}{c-1}\otimes \tau \to \cComplex$.
\begin{align*}
	\MoveEqLeft[3] \standardForm{v_{\tau^{\vee}} \otimes v_{\tau} \otimes v'_{\tau} \otimes f_{\SpehRepresentation{\tau}{c-1}}}{\secondSpecialFunctional}
	&\\
	= & \standardForm{v_{\Contragradient{\tau}}}{v'_{\tau}} \cdot \frac{1}{\sizeof{\GL_{c-1}\left(\finiteField\right)}}\frac{1}{\sizeof{\mathcal{Y}_{1,c-1}\left(\finiteField\right)}}\sum_{g \in \GL_{c-1}\left(\finiteField\right)} \sum_{y \in \mathcal{Y}_{1,c-1}\left(\finiteField\right)} W_{v_{\tau}}\begin{pmatrix}
		1\\
		& -g^{-1}
	\end{pmatrix} \\
	& \times W_{f_{\SpehRepresentation{\tau}{c-1}}}\left(\begin{pmatrix}
			g\\
			& \IdentityMatrix{(c-1)^2}
		\end{pmatrix} y \kappa_{1,c-1}\right).
	\end{align*}
and
\begin{align*}
	\MoveEqLeft[3] \standardForm{v_{\tau^{\vee}} \otimes v_{\tau} \otimes f_{\SpehRepresentation{\tau}{c-1}} \otimes v'_{\tau}}{\secondDualSpecialFunctional} 
	&\\
	= & \standardForm{v_{\Contragradient{\tau}}}{v'_{\tau}} \cdot \frac{1}{\sizeof{\GL_{c-1}\left(\finiteField\right)}}\frac{1}{\sizeof{\mathcal{Y}_{c-1,1}\left(\finiteField\right)}} \sum_{g \in \GL_{c-1}\left(\finiteField\right)} \sum_{y \in \mathcal{Y}_{c-1,1}\left(\finiteField\right)} W_{v_{\tau}}\begin{pmatrix}
		g\\
		& 1
	\end{pmatrix}\\
		&\times W_{f_{\SpehRepresentation{\tau}{c-1}}}\left(\begin{pmatrix}
			\IdentityMatrix{(c-1)^2}\\
			& -g^{-1}
		\end{pmatrix} y \kappa_{c-1,1} \right).
\end{align*}
These functionals are similar to the ones appearing in Theorems \ref{thm:special-value-of-diag-t-c(c-1)} and \ref{thm:special-value-of-diag-c(c-1)-t}.

We are now ready to state our results regarding the relation between Kaplan's zeta integrals and the ones in \Cref{sec:kaplan-functional-equation-finite-field}. 
Normalize the Haar measures on $\multiplicativegroup{\localField}$ and $\GL_{c}\left(\localField\right)$ so that $\multiplicativegroup{\ringOfIntegers}$ and $\GL_c \left(\ringOfIntegers\right)$ have volume $1$, respectively.

\begin{theorem}\label{prop:equality-of-gk-zeta-integrals}
	 Let $v \in \pi$, $v^{\vee} \in \Contragradient{\pi}$ and $W \in \Whittaker\left(\SpehRepresentation{\tau}{c}, \fieldCharacterkc{k}{c}\right)$.
	 \begin{enumerate}
	 	\item Suppose that $\pi \ncong \Contragradient{\tau}$. Then we have $$\zetaOperator\left(s,\Lift v,\Lift v^{\vee}, \Lift W;\fieldCharacter\right) =  \standardForm{\zetaOperator\left(W, \pi \times \tau \right) v}{v^{\vee}}.$$
	 	\item \label{item:gk-zeta-integral-exceptional-case} Suppose that $\pi = \Contragradient{\tau}$ (and therefore $k=c$). Then for any $v \in \Contragradient{\tau}$, $v^{\vee} \in \tau$ and $W = W_{f}$ for $f \in \SpehRepresentation{\tau}{c} \subset \tau \circ \SpehRepresentation{\tau}{c-1}$ we have \begin{align*}
	 		\MoveEqLeft[3] \zetaOperator\left(s,\Lift v,\Lift v^{\vee}, \Lift W;\fieldCharacter\right) = \standardForm{\zetaOperator\left(W, \pi \times \tau \right) v}{v^{\vee}}\\
	 		& + q^{-\frac{c^2}{2}} \cdot \frac{\centralCharacter{\Pi}\left(\uniformizer\right) \centralCharacter{\depthZeroRepresentation}\left(\uniformizer\right)q^{-c\left(s - \frac{1}{2}\right)}}{1 - \centralCharacter{\Pi}\left(\uniformizer\right) \centralCharacter{\depthZeroRepresentation}\left(\uniformizer\right) q^{-cs}} \standardForm{v \otimes v^{\vee} \otimes f\left(\IdentityMatrix{c^2}\right)}{\secondSpecialFunctional}.
	 	\end{align*}
	 \end{enumerate}
\end{theorem}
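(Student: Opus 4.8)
The plan is to unfold Kaplan's zeta integral $\zetaOperator\left(s, \Lift v, \Lift v^{\vee}, \Lift W; \fieldCharacter\right)$ using the Cartan decomposition $\GL_c\left(\localField\right) = \coprod_{t} \multiplicativegroup{\ringOfIntegers} \cdot \GL_c\left(\ringOfIntegers\right) \cdot t \cdot \GL_c\left(\ringOfIntegers\right)$, writing $h = k_1 t k_2$ with $t = \diag\left(\uniformizer^{i_1},\dots,\uniformizer^{i_c}\right)$. First I would use \Cref{prop:whittaker-left-diag-equivariance} together with \Cref{lem:support-of-matrix-coefficient-of-depth-zero} to reduce the matrix-coefficient factor $\standardForm{\Pi\left(h\right)\Lift v}{\Lift v^{\vee}}$ to the case $h \in \GL_c\left(\ringOfIntegers\right)$, so that only those Cartan cells with $t$ central contribute when one also accounts for the central character. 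More precisely, I would break the integral according to $\abs{t} < 1$, $\abs{t} = 1$, and $\abs{t} > 1$. The $\abs{t} = 1$ part gives, via \Cref{prop:relation-between-k-c-whittaker-models} and \Cref{lem:support-of-matrix-coefficient-of-depth-zero} part (2), exactly the finite-field sum $\standardForm{\zetaOperator\left(W, \pi \times \tau\right) v}{v^{\vee}}$ after identifying $\GL_c\left(\ringOfIntegers\right)$-integration with a normalized sum over $\GL_c\left(\finiteField\right)$. The $\abs{t} > 1$ part vanishes by \Cref{prop:support-of-diagonal-whittaker-elements} part \ref{item:support-of-W-diag-t-I-kc} (since $\Lift W$ is supported where $\abs{t_j} \le 1$). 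This already handles both cases simultaneously up to the remaining $\abs{t} < 1$ contribution.

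The crux is the $\abs{t} < 1$ contribution. In the non-exceptional case $\pi \ncong \Contragradient{\tau}$, I would invoke \Cref{lem:integral-vanishes-depth-zero-non-isomorphic} part \ref{item:vanishing-lemma-for-first-integral} directly: for each Cartan representative $t$ with $\abs{t} < 1$ (after a permutation bringing $t$ to dominant form and splitting off a central part), the inner integral over $\GL_c\left(\ringOfIntegers\right)$ vanishes, so the whole $\abs{t} < 1$ piece is zero and we get exactly $\standardForm{\zetaOperator\left(W, \pi \times \tau\right) v}{v^{\vee}}$. In the exceptional case $\pi = \Contragradient{\tau}$ (forcing $k = c$), \Cref{lem:integral-vanishes-depth-zero-non-isomorphic} no longer applies, and instead the surviving Cartan cells are precisely those with $t = \uniformizer^m \IdentityMatrix{c}$, $m \ge 1$, by \Cref{thm:special-value-of-diag-t-c(c-1)}: the contribution of $\diag\left(t, \IdentityMatrix{c(c-1)}\right)$ vanishes unless $i_1 = \dots = i_c = m$, in which case its value is $\centralCharacter{\depthZeroRepresentation}\left(\uniformizer\right)^m q^{-(m+1)(c-1)\binom{c}{2}} \standardForm{f\left(\IdentityMatrix{c^2}\right)}{\firstSpecialFunctional}$. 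Meanwhile $\standardForm{\Pi\left(h\right)\Lift v}{\Lift v^{\vee}}$ on the cell $t = \uniformizer^m \IdentityMatrix{c}$ forces $h \in \multiplicativegroup{\ringOfIntegers} \cdot \GL_c\left(\ringOfIntegers\right)$ and contributes $\centralCharacter{\Pi}\left(\uniformizer\right)^m \standardForm{v}{v^{\vee}}$ after integrating over $\multiplicativegroup{\ringOfIntegers} \GL_c\left(\ringOfIntegers\right) \backslash$-cosets.

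Having isolated the surviving cells, I would then assemble the geometric series: the $\abs{t} < 1$ contribution becomes
\[
\sum_{m \ge 1} \VolumeOf\left(\GL_c\left(\ringOfIntegers\right) \uniformizer^m \IdentityMatrix{c} \GL_c\left(\ringOfIntegers\right)\right) \abs{\uniformizer^m}^{c\left(s - \frac{(k-2)c+1}{2}\right)} \centralCharacter{\Pi}\left(\uniformizer\right)^m \centralCharacter{\depthZeroRepresentation}\left(\uniformizer\right)^m q^{-(m+1)(c-1)\binom{c}{2}} \standardForm{v\otimes v^{\vee}\otimes f\left(\IdentityMatrix{c^2}\right)}{\secondSpecialFunctional}
\]
up to combinatorial constants matching the definitions of $\firstSpecialFunctional$ versus $\secondSpecialFunctional$; here $\abs{\uniformizer} = q^{-1}$ and the Cartan cell volume for the scalar matrix $\uniformizer^m \IdentityMatrix{c}$ is $1$ (it normalizes $\GL_c\left(\ringOfIntegers\right)$), so the sum is $\sum_{m\ge 1}\left(\centralCharacter{\Pi}\left(\uniformizer\right)\centralCharacter{\depthZeroRepresentation}\left(\uniformizer\right) q^{-cs}\right)^m$ times a constant, which I would check telescopes to $q^{-\frac{c^2}{2}} \cdot \frac{\centralCharacter{\Pi}\left(\uniformizer\right)\centralCharacter{\depthZeroRepresentation}\left(\uniformizer\right)q^{-c\left(s-\frac12\right)}}{1 - \centralCharacter{\Pi}\left(\uniformizer\right)\centralCharacter{\depthZeroRepresentation}\left(\uniformizer\right)q^{-cs}}$ when $k = c$. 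The main obstacle I expect is bookkeeping the exponents of $q$: reconciling the $q^{-(m+1)(c-1)\binom{c}{2}}$ from \Cref{thm:special-value-of-diag-t-c(c-1)}, the $\abs{\det h}$-power in the zeta integral with its shift $\frac{(k-2)c+1}{2} = \frac{c(c-2)+1}{2}$, and the normalizations $\sizeof{\squareMatrix_{c-1}\left(\finiteField\right)}$, $\sizeof{\Mat{(c-1)}{1}\left(\finiteField\right)}^{\binom{c-1}{2}}$ etc. hidden in $\firstSpecialFunctional$ versus $\secondSpecialFunctional$, so that the total power of $q$ collapses to the clean $-\frac{c^2}{2}$ and $-c\left(s - \frac12\right)$ in the statement; convergence of the geometric series for $\RealPart s \gg 0$ and meromorphic continuation to $\cComplex\left(q^{-s}\right)$ is then automatic.
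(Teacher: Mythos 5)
Your overall strategy coincides with the paper's: reduce the $h$-integral to $\multiplicativegroup{\localField}\cdot\GL_c\left(\ringOfIntegers\right)$ via the support of the lifted matrix coefficient (\Cref{lem:support-of-matrix-coefficient-of-depth-zero}), kill $\abs{t}>1$ by \Cref{prop:support-of-diagonal-whittaker-elements}, identify the $\abs{t}=1$ term with $\standardForm{\zetaOperator\left(W,\pi\times\tau\right)v}{v^{\vee}}$ via \Cref{prop:relation-between-k-c-whittaker-models}, kill $\abs{t}<1$ by \Cref{lem:integral-vanishes-depth-zero-non-isomorphic} in the non-exceptional case, and feed \Cref{thm:special-value-of-diag-t-c(c-1)} into a geometric series in the exceptional case. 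The Cartan-cell framing is cosmetically different from the paper's direct use of the matrix-coefficient support, but amounts to the same reduction.

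There is, however, one genuine gap in your treatment of the exceptional case: you assert that on the cell $t=\uniformizer^{m}\IdentityMatrix{c}$ the matrix coefficient ``contributes $\centralCharacter{\Pi}\left(\uniformizer\right)^{m}\standardForm{v}{v^{\vee}}$ after integrating over cosets,'' as if it decoupled from the Whittaker factor. It does not: the inner integral is
\begin{equation*}
\int_{\GL_c\left(\ringOfIntegers\right)}\Lift W\left(\diag\left(\uniformizer^{m}h,\IdentityMatrix{(k-1)c}\right)\right)\standardForm{\Pi\left(h\right)\Lift v}{\Lift v^{\vee}}\,\mdifferential h,
\end{equation*}
and by \Cref{thm:special-value-of-diag-t-c(c-1)} (applied to the translate $\SpehRepresentation{\tau}{c}\left(\diag\left(\quotientMap\left(h\right),\IdentityMatrix{(k-1)c}\right)\right)f$) both factors depend on $\quotientMap\left(h\right)$, yielding a coupled sum $\sum_{h\in\GL_c\left(\finiteField\right)}\standardForm{\left(\tau\left(h\right)\otimes\idmap\right)f\left(\IdentityMatrix{c^2}\right)}{\firstSpecialFunctional}\,\standardForm{v}{\tau\left(h^{-1}\right)v^{\vee}}$. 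Evaluating this sum requires the Schur-orthogonality/swap identity of \Cref{cor:swap-map-for-cuspidal-representations}; that step is precisely what converts $\firstSpecialFunctional$ into $\secondSpecialFunctional$ (whose definition carries the pairing $\standardForm{v_{\Contragradient{\tau}}}{v'_{\tau}}$ produced by Schur's lemma) and supplies the factor $q^{\binom{c}{2}}\left(q^c-1\right)$ needed for the exponent of $q$ to collapse to $-\frac{c^2}{2}$. Without it, the ``combinatorial constants matching $\firstSpecialFunctional$ versus $\secondSpecialFunctional$'' you defer cannot be reconciled, since the two functionals are not scalar multiples of one another as functionals on $\Contragradient{\tau}\otimes\tau\otimes\tau\otimes\SpehRepresentation{\tau}{c-1}$. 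The rest of the bookkeeping is as you describe.
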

\begin{proof}
	By \Cref{lem:support-of-matrix-coefficient-of-depth-zero} we have that \begin{align*}
		\zetaOperator\left(s, \Lift v, \Lift v^{\vee}, \Lift W; \fieldCharacter \right) =& \int_{\multiplicativegroup{\localField}} \int_{\GL_c\left( \ringOfIntegers \right)} \Lift W\left(\diag\left(t h, \IdentityMatrix{\left(k-1\right)c}\right) \right) \standardForm{\Pi\left(th\right)\Lift v}{\Lift v^{\vee}} \\
		& \times \abs{t}^{cs - c\frac{\left(k-2\right)c + 1}{2}}  \mdifferential h \mdifferential t.
	\end{align*}
	By \Cref{prop:support-of-diagonal-whittaker-elements}, we have that if the integrand does not vanish for $t \in \multiplicativegroup{\localField}$, then $\abs{t} \le 1$. Thus \begin{equation}\label{eq:zeta-integral-as-infinite-series}
		\begin{split}
			\MoveEqLeft[3]\zetaOperator\left(s, \Lift v, \Lift v^{\vee}, \Lift W; \fieldCharacter \right)
			= \sum_{j = 0}^{\infty} \centralCharacter{\Pi}\left(\uniformizer\right)^j q^{-jcs+jc \frac{\left(k-2\right)c + 1}{2}}\\
			& \times \int_{\multiplicativegroup{\ringOfIntegers}} \int_{\GL_c\left( \ringOfIntegers \right)} \Lift W\left(\diag\left( \uniformizer^j t h, \IdentityMatrix{\left(k-1\right)c}\right) \right) \standardForm{\Pi\left(th\right) \Lift v}{\Lift v^{\vee}} \mdifferential h \mdifferential t.
		\end{split}
	\end{equation}
	By changing variable, we omit the integration over $t$, and are reduced to a series of integrals as in \Cref{lem:integral-vanishes-depth-zero-non-isomorphic} part \ref{item:vanishing-lemma-for-first-integral}. Notice that for $j = 0$, we have from \Cref{prop:relation-between-k-c-whittaker-models} and from the second part of \Cref{lem:support-of-matrix-coefficient-of-depth-zero} that $$\int_{\GL_c\left( \ringOfIntegers \right)} \Lift W\left(\diag\left(h, \IdentityMatrix{\left(k-1\right)c}\right) \right) \standardForm{\Pi\left(h\right) \Lift v}{\Lift v^{\vee}} \mdifferential h = \standardForm{\zetaOperator\left(W; \pi \times \tau\right) v}{v^{\vee}}.$$

	Suppose that $\pi \ncong \Contragradient{\tau}$. By \Cref{lem:integral-vanishes-depth-zero-non-isomorphic} part \ref{item:vanishing-lemma-for-first-integral}, the summand of \eqref{eq:zeta-integral-as-infinite-series} vanishes for $j \ge 1$. Hence the result follows.
	
	Suppose that $\pi = \Contragradient{\tau}$. In this case, we get from \Cref{thm:special-value-of-diag-t-c(c-1)} applied to the function $\SpehRepresentation{\tau}{c}\left(\diag\left(\quotientMap\left(h\right), \IdentityMatrix{(k-1)c}\right)\right)f$ and from the second part of \Cref{lem:support-of-matrix-coefficient-of-depth-zero}, that for $j \ge 1$,
	\begin{equation}\label{eq:expression-of-k-c-representations-are-related-gl-c-o-integral}
		\begin{split}
			\MoveEqLeft[3] \int_{\GL_c\left( \ringOfIntegers \right)} \Lift W\left(\diag\left( \uniformizer^j h, \IdentityMatrix{\left(k-1\right)c}\right) \right) \standardForm{\Pi\left(h\right) \Lift v}{\Lift v^{\vee}} \mdifferential h \\
			=& \frac{q^{-\left(j+1\right)\left(c-1\right)\binom{c}{2}} \centralCharacter{\depthZeroRepresentation}\left(\uniformizer\right)^j}{\sizeof{\GL_c\left(\finiteField\right)}}\sum_{h \in \GL_c\left(\finiteField\right)}  \standardForm{\tau\left(h\right) \otimes \idmap_{\SpehRepresentation{\tau}{c-1}} f\left(\IdentityMatrix{c^2}\right)}{\firstSpecialFunctional}\standardForm{v}{\tau\left(h^{-1}\right)v^{\vee}}.
		\end{split}
	\end{equation}
	By \Cref{cor:swap-map-for-cuspidal-representations}, we have that \eqref{eq:expression-of-k-c-representations-are-related-gl-c-o-integral} is given by $$\frac{\sizeof{\GL_{c-1}\left(\finiteField\right)} \sizeof{\mathcal{Y}_{1,c-1}\left(\finiteField\right)}}{\sizeof{\GL_c\left(\finiteField\right)}} \cdot q^{\binom{c}{2}}\left(q^c-1\right) \cdot q^{-\left(j+1\right)\left(c-1\right)\binom{c}{2}}  \centralCharacter{\depthZeroRepresentation}\left(\uniformizer\right)^j \standardForm{v \otimes v^{\vee} \otimes f\left(\IdentityMatrix{c^2}\right)}{\secondSpecialFunctional}.$$
	Substituting this expression in \eqref{eq:zeta-integral-as-infinite-series} and using the identity $$\frac{q^{\binom{c}{2}} \left(q^c -1\right)}{q^{\left(c-1\right) \binom{c}{2}} \sizeof{\GL_c\left(\finiteField\right)}} = \frac{q^{-\binom{c}{2}}}{\sizeof{\mathcal{Y}_{1,c-1}\left(\finiteField\right)} \sizeof{\GL_{c-1}\left(\finiteField\right)}}$$ yields the desired result.
\end{proof}

We move to state our result regarding the dual zeta integral. Normalize the Haar measures on $\multiplicativegroup{\localField}$ and on $\GL_c\left(\localField\right)$ as described before \Cref{prop:equality-of-gk-zeta-integrals}. Additionally, normalize the Haar measure on $\Mat{c}{\left(k-2\right)c}\left(\localField\right)$ so that $\Mat{c}{\left(k-2\right)c}\left(\maximalIdeal\right)$ has volume $\sizeof{\squareMatrix_{c\times \left(k-2\right)c}\left(\finiteField\right)}^{-\frac{1}{2}}$. This normalization makes the Fourier transform of $\Mat{c}{(k-2)c}\left(\localField\right)$ self-dual.

\begin{theorem}\label{prop:dual-zeta-gk-evaluation-for-depth-zero}
	Let $v \in \pi$, $v^{\vee} \in \Contragradient{\pi}$ and $W \in \Whittaker\left(\SpehRepresentation{\tau}{c}, \fieldCharacterkc{k}{c}\right)$.
	\begin{enumerate}
		\item Suppose that $\pi \ncong \Contragradient{\tau}$. Then $$\dualZetaOperator\left(s,\Lift v,\Lift v^{\vee}, \Lift W;\fieldCharacter\right) =  \standardForm{\dualZetaOperator\left(W, \pi \times \tau \right) v}{v^{\vee}}.$$
		\item \label{item:dual-zeta-integral-exceptional-case}  Suppose that $\pi = \Contragradient{\tau}$ (and therefore $k=c$). Then for any $v \in \Contragradient{\tau}$, $v^{\vee} \in \tau$ and $W = W_{\mathcal{L}f}$ for $f \in \SpehRepresentation{\tau}{c} \subset \tau \circ \SpehRepresentation{\tau}{c-1}$ we have \begin{align*}
			\MoveEqLeft[3] \dualZetaOperator\left(s,\Lift v,\Lift v^{\vee}, \Lift W;\fieldCharacter\right) = \standardForm{\dualZetaOperator\left(W, \pi \times \tau \right) v}{v^{\vee}}\\
			& + q^{-\binom{c}{2}} \cdot \frac{\centralCharacter{\Pi}^{-1}\left(\uniformizer\right) \centralCharacter{\depthZeroRepresentation}^{-1}\left(\uniformizer\right)q^{-c\left(1-s\right)}}{1 - \centralCharacter{\Pi}^{-1}\left(\uniformizer\right) \centralCharacter{\depthZeroRepresentation}^{-1}\left(\uniformizer\right) q^{-c(1- s)}} \\
			& \times q^{\frac{-c^2\left(c-2\right)}{2}} \sum_{X \in \Mat{c}{(c-2)c}\left(\finiteField\right)}  \standardForm{v \otimes v^{\vee} \otimes f\left( \begin{pmatrix}
					& \IdentityMatrix{(c-1)c}\\
					\IdentityMatrix{c}
				\end{pmatrix} \begin{pmatrix}
				\IdentityMatrix{c} & & X\\
				& \IdentityMatrix{c}\\
				& & \IdentityMatrix{(c-2)c}
				\end{pmatrix} \right)}{\secondDualSpecialFunctional}.
		\end{align*}
	\end{enumerate} 
\end{theorem}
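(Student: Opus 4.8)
The plan is to follow the template of the proof of \Cref{prop:equality-of-gk-zeta-integrals}, with the roles of $\abs{t}\le 1$ and $\abs{t}\ge 1$ interchanged, and with the additional inner integration over $X\in\Mat{c}{\left(k-2\right)c}\left(\localField\right)$ localized by the method of root exchange. First I would substitute $\Lift v$, $\Lift v^{\vee}$ and $\Lift W$ into the definition of $\dualZetaOperator\left(s,\cdot,\cdot,\cdot;\fieldCharacter\right)$. By the first part of \Cref{lem:support-of-matrix-coefficient-of-depth-zero}, the matrix coefficient $\standardForm{\Pi\left(h\right)\Lift v}{\Lift v^{\vee}}$ is supported on $h\in\multiplicativegroup{\localField}\cdot\GL_c\left(\ringOfIntegers\right)$, so writing $h=\uniformizer^{-j}h_0$ with $h_0\in\GL_c\left(\ringOfIntegers\right)$ of volume $1$, the $h$-integral becomes a series in $j$; on the $j$-th summand the center of $\Pi$ contributes $\centralCharacter{\Pi}^{-1}\left(\uniformizer\right)^{j}$, the factor $\abs{\det h}^{s-1+\frac{\left(k-2\right)c+1}{2}}$ contributes $q^{jc\left(s-1+\frac{\left(k-2\right)c+1}{2}\right)}$, and the second part of the same lemma turns the remaining matrix coefficient into $\standardForm{\pi\left(\quotientMap\left(h_0\right)\right)v}{v^{\vee}}$. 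The support statement \Cref{prop:support-of-diagonal-whittaker-elements} part \ref{item:support-of-W-diag-I-kc-t} (applied after the reductions below) restricts the series to $j\ge 0$.

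The second, and crucial, step is to localize the $X$-integration for fixed $j$ and $h_0$. An argument parallel to the support statements in \Cref{prop:support-of-diagonal-whittaker-elements} and to \Cref{lem:k-equals-c-implies-Y_1-is-in-maximal-compact} --- using the cuspidality of $\tau$, the left $\left(\UnipotentRadicalForWss{k}{c},\fieldCharacterkc{k}{c}\right)$-equivariance of $\Lift W$, and root exchange as in \Cref{prop:integrand-for-k-c-functional} --- shows that the $X$-integrand vanishes unless $X$ lies in a fixed compact lattice, on which it is locally constant, so that the $X$-integral collapses to a finite sum over $\Mat{c}{\left(k-2\right)c}\left(\finiteField\right)$ (the chosen self-dual normalization of the Fourier transform on $\Mat{c}{\left(k-2\right)c}\left(\localField\right)$ attaches the weight $\sizeof{\squareMatrix_{c\times\left(k-2\right)c}\left(\finiteField\right)}^{-\frac{1}{2}}$ to each term). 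For $j=0$ the argument of $\Lift W$ then lies in $\GL_{kc}\left(\ringOfIntegers\right)$, so \Cref{prop:relation-between-k-c-whittaker-models} together with the second part of \Cref{lem:support-of-matrix-coefficient-of-depth-zero} identifies the $j=0$ contribution with $\standardForm{\dualZetaOperator\left(W,\pi\times\tau\right)v}{v^{\vee}}$. If $\pi\ncong\Contragradient{\tau}$, the cuspidal-support argument of \Cref{lem:integral-vanishes-depth-zero-non-isomorphic} part \ref{item:vanishing-lemma-for-second-integral}, now carried out with the $X$-integration present, forces every $j\ge 1$ summand to vanish --- a nonzero summand would exhibit $\SpehRepresentation{\tau}{c}$ inside $\Contragradient{\pi}\circ\Sigma$ for some representation $\Sigma$ of $\GL_{\left(k-1\right)c}\left(\finiteField\right)$, which is impossible because the cuspidal support of $\SpehRepresentation{\tau}{c}$ is $\left\{\tau,\dots,\tau\right\}$ unless $k=c$ and $\pi\cong\Contragradient{\tau}$ --- and the first assertion follows.

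For the exceptional case $\pi\cong\Contragradient{\tau}$, which forces $k=c$, the summands with $j\ge 1$ persist. Writing the argument of $\Lift W$ in the $j$-th summand as $\diag\left(\IdentityMatrix{c\left(c-1\right)},\uniformizer^{-j}\IdentityMatrix{c}\right)k_{h_0,X}$ with $k_{h_0,X}\in\GL_{c^2}\left(\ringOfIntegers\right)$ the finite-level translate carrying $h_0$, the reduced $X$ and the cyclic Weyl element, I would apply \Cref{thm:special-value-of-diag-c(c-1)-t} to $\SpehRepresentation{\tau}{c}\left(\quotientMap\left(k_{h_0,X}\right)\right)f$, obtaining the value $\centralCharacter{\depthZeroRepresentation}\left(\uniformizer\right)^{-j}q^{-\left(j+1\right)\left(c-1\right)\binom{c}{2}}$ times a finite-field functional of $f\left(\quotientMap\left(k_{h_0,X}\right)\right)$. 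The $j$-dependent prefactor of the $j$-th summand is then $\centralCharacter{\Pi}^{-1}\left(\uniformizer\right)^{j}\centralCharacter{\depthZeroRepresentation}^{-1}\left(\uniformizer\right)^{j}q^{jc\left(s-1+\frac{\left(c-2\right)c+1}{2}\right)-\left(j+1\right)\left(c-1\right)\binom{c}{2}}$, and since $\left(c-2\right)c+1=\left(c-1\right)^{2}$ and $\left(c-1\right)\binom{c}{2}=\tfrac{c\left(c-1\right)^{2}}{2}$ the $j$-proportional part of the $q$-exponent collapses to $jc\left(s-1\right)$; summing the geometric series produces exactly $\tfrac{\centralCharacter{\Pi}^{-1}\left(\uniformizer\right)\centralCharacter{\depthZeroRepresentation}^{-1}\left(\uniformizer\right)q^{-c\left(1-s\right)}}{1-\centralCharacter{\Pi}^{-1}\left(\uniformizer\right)\centralCharacter{\depthZeroRepresentation}^{-1}\left(\uniformizer\right)q^{-c\left(1-s\right)}}$ times a residual factor $q^{-\left(c-1\right)\binom{c}{2}}$. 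Collecting the sum over $X\in\Mat{c}{\left(c-2\right)c}\left(\finiteField\right)$ together with the average over $h_0\in\GL_c\left(\finiteField\right)$ --- the latter rewritten via \Cref{cor:swap-map-for-cuspidal-representations}, exactly as in the proof of \Cref{prop:equality-of-gk-zeta-integrals} --- into the functional $\secondDualSpecialFunctional$, and tallying the counting factors $\sizeof{\GL_c\left(\finiteField\right)}$, $\sizeof{\mathcal{Y}_{c-1,1}\left(\finiteField\right)}$ and $\sizeof{\squareMatrix_{c-1}\left(\finiteField\right)}$ against the self-dual Fourier weight, yields the stated constant $q^{-\binom{c}{2}}q^{-c^{2}\left(c-2\right)/2}$.

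The hard part will be the $X$-localization in the presence of the cyclic Weyl element $\left(\begin{smallmatrix}&\IdentityMatrix{(k-1)c}\\\IdentityMatrix{c}&\end{smallmatrix}\right)$: the unipotent carrying $X$ is \emph{not} contained in $\UnipotentRadicalForWss{k}{c}$, so one cannot invoke the $\fieldCharacterkc{k}{c}$-equivariance of $\Lift W$ to compress it, and one must instead run a root-exchange and Cartan-decomposition argument in the spirit of \Cref{lem:k-equals-c-implies-Y_1-is-in-maximal-compact}, but longer, both to pin down the lattice supporting $X$ and to track the precise way the reduced $X$ enters $\secondDualSpecialFunctional$. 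The bookkeeping of the measure normalizations (the self-dual Fourier transform on $\Mat{c}{\left(k-2\right)c}\left(\localField\right)$ and the volume of $\mathcal{Y}_{c-1,1}$) is the other place demanding care, but it is mechanical once the $X$-localization is settled. Convergence and the interchange of the $j$-sum with the integrations are handled, as usual, by first working in the region $\RealPart s\ll 0$ where everything converges absolutely, and then continuing meromorphically to $\cComplex\left(q^{-s}\right)$.
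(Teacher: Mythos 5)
Your proposal is correct and follows essentially the same route as the paper: restrict the support of the matrix coefficient via \Cref{lem:support-of-matrix-coefficient-of-depth-zero}, reduce the $X$-integration to $\Mat{c}{(k-2)c}\left(\ringOfIntegers\right)$ by root exchange (this is exactly \Cref{lem:reduce-integration-over-unipotent-to-ring-of-integers}), identify the $j=0$ term with $\standardForm{\dualZetaOperator\left(W,\pi\times\tau\right)v}{v^{\vee}}$, kill the $j\ge 1$ terms by the cuspidal-support argument of \Cref{lem:integral-vanishes-depth-zero-non-isomorphic} in the non-exceptional case, and apply \Cref{thm:special-value-of-diag-c(c-1)-t} plus the geometric series in the exceptional case. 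The one place you overestimate the difficulty is the $X$-localization: it is a plain root exchange using the left $\left(\UnipotentRadicalForWss{k}{c},\fieldCharacterkc{k}{c}\right)$-equivariance and right-smoothness of $\Lift W$, and requires neither the cuspidality of $\tau$ nor a Cartan-decomposition analysis of the kind in \Cref{lem:k-equals-c-implies-Y_1-is-in-maximal-compact}.
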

\begin{proof}
	By \Cref{lem:support-of-matrix-coefficient-of-depth-zero}, we have that $\dualZetaOperator\left(s,\Lift v, \Lift v^{\vee},\Lift W;\fieldCharacter\right)$ is given by \begin{align*}
		 & \int_{\GL_c\left(\ringOfIntegers\right)} \int_{\multiplicativegroup{\localField}} \int_{\squareMatrix_{c \times 
				\left(k-2\right)c}\left(\localField\right)}  \standardForm{\Pi\left(th\right) \Lift v}{\Lift v^{\vee}} \cdot \abs{t}^{c\left(s - 1 + \frac{\left(k-2\right)c + 1}{2}\right)} \\
		& \times \Lift W\left( \diag\left(
			\IdentityMatrix{\left(k-1\right)c},
			t h\right)
		\begin{pmatrix}
			\IdentityMatrix{c}\\
			& \IdentityMatrix{\left(k-2\right)c}\\
			& X & \IdentityMatrix{c}
		\end{pmatrix} \begin{pmatrix}
			& \IdentityMatrix{\left(k-1\right)c}\\
			\IdentityMatrix{c}
		\end{pmatrix} \right) \differential X \mdifferential t \mdifferential h.
	\end{align*}
	Applying \Cref{lem:reduce-integration-over-unipotent-to-ring-of-integers}  to the function $g \mapsto \Lift W \left(g  \left(\begin{smallmatrix}
		& \IdentityMatrix{\left(k-1\right)c}\\
		\IdentityMatrix{c} &
	\end{smallmatrix}\right)\right)$, we have that $\dualZetaOperator\left(s,\Lift v, \Lift v^{\vee},\Lift W;\fieldCharacter\right)$ is given by 
	\begin{align*}
		& \int_{\GL_c\left(\ringOfIntegers\right)} \int_{\multiplicativegroup{\localField}} \int_{\squareMatrix_{c \times 
				\left(k-2\right)c}\left(\ringOfIntegers\right)}  \standardForm{\Pi\left(th\right) \Lift v}{\Lift v^{\vee}} \cdot \abs{t}^{c\left(s - 1 + \frac{\left(k-2\right)c + 1}{2}\right)} \\
		& \times \Lift W\left( \diag\left(
		\IdentityMatrix{\left(k-1\right)c},
		t h\right)
		\begin{pmatrix}
			\IdentityMatrix{c}\\
			& \IdentityMatrix{\left(k-2\right)c}\\
			& X & \IdentityMatrix{c}
		\end{pmatrix} \begin{pmatrix}
			& \IdentityMatrix{\left(k-1\right)c}\\
			\IdentityMatrix{c}
		\end{pmatrix} \right) \differential X \mdifferential t \mdifferential h.
	\end{align*}
	
	By \Cref{prop:support-of-diagonal-whittaker-elements} applied to the function $g \mapsto \Lift W\left(g \left(\begin{smallmatrix}
		& \IdentityMatrix{\left(k-1\right)c}\\
		\IdentityMatrix{c}
	\end{smallmatrix}\right)\right)$, we have that the domain of integration of $t$ is $\abs{t} \ge 1$. Hence $\dualZetaOperator\left(s,\Lift v, \Lift v^{\vee},\Lift W;\fieldCharacter\right)$ is given by
	\begin{equation}\label{eq:series-for-dual-kaplan-zeta-integral}
		 \begin{split}
		 	&\sum_{j = 0}^{\infty} \centralCharacter{\Pi}\left(\uniformizer\right)^{-j} q^{jc\left(s-1 + \frac{\left(k-2\right)c + 1}{2}\right)} \int_{\GL_c\left(\ringOfIntegers\right)} \int_{\multiplicativegroup{\ringOfIntegers}} \int_{\squareMatrix_{c \times 
		 			\left(k-2\right)c}\left(\ringOfIntegers\right)} \standardForm{\Pi\left(th\right) \Lift v}{\Lift v^{\vee}} \\
		 	& \times \Lift W\left( \diag\left(\IdentityMatrix{\left(k-1\right)c}, \uniformizer^{-j} th\right) \begin{pmatrix}
		 		\IdentityMatrix{c}\\
		 		& \IdentityMatrix{\left(k-2\right)c}\\
		 		& X & \IdentityMatrix{c}
		 	\end{pmatrix} \begin{pmatrix}
		 		& \IdentityMatrix{\left(k-1\right)c}\\
		 		\IdentityMatrix{c}
		 	\end{pmatrix} \right) \differential X \mdifferential t \mdifferential h.
		 \end{split}
	\end{equation}
	We may omit the integration over $t$ by changing variables. 

	Notice that for $j=0$, we have from \Cref{prop:relation-between-k-c-whittaker-models} and from the second part of \Cref{lem:support-of-matrix-coefficient-of-depth-zero} that the summand \begin{align*}
		&\int_{\GL_c\left(\ringOfIntegers\right)} \int_{\squareMatrix_{c \times 
				\left(k-2\right)c}\left(\ringOfIntegers\right)} \standardForm{\Pi\left(h\right) \Lift v}{\Lift v^{\vee}} \\
		& \times \Lift W\left( \begin{pmatrix}
			\IdentityMatrix{\left(k-1\right)c}\\
			& h
		\end{pmatrix} \begin{pmatrix}
			\IdentityMatrix{c}\\
			& \IdentityMatrix{\left(k-2\right)c}\\
			& X & \IdentityMatrix{c}
		\end{pmatrix} \begin{pmatrix}
			& \IdentityMatrix{\left(k-1\right)c}\\
			\IdentityMatrix{c}
		\end{pmatrix} \right) \differential X \mdifferential h
	\end{align*}
	evaluates to $\standardForm{\dualZetaOperator\left(W, \pi \times \tau\right) v}{v^{\vee}}$ (recall that the Haar measure is normalized such that $\Mat{c}{(k-2)c}\left(\maximalIdeal\right)$ has volume $q^{-\frac{c^2\left(k-2\right)}{2}}$).

	If $\pi \ncong \Contragradient{\tau}$, then by applying \Cref{lem:integral-vanishes-depth-zero-non-isomorphic} part \ref{item:vanishing-lemma-for-second-integral} to the $(k,c)$ $\fieldCharacter$-Whittaker function $$g \mapsto \int_{\Mat{c}{\left(k-2\right)c}\left(\ringOfIntegers\right)} \Lift W\left( g \begin{pmatrix}
	\IdentityMatrix{c}\\
	& \IdentityMatrix{\left(k-2\right)c}\\
	& X & \IdentityMatrix{c}
\end{pmatrix} \begin{pmatrix}
	& \IdentityMatrix{\left(k-1\right)c}\\
	\IdentityMatrix{c}
\end{pmatrix} \right) \differential X,$$ we get that the summand of \eqref{eq:series-for-dual-kaplan-zeta-integral} vanishes for $j > 0$, and hence the result the follows.

If $\pi = \Contragradient{\tau}$, then the proof proceeds similarly to the proof of \Cref{prop:equality-of-gk-zeta-integrals} part \ref{item:gk-zeta-integral-exceptional-case}.

\end{proof}

As a corollary, we get a relation between the Ginzburg--Kaplan gamma factors over the finite field and over the local field.

\begin{corollary}\label{cor:ginzburg-kaplan-gamma-factor-equality}Suppose that $\pi \ncong \Contragradient{\tau}$. Then
	$$\LocalGKGammaFactor{s}{\Pi}{\depthZeroRepresentation}{\fieldCharacter} = \GKGammaFactor{\pi}{\tau}{\fieldCharacter}.$$
\end{corollary}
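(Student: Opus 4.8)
The plan is to obtain the identity by playing the two functional equations defining the gamma factors against each other, using \Cref{prop:equality-of-gk-zeta-integrals} and \Cref{prop:dual-zeta-gk-evaluation-for-depth-zero} as the dictionary between Kaplan's local zeta integrals and the finite field zeta operators. Fix $v_0 \in \pi$, $v_0^{\vee} \in \Contragradient{\pi}$ and $W_0 \in \Whittaker(\SpehRepresentation{\tau}{c},\fieldCharacterkc{k}{c})$, and apply the local functional equation of \Cref{subsec:kaplans-local-zeta-integrals} to the lifted data $\Lift v_0 \in \Pi$, $\Lift v_0^{\vee} \in \Contragradient{\Pi}$ (via the identification $\Contragradient{\Pi} \cong \Pi'$) and $\Lift W_0 \in \Whittaker(\SpehRepresentation{\depthZeroRepresentation}{c},\fieldCharacterkc{k}{c})$. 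Since $\pi \ncong \Contragradient{\tau}$, the first parts of \Cref{prop:equality-of-gk-zeta-integrals} and \Cref{prop:dual-zeta-gk-evaluation-for-depth-zero} identify both lifted integrals with $s$-independent finite field quantities:
$$\zetaOperator(s,\Lift v_0,\Lift v_0^{\vee},\Lift W_0;\fieldCharacter) = \standardForm{\zetaOperator(W_0,\pi\times\tau)v_0}{v_0^{\vee}}, \qquad \dualZetaOperator(s,\Lift v_0,\Lift v_0^{\vee},\Lift W_0;\fieldCharacter) = \standardForm{\dualZetaOperator(W_0,\pi\times\tau)v_0}{v_0^{\vee}}.$$

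Substituting these into the local functional equation, and using that $-1 \in \multiplicativegroup{\ringOfIntegers}$ so that $\centralCharacter{\Pi}(-1) = \centralCharacter{\pi}(-1)$, one gets
$$\standardForm{\dualZetaOperator(W_0,\pi\times\tau)v_0}{v_0^{\vee}} = \centralCharacter{\pi}(-1)^{k-1}\,\LocalGKGammaFactor{s}{\Pi}{\depthZeroRepresentation}{\fieldCharacter}\,\standardForm{\zetaOperator(W_0,\pi\times\tau)v_0}{v_0^{\vee}}.$$
On the finite field side, since $\pi$ is irreducible cuspidal and $\pi \ncong \Contragradient{\tau}$, the cuspidal support of $\pi$ is $\{\pi\}$ and so $\Contragradient{\tau}$ does not lie in it; hence the finite field functional equation of \Cref{sec:kaplan-functional-equation-finite-field} gives $\dualZetaOperator(W_0,\pi\times\tau) = \GKPreGammaFactor{\pi}{\tau}{\fieldCharacter}\,\zetaOperator(W_0,\pi\times\tau)$, with $\abs{\GKPreGammaFactor{\pi}{\tau}{\fieldCharacter}} = 1$ and in particular $\GKPreGammaFactor{\pi}{\tau}{\fieldCharacter} \ne 0$. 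Comparing the last two displays yields
$$\left(\GKPreGammaFactor{\pi}{\tau}{\fieldCharacter} - \centralCharacter{\pi}(-1)^{k-1}\LocalGKGammaFactor{s}{\Pi}{\depthZeroRepresentation}{\fieldCharacter}\right)\standardForm{\zetaOperator(W_0,\pi\times\tau)v_0}{v_0^{\vee}} = 0$$
for all $v_0, v_0^{\vee}, W_0$.

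To finish it suffices to exhibit one triple with $\standardForm{\zetaOperator(W_0,\pi\times\tau)v_0}{v_0^{\vee}} \ne 0$, equivalently one $W_0$ with the operator $\zetaOperator(W_0,\pi\times\tau)$ nonzero (then one picks $v_0$ not killed by it and $v_0^{\vee}$ pairing nontrivially with $\zetaOperator(W_0,\pi\times\tau)v_0$ by non-degeneracy of the $\pi \times \Contragradient{\pi}$ pairing). I expect this nonvanishing to be the only genuine obstacle, the rest being formal bookkeeping. I would take $W_0 = \besselSpehFunction{\tau}{c}$, the Bessel--Speh function, for which the operator $\zetaOperator(\besselSpehFunction{\tau}{c},\pi\times\tau)$ — equivalently $\dualZetaOperator(\besselSpehFunction{\tau}{c},\pi\times\tau)$, since these differ by the nonzero scalar $\GKPreGammaFactor{\pi}{\tau}{\fieldCharacter}$ — is, up to an explicit nonzero power of $q$, the Gauss-sum operator $\GKGaussSum{\pi}{\tau}{\fieldCharacter}$ of \Cref{sec:ginzburg-kaplan-finite-gln} (this identification is a manipulation of the sum over $X$ in the definition of $\dualZetaOperator$ using the equivariance of $\besselSpehFunction{\tau}{c}$, and is part of the finite field package of \cite{CarmonZelingher2024}); and $\GKGaussSum{\pi}{\tau}{\fieldCharacter} = \GKPreGammaFactor{\pi}{\tau}{\fieldCharacter}\,\idmap_{\pi} \ne 0$.

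Granting the nonvanishing, cancelling the nonzero scalar $\standardForm{\zetaOperator(W_0,\pi\times\tau)v_0}{v_0^{\vee}}$ gives $\GKPreGammaFactor{\pi}{\tau}{\fieldCharacter} = \centralCharacter{\pi}(-1)^{k-1}\LocalGKGammaFactor{s}{\Pi}{\depthZeroRepresentation}{\fieldCharacter}$. Multiplying through by $\centralCharacter{\pi}(-1)^{k-1}$, using $\centralCharacter{\pi}(-1)^{2} = 1$, and recalling $\GKGammaFactor{\pi}{\tau}{\fieldCharacter} = \centralCharacter{\pi}(-1)^{k-1}\GKPreGammaFactor{\pi}{\tau}{\fieldCharacter}$, one concludes $\LocalGKGammaFactor{s}{\Pi}{\depthZeroRepresentation}{\fieldCharacter} = \GKGammaFactor{\pi}{\tau}{\fieldCharacter}$; in particular the left-hand side, which a priori lies in $\cComplex(q^{-s})$, is the constant function $\GKGammaFactor{\pi}{\tau}{\fieldCharacter}$.
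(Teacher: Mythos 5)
Your proposal is correct and is exactly the argument the paper intends: the corollary is stated without proof immediately after Theorems \ref{prop:equality-of-gk-zeta-integrals} and \ref{prop:dual-zeta-gk-evaluation-for-depth-zero}, the implicit proof being precisely this comparison of the local and finite-field functional equations followed by cancellation of a nonvanishing zeta value. Your extra care with the non-vanishing input (via $W_0=\besselSpehFunction{\tau}{c}$, for which $\zetaOperator\left(W_0,\pi\times\tau\right)$ is a nonzero multiple of $\idmap_{\pi}$, as also used in the proof of \Cref{thm:pi-equals-tau-dual}) and with the sign bookkeeping $\centralCharacter{\Pi}\left(-1\right)=\centralCharacter{\pi}\left(-1\right)$ is consistent with the paper's conventions.
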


\begin{theorem}\label{thm:pi-equals-tau-dual}
	Suppose that $\pi = \Contragradient{\tau}$ (and therefore $k = c$). Then $$\GKGammaFactor{\pi}{\tau}{\fieldCharacter} = -\centralCharacter{\tau}\left(-1\right)^{c-1} q^{-\frac{c}{2}}.$$
\end{theorem}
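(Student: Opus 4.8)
The plan is to identify $\GKGammaFactor{\pi}{\tau}{\fieldCharacter}$ with an explicit local Rankin--Selberg gamma factor. Since $\GKGammaFactor{\pi}{\tau}{\fieldCharacter}$ depends only on the finite field data, I am free to choose the central characters of the level zero lifts; I would take $\centralCharacter{\Pi} = \centralCharacter{\depthZeroRepresentation}^{-1}$, so that $\Pi$ and $\Contragradient{\depthZeroRepresentation}$ are both irreducible level zero supercuspidal representations of $\GL_c\left(\localField\right)$ constructed from $\Contragradient{\tau} = \pi$ with the same central character $\centralCharacter{\depthZeroRepresentation}^{-1}$, hence $\Pi \cong \Contragradient{\depthZeroRepresentation}$. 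By the globalization argument recalled in \Cref{subsec:kaplans-local-zeta-integrals}, the Ginzburg--Kaplan gamma factor $\LocalGKGammaFactor{s}{\Pi}{\depthZeroRepresentation}{\fieldCharacter}$ then agrees with the Jacquet--Piatetski-Shapiro--Shalika gamma factor $\gamma\left(s, \Contragradient{\depthZeroRepresentation} \times \depthZeroRepresentation, \fieldCharacter\right)$.

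Next I would compute this gamma factor explicitly. Because $\tau$ is cuspidal, under the local Langlands correspondence $\depthZeroRepresentation$ corresponds to a tamely ramified irreducible Weil group representation $\rho$ whose restriction to the inertia subgroup is a multiplicity free sum of $c$ distinct characters cyclically permuted by Frobenius; equivalently, the group of unramified characters $\chi$ with $\depthZeroRepresentation \otimes \chi \cong \depthZeroRepresentation$ has order $c$. Hence $L\left(s, \depthZeroRepresentation \times \Contragradient{\depthZeroRepresentation}\right) = \left(1 - q^{-cs}\right)^{-1}$. Moreover $\rho \otimes \Contragradient{\rho}$ has trivial determinant and Artin conductor $c^2 - c$; combining this with the hypothesis that $\fieldCharacter$ has conductor $\maximalIdeal$ (rather than $\ringOfIntegers$) gives $\varepsilon\left(s, \Contragradient{\depthZeroRepresentation} \times \depthZeroRepresentation, \fieldCharacter\right) = \varepsilon_0 \cdot q^{c\left(s - \frac{1}{2}\right)}$ for some $\varepsilon_0 \in \cComplex$ with $\abs{\varepsilon_0} = 1$, and therefore
\[
	\LocalGKGammaFactor{s}{\Pi}{\depthZeroRepresentation}{\fieldCharacter} = \varepsilon_0 \cdot q^{-\frac{c}{2}} \cdot \frac{1 - q^{-cs}}{q^{-cs} - q^{-c}}.
\]
The role of the conductor of $\fieldCharacter$ here is precisely to force the numerator of this rational function of $q^{-cs}$ to be linear.

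Finally I would substitute this into the Ginzburg--Kaplan functional equation and expand. Using \Cref{prop:equality-of-gk-zeta-integrals}\ref{item:gk-zeta-integral-exceptional-case} and \Cref{prop:dual-zeta-gk-evaluation-for-depth-zero}\ref{item:dual-zeta-integral-exceptional-case}, for $W = W_f$ with $f \in \SpehRepresentation{\tau}{c} \subset \tau \circ \SpehRepresentation{\tau}{c-1}$, both $\zetaOperator\left(s, \Lift v, \Lift v^{\vee}, \Lift W; \fieldCharacter\right)$ and $\dualZetaOperator\left(s, \Lift v, \Lift v^{\vee}, \Lift W; \fieldCharacter\right)$ become explicit elements of $\cComplex\left(q^{-cs}\right)$ with linear numerators, whose coefficients are $\standardForm{\zetaOperator\left(W, \pi \times \tau\right) v}{v^{\vee}}$, $\standardForm{\dualZetaOperator\left(W, \pi \times \tau\right) v}{v^{\vee}}$, $\standardForm{v \otimes v^{\vee} \otimes f\left(\IdentityMatrix{c^2}\right)}{\secondSpecialFunctional}$ and the $\secondDualSpecialFunctional$ term. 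Clearing the common denominator $q^{-cs} - q^{-c}$ (the factor $1 - q^{-cs}$ from the $\zetaOperator$ side cancels against the numerator of the gamma factor), equating the coefficients of $q^{-cs}$ and of $1$ — legitimate because, as $\left(v, v^{\vee}, W\right)$ vary, the pair $\bigl(\standardForm{\zetaOperator\left(W, \pi \times \tau\right) v}{v^{\vee}}, \standardForm{v \otimes v^{\vee} \otimes f\left(\IdentityMatrix{c^2}\right)}{\secondSpecialFunctional}\bigr)$ spans $\cComplex^{2}$, the second entry containing the factor $\standardForm{v}{v^{\vee}}$ — and combining with the finite field counterpart of Kaplan's functional equation from \cite{CarmonZelingher2024}, which in the exceptional case relates $\standardForm{\dualZetaOperator\left(W, \pi \times \tau\right) v}{v^{\vee}}$ to $\GKPreGammaFactor{\pi}{\tau}{\fieldCharacter}\standardForm{\zetaOperator\left(W, \pi \times \tau\right) v}{v^{\vee}}$ up to the explicit boundary term $q^{-\frac{c^2}{2}} \standardForm{v \otimes v^{\vee} \otimes f\left(\IdentityMatrix{c^2}\right)}{\secondSpecialFunctional}$, one obtains simultaneously $\varepsilon_0 = \centralCharacter{\pi}\left(-1\right)^{c-1}$ and $\GKPreGammaFactor{\pi}{\tau}{\fieldCharacter} = -q^{-\frac{c}{2}}$. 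Since $\centralCharacter{\pi}\left(-1\right) = \centralCharacter{\tau}\left(-1\right)$, this yields $\GKGammaFactor{\pi}{\tau}{\fieldCharacter} = \centralCharacter{\pi}\left(-1\right)^{c-1}\GKPreGammaFactor{\pi}{\tau}{\fieldCharacter} = -\centralCharacter{\tau}\left(-1\right)^{c-1} q^{-\frac{c}{2}}$, as claimed; note that the minus sign originates not from the root number but from the shape of the zeta integral expansions.

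The hard part will be the normalization bookkeeping in the last step: one must track the modulus characters, all Haar measure conventions, and in particular the effect of the conductor of $\fieldCharacter$ on the local $\varepsilon$-factor, since this is exactly what makes the degrees of the two rational functions match (without it the two sides would have numerators of different degrees). A secondary point requiring care is the identification $\LocalGKGammaFactor{s}{\Pi}{\depthZeroRepresentation}{\fieldCharacter} = \gamma\left(s, \Contragradient{\depthZeroRepresentation} \times \depthZeroRepresentation, \fieldCharacter\right)$ and the computation of the Rankin--Selberg $L$- and $\varepsilon$-factors via the local Langlands correspondence for depth zero supercuspidal representations; the cancellation in the last step also doubles as a consistency check on all of these normalizations.
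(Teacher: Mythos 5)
Your overall strategy is the same as the paper's: identify $\LocalGKGammaFactor{s}{\Pi}{\depthZeroRepresentation}{\fieldCharacter}$ with the Jacquet--Piatetski-Shapiro--Shalika gamma factor, write it explicitly as a degree-one rational function of $q^{-cs}$, expand both local zeta integrals via \Cref{prop:equality-of-gk-zeta-integrals}\ref{item:gk-zeta-integral-exceptional-case} and \Cref{prop:dual-zeta-gk-evaluation-for-depth-zero}\ref{item:dual-zeta-integral-exceptional-case}, and compare coefficients. The conductor/$L$-factor computation ($L\left(s,\Contragradient{\depthZeroRepresentation}\times\depthZeroRepresentation\right)=\left(1-q^{-cs}\right)^{-1}$, Artin conductor $c^{2}-c$) is consistent with the explicit formula the paper imports from Ye's work, and the choice $\centralCharacter{\Pi}=\centralCharacter{\depthZeroRepresentation}^{-1}$ is a harmless normalization.

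The gap is in how you close the system. You leave the root number $\varepsilon_{0}$ undetermined and claim that the coefficient comparison yields $\varepsilon_{0}$ and $\GKPreGammaFactor{\pi}{\tau}{\fieldCharacter}$ \emph{simultaneously}, using as an extra input ``the finite field counterpart of Kaplan's functional equation \ldots in the exceptional case.'' No such statement is available at this point: the functional equation of \cite{CarmonZelingher2024} is proved only when $\Contragradient{\tau}$ does not lie in the cuspidal support of $\pi$, and the exceptional-case modified functional equation with the boundary term $q^{-\frac{c^{2}}{2}}\standardForm{\cdot}{\secondSpecialFunctional}$ is precisely \Cref{cor:modified-functional-equation} of this paper, which is \emph{deduced from} \Cref{thm:pi-equals-tau-dual}. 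Without it, equating the two coefficients of the linear numerators gives two identities among the four finite-field quantities $\standardForm{\zetaOperator\left(W,\pi\times\tau\right)v}{v^{\vee}}$, $\standardForm{\dualZetaOperator\left(W,\pi\times\tau\right)v}{v^{\vee}}$, and the two special functionals, together with the unknown $\varepsilon_{0}$ --- an underdetermined system in which $\GKPreGammaFactor{\pi}{\tau}{\fieldCharacter}$ does not even appear. Two independent inputs are needed and your proposal supplies neither: (a) the actual value of the root number, which the paper gets from the explicit level zero formula of \cite{Ye18} (equivalently, you would have to compute $\varepsilon_{0}=\centralCharacter{\tau}\left(-1\right)^{c-1}$ directly via Deligne's tame epsilon formula rather than extract it from the comparison); and (b) the identity $\GKGammaFactor{\pi}{\tau}{\fieldCharacter}=\centralCharacter{\pi}\left(-1\right)^{c-1}\sizeof{\GL_{c}\left(\finiteField\right)}\standardForm{\dualZetaOperator\left(W,\pi\times\tau\right)v}{v^{\vee}}$ for $W=\besselSpehFunction{\tau}{c}$ from \cite[Section 4.3.3]{CarmonZelingher2024}, which is what links the coefficient identities to the Gauss-sum definition of $\GKPreGammaFactor{\pi}{\tau}{\fieldCharacter}$. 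The paper also simplifies the comparison by specializing to the Bessel--Speh function and Whittaker vectors, for which it proves directly that $\zetaOperator\left(s,\Lift v,\Lift v^{\vee},\Lift W;\fieldCharacter\right)$ is the constant $\sizeof{\GL_{c}\left(\finiteField\right)}^{-1}$, so that only the constant Taylor coefficient in $q^{-c\left(1-s\right)}$ needs to be matched; your span argument over general data is not wrong, but it is not what carries the proof.
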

\begin{proof}
	Take $W = \besselSpehFunction{\tau}{c}$. We have that for any $h \in \GL_c\left(\ringOfIntegers\right)$ and any $j \ge 1$, $$\Lift W \left(\diag\left(\uniformizer^j h, \IdentityMatrix{\left(c-1\right)c}\right)\right) = 0.$$
	Indeed, for any $X \in \squareMatrix_c\left(\ringOfIntegers\right)$ if $u_X = \diag\left(\left(\begin{smallmatrix}
		\IdentityMatrix{c} & X\\
		& \IdentityMatrix{c}\\
	\end{smallmatrix}\right), \IdentityMatrix{(c-2)c}\right)$ then we have $$\SpehRepresentation{\depthZeroRepresentation}{c}\left(u_X\right) \Lift W = \Lift \left(\SpehRepresentation{\tau}{c}\left(\quotientMap\left(u_X\right)\right) W\right) = \fieldCharacter\left(\trace X\right) \Lift W.$$
	Substituting $\diag\left(\uniformizer^j h, \IdentityMatrix{(c-1)c}\right)$ yields
	$$\Lift W\left( \diag\left(\uniformizer^j h, \IdentityMatrix{(c-1)c}\right) u_X \right) = \fieldCharacter\left(\trace X\right) \Lift W\left(\diag\left(\uniformizer^j h, \IdentityMatrix{(c-1)c}\right)\right).$$
	On the other hand, the equality $$\diag\left(\uniformizer^j h, \IdentityMatrix{(c-1)c}\right) u_X = u_{\uniformizer^j h X} \cdot \diag\left(\uniformizer^j h, \IdentityMatrix{(c-1)c}\right)$$ yields that $$\Lift W\left( \diag\left(\uniformizer^j h, \IdentityMatrix{(c-1)c}\right) u_X \right) = \Lift W\left( \diag\left(\uniformizer^j h, \IdentityMatrix{(c-1)c}\right)\right).$$
	Therefore, by choosing $X$ such that $\fieldCharacter\left(\trace X\right) \ne 0$, we get that $\Lift W\left( \diag\left(\uniformizer^j h, \IdentityMatrix{(c-1)c}\right)\right)$ must be zero. A similar argument shows that $\Lift W \left(\diag\left(h, \IdentityMatrix{\left(c-1\right)c}\right)\right) = 0$ for $h \in \GL_c\left(\ringOfIntegers\right)$ such that $h \ne \IdentityMatrix{c}$ (see \cite[Proposition 2.2]{CarmonZelingher2024}).
	
	Let $v_{\tau, \fieldCharacter} \in \tau$ and $v_{\Contragradient{\tau}, \fieldCharacter^{-1}} \in \Contragradient{\tau}$ be a $\fieldCharacter$-Whittaker vector and a $\fieldCharacter^{-1}$-Whittaker vector, respectively, normalized such that $\standardForm{v_{\tau, \fieldCharacter}}{v_{\Contragradient{\tau}, \fieldCharacter^{-1}}} = 1$. Substituting $W = \besselSpehFunction{\tau}{c}$, $v = v_{\Contragradient{\tau}, \fieldCharacter}$ and $v^{\vee} = v_{\tau, \fieldCharacter}$, we get that $$\zetaOperator\left(s, \Lift v, \Lift v^{\vee}, \Lift W; \fieldCharacter\right) = \frac{1}{\sizeof{\GL_c\left(\finiteField\right)}}.$$
	Therefore, we have that $$\LocalGKGammaFactor{s}{\Pi}{\depthZeroRepresentation}{\fieldCharacter} = \centralCharacter{\pi}\left(-1\right)^{c-1} \sizeof{\GL_c\left(\finiteField\right)}\dualZetaOperator\left(s, \Lift v, \Lift v^{\vee}, \Lift W; \fieldCharacter\right).$$
	Since $W$ is invariant under right translations by matrices of the form $\left(\begin{smallmatrix}
		\IdentityMatrix{c} & & X\\
		& \IdentityMatrix{c}\\
		& & \IdentityMatrix{(c-2)c}
	\end{smallmatrix}\right)$, where $X \in \Mat{c}{(c-2)c}\left(\ringOfIntegers\right)$, we get from \Cref{prop:dual-zeta-gk-evaluation-for-depth-zero} part \ref{item:dual-zeta-integral-exceptional-case} that
	\begin{align*}
		\MoveEqLeft[3] \dualZetaOperator\left(s,\Lift v,\Lift v^{\vee}, \Lift W;\fieldCharacter\right) = \standardForm{\dualZetaOperator\left(W, \pi \times \tau \right) v}{v^{\vee}}\\
		& + q^{\left(c-1\right) \binom{c}{2}} \cdot  q^{-\frac{c^2}{2}} \cdot \frac{\centralCharacter{\Pi}^{-1}\left(\uniformizer\right) \centralCharacter{\depthZeroRepresentation}^{-1}\left(\uniformizer\right)q^{-c\left(1-s\right)}}{1 - \centralCharacter{\Pi}^{-1}\left(\uniformizer\right) \centralCharacter{\depthZeroRepresentation}^{-1}\left(\uniformizer\right) q^{-c(1- s)}} \standardForm{v \otimes v^{\vee} \otimes f \begin{pmatrix}
				& \IdentityMatrix{(c-1)c}\\
				\IdentityMatrix{c}
			\end{pmatrix}}{\secondDualSpecialFunctional},
	\end{align*}
	where $f \in \SpehRepresentation{\tau}{c} \subset \SpehRepresentation{\tau}{c-1} \circ \tau$ is chosen so that $$\besselSpehFunction{\tau}{c}\left(g\right) = \standardForm{\SpehRepresentation{\tau}{c}\left(g\right) f}{\gShortSpehWhittakerFunctional{\tau}{c}{c}}$$ for any $g \in \GL_{c^2}\left(\finiteField\right)$.
	On the other hand, since $\LocalGKGammaFactor{s}{\Pi}{\depthZeroRepresentation}{\fieldCharacter}$ equals the Jacquet--Piatetski-Shapiro--Shalika gamma factor defined in \cite{Jacquet1983rankin}, we have by \cite[Remark after Corollary 4.3]{Ye18} that\footnote{A factor of $q^{-\frac{c}{2}}$ is missing in \cite{Ye18}. This factor should appear in the definition of the Fourier transform in order to make it self-dual. Also, a factor of $\centralCharacter{\depthZeroRepresentation}\left(-1\right)^{c-1}$ is missing in the definition of the gamma factor in \cite{Ye18}.} $$\LocalGKGammaFactor{s}{\Pi}{\depthZeroRepresentation}{\fieldCharacter} = \centralCharacter{\tau}\left(-1\right)^{c-1} \frac{q^{c\left(s-\frac{1}{2}\right)}}{\centralCharacter{\Pi}\left(\uniformizer\right) \centralCharacter{\depthZeroRepresentation}\left(\uniformizer\right)} \cdot \frac{1 - \centralCharacter{\Pi}\left(\uniformizer\right) \centralCharacter{\depthZeroRepresentation}\left(\uniformizer\right) q^{-cs}}{1 - \centralCharacter{\Pi}^{-1}\left(\uniformizer\right) \centralCharacter{\depthZeroRepresentation}^{-1}\left(\uniformizer\right) q^{-c\left(1-s\right)}},$$
	which can also be written as $$ \LocalGKGammaFactor{s}{\Pi}{\depthZeroRepresentation}{\fieldCharacter} = \centralCharacter{\pi}\left(-1\right)^{c-1} q^{\frac{c}{2}} \left(-1 + \frac{1-q^{-c}}{1-\centralCharacter{\Pi}^{-1}\left(\uniformizer\right) \centralCharacter{\depthZeroRepresentation}^{-1}\left(\uniformizer\right) q^{-c\left(1-s\right)}}\right).$$
	One can show that $$\GKGammaFactor{\pi}{\tau}{\fieldCharacter} = \centralCharacter{\pi}\left(-1\right)^{c-1} \sizeof{\GL_c\left(\finiteField\right)} \standardForm{\dualZetaOperator\left(W, \pi \times \tau\right) v}{v^{\vee}} ,$$ see also \cite[Section 4.3.3]{CarmonZelingher2024}. Hence, we get by comparing the constant coefficient of the Taylor expansion of $\LocalGKGammaFactor{s}{\Pi}{\depthZeroRepresentation}{\fieldCharacter}$ (in the variable $q^{-c\left(1-s\right)}$) that $$\GKGammaFactor{\pi}{\tau}{\fieldCharacter} = \centralCharacter{\pi}\left(-1\right)^{c-1} q^{\frac{c}{2}}\left(-1 + 1-q^{-c}\right) = -\centralCharacter{\pi}\left(-1\right)^{c-1} q^{-\frac{c}{2}},$$
	and by comparing any other non-zero coefficient that
	$$\sizeof{\GL_c\left(\finiteField\right)} q^{\left(c-1\right) \binom{c}{2}} \cdot q^{-\frac{c^2}{2}} \standardForm{v \otimes v^{\vee} \otimes f \begin{pmatrix}
		& \IdentityMatrix{(c-1)c}\\
		\IdentityMatrix{c}
\end{pmatrix}}{\secondDualSpecialFunctional} = q^{\frac{c}{2}} - q^{-\frac{c}{2}},$$
	which implies that	
	$$q^{\left(c-1\right) \binom{c}{2}} \standardForm{v \otimes v^{\vee} \otimes f \begin{pmatrix}
		& \IdentityMatrix{(c-1)c}\\
		\IdentityMatrix{c}
\end{pmatrix}}{\secondDualSpecialFunctional} = \frac{1}{\dim \tau}.$$		
\end{proof}

As another corollary, we obtain the modified functional equation for the case $\pi = \Contragradient{\tau}$.
\begin{corollary}\label{cor:modified-functional-equation}
	Suppose that $\pi = \Contragradient{\tau}$ (and therefore, $k=c$) and denote $T = \centralCharacter{\Pi}\left(\uniformizer\right) \centralCharacter{\depthZeroRepresentation}\left(\uniformizer\right) q^{-cs}$. Then for any $v \in \Contragradient{\tau}$, $v^{\vee} \in \tau$ and $W \in \Whittaker\left(\SpehRepresentation{\tau}{c}, \fieldCharacterkc{c}{c}\right)$, such that $W = W_f$ for $f \in \SpehRepresentation{\tau}{c}$, the following modified functional equation holds:
	\begin{equation*}
		\begin{split}
			&\left(1 - T\right) \standardForm{\zetaOperator\left(W, \pi \times \tau \right) v}{v^{\vee}} + q^{-\binom{c}{2}} \cdot  \standardForm{v \otimes v^{\vee} \otimes f_{\tau \circ \SpehRepresentation{\tau}{c-1}}\left(\IdentityMatrix{c^2}\right)}{\secondSpecialFunctional} T\\
			=& \left(q^{\frac{c}{2}}T - q^{-\frac{c}{2}}\right)\standardForm{\dualZetaOperator\left(W, \pi \times \tau \right) v}{v^{\vee}}\\
			& + q^{-c \cdot \binom{c}{2}} \sum_{X \in \Mat{c}{(c-2)c}\left(\finiteField\right)}  \standardForm{v \otimes v^{\vee} \otimes f_{\SpehRepresentation{\tau}{c-1} \circ \tau}\left( \begin{pmatrix}
					& \IdentityMatrix{(c-1)c}\\
					\IdentityMatrix{c}
				\end{pmatrix} \begin{pmatrix}
					\IdentityMatrix{c} & & X\\
					& \IdentityMatrix{c}\\
					& & \IdentityMatrix{(c-2)c}
				\end{pmatrix} \right)}{\secondDualSpecialFunctional}.
		\end{split}
	\end{equation*}	
	Therefore, we have the following simple modified functional equation:
		\begin{align*}
			\standardForm{\dualZetaOperator\left(W, \pi \times \tau \right) v}{v^{\vee}} =& \GKPreGammaFactor{\pi}{\tau}{\fieldCharacter} \left(\standardForm{\zetaOperator\left(W, \pi \times \tau \right) v}{v^{\vee}}\right) \\
			&+ q^{-\frac{c^2}{2}} \cdot  \standardForm{v \otimes v^{\vee} \otimes f_{\tau \circ \SpehRepresentation{\tau}{c-1}}\left(\IdentityMatrix{c^2}\right)}{\secondSpecialFunctional}.
		\end{align*}
	(Recall that $\GKPreGammaFactor{\pi}{\tau}{\fieldCharacter} = -q^{-\frac{c}{2}}$ in this case).
	
	Here, $f_{\tau \circ \SpehRepresentation{\tau}{c-1}}$ (respectively, $f_{\SpehRepresentation{\tau}{c-1} \circ \tau}$) is the element representing $f$ in $\tau \circ \SpehRepresentation{\tau}{c-1}$ (respectively, in $\SpehRepresentation{\tau}{c-1} \circ \tau$), given by $f_{\tau \circ \SpehRepresentation{\tau}{c-1}}\left(g\right)\left(g'\right) = f\left(\diag\left(\IdentityMatrix{c}, g'\right)g\right)$ (respectively, by $f_{\SpehRepresentation{\tau}{c-1} \circ \tau}\left(g\right)\left(g'\right) = f\left(\diag\left(g', \IdentityMatrix{c}\right)\right) g$) for $g' \in \GL_{(c-1)c}\left(\finiteField\right)$ and $g \in \GL_{c^2}\left(\finiteField\right)$.
\end{corollary}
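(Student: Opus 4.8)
The plan is to derive both displayed identities directly from the functional equation for Kaplan's local zeta integrals for the pair $\left(\Pi,\depthZeroRepresentation\right)$, by feeding into it the explicit evaluations of $\zetaOperator\left(s,\Lift v,\Lift v^{\vee},\Lift W;\fieldCharacter\right)$ and $\dualZetaOperator\left(s,\Lift v,\Lift v^{\vee},\Lift W;\fieldCharacter\right)$ proved in \Cref{prop:equality-of-gk-zeta-integrals} and \Cref{prop:dual-zeta-gk-evaluation-for-depth-zero}, together with the value of the local Ginzburg--Kaplan gamma factor computed in the proof of \Cref{thm:pi-equals-tau-dual}. Everything then reduces to an identity of rational functions in one variable, from which the two forms of the modified functional equation fall out by inspection and by comparing coefficients.

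First I would set $T = \centralCharacter{\Pi}\left(\uniformizer\right)\centralCharacter{\depthZeroRepresentation}\left(\uniformizer\right)q^{-cs}$ and record the elementary relation $\centralCharacter{\Pi}^{-1}\left(\uniformizer\right)\centralCharacter{\depthZeroRepresentation}^{-1}\left(\uniformizer\right)q^{-c\left(1-s\right)} = q^{-c}T^{-1}$, so that the geometric factors occurring in \Cref{prop:equality-of-gk-zeta-integrals} part \ref{item:gk-zeta-integral-exceptional-case} and \Cref{prop:dual-zeta-gk-evaluation-for-depth-zero} part \ref{item:dual-zeta-integral-exceptional-case} collapse to $\frac{q^{c/2}T}{1-T}$ and $\frac{q^{-c/2}}{q^{c/2}T-q^{-c/2}}$ respectively. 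Substituting these and simplifying the powers of $q$ (using $\binom{c}{2}+\frac{c}{2}=\frac{c^{2}}{2}$ for the first and $\binom{c}{2}+\frac{c^{2}\left(c-2\right)}{2}+\frac{c}{2}=c\binom{c}{2}$ for the second), one checks, with $k=c$, that $\left(1-T\right)\zetaOperator\left(s,\Lift v,\Lift v^{\vee},\Lift W;\fieldCharacter\right)$ is exactly the left-hand side of the first displayed identity, and that $\left(q^{c/2}T-q^{-c/2}\right)\dualZetaOperator\left(s,\Lift v,\Lift v^{\vee},\Lift W;\fieldCharacter\right)$ is exactly its right-hand side (here $f$ is realized inside $\tau\circ\SpehRepresentation{\tau}{c-1}$, respectively $\SpehRepresentation{\tau}{c-1}\circ\tau$, as required by the two cited theorems, which is where $f_{\tau\circ\SpehRepresentation{\tau}{c-1}}$ and $f_{\SpehRepresentation{\tau}{c-1}\circ\tau}$ enter).

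Next I would invoke Kaplan's local functional equation, $\dualZetaOperator\left(s,\Lift v,\Lift v^{\vee},\Lift W;\fieldCharacter\right)=\centralCharacter{\pi}\left(-1\right)^{c-1}\LocalGKGammaFactor{s}{\Pi}{\depthZeroRepresentation}{\fieldCharacter}\,\zetaOperator\left(s,\Lift v,\Lift v^{\vee},\Lift W;\fieldCharacter\right)$, together with the formula for the Jacquet--Piatetski-Shapiro--Shalika gamma factor recorded inside the proof of \Cref{thm:pi-equals-tau-dual}, which after rewriting in the variable $T$ and using $\centralCharacter{\pi}=\centralCharacter{\tau}^{-1}$ becomes $\centralCharacter{\pi}\left(-1\right)^{c-1}\LocalGKGammaFactor{s}{\Pi}{\depthZeroRepresentation}{\fieldCharacter}=\left(1-T\right)\bigl/\left(q^{c/2}T-q^{-c/2}\right)$. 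Hence the functional equation is equivalent to $\left(q^{c/2}T-q^{-c/2}\right)\dualZetaOperator\left(s,\Lift v,\Lift v^{\vee},\Lift W;\fieldCharacter\right)=\left(1-T\right)\zetaOperator\left(s,\Lift v,\Lift v^{\vee},\Lift W;\fieldCharacter\right)$, which by the previous step is precisely the first modified functional equation of the statement.

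Finally, both sides of this identity are polynomials in $T$ of degree at most one whose remaining coefficients are finite-field quantities not depending on $s$; since it holds on an open subset of the $s$-plane it is an identity of polynomials in $T$, and comparing the coefficients of $T$ yields $-\standardForm{\zetaOperator\left(W,\pi\times\tau\right)v}{v^{\vee}}+q^{-\binom{c}{2}}\standardForm{v\otimes v^{\vee}\otimes f_{\tau\circ\SpehRepresentation{\tau}{c-1}}\left(\IdentityMatrix{c^{2}}\right)}{\secondSpecialFunctional}=q^{c/2}\standardForm{\dualZetaOperator\left(W,\pi\times\tau\right)v}{v^{\vee}}$; dividing by $q^{c/2}$, using $\binom{c}{2}+\frac{c}{2}=\frac{c^{2}}{2}$ and the value $\GKPreGammaFactor{\pi}{\tau}{\fieldCharacter}=-q^{-c/2}$ from \Cref{thm:pi-equals-tau-dual}, gives the simple modified functional equation. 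I do not expect a genuine obstacle here: the entire content is careful bookkeeping of the exponents of $q$ and of the substitutions into $T$, plus the routine identification of $f\in\SpehRepresentation{\tau}{c}$ with its two realizations $f_{\tau\circ\SpehRepresentation{\tau}{c-1}}$ and $f_{\SpehRepresentation{\tau}{c-1}\circ\tau}$ inside the relevant parabolic inductions.
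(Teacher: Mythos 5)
Your proposal is correct and is precisely the argument the paper intends (the corollary is stated without an explicit proof, as an immediate consequence of Theorems \ref{prop:equality-of-gk-zeta-integrals} and \ref{prop:dual-zeta-gk-evaluation-for-depth-zero}, the local functional equation, and the explicit formula for $\LocalGKGammaFactor{s}{\Pi}{\depthZeroRepresentation}{\fieldCharacter}$ from the proof of \Cref{thm:pi-equals-tau-dual}). Your bookkeeping of the powers of $q$, the rewriting of the gamma factor as $\left(1-T\right)/\left(q^{c/2}T-q^{-c/2}\right)$ after cancelling $\centralCharacter{\pi}\left(-1\right)^{c-1}\centralCharacter{\tau}\left(-1\right)^{c-1}=1$, and the extraction of the coefficient of $T$ all check out.
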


We use the results of Theorems \ref{prop:equality-of-gk-zeta-integrals} and \ref{prop:dual-zeta-gk-evaluation-for-depth-zero} to obtain the following identity. This identity is \cite[Theorem 4.24]{CarmonZelingher2024} and is crucial for the proofs of the results of \cite[Section 5]{CarmonZelingher2024}. See \cite{ye2021epsilon} for the definition of $\varepsilon_0\left(\pi \times \tau, \fieldCharacter\right)$.
\begin{theorem}\label{cor:gamma-factor-equality-with-epsilon-factor}
	Let $\pi$ and $\tau$ be irreducible cuspidal representations of $\GL_c\left(\finiteField\right)$ and $\GL_k\left(\finiteField\right)$, respectively. Then $$\GKGammaFactor{\pi}{\tau}{\fieldCharacter} = \varepsilon_0\left(\pi \times \tau, \fieldCharacter\right).$$
\end{theorem}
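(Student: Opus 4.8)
The plan is to split into the two cases $\pi \ncong \Contragradient{\tau}$ and $\pi \cong \Contragradient{\tau}$, and in each to feed the relevant result already established — \Cref{cor:ginzburg-kaplan-gamma-factor-equality} in the former case and \Cref{thm:pi-equals-tau-dual} in the latter — and then to match the outcome against the value of $\varepsilon_0\left(\pi\times\tau,\fieldCharacter\right)$ supplied by \cite{ye2021epsilon}. Throughout, fix irreducible level zero supercuspidal representations $\Pi$ and $\depthZeroRepresentation$ of $\GL_c\left(\localField\right)$ and $\GL_k\left(\localField\right)$ built from $\pi$ and $\tau$; since both sides of the asserted identity are intrinsic to the finite field, the final answer is independent of the chosen central characters.

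\emph{The non-exceptional case.} Suppose $\pi \ncong \Contragradient{\tau}$. By \Cref{cor:ginzburg-kaplan-gamma-factor-equality} we have $\GKGammaFactor{\pi}{\tau}{\fieldCharacter} = \LocalGKGammaFactor{s}{\Pi}{\depthZeroRepresentation}{\fieldCharacter}$, and, since $\Pi$ and $\depthZeroRepresentation$ are supercuspidal, the globalization argument recalled in \Cref{subsec:kaplans-local-zeta-integrals} identifies the right-hand side with the Jacquet--Piatetski-Shapiro--Shalika gamma factor $\gamma\left(s,\Pi\times\depthZeroRepresentation,\fieldCharacter\right)$ of \cite{Jacquet1983rankin}. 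As $\pi\ncong\Contragradient{\tau}$, the representations $\Pi$ and $\Contragradient{\depthZeroRepresentation}$ are non-isomorphic supercuspidals even up to unramified twist, so both Rankin--Selberg $L$-factors $L\left(s,\Pi\times\depthZeroRepresentation\right)$ and $L\left(1-s,\Contragradient{\Pi}\times\Contragradient{\depthZeroRepresentation}\right)$ are identically $1$; hence $\gamma\left(s,\Pi\times\depthZeroRepresentation,\fieldCharacter\right)$ equals the local $\varepsilon$-factor, and the $s$-independence of the left-hand side $\GKGammaFactor{\pi}{\tau}{\fieldCharacter}$ forces this $\varepsilon$-factor to be a constant. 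The computation of the level zero local Rankin--Selberg gamma factor in terms of finite field data in \cite{YeZeligher18, Ye18}, together with the identification in \cite{ye2021epsilon} of the finite field Rankin--Selberg gamma factor with $\varepsilon_0\left(\pi\times\tau,\fieldCharacter\right)$ in the non-dual range, then shows that this constant is exactly $\varepsilon_0\left(\pi\times\tau,\fieldCharacter\right)$, completing this case.

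\emph{The exceptional case.} Suppose $\pi\cong\Contragradient{\tau}$, so $c=k$. Here \Cref{thm:pi-equals-tau-dual} already computes $\GKGammaFactor{\pi}{\tau}{\fieldCharacter} = -\centralCharacter{\tau}\left(-1\right)^{c-1}q^{-\frac{c}{2}}$, so it suffices to check that $\varepsilon_0\left(\Contragradient{\tau}\times\tau,\fieldCharacter\right) = -\centralCharacter{\tau}\left(-1\right)^{c-1}q^{-\frac{c}{2}}$. This ``self-dual'' value is read off from the explicit description of $\varepsilon_0$ in \cite{ye2021epsilon}; alternatively, one reruns the argument in the proof of \Cref{thm:pi-equals-tau-dual}, using the formula of \cite{Ye18} for the local gamma factor $\LocalGKGammaFactor{s}{\Pi}{\depthZeroRepresentation}{\fieldCharacter}$ of a self-dual pair of level zero supercuspidals and extracting from its Taylor expansion the same normalized constant term $-\centralCharacter{\pi}\left(-1\right)^{c-1}q^{-\frac{c}{2}}$ that appears there. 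Either way one obtains the required equality, and the two cases together exhaust all pairs $\left(\pi,\tau\right)$.

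\emph{Main obstacle.} The real work lies in the normalization bookkeeping needed to reconcile three conventions: the $\gamma$- and $\varepsilon$-factors of \cite{Jacquet1983rankin}, the finite field $\varepsilon_0$-factor of \cite{ye2021epsilon}, and the specific normalization of the Ginzburg--Kaplan gamma factor used here (the self-dual Fourier transform on $\Mat{c}{\left(k-2\right)c}$, the powers $q^{\pm\frac{\left(k-2\right)c^2}{2}}$, and the sign $\centralCharacter{\pi}\left(-1\right)^{k-1}$), including making sure the $q^{-s}$-powers coming from conductors genuinely cancel for $\fieldCharacter$ of conductor $\maximalIdeal$. One must also confirm that the compatibility between the level zero local gamma factor of \cite{Jacquet1983rankin} and the finite field $\varepsilon_0$-factor is available in the full generality of pairs $\GL_c\times\GL_k$ needed here, in particular for small values of $k$. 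Once these identifications are pinned down, chaining the equalities of the two cases yields $\GKGammaFactor{\pi}{\tau}{\fieldCharacter} = \varepsilon_0\left(\pi\times\tau,\fieldCharacter\right)$.
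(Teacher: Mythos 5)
Your proposal is correct and follows essentially the same route as the paper: in the non-exceptional case it chains \Cref{cor:ginzburg-kaplan-gamma-factor-equality} with the identification of the local Ginzburg--Kaplan gamma factor with the Jacquet--Piatetski-Shapiro--Shalika gamma factor and the known level zero computation of the latter as $\varepsilon_0\left(\pi\times\tau,\fieldCharacter\right)$, and in the exceptional case it compares \Cref{thm:pi-equals-tau-dual} with the explicit self-dual value of $\varepsilon_0$. The only cosmetic difference is the source cited for that self-dual value (the paper uses the proof of \cite[Theorem 2.18]{zelingher2022values} rather than reading it off from \cite{ye2021epsilon}).
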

\begin{proof}
	When $\pi$ is isomorphic to $\Contragradient{\tau}$, by the proof of \cite[Theorem 2.18]{zelingher2022values} we have that $$\varepsilon_0\left(\pi \times \tau, \fieldCharacter\right) = -\centralCharacter{\tau}\left(-1\right)^{c-1} q^{-\frac{c}{2}}.$$ Hence the result follows in this case from \Cref{thm:pi-equals-tau-dual}. When $\pi$ is not isomorphic to $\Contragradient{\tau}$, we have that the Jacquet--Piatetski-Shapiro--Shalika gamma factor \cite{Jacquet1983rankin} equals the Jacquet--Piatetski-Shapiro--Shalika epsilon factor $\varepsilon\left(s, \Pi \times \depthZeroRepresentation, \fieldCharacter\right)$ \cite{Jacquet1983rankin}, and by the proof of \cite[Theorem 4.4]{ye2021epsilon}, this equals $\varepsilon_0\left(\pi \times \tau, \fieldCharacter\right)$. The result now follows from \Cref{cor:ginzburg-kaplan-gamma-factor-equality} and from the equality of the Ginzburg--Kaplan gamma factor and the Jacquet--Piatetski-Shapiro--Shalika gamma factor.
\end{proof}

\appendix

\section{A variant of Schur's lemma}

Throughout the text, the following variant of Schur's lemma is used.

\begin{proposition}
	Let $\tau$ and $\pi$ be irreducible representations of a finite group $G$. Then
	$$\frac{1}{\sizeof{G}}\sum_{g \in G} \tau\left(g\right) \otimes \pi\left(g^{-1}\right) = \begin{dcases}
		0 & \tau \ncong \pi\\
		\frac{1}{\dim \tau} \mathrm{sw} & \tau = \pi
	\end{dcases}, $$
	where $\mathrm{sw} \colon \tau \otimes \tau \to \tau \otimes \tau$ is the swap linear map, acting on pure tensors by $$\mathrm{sw} \left(v \otimes v'\right) = v' \otimes v,$$
	where $v, v' \in \tau$.
\end{proposition}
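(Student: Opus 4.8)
The plan is to deduce everything from the Schur orthogonality relations. Regard the operator $T=\frac{1}{\sizeof{G}}\sum_{g\in G}\tau(g)\otimes\pi(g^{-1})$ as an endomorphism of $\underlyingVectorSpace{\tau}\otimes\underlyingVectorSpace{\pi}$, fix bases $(e_i)$ of $\underlyingVectorSpace{\tau}$ and $(f_a)$ of $\underlyingVectorSpace{\pi}$, and write $\tau(g)_{ij}$, $\pi(g)_{ab}$ for the associated matrix coefficients. Then the coefficient of $e_i\otimes f_a$ in $T(e_j\otimes f_b)$ is exactly $\frac{1}{\sizeof{G}}\sum_{g\in G}\tau(g)_{ij}\,\pi(g^{-1})_{ab}$, so the proposition is a repackaging of the orthogonality of matrix coefficients. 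Recall that this relation is itself an immediate consequence of Schur's lemma: for any linear map $M\colon\underlyingVectorSpace{\pi}\to\underlyingVectorSpace{\tau}$ the average $\frac{1}{\sizeof{G}}\sum_{g}\tau(g)\,M\,\pi(g^{-1})$ lies in $\Hom_{G}(\underlyingVectorSpace{\pi},\underlyingVectorSpace{\tau})$, and when $\tau=\pi$ its trace equals $\trace(M)$; specializing $M$ to the matrix units $E_{ja}$ and applying Schur gives the orthogonality relations in the forms I will use.

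First I would treat the case $\tau\ncong\pi$. Here $\Hom_{G}(\underlyingVectorSpace{\pi},\underlyingVectorSpace{\tau})=0$ by Schur's lemma, so $\frac{1}{\sizeof{G}}\sum_{g}\tau(g)\,M\,\pi(g^{-1})=0$ for every $M$; taking $M=E_{ja}$ shows $\frac{1}{\sizeof{G}}\sum_{g}\tau(g)_{ij}\,\pi(g^{-1})_{ab}=0$ for all $i,j,a,b$, hence $T=0$.

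Next I would suppose $\tau=\pi$ and identify $\underlyingVectorSpace{\pi}=\underlyingVectorSpace{\tau}$ with the same basis on both tensor factors. Schur's lemma applied to $\frac{1}{\sizeof{G}}\sum_{g}\tau(g)\,E_{ja}\,\tau(g^{-1})=\tfrac{\delta_{ja}}{\dim\tau}\,\idmap$ reads, entrywise, $\frac{1}{\sizeof{G}}\sum_{g}\tau(g)_{ij}\,\tau(g^{-1})_{ab}=\tfrac{1}{\dim\tau}\,\delta_{ib}\,\delta_{ja}$. Therefore
\[
 T(e_j\otimes e_b)=\frac{1}{\dim\tau}\sum_{i,a}\delta_{ib}\,\delta_{ja}\,e_i\otimes e_a=\frac{1}{\dim\tau}\,e_b\otimes e_j=\frac{1}{\dim\tau}\,\mathrm{sw}(e_j\otimes e_b),
\]
and since this holds on a basis, $T=\frac{1}{\dim\tau}\,\mathrm{sw}$.

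I do not expect a genuine obstacle; the only delicate points are getting the index pattern in the orthogonality relation right (so that $\delta_{ib}\delta_{ja}$ is recognized as the swap rather than, say, an isotypic projection) and noting that in the merely-isomorphic case the formula is to be read after transporting $\mathrm{sw}$ along the chosen isomorphism $\tau\isomorphic\pi$. A coordinate-free alternative, which one may prefer to record, is to use the canonical isomorphism $\EndomorphismRing(\underlyingVectorSpace{\tau})\otimes\EndomorphismRing(\underlyingVectorSpace{\pi})\isomorphic\EndomorphismRing(\Hom(\underlyingVectorSpace{\pi},\underlyingVectorSpace{\tau}))$ sending $A\otimes B$ to $\Phi\mapsto A\Phi B$: under it $T$ becomes the averaging projector $\Phi\mapsto\frac{1}{\sizeof{G}}\sum_{g}\tau(g)\,\Phi\,\pi(g)^{-1}$ onto $\Hom_{G}(\underlyingVectorSpace{\pi},\underlyingVectorSpace{\tau})$, which vanishes when $\tau\ncong\pi$ and, when $\tau=\pi$, equals $\Phi\mapsto\tfrac{\trace\Phi}{\dim\tau}\,\idmap$ by a trace computation; its preimage under the identification is $\frac{1}{\dim\tau}\,\mathrm{sw}$.
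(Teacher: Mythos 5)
Your proof is correct. The computation of the matrix entry of $T$ on $e_j\otimes f_b$ is right, the orthogonality relation $\frac{1}{\sizeof{G}}\sum_{g}\tau(g)_{ij}\,\tau(g^{-1})_{ab}=\frac{1}{\dim\tau}\delta_{ib}\delta_{ja}$ is correctly derived from averaging the matrix units, and the index pattern does assemble into $\frac{1}{\dim\tau}\,\mathrm{sw}$ as you claim. The route is genuinely different in mechanics from the paper's, though both rest on Schur's lemma. The paper avoids coordinates entirely: it observes that $T\circ(\tau(h_1)\otimes\pi(h_2))=(\tau(h_2)\otimes\pi(h_1))\circ T$, hence $\mathrm{sw}\circ T\in\Hom_{G\times G}(\tau\otimes\pi,\pi\otimes\tau)$; Schur's lemma for $G\times G$ then kills this space when $\tau\ncong\pi$ and forces $T=c\cdot\mathrm{sw}$ when $\tau=\pi$, after which the scalar is pinned down by a single trace computation, $c\cdot\trace\mathrm{sw}=\trace T=\frac{1}{\sizeof{G}}\sum_g\trace\tau(g)\trace\tau(g^{-1})=1$ together with $\trace\mathrm{sw}=\dim\tau$. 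What the paper's argument buys is that the shape of the answer (a multiple of $\mathrm{sw}$) is forced before any computation, and the only computation needed is character orthogonality; what your entrywise argument buys is that the full operator identity, constant included, drops out of the standard matrix-coefficient orthogonality relations in one step, at the cost of tracking indices. Your closing coordinate-free alternative, transporting $T$ to the averaging projector on $\Hom(\underlyingVectorSpace{\pi},\underlyingVectorSpace{\tau})$, is a third valid packaging and is closest in spirit to the paper's.
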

\begin{proof}
	Denote $$T = \frac{1}{\sizeof{G}}\sum_{g \in G} \tau\left(g\right) \otimes \pi\left(g^{-1}\right).$$
	Then for any $h_1, h_2 \in G$, $$T \circ \left(\tau\left(h_1\right) \otimes \pi\left(h_2\right)\right) = \left(\tau\left(h_2\right) \otimes \pi\left(h_1\right)\right)\circ T =  \mathrm{sw} \circ \left(\pi\left(h_1\right) \otimes \tau\left(h_2\right)\right)\circ \mathrm{sw} \circ T.$$
	Therefore, we have that $\mathrm{sw} \circ T \in \Hom_{G \times G}\left(\tau \otimes \pi, \pi \otimes \tau\right)$. If $\tau \ncong \pi$, then by Schur's lemma the latter $\Hom$-space is zero. If $\tau = \pi$, we have that $\mathrm{sw} \circ T = c \cdot \idmap_{\tau \otimes \tau}$, where $c \in \cComplex$ is a scalar, which is equivalent to $T = c \cdot \mathrm{sw}$. To find the scalar, we take the trace, and using Schur's orthogonality relations, we get that $$c \cdot \trace \mathrm{sw} = \trace \left(T\right) = \frac{1}{\sizeof{G}} \sum_{g \in G} \trace \tau\left(g\right) \trace \tau\left(g^{-1}\right) = 1.$$
	The result now follows because $\trace \mathrm{sw} = \dim \pi$.
\end{proof}
Using the well-known fact that every irreducible cuspidal representation of $\GL_k\left(\finiteField\right)$ is of dimension $\grpIndex{\GL_{k-1}\left(\finiteField\right)}{\UnipotentSubgroup_{k-1}\left(\finiteField\right)}$, we obtain the following result.
\begin{corollary}\label{cor:swap-map-for-cuspidal-representations}
	If $\tau$ is an irreducible cuspidal representation of $\GL_k\left(\finiteField\right)$ then
	$$\frac{1}{\sizeof{\GL_k\left(\finiteField\right)}}  \sum_{g \in \GL_k\left(\finiteField\right)} \tau\left(g\right) \otimes \tau\left(g^{-1}\right) = \frac{q^{\binom{k}{2}} \cdot \left(q^k - 1\right)}{\sizeof{\GL_k\left(\finiteField\right)}} \cdot \mathrm{sw}.$$
\end{corollary}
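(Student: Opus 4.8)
The plan is to deduce this corollary directly from the preceding variant of Schur's lemma, applied with $\pi = \tau$, together with the classical dimension formula for cuspidal representations. First I would invoke the Proposition above with both representations equal to the cuspidal representation $\tau$: since trivially $\tau \cong \tau$, it yields at once
$$\frac{1}{\sizeof{\GL_k\left(\finiteField\right)}}\sum_{g \in \GL_k\left(\finiteField\right)} \tau\left(g\right) \otimes \tau\left(g^{-1}\right) = \frac{1}{\dim \tau}\, \mathrm{sw}.$$
Thus all that remains is the purely numerical identification of the scalar $\frac{1}{\dim\tau}$ with $\frac{q^{\binom{k}{2}}\left(q^k - 1\right)}{\sizeof{\GL_k\left(\finiteField\right)}}$; there is no further representation-theoretic input needed.

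Second, I would recall the well-known fact that an irreducible cuspidal representation of $\GL_k\left(\finiteField\right)$ has dimension $\grpIndex{\GL_{k-1}\left(\finiteField\right)}{\UnipotentSubgroup_{k-1}\left(\finiteField\right)}$ (see, e.g., \cite{Green55}), and that $\sizeof{\UnipotentSubgroup_{k-1}\left(\finiteField\right)} = q^{\binom{k-1}{2}}$, so that $\dim\tau = \sizeof{\GL_{k-1}\left(\finiteField\right)}/q^{\binom{k-1}{2}}$. The reduction is then complete once one checks the elementary identity
$$\frac{\sizeof{\GL_{k-1}\left(\finiteField\right)}}{q^{\binom{k-1}{2}}} = \frac{\sizeof{\GL_k\left(\finiteField\right)}}{q^{\binom{k}{2}}\left(q^k - 1\right)}.$$

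Third, to verify this identity I would use the factorization $\sizeof{\GL_m\left(\finiteField\right)} = \prod_{i=0}^{m-1}\left(q^m - q^i\right) = q^{\binom{m}{2}}\prod_{j=1}^{m}\left(q^j - 1\right)$, after which both sides of the displayed identity collapse to $\prod_{j=1}^{k-1}\left(q^j - 1\right)$. Substituting back gives $\frac{1}{\dim\tau} = \frac{q^{\binom{k}{2}}\left(q^k-1\right)}{\sizeof{\GL_k\left(\finiteField\right)}}$, and combining with the first display yields the corollary. The only point requiring any care is keeping the bookkeeping of the powers of $q$ straight in the order computation; there is no conceptual obstacle, and I would present that computation in a single line.
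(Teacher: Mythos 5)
Your proposal is correct and follows exactly the paper's route: apply the preceding variant of Schur's lemma with $\pi = \tau$ and then substitute the standard cuspidal dimension $\dim\tau = \grpIndex{\GL_{k-1}\left(\finiteField\right)}{\UnipotentSubgroup_{k-1}\left(\finiteField\right)} = \prod_{j=1}^{k-1}\left(q^j-1\right)$, which the paper cites in one line. Your arithmetic verification of $\tfrac{1}{\dim\tau} = \tfrac{q^{\binom{k}{2}}\left(q^k-1\right)}{\sizeof{\GL_k\left(\finiteField\right)}}$ is correct.
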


\section{Root exchange algorithm}\label{appendix:root-exchange}

\newcommand{\rootEmbeddingOne}{\iota'}
\newcommand{\rootEmbeddingTwo}{\iota}

We explain the root exchange algorithm commonly used to reduce domains of integration. Our treatment is informal, and we refer to \cite[Section 6.1]{Cai2019} and \cite[Section 7.1]{GinzburgRallisSoudry2011} for the precise statements.

As always, assume that $\fieldCharacter \colon \localField \to \multiplicativegroup{\cComplex}$ has conductor $\maximalIdeal$. Let $G = \GL_n\left(\localField\right)$ for some $n$. The root exchange algorithm concerns integrals of the form \begin{equation}\label{eq:root-exchange-integral}
	I_l\left(f\right) = \int_{\Mat{a_1}{b_1}\left(\localField\right)} \dots \int_{\Mat{a_l}{b_l}\left(\localField\right)} f\left(r_1\left(y_1\right) \dots r_l\left(y_l\right)\right) \differential y_l \dots \differential y_1,
\end{equation}
where $f \colon G \to \cComplex$ is a right smooth function and for every $1 \le i \le l$, $$r_i \colon \left(\Mat{a_i}{b_i}\left(\localField\right), +\right) \to \left(G, \cdot\right)$$ is a continuous injective homomorphism of groups. We make the additional assumption that $$r_i\left(\Mat{a_i}{b_i}\left( \maximalIdeal^j \right)\right) \subset \IdentityMatrix{n} + \Mat{a_i}{b_i}\left(\maximalIdeal^j\right)$$ for every $j \in \zIntegers$. Integrals over unipotent subgroups can be realized in this form.

The idea of the root exchange algorithm is to use embeddings $r^{\ast}_i \colon \Mat{b_i}{a_i}\left(\localField\right) \to G$ and make use of the right smoothness of $f$ in order to reduce the integration domain in \eqref{eq:root-exchange-integral}. In order for this to be successful, we need $f$ to satisfy the following relation: for any $1 \le i \le l$, any $g_0 \in G$, $y_i \in \Mat{a_i}{b_i}\left(\localField\right)$, and any $x_i \in \Mat{b_i}{a_i}\left(\localField\right)$,
\begin{equation}\label{eq:condition-for-root-exchange}
	\begin{split}
		& \int_{\Mat{a_1}{b_1}\left(\localField\right)} \dots \int_{\Mat{a_{i-1}}{b_{i-1}}\left(\localField\right)} f\left(r_1\left(y_1\right) \dots r_i\left(y_i\right) r_i^{\ast}\left(x_i\right) g_0\right) \differential y_{i-1} \dots \differential y_1 \\
		=& \fieldCharacter\left(\trace\left(y_i x_i\right)\right) \int_{\Mat{a_1}{b_1}\left(\localField\right)} \dots \int_{\Mat{a_{i-1}}{b_{i-1}}\left(\localField\right)} f\left(r_1\left(y_1\right) \dots r_i\left(y_i\right) g_0\right) \differential y_{i-1} \dots \differential y_1.
	\end{split}
\end{equation}
The algorithm goes as follows. Let $f_l = f$.
\begin{enumerate}
	\item Suppose that $f_j$ is defined so that $I_l\left(f\right) = I_{j}\left(f_j\right)$. Suppose that $f_j$ is right smooth and satisfies the condition \eqref{eq:condition-for-root-exchange} for $1 \le i \le j$. Suppose that $f_j$ is right invariant under $\IdentityMatrix{n} + \squareMatrix_n\left(\maximalIdeal^{m_j}\right)$ for $m_j \ge 1$. Then \begin{equation}\label{eq:f-j-is-right-smooth}
		f_j\left(g\right) = \frac{1}{\VolumeOf\left(\Mat{b_{j}}{a_{j}}\left(\maximalIdeal^{m_j}\right)\right)} \int_{\Mat{b_{j}}{a_{j}}\left(\maximalIdeal^{m_j}\right)} f_j\left(g r_{j}\left(x_j\right)\right) \differential x_{j}.
	\end{equation}
	\item \label{item:root-exchange-step-two}Substitute \eqref{eq:f-j-is-right-smooth} in $I_{j}\left(f_j\right)$ to and use \eqref{eq:condition-for-root-exchange} (with $g_0 = \IdentityMatrix{n}$) to get \begin{equation}
		\begin{split}
			I_{j}\left(f_j\right) =& \frac{1}{\VolumeOf\left(\Mat{b_j}{a_j}\left(\maximalIdeal^{m_j}\right)\right)} \int_{\Mat{a_1}{b_1}\left(\localField\right)} \dots \int_{\Mat{a_{j}}{b_{j}}\left(\localField\right)}\int_{\Mat{b_j}{a_j}\left(\maximalIdeal^{m_j}\right)}\\
			& \times \fieldCharacter\left(\trace\left(y_j x_j\right)\right) f_j\left(r_1\left(y_1\right) \dots r_j\left(y_j\right)\right) \differential x_j \differential y_j \dots \differential y_1.
		\end{split}
	\end{equation}
	\item \label{item:root-exchange-step-three}The inner integral $$\frac{1}{\VolumeOf\left(\Mat{b_j}{a_j}\left(\maximalIdeal^{m_j}\right)\right)} \int_{\Mat{b_j}{a_j}\left(\maximalIdeal^{m_j}\right)}\fieldCharacter\left(\trace\left(y_j x_j\right)\right) \differential x_j$$
	is (up to scalar multiplication) the Fourier transform of the characteristic function of $\Mat{b_j}{a_j}\left(\maximalIdeal^{m_j}\right)$ (viewed as a function in the variable $y_j$). It equals the characteristic function of $\maximalIdeal^{-m_j + 1}$. Hence $I_j\left(f_j\right)$ is given by $$\int_{\Mat{a_1}{b_1}\left(\localField\right)} \dots \int_{\Mat{a_{j-1}}{b_{j-1}}\left(\localField\right)} \int_{\Mat{a_j}{b_j}\left(\maximalIdeal^{-m_j+1}\right)} f_j \left(r_1\left(y_1\right) \dots r_j\left(y_j\right)\right) \differential y_j \dots \differential y_l.$$
	Define $$f_{j-1}\left(g\right) = \int_{\Mat{a_j}{b_j}\left(\maximalIdeal^{-m_j+1}\right)} f_{j}\left(g r_j\left(y_j\right)\right) \differential y_j.$$
	Then $I_j\left(f_j\right) = I_{j-1}\left(f_{j-1}\right)$ and $f_{j-1}$ is right smooth and satisfies \eqref{eq:condition-for-root-exchange} for $1 \le i \le j-1$.
	\item Repeatedly run the algorithm steps to eventually get $I_l\left(f\right) = I_0\left(f_0\right) = f_0\left(\IdentityMatrix{n}\right)$, which is the integral
	$$ \int_{\Mat{a_1}{b_1}\left(\maximalIdeal^{-m_1+1}\right)} \dots \int_{\Mat{a_l}{b_l}\left(\maximalIdeal^{-m_l+1}\right)} f\left(r_1\left(y_1\right) \dots r_l\left(y_l\right)\right) \differential y_l \dots \differential y_1.$$
\end{enumerate}

We end with a few useful remarks. The first one is that if the images of $r^{\ast}_1, \dots, r^{\ast}_l$ commute, then we may choose $m_1 = m_2 = \dots = m_l = m$, where $f$ is right invariant under $\IdentityMatrix{n} + \squareMatrix_n\left(\maximalIdeal^m\right)$.

The second remark is there exists a slight generalization of this algorithm, where instead of using the fact that $f_j$ is right smooth, we use the fact that \begin{equation}\label{eq:f-j-is-right-smooth-alternative}
	f_j\left(g\right) = \int_{\Mat{b_j}{a_j}\left(\localField\right)} f'_j\left(g r^{\ast}_j\left(x_j\right)\right) \varphi_j\left(x_j\right) \differential x_j,
\end{equation}
where $f'_j$ satisfies similar properties to $f_j$ and $\varphi_j \colon \Mat{b_j}{a_j}\left(\localField\right) \to \cComplex$ is a compactly supported smooth function. In this generalization, \eqref{eq:f-j-is-right-smooth-alternative} serves an alternative to \eqref{eq:f-j-is-right-smooth}, and after applying steps \ref{item:root-exchange-step-two} and \ref{item:root-exchange-step-three}, we get $$I_j\left(f_j\right) = I_{j-1}\left(f_{j-1}\right),$$ where $$f_{j-1}\left(g\right) = \int_{\Mat{a_j}{b_j}\left(\localField\right)} f'_j\left(g r_j\left(y_j\right) \right) \fourierTransform{\fieldCharacter}{\varphi_j}\left(y_j\right) \differential y_j,$$
and where $$\fourierTransform{\fieldCharacter}{\varphi_j}\left(y_j\right) = \int_{\Mat{b_j}{a_j}\left(\localField\right)} \varphi_j\left(x_j\right) \fieldCharacter\left(\trace\left(x_j y_j\right)\right) \differential x_j.$$
The advantage of this generalized version is that it allows one to apply the algorithm in the context of archimedean fields and finite fields.

\subsection{Reduction of integral domain for $(k,c)$ $\fieldCharacter$-functionals}

In this subsection, we explain the choice of $r_i$ and $r^{\ast}_i$ for the integral \eqref{eq:recursive-formula-for-k-c-whittaker-function} from \Cref{subsec:lifts-of-whittaker-functions}, introduced in \cite[Lemma 9]{CaiFriedbergGourevitchKaplan2023}.

Recall that
\begin{equation*}
	\mathcal{Y}\left(R\right) = \left\{ \begin{pmatrix}
		\IdentityMatrix{k c_1}\\
		Y & \IdentityMatrix{k c_2} \end{pmatrix} \mid Y = \begin{pmatrix}
		0_{c_2 \times c_1}& y_{12} & y_{13} & \dots & y_{1k}\\
		&0_{c_2 \times c_1} & y_{23} & \dots & y_{2k}\\
		& & \ddots & \ddots & \vdots\\
		& & & 0_{c_2 \times c_1} & y_{k-1,k}\\
		& & & & 0_{c_2 \times c_1}
	\end{pmatrix} \mid y_{ij} \in \Mat{c_2}{c_1}\left(R\right) \right\}.
\end{equation*}
Let $$\mathcal{X}\left(R\right) = \left\{ \begin{pmatrix}
	\IdentityMatrix{k c_1} & X\\
	& \IdentityMatrix{k c_2}
\end{pmatrix} \mid X =\begin{pmatrix}
	0_{c_1 \times c_2} \\
	0_{c_1 \times c_2} & x_{12} &\\
	0_{c_1 \times c_2} & x_{13} & x_{23}\\
	\vdots & \vdots & \vdots & \ddots\\
	0_{c_1 \times c_2} & x_{1k} & x_{2 k} & \cdots & x_{k-1,k}
\end{pmatrix}  \mid x_{ij} \in \Mat{c_1}{c_2}\left(R\right) \right\}.$$
Let $\mathcal{Y}_{ij}\left(R\right)$ (respectively, $\mathcal{X}_{ij}\left(R\right)$) be the subgroup of $\mathcal{Y}\left(R\right)$ (respectively, $\mathcal{X}\left(R\right)$) consisting of elements such that $y_{i'j'} = 0_{c_2 \times c_1}$ (respectively, $x_{i'j'} = 0_{c_1 \times c_2}$) for $\left(i',j'\right) \ne \left(i,j\right)$. Let $r^{+}_{i,j} \colon \Mat{c_1}{c_2}\left(R\right) \to \mathcal{X}_{ij}\left(R\right)$, $r^{-}_{i,j} \colon \Mat{c_2}{c_1}\left(R\right) \to \mathcal{Y}_{ij}\left(R\right)$ be the obvious maps. 

Let $\left(i_1, j_1\right), \dots, (i_{\binom{k}{2}}, j_{\binom{k}{2}})$ be the sequence of pairs $\left(k-1,k\right)$, $\left(k-2, k-1\right)$, $\dots$, $\left(1,2\right)$, $\left(k-2, k\right)$, $\left(k-3, k-1\right)$, $\dots$, $\left(1,3\right)$, $\dots$, $\left(1, k\right)$.

Recall \Cref{prop:integrand-for-k-c-functional}.

\begin{proposition}
	For any $\Phi \in \SpehRepresentation{\depthZeroRepresentation}{c} \subset \abs{\det}^{-\frac{c_2}{2}} \SpehRepresentation{\depthZeroRepresentation}{c_1} \times \abs{\det}^{\frac{c_1}{2}} \SpehRepresentation{\depthZeroRepresentation}{c_2}$, such that $\Phi$ is right invariant under $\IdentityMatrix{kc} + \squareMatrix_{kc}\left(\maximalIdeal^{r+1}\right)$ for $r \ge 0$, we have $$\standardForm{\Phi}{\gShortSpehWhittakerFunctional{\depthZeroRepresentation}{k}{c}} = \int_{\mathcal{Y}\left(\uniformizer^{-r} \ringOfIntegers \right)} \standardForm{\Phi\left( y \kappa \right)}{\gShortSpehWhittakerFunctional{\depthZeroRepresentation}{k}{c_1} \otimes \gShortSpehWhittakerFunctional{\depthZeroRepresentation}{k}{c_2}} \differential y.$$
\end{proposition}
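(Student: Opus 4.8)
The plan is to read the claimed identity as a straight application of the root exchange algorithm of \Cref{appendix:root-exchange} to the defining integral \eqref{eq:recursive-formula-for-k-c-whittaker-function}. Set $l = \binom{k}{2}$, and for $1 \le a \le l$ take $r_a = r^{-}_{i_a,j_a}$ and $r^{\ast}_a = r^{+}_{i_a,j_a}$, with $\bigl((i_a,j_a)\bigr)_{a=1}^{l}$ the sequence of pairs fixed above. Let $f \colon \GL_{kc}(\localField) \to \cComplex$ be $f(g) = \standardForm{\Phi(g\kappa)}{\gShortSpehWhittakerFunctional{\depthZeroRepresentation}{k}{c_1} \otimes \gShortSpehWhittakerFunctional{\depthZeroRepresentation}{k}{c_2}}$. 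Since $\mathcal{Y}(\localField)$ is abelian and is generated by the $\mathcal{Y}_{ij}(\localField)$, each $y \in \mathcal{Y}(\localField)$ factors uniquely as $r_1(y_1)\cdots r_l(y_l)$; hence \eqref{eq:recursive-formula-for-k-c-whittaker-function} says exactly $\standardForm{\Phi}{\gShortSpehWhittakerFunctional{\depthZeroRepresentation}{k}{c}} = I_l(f)$ in the notation of \eqref{eq:root-exchange-integral}.

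Next I would check the hypotheses of the algorithm. The function $f$ is right smooth because $\Phi$ is, and because $\kappa$ is a permutation matrix it normalizes $\IdentityMatrix{kc} + \squareMatrix_{kc}(\maximalIdeal^{r+1})$, so $f$ is right invariant under that group and one may take $m_1 = \cdots = m_l = r+1$. The containments $r_a\bigl(\Mat{c_2}{c_1}(\maximalIdeal^{j})\bigr) \subset \IdentityMatrix{kc} + \squareMatrix_{kc}(\maximalIdeal^{j})$ are immediate. The one real point is the relation \eqref{eq:condition-for-root-exchange}: after integrating out the earlier variables, right translation of $f$ by $r^{\ast}_a(x)$ multiplies it by $\fieldCharacter(\trace(yx))$. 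This is proved by conjugating $r^{\ast}_a(x)$ to the far left past the $r_b(y_b)$ with $b<a$; the ordering of the pairs is chosen precisely so that every commutator produced lands in some $\mathcal{Y}_{i'j'}(\localField)$ whose index precedes $(i_b,j_b)$ in the sequence, hence in a direction already integrated over all of $\localField$ and therefore invisible. Once $r^{\ast}_a(x)$ reaches the far left, $\kappa$ conjugates it into $\UnipotentRadicalForWss{k}{c_1}(\localField) \times \UnipotentRadicalForWss{k}{c_2}(\localField) \subset \ParabolicSubgroup_{(kc_1,kc_2)}(\localField)$, whose Levi part is unipotent (so the modulus character is trivial), and it acts on $\Phi$ through $\gShortSpehWhittakerFunctional{\depthZeroRepresentation}{k}{c_1} \otimes \gShortSpehWhittakerFunctional{\depthZeroRepresentation}{k}{c_2}$, producing exactly $\fieldCharacter(\trace(yx))$. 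This is the content of \cite[Lemma 9]{CaiFriedbergGourevitchKaplan2023}, and I would cite it (together with the references in \Cref{appendix:root-exchange}) rather than redo the bookkeeping.

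Finally, the images of $r^{\ast}_1,\dots,r^{\ast}_l$ all lie in the abelian group $\mathcal{X}(\localField)$ and so commute; by the first remark in \Cref{appendix:root-exchange} the algorithm may then be run with the single value $m = r+1$, which also avoids any loss of smoothness between successive steps. Its output replaces every integration domain $\Mat{c_2}{c_1}(\localField)$ by $\Mat{c_2}{c_1}(\maximalIdeal^{-m+1}) = \Mat{c_2}{c_1}(\maximalIdeal^{-r}) = \Mat{c_2}{c_1}(\uniformizer^{-r}\ringOfIntegers)$, i.e. it replaces $\mathcal{Y}(\localField)$ by $\mathcal{Y}(\uniformizer^{-r}\ringOfIntegers)$, with the same Haar measure, yielding $\standardForm{\Phi}{\gShortSpehWhittakerFunctional{\depthZeroRepresentation}{k}{c}} = \int_{\mathcal{Y}(\uniformizer^{-r}\ringOfIntegers)} \standardForm{\Phi(y\kappa)}{\gShortSpehWhittakerFunctional{\depthZeroRepresentation}{k}{c_1} \otimes \gShortSpehWhittakerFunctional{\depthZeroRepresentation}{k}{c_2}}\, \differential y$. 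The main obstacle is the verification of \eqref{eq:condition-for-root-exchange} — tracking which commutators arise and confirming the ordering pushes them all into already-saturated directions — but this is exactly what \cite[Lemma 9]{CaiFriedbergGourevitchKaplan2023} carries out, so for the present purpose it amounts to matching notation.
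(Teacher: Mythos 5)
Your proposal is essentially the paper's own argument: the appendix proves this proposition by running the root exchange algorithm with exactly the maps $r^{-}_{i,j}$ and $r^{+}_{i,j}$, using the commutativity of $\mathcal{X}\left(\localField\right)$ to take a single $m = r+1$ for all variables, and deferring the verification of \eqref{eq:condition-for-root-exchange} to the equivariance of $\gShortSpehWhittakerFunctional{\depthZeroRepresentation}{k}{c_1} \otimes \gShortSpehWhittakerFunctional{\depthZeroRepresentation}{k}{c_2}$ and the commutator computation of \cite[Lemma 9]{CaiFriedbergGourevitchKaplan2023}. The only (cosmetic) discrepancy is the indexing: the paper sets $r_{\binom{k}{2}+1-m} = r^{-}_{i_m,j_m}$, so the algorithm processes the pairs in the order listed rather than in reverse as in your labeling.
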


The proof of this proposition follows from the root exchange algorithm by taking $r_{\binom{k}{2} + 1 - m} = r^{-}_{i_m, j_m}$ and $r^{\ast}_{\binom{k}{2} + 1 - m} = r^{+}_{i_m, j_m}$ for every $1 \le m \le \binom{k}{2}$. We use the fact that the images of $r^{\ast}_m$ lie in $\mathcal{X}\left(\localField\right)$, which is a commutative group, and hence by the first remark in the previous section, we can use the same domain for all variables. The hardest thing to check is that the relation \eqref{eq:condition-for-root-exchange} is satisfied. The identity follows the fact that $\gShortSpehWhittakerFunctional{\depthZeroRepresentation}{k}{c_1}$ and $\gShortSpehWhittakerFunctional{\depthZeroRepresentation}{k}{c_2}$ are $\left(k, c_1\right)$ and $\left(k, c_2\right)$ $\fieldCharacter$-Whittaker functionals, respectively, and from computing $$\left(\prod_{t = s}^{\binom{k}{2}} r_{i_t, j_t}^{-}\left(y_{i_t, j_t}\right)\right) r_{i_s, j_s}^{+}\left(x_{i_s, j_s}\right) \left(\prod_{t = s}^{\binom{k}{2}} r_{i_t, j_t}^{-}\left(y_{i_t, j_t}\right)\right)^{-1},$$
where one needs to change variables in the integral for some of the variables. See also a demonstration at \url{https://elad.zelingher.com/mathapps/gln/rootexchange}.

\subsection{Reduction of integration domain in the dual Ginzburg--Kaplan integral}
We explain another application of the root exchange algorithm, used in the proof of \Cref{prop:dual-zeta-gk-evaluation-for-depth-zero}.

\begin{lemma}\label{lem:reduce-integration-over-unipotent-to-ring-of-integers}
	Suppose that $W \in \Whittaker\left(\SpehRepresentation{\depthZeroRepresentation}{c}, \fieldCharacterkc{k}{c}\right)$ is invariant under right translations of $\IdentityMatrix{kc} + \squareMatrix_{kc}\left(\maximalIdeal\right)$. Then for any $h \in \GL_c\left(\localField\right)$, \begin{align*}
		& \int_{\Mat{c}{\left(k-2\right)c}\left(\localField\right)} W \left(\diag\left(\IdentityMatrix{\left(k-1\right)c}, h\right) \begin{pmatrix}
			\IdentityMatrix{c}\\
			& \IdentityMatrix{\left(k-2\right)c}\\
			& Y & \IdentityMatrix{c}
		\end{pmatrix}\right) \differential Y \\
		= & \int_{\Mat{c}{\left(k-2\right)c}\left(\ringOfIntegers\right)} W \left(\diag\left(\IdentityMatrix{\left(k-1\right)c},h\right)\begin{pmatrix}
			\IdentityMatrix{c}\\
			& \IdentityMatrix{\left(k-2\right)c}\\
			& Y & \IdentityMatrix{c}
		\end{pmatrix}\right) \differential Y.
	\end{align*}
\end{lemma}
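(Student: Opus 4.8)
The plan is to derive this from the root exchange algorithm of \Cref{appendix:root-exchange}, applied to the right-smooth function $f(g)=W\!\left(\diag(\IdentityMatrix{(k-1)c},h)\,g\right)$ on $\GL_{kc}(\localField)$. Writing the lower-triangular variable in $c\times c$ blocks, $Y=(Y_2\mid Y_3\mid\dots\mid Y_{k-1})$ with $Y_j\in\Mat{c}{c}(\localField)$, one has $\left(\begin{smallmatrix}\IdentityMatrix{c}&&\\&\IdentityMatrix{(k-2)c}&\\&Y&\IdentityMatrix{c}\end{smallmatrix}\right)=r^{-}_2(Y_2)\cdots r^{-}_{k-1}(Y_{k-1})$, where $r^{-}_j(Y_j)$ is the matrix with $Y_j$ in the $(k,j)$ block position and the identity elsewhere; these factors commute, and $r^{-}(Y)$ lies in the unipotent radical opposite to the standard parabolic with Levi $\GL_{(k-1)c}\times\GL_{c}$. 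The exchange partners are the transposed embeddings $r^{+}_j(X_j)$, $X_j\in\Mat{c}{c}(\localField)$, with $X_j$ in the $(j,k)$ block position; these all lie in the commutative subgroup $\UnipotentRadicalForWss{k}{c}$, and only $r^{+}_{k-1}$ has its nonzero block on the superdiagonal. I would run the algorithm with the $r_i$ being the $r^{-}_j$ taken in the order $j=k-1,k-2,\dots,2$ and the $r^{\ast}_i$ the corresponding $r^{+}_j$; since the images of the $r^{+}_j$ commute, the first remark following the algorithm permits a single lattice for every variable, and since $W$ is right invariant under $\IdentityMatrix{kc}+\squareMatrix_{kc}(\maximalIdeal)$ the relevant level is $m=1$, so that $\maximalIdeal^{-m+1}=\ringOfIntegers$ and \Cref{item:root-exchange-step-three} produces precisely the asserted shrinking of the domain — once the algorithm's hypothesis is in place.

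The substance is the verification of \eqref{eq:condition-for-root-exchange} for $f$: for each $j$, conjugating $r^{+}_j(X_j)$ to the left of $\diag(\IdentityMatrix{(k-1)c},h)$ and of the remaining lower-triangular factors must yield an element of $\UnipotentRadicalForWss{k}{c}$, modulo $\IdentityMatrix{kc}+\squareMatrix_{kc}(\maximalIdeal)$, whose $\fieldCharacterkc{k}{c}$-value is $\fieldCharacter(\trace(Y_jX_j))$. Here one uses that conjugating an element of $r^{+}_j$ by the product of the surviving $r^{-}_i$'s lands, by the Bruhat-type decomposition for the parabolic with Levi $\GL_{(k-1)c}\times\GL_c$, in a product of a superdiagonal unipotent part, a part in the Levi, and a part in the opposite unipotent; the superdiagonal $(k-1,k)$ block carries the value of $\fieldCharacterkc{k}{c}$ and, after reorganizing, yields the character $\fieldCharacter(\trace(Y_jX_j))$ through the $(k,c)$ $\fieldCharacter$-equivariance of $W$ (restricted, for the Levi part, to the $(k-1,c)$-type character on the $\GL_{(k-1)c}$-factor), while the opposite-unipotent and torus by-products $I+Y_jX_j$ are absorbed by rescaling the not-yet-exchanged $Y_i$ with $i<j$ — the change of variables alluded to in \Cref{appendix:root-exchange} — and the leftover lands in $\IdentityMatrix{kc}+\squareMatrix_{kc}(\maximalIdeal)$ once $X_j$ has entries in $\maximalIdeal$, hence is killed by the right-invariance hypothesis. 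Because the torus factor $I+Y_jX_j$ need not be close to the identity when $Y_j$ is large, I would carry this out in the generalized form of the algorithm (the second remark in \Cref{appendix:root-exchange}), replacing the appeal to right-smoothness by integration against the characteristic function of $\Mat{c}{c}(\maximalIdeal)$; once \eqref{eq:condition-for-root-exchange} holds, the inner Fourier integral in \Cref{item:root-exchange-step-three} equals the characteristic function of $\Mat{c}{(k-2)c}(\ringOfIntegers)$, independently of the normalization of the additive measure, so no normalization of the two occurrences of $\differential Y$ is needed and the lemma follows.

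The hard part is exactly this verification of \eqref{eq:condition-for-root-exchange}: in contrast with the purely unipotent situation of \Cref{prop:integrand-for-k-c-functional}, the root here is exchanged out of the \emph{opposite} unipotent, so conjugation past the surviving factors (and past $\diag(\IdentityMatrix{(k-1)c},h)$) is genuinely not clean and produces the Levi, opposite-unipotent, and torus by-products above; processing $Y$ one $c\times c$ block at a time, in the indicated order and with the attendant changes of variables, is what keeps them controllable. I expect this bookkeeping, rather than any conceptual difficulty, to be the only real obstacle.
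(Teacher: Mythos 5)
Your overall framework is the right one and is the same as the paper's: split $Y$ into $c\times c$ blocks, run the root exchange of \Cref{appendix:root-exchange} one block at a time, and use the right invariance under $\IdentityMatrix{kc}+\squareMatrix_{kc}\left(\maximalIdeal\right)$ to take $m=1$, so that \Cref{item:root-exchange-step-three} shrinks each domain to $\ringOfIntegers$. The gap is in your choice of exchange partners. For $z\in\Mat{c}{c}\left(\localField\right)$ write $u_{a,b}\left(z\right)$ for the matrix equal to $\IdentityMatrix{kc}$ except for the block $z$ in block position $\left(a,b\right)$ (all blocks of size $c$). You pair $r^{-}_j$, whose block sits at $\left(k,j\right)$, with $r^{+}_j$ at the \emph{transposed} position $\left(j,k\right)$. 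With that choice \eqref{eq:condition-for-root-exchange} cannot hold: the only nonzero products of the relevant block matrix units are $E_{j,k}E_{k,j}=E_{j,j}$ and $E_{k,j}E_{j,k}=E_{k,k}$, so commuting $r^{+}_j\left(X_j\right)$ past $r^{-}_j\left(Y_j\right)$ produces Levi blocks $\IdentityMatrix{c}+X_jY_j$ at $\left(j,j\right)$ and $\IdentityMatrix{c}+Y_jX_j$ at $\left(k,k\right)$, together with the upper- and lower-unipotent pieces of the Bruhat decomposition you invoke — and none of these ever yields $\fieldCharacter\left(\trace\left(Y_jX_j\right)\right)$. For $j<k-1$ the upper-unipotent piece at $\left(j,k\right)$ lies strictly above the superdiagonal, so $\fieldCharacterkc{k}{c}$ is trivial on it; for $j=k-1$ the superdiagonal block is $X_{k-1}\left(\IdentityMatrix{c}+Y_{k-1}X_{k-1}\right)^{-1}$ (further twisted by $h^{-1}$ after conjugation past $\diag\left(\IdentityMatrix{(k-1)c},h\right)$), not $Y_{k-1}X_{k-1}$; and $W$ has no equivariance under the Levi $\GL_{(k-1)c}\times\GL_c$ — there is no ``$(k-1,c)$-type character on the $\GL_{(k-1)c}$-factor'' available — that could convert $\IdentityMatrix{c}+Y_jX_j$ into a character value. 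Since \Cref{item:root-exchange-step-three} needs the integrand to depend on $x_j$ exactly through $\fieldCharacter\left(\trace\left(y_jx_j\right)\right)$, the Fourier-transform step collapses; rescaling the not-yet-exchanged $Y_i$ cannot repair this, as it leaves the $x_j$-dependence of the $\left(j,j\right)$ and $\left(k,k\right)$ blocks intact.

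The correct partner for $Y_j$ at $\left(k,j\right)$ is $X$ at the position shifted up by one block row, namely $\left(j-1,k\right)$; this is what the paper's $r_i^{\ast}$ is (up to an index typo: $r_i$ sits at $\left(k,i+1\right)$ and $r_i^{\ast}$ at $\left(i,k\right)$). Then $E_{j-1,k}E_{k,j}=E_{j-1,j}$ lands on the superdiagonal while $E_{k,j}E_{j-1,k}=0$, giving the exact identity
\begin{equation*}
u_{k,j}\left(Y\right)\,u_{j-1,k}\left(-X\right) \;=\; u_{j-1,k}\left(-X\right)\,u_{j-1,j}\left(XY\right)\,u_{k,j}\left(Y\right),
\end{equation*}
with no Levi, opposite-unipotent, or torus by-products whatsoever. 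The middle factor lies in $\UnipotentRadicalForWss{k}{c}$ and contributes exactly $\fieldCharacter\left(\trace\left(XY\right)\right)=\fieldCharacter\left(\trace\left(YX\right)\right)$; the factor $u_{j-1,k}\left(-X\right)$, after conjugation past $\diag\left(\IdentityMatrix{(k-1)c},h\right)$, stays strictly above the superdiagonal (since $j-1\le k-2$) and is killed by $\fieldCharacterkc{k}{c}$. Commuting $u_{j-1,k}\left(-X\right)$ past a surviving $u_{k,m}\left(Y_m\right)$ produces $u_{j-1,m}\left(XY_m\right)$, which is a $\fieldCharacterkc{k}{c}$-trivial element of $\UnipotentRadicalForWss{k}{c}$ precisely when $m>j$; so the blocks must be exchanged in increasing order of $j$ (from $j=2$ to $j=k-1$), the opposite of the order you propose. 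With these two corrections — the shifted partner and the order — no generalized form of the algorithm, no Bruhat decomposition, and no change of variables are needed, and the lemma follows as you outline.
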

This lemma is proved using the root exchange algorithm where for $1 \le i \le r$, the maps $r_i \colon \squareMatrix_c\left(\localField\right) \to \GL_{kc}\left(\localField\right)$ and $r^{\ast}_i \colon \squareMatrix_c\left(\localField\right) \to \GL_{kc}\left(\localField\right)$ are given by $$r_i\left(y\right) = \begin{pmatrix}
	\IdentityMatrix{c}\\
	& \IdentityMatrix{\left(k-2\right)c}\\
	& y e_i & \IdentityMatrix{c}
\end{pmatrix} \,\,\,\,\,\,\text{and}\,\,\,\,\,\, r_i^{\ast}\left(x\right) = \begin{pmatrix}
\IdentityMatrix{\left(k-2\right)c} & & -\transpose{e_{i+1}} x\\
& \IdentityMatrix{c} & \\
& & \IdentityMatrix{c}
\end{pmatrix}.$$
Here $e_i = \left(0_c,\dots,0_c,\IdentityMatrix{c},0_c,\dots,0_c\right) \in \Mat{c}{\left(k-2\right)c}\left(\localField\right)$ and $\IdentityMatrix{c}$ is located at the $i$th position.

\bibliographystyle{abbrv}
\bibliography{references}
\end{document}